\newcommand{\mres}{\mathop{\vrule height 1.3ex depth 0pt width
0.12ex\vrule height 0.11ex depth 0pt width 1.1ex}}
\newtheorem{thm}{Theorem}[section]
\newtheorem{lem}[thm]{Lemma}
\newtheorem{prop}[thm]{Proposition}
\newtheorem{cor}[thm]{Corollary}
\theoremstyle{definition}
\newtheorem{defn}[thm]{Definition}
\newtheorem{sett}[thm]{Setting}
\newtheorem{exmp}[thm]{Example}
\newtheorem{rem}[thm]{Remark}
\numberwithin{equation}{section}
\begin{document}

\title{A new version of \\
Brakke's local regularity theorem}

\author{Ananda Lahiri\footnote{
Max Planck Institute for Gravitational Physics 
(Albert Einstein Institute)}}

\maketitle

\pagestyle{plain}
\setcounter{page}{1}
%
%abstract
%
\begin{abstract}
Consider an integral Brakke flow $(\mu_t)$, $t\in [0,T]$, 
inside some ball in Euclidean space. If $\mu_{0}$ has small height, 
its measure does not deviate too much from that of a plane and if $\mu_{T}$ is non-empty, 
then Brakke's local regularity theorem yields that $(\mu_t)$ is actually smooth and graphical inside a smaller ball 
for times $t\in (C,T-C)$ for some constant $C$. 
Here we extend this result to times $t\in (C,T)$. 
The main idea is to prove that a Brakke flow that is initially locally graphical with small gradient will remain graphical for some time.
Moreover we use the new local regularity theorem to generalise White's regularity theorem to Brakke flows.

\end{abstract}
\tableofcontents

\section{Introduction}
\paragraph{Overview.}
Consider a smooth family of embeddings $F_t:M^{\mathbf{n}}\to\mathbb{R}^{\mathbf{n}+\mathbf{k}}$ 
for $t\in I$, where $M^{\mathbf{n}}$ is an $\mathbf{n}$-dimensional manifold and $I$ is an open interval in~$\mathbb{R}$.
The family $M_t=F_t[M^{\mathbf{n}}]$ is called a smooth mean curvature flow if
\begin{align}
\label{smoothmcf}
\left(\partial_t F_t(\hat{p})\right)^{\bot}=\mathbf{H}(M_t,F_t(\hat{p}))
\end{align}
for all $(t,\hat{p})\in I\times M^{\mathbf{n}}$.
Here $(\cdot)^{\bot}$ denoetes the projection onto the normal space
and $\mathbf{H}$ denotes the mean curvature vector.
This evolution equation can be generalised to $\mathbf{n}$-rectifiable Radon measures on $\mathbb{R}^{\mathbf{n}+\mathbf{k}}$
(see Definition \ref{brakkeflow}). 
Such a weak solution will be called a Brakke flow.
Here we want to show that under certain local assumptions
a Brakke flow locally satisfies the smooth characterization above.
%Note that a Brakke flow is not a weak solution of \eqref{smoothmcf} in the usual PDE-sense.

The Mean curvature flow was introduced by Brakke in his pioneering work \cite{brakke},
where he described the evolution in the setting of geometric measure theory.
This early work already contains an existence result
as well as a regularity theory.
His local regularity theorem states that a Brakke flow 
that lies in a narrow slab and consists of one sheet 
is locally smooth and graphical for a certain time interval.
Actually the arguments in \cite{brakke} often contain gaps or small errors.
A new rigorous proof of the regularity results was given by Kasai and Tonegawa \cite{kasait}, \cite{tonegawa}.
Also the author's thesis \cite{lahiri} 
offers a complete version of Brakke's regularity theory
following the original approach.

A major breakthrough in the studies of mean curvature flow 
was the monotonicity formula
found by Huisken \cite{huisken2} for smooth flows,
which later was generalised to Brakke flows by Ilmanen \cite{ilmanen2}
and localised by Ecker \cite{eckerb}.
With the help of this monotonicity formula, 
White proved a local regularity theorem \cite{white}
stating that Gaussian density ratios close to one yield curvature estimates.
White's theorem is formulated for smooth mean curvature flow
and can be applied to many singular situations as well,
but not for arbitrary Brakke flows.
Building up on White's curvature estimates,
Ilmanen, Neves and Schulze \cite{ilnesch} showed
that a smooth mean curvature flow
which is locally initially graphical with small gradient
remains graphical for some time.
For related gradient and curvature estimates see
\cite{eckerh2}, \cite{eckerh1}, \cite{coldingm2}, \cite{wang}, \cite{chenyin}, \cite{brendleh},
\cite{lahiri2}.

Existence results for generalized solutions of mean curvature flow can be found in
\cite{brakke}, \cite{chengg}, \cite{evanss1}, \cite{evanss4}, \cite{ilmanen1}, \cite{kimt}.
For an introduction to weak mean curvature flow
we recommend the work of Ilmanen \cite{ilmanen1}
which also points out the similarities between Brakke flow and level set flow.
We also want to mention the book of Ecker \cite{eckerb}
as a good reference for smooth mean curvature flow and regularity up to the first singular time.

\paragraph{Results of the present article.}
We consider Brakke flows of $\mathbf{n}$-rectifiable Radon measures in $\mathbb{R}^{\mathbf{n}+\mathbf{k}}$ 
see Definition \ref{brakkeflow} for the details.
All Brakke flows considered here are assumed to be integral.
The constants below may depend on $\mathbf{n}$ and $\mathbf{k}$.

Our main result is a new version of 
Brakke's local regularity theorem \cite[Thm.\ 6.10, Thm.\ 6.11]{brakke},
see also Kasai and Tonegawa \cite[Thm.\ 8.7]{kasait}.
Our  statement says that a non-vanishing Brakke flow 
which initially locally lies in a narrow slab
and consists of less than two sheets,
becomes graphical in a small neighbourhood.

%
%
%Brakke local regularity (small neighgbourhood)
%
%
\begin{thm}
\label{locregthm}
There exists an $\alpha_0\in (0,1)$
and for every $\lambda\in (0,1)$ there exists a $\gamma_0\in (0,1)$
such that the following holds:

Let $\gamma\in [0,\gamma_0]$, $\rho\in (0,\infty)$, 
$t_1\in\mathbb{R}$, $t_2\in (t_1+\gamma^{\alpha_0}\rho^2,t_1+\alpha_0\rho^2]$,
$a=(\hat{a},\tilde{a})\in\mathbb{R}^{\mathbf{n}}\times\mathbb{R}^{\mathbf{k}}$
and let $(\mu_t)_{t\in [t_1,t_2]}$ be a Brakke flow in $\mathbf{B}(a,2\rho)$
with $a\in\mathrm{spt}\mu_{t_2}$.
Suppose we have 
\begin{align}
\label{locregthma}
\mathrm{spt}\mu_{t_1}\cap\mathbf{B}(a,2\rho)
&\subset\{(\hat{x},\tilde{x})\in\mathbb{R}^{\mathbf{n}}\times\mathbb{R}^{\mathbf{k}}:|\tilde{x}-\tilde{a}|\leq\gamma\rho\},
%\mathbf{C}(a,2\rho,\gamma\rho),
\\
\label{locregthmb}
\rho^{-\mathbf{n}}\mu_{t_1}(\mathbf{B}(a,\rho))
&\leq (2-\lambda)\omega_{\mathbf{n}}
\end{align}
and set $I:=(t_1+\gamma^{\alpha_0}\rho^2,t_2)$.

Then there exists an 
$u\in\mathcal{C}^{\infty}\left(I\times\mathbf{B}^{\mathbf{n}}(\hat{a},\gamma_0\rho),\mathbb{R}^{\mathbf{k}}\right)$
such that
\begin{align*}
%\label{locregthme}
\mu_{t}\mres\mathbf{C}(a,\gamma_0\rho,\rho)=\mathscr{H}^{\mathbf{n}}\mres\mathrm{graph}(u(t,\cdot\,)) 
\quad \text{for all } t\in I.
\end{align*}
Moreover $\sup |Du(t,\cdot\,)|\leq\alpha_0^{-1}\rho^{-2}(t-t_1)$ for all $t\in I$
and $F_t(\hat{x})=(\hat{x},u(t,\hat{x}))$ satisfies \eqref{smoothmcf}.
\end{thm}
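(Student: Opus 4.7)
My strategy is to combine the classical (Brakke, Kasai--Tonegawa) local regularity theorem, which gives graphicality on a time interval bounded away from $t_2$, with a new persistence-of-graphicality result for Brakke flows which extends the graphical description all the way up to $t_2$. This matches the paper's stated main idea that an initially graphical Brakke flow with small gradient remains graphical for some time.

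First, I would apply the classical Brakke-type local regularity theorem to the flow on the interval $[t_1,t_2]$. Under the slab hypothesis and the less-than-two-sheets bound, this produces a ``starting time'' $t^{\ast}\in(t_1+\gamma^{\alpha_0}\rho^{2},\,t_2)$, chosen at distance $\sim\gamma^{\alpha_0}\rho^{2}$ from $t_1$, such that $\mu_{t^{\ast}}$ restricted to a cylinder of radius slightly larger than $\gamma_0\rho$ is the graph of a smooth function $u^{\ast}$ with $|Du^{\ast}|\lesssim\gamma^{\beta}$ for some $\beta>0$. Since $\gamma^{\alpha_0}\rho^{2}$ shrinks as $\gamma\to 0$, the buffer required by the classical theorem at the right endpoint can be absorbed into the $\gamma_0$-dependence, so $t^{\ast}$ exists whenever $t_2-t_1>\gamma^{\alpha_0}\rho^{2}$; a short rescaling argument handles the degenerate case where $t_2-t_1$ is only marginally larger than $\gamma^{\alpha_0}\rho^{2}$.

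The heart of the argument is then the persistence lemma: if $\mu_{\tau}$ is, on a disc, a smooth graph over $\mathbb{R}^{\mathbf{n}}$ with small gradient, then on $[\tau,\tau+\delta]$, for a $\delta>0$ depending only on that gradient and the radius, the flow remains graphical with controlled gradient. This is the Brakke-flow analogue of Ilmanen--Neves--Schulze, and here it is the main obstacle. In the smooth MCF setting one simply solves the quasilinear graphical PDE by short-time existence, but for a Brakke flow one must (i) construct a smooth graphical MCF $v$ starting from $u^{\ast}$ using the classical quasilinear theory, (ii) show by a comparison argument — using Ecker's localised monotonicity formula together with the unit-density lower bound at points of the graph of $v$ — that $\mu_{t}$ coincides with $\mathscr{H}^{\mathbf{n}}\mres\mathrm{graph}(v(t,\cdot))$ in the smaller cylinder, and (iii) rule out that additional sheets of $\mathrm{spt}\mu_t$ enter the cylinder from the boundary, which requires an upper barrier of clearing-out type.

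With the persistence lemma in hand, one starts at $t^{\ast}$ and iterates forward. At each step the local gradient grows, but only by a controlled amount, so that after finitely many iterations one reaches $t_2$; a careful bookkeeping produces the linear bound $\sup|Du(t,\cdot)|\le\alpha_0^{-1}\rho^{-2}(t-t_1)$. The hypothesis $a\in\mathrm{spt}\mu_{t_2}$ is used in two places: it forbids the flow from simply vanishing inside the cylinder before $t_2$ (via lower density from Huisken's monotonicity run forward from the graphical regime), and it identifies the single sheet that the graphical description captures. The smoothness of $u$ on $I\times\mathbf{B}^{\mathbf{n}}(\hat{a},\gamma_0\rho)$ and the mean curvature flow equation \eqref{smoothmcf} then follow automatically from the fact that the graphical representation satisfies the graphical MCF PDE on each subinterval produced by the persistence lemma.
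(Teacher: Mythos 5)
Your high-level plan — first extract graphicality at one time via a Brakke-type local regularity theorem, then propagate forward to $t_2$ via a persistence-of-graphicality result — matches the paper's strategy in outline, but two of its load-bearing steps are missing or sketched incorrectly.

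First, the density-propagation gap. Hypothesis \eqref{locregthmb} gives the ``less than two sheets'' bound only at the single centre $a$ and the single radius $\rho$. To apply any version of Brakke's local regularity theorem to obtain a graph over a disc around $\hat{a}$ (and, more importantly, to iterate the persistence argument at nearby centres, which is unavoidable because each application of local regularity only yields a graph on a radius of order $h_0 R$ with $R$ comparable to $\sqrt{t_2-t_1}\lesssim\gamma^{\alpha_0/2}\rho$), one needs the bound $r^{-\mathbf{n}}\mu(\mathbf{B}(y,r))\le(2-\kappa)\omega_{\mathbf{n}}$ for \emph{all} $y$ near $a$ and \emph{all} small $r$. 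This does not follow from the hypothesis at $a$ alone. The paper bridges this precisely with a combination of the tilt-excess estimate (Lemma \ref{tiltboundlem}) and Brakke's cylindrical growth theorem (Theorem \ref{cylgrowththm}): pick a good time $s_1$ shortly after $t_1$ where $\int|\mathbf{H}|^2$ is controlled (which exists by integration of the Brakke inequality), deduce small tilt-excess, and then use cylindrical growth to transfer the density ratio bound at $a$ to nearby points and smaller radii. Your sketch never addresses this and tacitly treats the hypothesis as if it held at all nearby centres; as written, the very first application of the classical theorem, and all subsequent iterations, lack the required density hypotheses.

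Second, the proof sketch of the persistence lemma itself does not work as described. Constructing a smooth graphical solution $v$ from the data $u^{\ast}$ and ``comparing'' it to the Brakke flow via monotonicity and unit-density lower bounds cannot by itself establish $\mu_t=\mathscr{H}^{\mathbf{n}}\mres\mathrm{graph}(v(t,\cdot))$. The difficulty is not additional sheets entering (which you do address with clearing-out), but \emph{holes}: a Brakke flow is allowed to lose mass instantaneously, so $\mathrm{spt}\mu_t$ could a priori be a strict subset of the graph. Monotonicity gives upper bounds on Gaussian density, not lower bounds at arbitrary points of the putative graph, and the assumption $a\in\mathrm{spt}\mu_{t_2}$ gives a lower bound only along $a$, not at every graph point. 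The paper's Theorem \ref{nobdrythm} — a constancy theorem for unit-density rectifiable varifolds with absolutely continuous first variation contained in a Lipschitz graph, proved via Federer's Gauss--Green theorem and Allard's constancy theorem — is exactly the new ingredient that closes this gap: once one knows the support is contained in a Lipschitz graph and the density is $1$ a.e., the measure must fill the whole graph. The paper also avoids your step (i) entirely; it never solves the graphical PDE forward, but instead iterates local regularity at smaller and smaller scales up to the initial time (Lemmas \ref{graphorholeprop}, \ref{improvedgohlem}, \ref{staygraphohlem}, \ref{staygraphlem}) and invokes Theorem \ref{nobdrythm} to upgrade a.e.\ graphicality and unit density to an exact graphical identity. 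Without some version of this ``graphs have no holes'' statement, your comparison plan has a genuine gap.

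A smaller point: the classical Theorem \ref{Blocalregthm} requires a lower measure bound at the final time and only yields a graph on $(t_1+\Lambda R^2,\,t_2-\Lambda R^2)$. Verifying the lower bound from $a\in\mathrm{spt}\mu_{t_2}$ needs the clearing-out lemma (as in Proposition \ref{graphorholeprop}), and the scale $R$ must be chosen small enough relative to $t_2-t_1$; you allude to this but do not actually carry out the choice, which is where the exponent $\alpha_0$ is determined.
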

\begin{comment}
\begin{rem}
The height bound in \eqref{brakkeregthma}
can be replaced by an $\mathcal{L}^2$-height bound
see the distance estimate by Ecker \cite[4.26]{eckerb}
or the weak generalisation by Kasai and Tonegawa \cite[6.4]{kasait}
\end{rem}
\end{comment}
\noindent
The main difference to previous versions is that here we obtain regularity up to the time $t_2$ at which we assumed the non-vanishing, 
were in Brakke's original theorem measure bounds from below have to be assumed for later times. 

We also obtain a local regularity theorem similar to the one of White \cite{white}, 
see also Ecker's version \cite[Thm.\ 5.6]{eckerb}. 
We show that a non-vanishing Brakke flow which locally has Gaussian density ratios close to one will become graphical in a small neighbourhood.
%
%
%White local regularity
%
%
\begin{thm}
\label{whitelocregthm}
For every $\beta\in (0,1)$ there exists an $\eta\in (0,1)$
such that the following holds:

Let $\rho\in (0,\infty)$, $\rho_0\in [\rho,\infty)$,
$t_0\in\mathbb{R}$,
$a\in\mathbb{R}^{\mathbf{n}+\mathbf{k}}$
and let $(\mu_t)_{t\in [t_0-\rho^2,t_0]}$ 
be a Brakke flow in $\mathbf{B}(a,4\sqrt{\mathbf{n}}\rho_0)$.
Suppose $a\in\mathrm{spt}\mu_{t_0}$ and
for all $(s,y)\in(t_0-\rho^2,t_0]\times\mathbf{B}(a,\rho)$
we have
\begin{align}
\label{whitelocregthma}
\int_{\mathbb{R}^{\mathbf{n}+\mathbf{k}}}\Phi_{(s,y)}\varphi_{(s,y),\rho_0}
\;\mathrm{d}\mu_{t_0-\rho^2}
\leq 1+\eta,
\end{align}
where $\Phi$ and $\varphi$ are from Definition \ref{sphtestfct}.
Set $I:=(t_0-\eta^2\rho^2,t_0)$.

Then there exist $S\in\mathbf{O}(\mathbf{n}+\mathbf{k})$ 
and $u\in\mathcal{C}^{\infty}(I\times\mathbf{B}^{\mathbf{n}}(0,\eta\rho),\mathbb{R}^{\mathbf{n}+\mathbf{k}})$
such that for $M_t=\mathrm{graph}(u(t,\cdot\,))$ we have
\begin{align*}
\mu_{t}\mres\mathbf{B}(a,\eta\rho)
=\mathscr{H}^{\mathbf{n}}
\mres\left(S[M_t]+a\cap\mathbf{B}(a,\eta\rho)\right)
\;\;\;\text{for all}\;\; t\in I.
\end{align*}
Moreover $\sup |Du|\leq\beta$
and $F_t(\hat{x})=(\hat{x},u(t,\hat{x}))$ satisfies \eqref{smoothmcf}.
\end{thm}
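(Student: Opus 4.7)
I would prove Theorem \ref{whitelocregthm} by a compactness-and-contradiction argument that reduces it to Theorem \ref{locregthm}, using the rigidity of Huisken's monotonicity formula at Gaussian density one. By parabolic rescaling and translation one may assume $a=0$, $t_0=0$, $\rho=1$. Suppose the conclusion fails: there exist $\beta\in(0,1)$ and a sequence of integral Brakke flows $(\mu^j_t)_{t\in[-1,0]}$ on $\mathbf{B}(0,4\sqrt{\mathbf{n}}\rho_0^j)$ with $0\in\mathrm{spt}\mu^j_0$, $\rho_0^j\geq 1$, and $\eta_j\searrow 0$ satisfying \eqref{whitelocregthma}, but for which no rotation $S\in\mathbf{O}(\mathbf{n}+\mathbf{k})$ and function $u^j$ yield a graphical representation with $\sup|Du^j|\leq\beta$ on the cylinder required by the theorem. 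By Ecker's localised monotonicity the density bound \eqref{whitelocregthma} propagates to all intermediate times, yielding an effective Gaussian density upper bound of $1+\eta_j+\varepsilon_j$, with $\varepsilon_j\searrow 0$, at every $(s,y)\in(-1,0]\times\mathbf{B}(0,1)$ and $t\in(-1,s)$.

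By Brakke--Ilmanen compactness, after passing to subsequences so that $\rho_0^j\to\rho_0^\infty\in[1,\infty]$, the flows $\mu^j$ converge to an integral Brakke flow $\mu^\infty$ on $(-1,0]\times\mathbf{B}(0,R)$ for some $R>1$. Upper semicontinuity of Gaussian densities forces every Gaussian density ratio of $\mu^\infty$ at $(s,y)\in(-1,0]\times\mathbf{B}(0,1)$ to be at most $1$. Because $0\in\mathrm{spt}\mu^j_0$ passes to $0\in\mathrm{spt}\mu^\infty_0$ (a local nontriviality consequence of the monotonicity formula together with the initial hypothesis), and since an integral Brakke flow has Gaussian density at least one at support points, these densities are identically $1$ on $\mathrm{spt}\mu^\infty\cap(-1,0]\times\mathbf{B}(0,1)$. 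The equality case of Huisken--Ilmanen monotonicity then pins down $\mu^\infty$, in a neighbourhood of $(0,0)$, as the static multiplicity-one measure of a single $\mathbf{n}$-plane $P$ through the origin.

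Choose $S\in\mathbf{O}(\mathbf{n}+\mathbf{k})$ sending $P$ to $\mathbb{R}^\mathbf{n}\times\{0\}$, and let $\alpha_0$, $\gamma_0=\gamma_0(1/2)$ be the constants furnished by Theorem \ref{locregthm} for $\lambda=1/2$. Pick $\gamma\in(0,\gamma_0]$ so small that $2\alpha_0^{-1}\gamma^{\alpha_0}<\beta$, and set $t_1=-2\gamma^{\alpha_0}$. For $j$ sufficiently large the rotated flows $S[\mu^j]$ are close to the static horizontal plane inside $\mathbf{B}(0,2)$; they therefore satisfy at time $t_1$ both the slab condition $\mathrm{spt}\mu^j_{t_1}\cap\mathbf{B}(0,2)\subset\{|\tilde{x}|\leq\gamma\}$ and the measure bound $\mu^j_{t_1}(\mathbf{B}(0,1))\leq(3/2)\omega_\mathbf{n}=(2-\lambda)\omega_\mathbf{n}$. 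Together with $0\in\mathrm{spt}\mu^j_0$ these are the hypotheses of Theorem \ref{locregthm} on $[t_1,0]$ with $\rho=1$, $t_2=0$, $a=0$; its conclusion delivers a smooth graphical representation on $(-\gamma^{\alpha_0},0)\times\mathbf{B}^\mathbf{n}(0,\gamma_0)$ with $\sup|Du^j(t,\cdot)|\leq\alpha_0^{-1}(t-t_1)\leq 2\alpha_0^{-1}\gamma^{\alpha_0}<\beta$. Setting $\eta:=\gamma^{\alpha_0/2}$ so that $(-\eta^2,0)=(-\gamma^{\alpha_0},0)$ and $\mathbf{B}^\mathbf{n}(0,\eta)\subset\mathbf{B}^\mathbf{n}(0,\gamma_0)$ for $\gamma$ small, this contradicts the assumed failure of the conclusion for $\mu^j$ and completes the argument.

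The principal obstacle is the compactness step: one must pass the Gaussian density upper bound cleanly through a Brakke-flow limit, handle the possible divergence or boundedness of $\rho_0^j$, and invoke the equality case of Huisken--Ilmanen monotonicity to identify the limit with a single flat plane through the origin. Once that rigidity is secured, the remainder is scale-and-density bookkeeping needed to feed the rotated approximating flows into Theorem \ref{locregthm}.
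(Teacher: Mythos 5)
Your high-level architecture -- compactness and contradiction, identify the blow-up limit as a multiplicity-one plane via density-one rigidity, then feed the approximating flows into Theorem \ref{locregthm} -- is exactly the strategy of the paper (which factors it into Lemma \ref{globalgaussianlem}, Lemma \ref{localgaussianlem}, and a short final argument). However, there are three concrete gaps in the way you set it up.

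First, the rescaling is wrong, and this is why you run into the ``boundedness of $\rho_0^j$'' problem that you flag but do not resolve. You normalize $\rho=1$, so the Gaussian density hypothesis remains on the fixed ball $\mathbf{B}(0,1)$ and the cutoff scale $\rho_0^j\geq 1$ can stay bounded. The limit flow then satisfies only the \emph{truncated} Gaussian density bound $\int\Phi_{(s,y)}\varphi_{(s,y),\rho_0^\infty}\,\mathrm{d}\mu^\infty\leq 1$, and only for $y\in\mathbf{B}(0,1)$, which is not enough to invoke the global planarity statement (Lemma \ref{globalgaussianlem}); that lemma requires the untruncated density bound at \emph{all} base points, with the flow on all of $\mathbb{R}^{\mathbf{n}+\mathbf{k}}$. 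The fix, which is exactly what the paper's Lemma \ref{localgaussianlem} does, is to rescale so that the \emph{conclusion} scale is unit size: set the new radius to be $\delta^{-1}$ (equivalently, normalize the scale at which the height and measure bounds are required). Then the hypothesis ball $\mathbf{B}(0,\rho)=\mathbf{B}(0,\delta^{-1})$ and the cutoff scale $\rho_0\geq\delta^{-1}$ both diverge, and the limit is a Brakke flow on all of $\mathbb{R}^{\mathbf{n}+\mathbf{k}}$ with untruncated density ratios $\leq 1$ everywhere.

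Second, the claim that weak Brakke-flow convergence of $S[\mu^j]$ to a static plane gives the slab condition $\mathrm{spt}\mu^j_{t_1}\cap\mathbf{B}(0,2)\subset\{|\tilde{x}|\leq\gamma\}$ for large $j$ is unjustified: weak-$*$ convergence of measures does not control supports. This is precisely where the paper uses the clearing-out lemma (Lemma \ref{clearoutlem}): a point $z_j\in\mathrm{spt}\mu^j_{-\sigma}$ far from the plane forces a quantitative lower bound on $\mu^j_{-\sigma-\tau}(\mathbf{B}(z_j,\epsilon/4))$ at a slightly earlier time (otherwise clearing out would have killed $z_j$), and that measure lower bound \emph{does} pass to the limit, contradicting that the limit at time $-\sigma-\tau$ is the plane. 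Without this device, mass of order $o(1)$ straying far from $P$ could carry support points with it.

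Third, ``the equality case of Huisken--Ilmanen monotonicity then pins down $\mu^\infty$ as a static plane'' is hiding the entire content of Lemma \ref{globalgaussianlem}. Equality in the monotonicity formula with respect to a single base point only gives $\mathbf{H}_{\mu_t}+(\mathbf{T}(\mu_t,\cdot)^{\bot})_{\natural}(\cdot-y)/(2(t_0-t))=0$, i.e., a self-shrinker condition. To conclude flatness one needs density $=1$ with respect to $\mathbf{n}+1$ affinely independent base points, deduce that the tangent plane is constant and $\mathbf{H}_{\mu_t}\equiv 0$, and then use the constancy Theorem \ref{nobdrythm} plus continuity of the Brakke flow to upgrade to a genuinely static multiplicity-one plane. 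A related minor scale issue: you apply Theorem \ref{locregthm} at scale $\rho=1$, which requires the slab condition on $\mathbf{B}(0,2)$; but the density control on the limit is only available inside $\mathbf{B}(0,1)$, so the application of Theorem \ref{locregthm} must be made at a strictly smaller radius (the paper applies it at scale $\delta\rho$, well inside the controlled region).

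In short: right architecture, but the normalization must place the conclusion (not the hypothesis) at unit scale, the slab condition needs the clearing-out argument rather than weak convergence, and the passage from ``density one'' to ``static plane'' is a lemma, not a one-liner.
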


One key ingredient in order to obtain these regularity results is the observation
that a non-vanishing Brakke flow
which is initially graphical with small gradient
will stay graphical for some time.
This is the non-smooth version 
of the corresponding theorem by Ilmanen, Neves and Schulze \cite[Thm.\ 1.5]{ilnesch}.
%
%
%stay graph
%
%
\begin{thm}
\label{staygraphthm}
There exists an $l_0\in (0,1)$
such that the following holds:

Let $l\in [0,l_0]$, 
$\rho\in (0,\infty)$,
$t_1\in\mathbb{R}$, $t_2\in (t_1,t_1+l_0\rho^2]$,
$a=(\hat{a},\tilde{a})\in\mathbb{R}^{\mathbf{n}}\times\mathbb{R}^{\mathbf{k}}$
and let $(\mu_t)_{t\in [t_1,t_2]}$ be a Brakke flow in $\mathbf{C}(a,2\rho,2\rho)$.
Assume  $a\in\mathrm{spt}\mu_{t_1}$ and
\begin{align}
\label{staygraphthma}
\mathrm{spt}\mu_{t_2}\cap\mathbf{C}(a,\rho,\rho)\neq\emptyset.
\end{align}
Suppose there exists an $u_0\in\mathcal{C}^{0,1}\left(\mathbf{B}^{\mathbf{n}}(\hat{a},2\rho),\mathbb{R}^{\mathbf{k}}\right)$
with $\mathrm{lip}(u_0)\leq l$ 
and
\begin{align}
\label{staygraphthmb}
\mu_{t_1}\mres\mathbf{C}(a,2\rho,2\rho)=\mathscr{H}^{\mathbf{n}}\mres\mathrm{graph}(u_0).
\end{align}

Then there exists an 
$u\in\mathcal{C}^{\infty}\left((t_1,t_2)\times\mathbf{B}^{\mathbf{n}}(\hat{a},\rho),\mathbb{R}^{\mathbf{k}}\right)$
such that
\begin{align*}
\mu_{t}\mres\mathbf{C}(a,\rho,\rho)=\mathscr{H}^{\mathbf{n}}\mres\mathrm{graph}(u(t,\cdot\,))
\quad\text{for all}\;\; t\in (t_1,t_2).
\end{align*}
Moreover $\sup |Du(t,\cdot\,)|\leq\sqrt[4]{l}+\rho^{-2}(t-t_1)$ for all $t\in (t_1,t_2)$
and $F_t(\hat{x})=(\hat{x},u(t,\hat{x}))$ satisfies \eqref{smoothmcf}.
\end{thm}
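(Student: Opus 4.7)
The plan is to deduce Theorem \ref{staygraphthm} from Theorem \ref{locregthm} by an iterated application on the initial graph, exploiting that the Lipschitz hypothesis $\mathrm{lip}(u_0)\leq l$ automatically secures the slab and measure conditions of Theorem \ref{locregthm} at every scale and every base point on the graph. For a candidate base point $p_0=(\hat{p}_0,u_0(\hat{p}_0))$ with $\mathbf{B}(p_0,2r)\subset\mathbf{C}(a,2\rho,2\rho)$, the inclusion $\mathrm{spt}\mu_{t_1}\cap\mathbf{B}(p_0,2r)\subset\{|\tilde{x}-u_0(\hat{p}_0)|\leq 2lr\}$ holds since $\mathrm{spt}\mu_{t_1}$ agrees with $\mathrm{graph}(u_0)$ there, so the slab hypothesis \eqref{locregthma} is met with $\gamma=2l\leq\gamma_0$; and $r^{-\mathbf{n}}\mu_{t_1}(\mathbf{B}(p_0,r))\leq\omega_{\mathbf{n}}(1+l^2)^{\mathbf{n}/2}<(2-\lambda)\omega_{\mathbf{n}}$ for a suitable $\lambda$, giving \eqref{locregthmb}. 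The one ingredient not free is $p_0\in\mathrm{spt}\mu_{t'}$ at the chosen top time $t'$.

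To produce an initial seed, I would use \eqref{staygraphthma}: pick $q\in\mathrm{spt}\mu_{t_2}\cap\mathbf{C}(a,\rho,\rho)$ and apply Theorem \ref{locregthm} at $q$ with $\gamma=2l$ and scale $r$ chosen so that $t_2-t_1\in\bigl((2l)^{\alpha_0}r^2,\alpha_0 r^2\bigr]$. This is compatible for small $l_0$ and yields a smooth graph $u$ on $\bigl(t_1+(2l)^{\alpha_0}r^2,t_2\bigr)\times\mathbf{B}^{\mathbf{n}}(\hat{q},\gamma_0 r)$. Crucially, every point $(\hat{x},u(t,\hat{x}))$ on this graph lies in $\mathrm{spt}\mu_t$, supplying a reservoir of legitimate new base points. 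Taking any such point near the boundary of the already graphical region as a seed for a fresh application of Theorem \ref{locregthm} (same scale, same top time $t_2$) extends the graphical region by $\gamma_0 r$. Finitely many such steps cover $\mathbf{B}^{\mathbf{n}}(\hat{a},\rho)$, giving smooth graphicality on $\bigl(t_1+(2l)^{\alpha_0}r^2,t_2\bigr)\times\mathbf{B}^{\mathbf{n}}(\hat{a},\rho)$.

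To reach arbitrary $t\in(t_1,t_2)$ I would rerun the previous step at smaller scales $r'<r$ with $t-t_1\in\bigl((2l)^{\alpha_0}{r'}^2,\alpha_0{r'}^2\bigr]$, taking top time $t$ rather than $t_2$. The graphs produced at different scales must agree on their common domain by uniqueness of smooth mean curvature flow with smooth initial data, so they assemble into a single $u\in\mathcal{C}^{\infty}\bigl((t_1,t_2)\times\mathbf{B}^{\mathbf{n}}(\hat{a},\rho),\mathbb{R}^{\mathbf{k}}\bigr)$. Theorem \ref{locregthm} delivers $|Du(t,\cdot)|\leq\alpha_0^{-1}{r'}^{-2}(t-t_1)$ at each scale, which is only $O(1)$; I would then sharpen this to $\sup|Du(t,\cdot)|\leq\sqrt[4]{l}+\rho^{-2}(t-t_1)$ by interpolating against the initial $l$-Lipschitz datum via a standard parabolic smoothing estimate on the smooth mean curvature flow now known to exist.

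The main obstacle is the spatial propagation step: each fresh application of Theorem \ref{locregthm} needs a support point at its top time, whereas \eqref{staygraphthma} only furnishes the single seed $q$. I resolve this by using the graph produced at the previous stage to manufacture new base points for the next application, but the interplay between the spatial reach $\gamma_0 r$ of each patch and its time-depth $(2l)^{\alpha_0}r^2$ has to be tracked carefully so that the iteration terminates in finitely many steps and the window of graphicality sharpens from $(t_1+(2l)^{\alpha_0}r^2,t_2)$ to the full $(t_1,t_2)$ as promised.
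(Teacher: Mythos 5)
Your proposal rests on using Theorem \ref{locregthm} as a black box, and that is a circular move here: in this paper Theorem \ref{locregthm} is \emph{derived from} Theorem \ref{staygraphthm} (via Lemma \ref{becomegraphlem}), and its distinctive feature --- smoothness on the whole interval $(t_1+\gamma^{\alpha_0}\rho^2,t_2)$ assuming only $a\in\mathrm{spt}\mu_{t_2}$ --- is precisely the content that Theorem \ref{staygraphthm} is introduced to supply. The input that is independently available is Brakke's Theorem \ref{Blocalregthm}, which needs a measure lower bound at the final time and yields regularity only on $(t_1+\Lambda R^2,\,t_2-\Lambda R^2)$, i.e.\ it loses a full parabolic layer at the top. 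The actual proof therefore has to work from \ref{Blocalregthm} (through Proposition \ref{graphorholeprop} and Lemmas \ref{improvedgohlem}, \ref{staygraphohlem}, \ref{staygraphlem}), use clearing out to manufacture the missing measure lower bound shortly before the final time, control the gradient through the iteration with the curvature estimates of Appendix \ref{curvature_estimates}, and finally handle possible sudden vanishing by showing unit density and invoking the constancy-type Theorem \ref{nobdrythm} at a sequence of times $\tau_m\nearrow t_2$. None of that machinery is replaced by your scheme; it is hidden inside the theorem you invoke, so the proposal assumes what is to be proved.

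Even granting \ref{locregthm}, two steps fail as written. First, the slab hypothesis \eqref{locregthma} must be centered at the chosen base point's own height, and your centers ($q\in\mathrm{spt}\mu_{t_2}$ for the seed, later points $(\hat{x},u(t,\hat{x}))$ for the propagation) do not lie on the initial graph: at the parabolic scale $t-t_1\sim\alpha_0 r^2$ their vertical distance to $\mathrm{graph}(u_0)$ is of order a fixed constant times $r$, not $O(lr)$, so the condition with $\gamma=2l$ cannot be verified; repairing this forces $\gamma$ of constant size (or an appeal to the improved height bound, Proposition \ref{improvedheightbndprop}/Corollary \ref{heightboundcor}, with carefully matched exponents), and then it is no longer clear that the windows $(t_1+\gamma^{\alpha_0}r'^2,t)$ shrink toward $t_1$ --- exactly the bookkeeping you defer. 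Second, the asserted bound $\sup|Du(t,\cdot)|\le\sqrt[4]{l}+\rho^{-2}(t-t_1)$ does not follow from \ref{locregthm} plus a ``standard parabolic smoothing estimate'': \ref{locregthm} only gives $|Du|\le\alpha_0^{-1}r'^{-2}(t-t_1)$ patchwise, and keeping the gradient uniformly small through infinitely many re-centerings down to $t\searrow t_1$ is the hardest part of the paper's argument, carried out in Lemma \ref{staygraphlem} using the interior curvature and gradient estimates (Lemmas \ref{wangcurveestlem} and \ref{curveboundcor}).
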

\begin{rem}
In all the above results $F_t(\hat{x})=(\hat{x},u(t,\hat{x}))$ satisfies \eqref{smoothmcf}
and $|Du|$ is small,
thus the results for smooth graphical mean curvature flow can be applied to obtain 
bounds on the curvature of $\mathrm{graph}(u(t,\cdot\,))$ and higher derivatives of it, 
see Appendix \ref{curvature_estimates}, in particular Proposition \ref{higherregprop} and Lemma \ref{wangcurveestlem}.
This also yields estimates on the higher derivatives of $u$.
 
Note that in the above results 
we cannot expect to obtain a graphical representation at the final time,
see Example \ref{masslossrem}.
\end{rem}
%
%
%continuation theorem
%
%
To deal with the potentail vanishing of Brakke flows we 
also use some sort of continuation result for varifolds.
We show that a unit density varifold with absolutely continuous first variation
that is contained in a Lipshitz graph is either empty or coincides with the graph.
This generalizes Allard's constancy theorem \cite[Thm.\ 4.6.(3)]{allard}
(see also Simon \cite[Thm.\ 8.4.1]{simon})
to Lipschitz graphs, but additionally requires unit density.
\begin{thm}
\label{nobdrythm}
Let $D\subset\mathbb{R}^{\mathbf{n}}$ be open and connected
with $(\mathbf{n}-1)$-rectifiable boundary $\partial D=\bar{D}\setminus D$
and set $U:=D\times\mathbb{R}^{\mathbf{k}}$.
Consider a unit density $\mathbf{n}$-rectifiable Radon measure $\mu$
and a Lipschitz function $f:D\to\mathbb{R}^{\mathbf{k}}$ such that
\begin{align}
\label{nobdrylema}
&\emptyset\neq\mathrm{spt}\mu\cap U\subset\mathrm{graph}f
\\
\label{nobdrylemb}
&\mu(A)=0\;\text{implies}\;\|\delta \mu\|(A)=0\;\text{for all}\;A\subset U.
\end{align}

Then $\mu\mres U=\mathscr{H}^{\mathbf{n}}\mres\mathrm{graph}f$.
\end{thm}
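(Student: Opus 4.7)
My plan is to use the bi-Lipschitz parametrization $\Psi : D \to \mathrm{graph}\, f$, $\Psi(x) := (x, f(x))$, to reduce the theorem to showing that an indicator function on $D$ equals $1$ almost everywhere. Since $\mu$ is unit density $\mathbf{n}$-rectifiable, I write $\mu \mres U = \mathscr{H}^{\mathbf{n}} \mres E$ for an $\mathbf{n}$-rectifiable Borel set $E$ which by \eqref{nobdrylema} may be taken inside $\mathrm{graph}\, f$; set $E' := \Psi^{-1}(E) \subset D$. With Jacobian $J\Psi(x) := \sqrt{\det(I + Df(x)^{T}Df(x))}$ (bounded between $1$ and a constant depending on $\mathrm{lip}(f)$), the area formula reduces the theorem to the claim $\mathscr{L}^{\mathbf{n}}(D \setminus E') = 0$.

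The divergence-form equation I use comes from testing the first variation against horizontal vector fields. For $\phi \in \mathcal{C}^{1}_{c}(D;\mathbb{R}^{\mathbf{n}})$, the field $X(x,y) := (\phi(x), 0)$ has compact support in $U$; since at $\mu$-a.e.\ point the approximate tangent plane of $\mu$ coincides with $T_{\Psi(x)} \mathrm{graph}\, f$, a short calculation in the basis $\tau_{i}(x) := (e_{i}, \partial_{i} f(x))$ yields the tangential divergence $\mathrm{div}_{T_{\Psi(x)} \mathrm{graph}\, f} X = \sum_{i,j} g^{ij}(x) \partial_{i}\phi^{j}(x)$ with $g_{ij}(x) := \delta_{ij} + \partial_{i} f(x) \cdot \partial_{j} f(x)$. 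By \eqref{nobdrylemb} and Radon--Nikodym, $\delta\mu = -H\mu$ with $H \in L^{1}_{\mathrm{loc}}(\mu; \mathbb{R}^{\mathbf{n}+\mathbf{k}})$, and the area formula then yields, for each $k \in \{1, \ldots, \mathbf{n}\}$, the distributional identity
\[
\sum_{i=1}^{\mathbf{n}} \partial_{i}\bigl(\chi_{E'}\, a^{ik}\bigr) \;=\; \chi_{E'}\,(H^{k} \circ \Psi)\, J\Psi \;\in\; L^{1}_{\mathrm{loc}}(D), \qquad a^{ij} := g^{ij}\, J\Psi,
\]
with the matrix $(a^{ij})$ $L^{\infty}$, symmetric, and uniformly positive definite (bounds depending only on $\mathrm{lip}(f)$).

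The final and most delicate step is to deduce that $\chi_{E'}$ has no essential boundary in $D$. In the smooth setting (say $f \in \mathcal{C}^{\infty}$), substituting $\phi = a^{-1}\psi$ for $\psi \in \mathcal{C}^{1}_{c}(D; \mathbb{R}^{\mathbf{n}})$ shows that $D\chi_{E'}$ is represented by an explicit $L^{1}_{\mathrm{loc}}$ function; hence $\chi_{E'} \in W^{1,1}_{\mathrm{loc}}(D)$. Since $\chi_{E'}$ takes only the values $0$ and $1$, its distributional gradient, which would equal the purely singular measure $-\nu_{E'}\, \mathscr{H}^{\mathbf{n}-1} \mres \partial^{*}E'$, must vanish, so $E'$ has zero perimeter in $D$. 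The indecomposability of the connected open set $D$ (a set of locally finite perimeter thanks to the $(\mathbf{n}-1)$-rectifiability of $\partial D$) then yields $\mathscr{L}^{\mathbf{n}}(E') = 0$ or $\mathscr{L}^{\mathbf{n}}(D \setminus E') = 0$, and the non-emptiness of $\mathrm{spt}\mu \cap U$ from \eqref{nobdrylema} excludes the first alternative. The main technical obstacle is executing this argument when $f$ is merely Lipschitz, so that $a$ is only $L^{\infty}$ and the substitution $\phi = a^{-1}\psi$ does not land in $\mathcal{C}^{1}_{c}$; this requires either a smooth approximation of $a$ (with care to preserve the divergence constraint in the limit) or an argument via Anzellotti's pairing theory for $L^{\infty}$ vector fields with $L^{1}$ divergence and $BV$ functions.
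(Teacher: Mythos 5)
Your approach is genuinely different from the paper's, but it leaves the central difficulty --- the Lipschitz regularity of $f$ --- unresolved, and you acknowledge this yourself.

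Your route: push $\mu$ down to $D$ via $\Psi^{-1}$, derive a divergence-form identity
\[
\sum_{j}\partial_{j}\bigl(\chi_{E'}\,a^{ij}\bigr)=\chi_{E'}\,(H^{i}\circ\Psi)\,J\Psi\in L^{1}_{\mathrm{loc}}(D),\qquad a^{ij}=g^{ij}J\Psi,
\]
and try to conclude $\chi_{E'}\in W^{1,1}_{\mathrm{loc}}(D)$, hence locally constant, hence $\equiv 1$ by connectedness of $D$ and non-emptiness of $\mathrm{spt}\mu\cap U$. The derivation of the divergence identity is correct, and the endgame (an indicator in $W^{1,1}_{\mathrm{loc}}$ has vanishing gradient by the chain rule applied to $u^{2}=u$, so it is constant on each component) works. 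The issue is the step in between. When $f$ is merely Lipschitz, $a$ is $L^{\infty}$, and the identity above only says that the $L^{\infty}$ fields $V^{i}=\chi_{E'}a^{i\cdot}$ have $L^{1}_{\mathrm{loc}}$ divergence. This gives no BV or Sobolev regularity on $\chi_{E'}$: a bounded divergence-measure field need not have components of bounded variation, and the substitution $\phi=a^{-1}\psi$ is not admissible. Your suggested repairs are not obvious fixes either: Anzellotti pairing handles $\mathrm{div}\,V\in L^{\infty}$ (or a measure) paired against BV functions, which is exactly the regularity of $\chi_{E'}$ you are trying to establish, not assume; and a smooth approximation $a_{\epsilon}\to a$ disturbs the constraint $\mathrm{div}(\chi_{E'}a^{i\cdot})\in L^{1}_{\mathrm{loc}}$ because $\chi_{E'}$ itself is tied to $a$ through the geometry of the graph. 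So as written the argument proves the theorem only for $f\in\mathcal{C}^{1,1}$ (or so), not for Lipschitz $f$.

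The paper's proof confronts the same difficulty but resolves it differently: it decomposes $U$ (up to an $\mathscr{H}^{\mathbf{n}-1}$-null set) into points of lower density $\geq 1$ and points of density $0$ where additionally $\Theta^{\mathbf{n}-1}(\|\delta\mu\|)=0$, shows via a two-stage blow-up and varifold compactness that the projection $A_{0}=p[\mathrm{spt}\mu]\cap D$ has density $1$ or $0$ at the corresponding projected points, invokes Federer's Gauss--Green theorem to conclude $A_{0}$ is a stationary set in $D$, and finishes with Allard's constancy theorem. The blow-up step is exactly what absorbs the low regularity of $f$: blow-ups of $\mathrm{graph}\,f$ converge (Arzel\`a--Ascoli) to a Lipschitz graph, and the limiting stationary varifold can be shown to fill the limiting graph near the origin, which feeds back into a density count. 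That is the idea missing from your proposal, and it is the heart of the theorem.
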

Note that Duggan \cite[Thm.\ 4.5]{duggan} shows constant density
for stationary $\mathbf{n}$-rectifiable Radon measures
which already equal a Lipschitz graph.
\paragraph{Organisation and sketch of the proof}
We start by recalling some definitions and important results in Section \ref{preliminaries}.

Then in Section \ref{graphs_without_holes} we show Theorem \ref{nobdrythm}.
In the proof we employ the Gauss--Green theorem by Federer \cite[\S 4.5.6]{federer}
to see that the projection of $\mathrm{spt}\mu$ onto $\mathbb{R}^{\mathbf{n}}$ is stationary in $D$
and subsequently the result follows from Allard's constancy theorem \cite[Thm.\ 4.6.(3)]{allard}.

Section \ref{height_bound} establishes that a local height bound is maintained under Brakke flow, see Proposition \ref{improvedheightbndprop}.
Here we improve the height bound increases in time from the known linear growth to arbitrary high exponents.

The main part of this work is Section \ref{stay_graph},
where Theorem \ref{staygraphthm} is established.
Essentially we consider a Brakke flow in $\mathbf{C}(0,2,2)$
for times in $[0,\tau]$ such that 
$\mathrm{spt}\mu_{\tau}\cap\mathbf{C}(0,\delta,1)\neq\emptyset$.
First assume as initial condition that
$\mu_{0}\mres\mathbf{C}(0,2,2)$ lies in a slab of height $h$
and satisfies certain density ratio assumptions.
Based on Brakke's local regularity theorem \cite[Thm.\ 6.11]{brakke}
and the height bound from Section \ref{height_bound}
we show %in Lemma \ref{graphorholelem}
that the flow is graphical inside $\mathbf{C}(0,h,1)$
for times in $[C h^2,\tau- C h^2]$
if $h$ is small enough, $\delta\leq h$ and $\tau\leq\sqrt[32]{h}$.
Under stronger density assumptions we actually obtain
graphical representability inside $\mathbf{C}(0,1,1)$
for times in $[C h,\tau-C h]$,
see Lemma \ref{improvedgohlem}.

Now Change the initial condition to
$\mu_{0}\mres\mathbf{C}(0,2,2)$ being graphical 
with Lipschitz constant smaller than $l$.
This allows applying Lemma \ref{improvedgohlem} on arbitrary small scales,
which yields that the flow is graphical inside $\mathbf{C}(0,1,1)$
for times in $[0,\tau-C l]$ if $l$ is small enough, $\delta\leq l$ and $\tau\leq\sqrt[32]{l}$.
%This is stated in Lemma \ref{staygraphohlem}.
Iterating this result leads to Lemma \ref{staygraphlem} which says 
that the flow is graphical inside $\mathbf{B}(0,L\delta)$
for times in $[0,\tau-\delta^2]$ if we choose $l$ small enough depending on $L$
and suppose $\delta\leq l$, $\tau\leq l$.
Here the curvature estimates from Appendix \ref{curvature_estimates} 
for smooth mean curvature flow are crucial to control the gradient durng the iteration.
Using Lemma \ref{staygraphlem} with varying center points and arbitrary small $\delta$
we obtain that $\mathrm{spt}\mu_t\cap\mathbf{C}(0,1,1)$
is contained in a Lipschitz graph 
and has unit density for almost all $t\in [0,\tau]$.
In view of Theorem \ref{nobdrythm} 
this lets us conclude Theorem~\ref{staygraphthm}.

Section \ref{local_regularity} contains the proof of Theorem \ref{locregthm}.
First we see that Theorem~\ref{staygraphthm} and Lemma \ref{improvedgohlem}
directly imply a version of Theorem \ref{locregthm},
which assumes stronger density bounds initially, see Lemma \ref{becomegraphlem}.
Then we use Brakke's cylindrical growth theorem \cite[Lem.\ 6.4]{brakke} 
to simplify these assumptions,
which establishes Theorem \ref{locregthm} in the desired form.

In Section \ref{white_regularity} Theorem \ref{whitelocregthm} is proven.
In order to do so we first employ Huisken's monotonicty formula \cite[Thm.\ 3.1]{huisken2}
to show that non-moving planes are the only Brakke flows in $\mathbf{R}^{\mathbf{n}+\mathbf{k}}$
that have Gaussian density ratios bounded everywhere by $1$.
Then under the assumptions of Theorem \ref{whitelocregthm}
a blow up argument and Ilmanen's compactness theorem
yield that in a small neighbourhood the conditions of Theorem \ref{locregthm}
are satisfied, which yields the conclusion of Theorem \ref{whitelocregthm}.

In Appendix \ref{curvature_estimates} we show some curvature estiamtes for smooth mean curvature flow.
Most of the presented results are already known despite small variations.
Finally Appendix \ref{appendix} contains some remarks.

\paragraph{Acknowledgements.}
I want to thank Ulrich Menne for his help and advice,
in particular for the proof of Theorem \ref{nobdrythm}.
Also I want to thank Matthew Langford and Yoshihiro Tonegawa
for some helpfull discussions.

\section{Preliminaries}
\label{preliminaries}
%
%Notation
%
\paragraph{Notation.}
For an excellent introduction to geometric measure theory we recommend the lecture notes by Simon \cite{simon}.
Here we recall the most important definitions.
\begin{itemize}
%
%sets
%
\item 
We set $\mathbb{R}^{+}:=\{x\in \mathbb{R},x\geq 0\}$,
$\mathbb{N}:=\{1,2,3,\ldots\}$
and $(a)_+:=\max\{a,0\}$ for $a\in\mathbb{R}$.

%
%constants
%
\item 
We fix $\mathbf{n},\mathbf{k}\in\mathbb{N}$.
Quantities that only depend on $\mathbf{n}$ and/or $\mathbf{k}$ are considered constant.
Such a constant may be denoted by $C$ or $c$,
in particular the value of $C$ and $c$ may change in each line.

%
%canonical projections
%
\item
For $a\in\mathbb{R}^{\mathbf{n}+\mathbf{k}}$
the values $\hat{a}\in\mathbb{R}^{\mathbf{n}}$
and $\tilde{a}\in\mathbb{R}^{\mathbf{k}}$ 
are given by $a=(\hat{a},\tilde{a})$.

%
%canonical basis
%
\item
We denote the canonical basis of $\mathbb{R}^{\mathbf{n}+\mathbf{k}}$ by $(\mathbf{e}_i)_{1\leq i\leq\mathbf{n}+\mathbf{k}}$.
In particular the canonical basis of $\mathbb{R}^{\mathbf{n}}$ is $(\hat{\mathbf{e}}_i)_{1\leq i\leq\mathbf{n}}$.

\end{itemize}
Consider $n,k\in\mathbb{N}$.
\begin{itemize}
%
%projections
%
\item
Let $\mathbf{O}(n)$ denote the space of rotations on $\mathbb{R}^{n}$.
Let $\mathbf{G}(n+k,n)$ denote the space of $n$-dimensional subspaces of $\mathbb{R}^{n+k}$.
For $T\in\mathbf{G}(n+k,n)$
set $T^{\bot}:=\{x\in\mathbb{R}^{n+k}:\;x\cdot v=0\quad\forall v\in T\}$.
By $T_{\natural}:\mathbb{R}^{n+k}\to T$ we denote the projection onto $T$.

%
%Balls and cylinders
%
\item
For $R,r,h\in (0,\infty)$ and $a\in\mathbb{R}^n$ we set
\begin{align*}
\mathbf{B}^n(a,R):=\left\{x\in\mathbb{R}^{n}:|x-a|<R\right\},
\;\;\;\mathbf{B}(a,R):=\mathbf{B}^{\mathbf{n}+\mathbf{k}}(a,R),
\\
\mathbf{C}(a,r,h):=\mathbf{B}^{\mathbf{n}}(\hat{a},r)\times\mathbf{B}^{\mathbf{k}}(\tilde{a},h),
\;\;\;
\mathbf{C}(a,r):=\mathbf{B}^{\mathbf{n}}(\hat{a},r)\times\mathbb{R}^{\mathbf{k}}.
\end{align*}

\begin{comment}
%
%functionspaces
%
\item
For $\Omega\subset\mathbb{R}^n$, $l\in\mathbb{N}$, $\alpha\in (0,1]$ 
we let $\mathcal{C}^{l,\alpha}(\Omega,\mathbf{R}^k)$ 
be the space of all functions $f:\Omega\to\mathbb{R}^k$ 
such that the first $l$ derivatives exist and are
H\"older continuous with coefficient $\alpha$,
or just continuous if $\alpha$ is left out.
If $k=1$ we will leave out the $\mathbf{R}^k$.
If we only consider functions with compact support in $\Omega$
we write $\mathcal{C}^{l,\alpha}_c$.
\end{comment}

%
%functions
%
\item
Consider open sets $I\subset\mathbb{R}$ and $V\subset\mathbb{R}^n$.
For $f\in\mathcal{C}^{1}(I\times V)$ we denote by $\partial_t f$ the derivative of $f$ in $I$,
while $Df$ denotes the derivative of $f$ in~$V$.
If $(\mu_t)_{t\in I}$ is a family of Radon measures on $V$ we often abbreviate
$\int_Vf(t,x)\;\mathrm{d}\mu_t(x)=\int_Vf\;\mathrm{d}\mu_t$.

%
%measures
%
\item
Let $\mathscr{L}^n$ denote the $n$-dimensional Lebesque measure
and $\mathscr{H}^n$ denote the $n$-dimensional Hausdorf measure.
Set $\omega_n:=\mathscr{L}^n(B^n(0,1))$.
\end{itemize}
%
%
%radon measures
%
%
Consider an open set $U\subset\mathbb{R}^{n+k}$ be open
and a Radon measure $\mu$ on $U$.
\begin{itemize}
%
%support
%
\item
Set $\mathrm{spt}\mu:=\{x\in U:\;\mu(\mathbf{B}^{n+k}(x,r))>0,\;\text{for all}\;r\in (0,\infty)\}$.
%
%density
%
\item
Consider $x\in U$.
We define the upper and lower density by
\begin{align*}
\Theta^{*n}(\mu,x):=\limsup_{r\searrow 0}\frac{\mu(\mathbf{B}^{n+k}(x,r))}{\omega_n r^{n}},
\;\;\;
\Theta^{n}_*(\mu,x):=\liminf_{r\searrow 0}\frac{\mu(\mathbf{B}^{n+k}(x,r))}{\omega_n r^{n}}
\end{align*}
and if both coincide 
the value is denoted by $\Theta^{n}(\mu,x)$ and called the density of $\mu$ at $x$.

%
%tangent space
%
\item
Consider $y\in U$.
If there exist $\theta(y)\in (0,\infty)$ and $\mathbf{T}(\mu,y)\in\mathbf{G}(n+k,n)$
such that
\begin{align*}
\lim_{\lambda\searrow 0}\lambda^{-n}\int_U\phi(\lambda^{-1}(x-y))\,\mathrm{d}\mu(x)
=\theta(y)\int_{\mathbf{T}(\mu,y)}\phi(x)\,\mathrm{d}\mathscr{H}^n(x)
\end{align*}
for all $\phi\in\mathcal{C}^0_{\mathrm{c}}\left(\mathbb{R}^{n+k}\right)$,
then $\mathbf{T}(\mu,y)$ is called the ($n$-dimensional) approximate tangent space of $\mu$ at $x$
with multiplicity $\theta(y)$.

%
%rectifiable measure
%
\item
We say $\mu$ is $n$-rectifiable,
if the approximate tangent space exists at $\mu$-a.e. $x\in U$.
Note that in this case $\theta(x)=\Theta^{n}(\mu,x)$ for $\mu$-a.e. $x\in U$.
We say $\mu$ is integer $n$-rectifiable,
if $\mu$ is $n$-rectifiable and $\Theta^{n}(\mu,x)\in\mathbb{N}$ for $\mu$-a.e. $x\in U$.
We say $\mu$ has unit density,
if $\mu$ is $n$-rectifiable and $\Theta^{n}(\mu,x)=1$ for $\mu$-a.e. $x\in U$.
\end{itemize}
%
%
%rectifiable measures
%
%
Let $\mu$ be an $n$-rectifiable Radon measure on $U$
\begin{itemize}
%
%divergence
%
\item
Consider $\phi\in\mathcal{C}^1(U,\mathbb{R}^{n+k})$.
For $x\in U$ such that $\mathbf{T}(\mu,x)$ exists
set $\mathrm{div}_{\mu}\phi(x):=\sum_{i=1}^n (D_{b_i}\phi(x))\cdot b_i$,
where $(b_i)_{1\leq i\leq n}$ is an orthonormal basis of $\mathbf{T}(\mu,x)$.
%
%first variation
%
\item
Denote the first variation of $\mu$ in $U$ by
$\delta\mu(\phi):=\int_U\mathrm{div}_{\mu}\phi\;\mathrm{d}\mu$
for $\phi\in\mathcal{C}_{\mathrm{c}}^1(U,\mathbb{R}^{n+k})$.
Set $\|\delta\mu\|(A):=\sup\{\partial\mu(\phi), \phi\in\mathcal{C}_{\mathrm{c}}^1(A,\mathbb{R}^{n+k}), |\phi|\leq 1\}$
for $A\subset U$ open.
%
%mean curvature
%
\item
If there exists $\mathbf{H}_{\mu}:\mathrm{spt}\mu\to\mathbb{R}^{n+k}$ such that
$\mathbf{H}_{\mu}$ is locally $\mu$-integrable and
$\delta\mu(\phi)=\int_U\mathbf{H}_{\mu}\cdot\phi\;\mathrm{d}\mu$
for all $\phi\in\mathcal{C}_{\mathrm{c}}^1(U,\mathbb{R}^{n+k})$,
then $\mathbf{H}_{\mu}$ is called the generalised mean curvature vector of $\mu$ in $U$.
\end{itemize}

\paragraph{Brakke flow.}
An introduction to the Brakke flow can be found in \cite{brakke}, \cite{ilmanen1}, \cite{kasait} or \cite{lahiri}.
%
%Brakke Variation
%
\begin{defn}
\label{brakkevar}
For a Radon measure $\mu$ on $\mathbb{R}^{\mathbf{n}+\mathbf{k}}$
and a $\phi\in\mathcal{C}^1_{\mathrm{c}}(\mathbb{R}^{\mathbf{n}+\mathbf{k}})$
we define the Brakke variation $\mathscr{B}(\mu,\phi)$ as follows:
If $\mu\mres\{\phi>0\}$ is $\mathbf{n}$-rectifiable,
has generalised mean curvature vector $\mathbf{H}_{\mu}$ in $\{\phi>0\}$
and ${\int_{\{\phi>0\}}|\mathbf{H}_{\mu}|^2\,\mathrm{d}\mu <\infty}$ then set
\begin{align*}
%\label{brakkevara}
\mathscr{B}(\mu,\phi):=\int_{\mathbb{R}^{\mathbf{n}+\mathbf{k}}}
\left((\mathbf{T}(\mu,x)^{\bot})_{\natural}D\phi(x)\cdot\mathbf{H}_{\mu}(x)-\phi(x)|\mathbf{H}_{\mu}(x)|^2\right)
\,\mathrm{d}\mu(x).
\end{align*}
Else we set $\mathscr{B}(\mu,\phi):=-\infty$.
Note that in case $\mu$ is integer $\mathbf{n}$-rectifiable,
by a deep theorem of Brakke \cite[Thm.\ 5.8]{brakke},
we have $\mathbf{H}_{\mu}(x)\perp\mathbf{T}(\mu,x)$ for $\mu$-a.e. $x\in\mathbb{R}^{\mathbf{n}+\mathbf{k}}$.
Hence in this case the projection can be left out.
\end{defn}
%
%bound for C^2 function
%
\begin{rem}[{\cite[Prop.\ 3.4]{brakke}},{\cite[Lem.\ 6.6]{ilmanen1}}]
\label{brakkevarbound}
If $\phi\in\mathcal{C}^2_{\mathrm{c}}(\mathbb{R}^{\mathbf{n}+\mathbf{k}})$ 
and $\mathscr{B}(\mu,\phi)> -\infty$ we can estimate
\begin{align*}
\mathscr{B}(\mu,\phi)
&\leq
\sup|D^2\phi|\;\mu(\{\phi>0\})-\frac{1}{2}\int_{\mathbb{R}^{\mathbf{n}+\mathbf{k}}}|\mathbf{H}_{\mu}|^2\phi\,\mathrm{d}\mu.
\end{align*}
\end{rem}

%
%brakke flow
%
\begin{defn}
\label{brakkeflow}
Let $U\subset\mathbb{R}^{\mathbf{n}+\mathbf{k}}$ be open, $t_1\in\mathbb{R}$, $t_2\in (t_1,\infty)$
and $(\mu_t)_{t\in [t_1,t_2]}$ be a family of radon measures on $\mathbb{R}^{\mathbf{n}+\mathbf{k}}$.
We call $(\mu_t)_{t\in [t_1,t_2]}$ a Brakke flow in $U$ 
if $\mu_t\mres U$ is integer $\mathbf{n}$-rectifiable for a.e.\ $t\in (t_1,t_2)$
and for all $t_1\leq s_1<s_2\leq t_2$ we have
\begin{align}
\label{brakkeflowa}
\mu_{s_2}(\phi(s_2,\cdot\,))-\mu_{s_1}(\phi(s_1,\cdot\,))
\leq
\int_{s_1}^{s_2}\left(\mathscr{B}(\mu_t,\phi(t,\cdot\,))+\mu_{t}(\partial_t\phi(t,\cdot\,))\right)\,\mathrm{d}t
\end{align}
for all $\phi\in\mathcal{C}^1((s_1,s_2)\times U)\cap\mathcal{C}^0([s_1,s_2]\times U)$ 
with $\cup_{t\in [s_1,s_2]}\mathrm{spt}\phi(t,\cdot\,)\subset\subset U$.
\end{defn}
%
%basic remarks
%
\begin{rem}
Suppose $(\mu_t)_{t\in [t_1,t_2]}$ is a Brakke flow in $U$
then we have:
\begin{itemize}
\item
For a.e.\ $t\in (t_1,t_2)$ we have:
$\mu_t\mres U$ is integer $\mathbf{n}$-rectifiable, 
has generalised mean curvature vector $\mathbf{H}_{\mu_t}$ in $U$
and $\int_{K}|\mathbf{H}_{\mu_t}|^2\,\mathrm{d}\mu_t <\infty$ 
for all $K\subset\subset U$.
\item
For $(s_0,y_0)\in\mathbb{R}\times\mathbb{R}^{\mathbf{n}+\mathbf{k}}$ 
and $r\in (0,\infty)$ 
set $\nu_t(A):=r^{-\mathbf{n}}\mu_{r^2t+s_0}(r^{}A+y_0)$,
then $(\nu_t)_{t\in [r^{-2}(t_1-s_0),r^{-2}(t_2-s_0)]}$ 
is a Brakke flow in $r^{-1}(U-y_0)$.
\end{itemize}
\end{rem}
%
%example
%
The Brakke flow allows the sudden loss of mass.
In particular we have
\begin{exmp}
\label{masslossrem}
For $0<t_0\leq T$ and $0<\epsilon<\rho<\infty$
consider the Brakke flow $(\mu_t)_{t\in [0,T]}$ given by
$\mu_t=\mathscr{H}^{\mathbf{n}}\mres(\mathbb{R}^{\mathbf{n}}\times\{0\}^{\mathbf{k}})$
for $t\in [0,t_0)$,
$\mu_{t_0}=\mathscr{H}^{\mathbf{n}}\mres(\mathbf{B}^{\mathbf{n}}(0,\epsilon)\times\{0\}^{\mathbf{k}})$
and $\mu_t:=\emptyset$ for $t\in (t_0,T]$.
Note that $\mu_t$ is graphical with Lipschitz constant $0$ for $t\in [0,t_0)$
and $0\in\mathrm{spt}\mu_{t_0}$ but $\mu_{t_0}\mres\mathbf{B}(0,\rho)$
is not graphical. 
\end{exmp}

\paragraph{Important results}
Here we recall some important results that are crucial for the proofs in this article.
%In particular we state Huisken's monotonicity formula, Brakke's clearing out lemma,
%Brakke's local regularity theorem and Ilmanen's compactness theorem.
%
%local measure bound
%
\begin{lem}[Measure bound {\cite[Thm.\ 3.7]{brakke}},{\cite[Prop.\ 4.9]{eckerb}}]
\label{barrierlem}
Let $R\in (0,\infty)$, $t_1\in\mathbb{R}$, $t_2\in (t_1,t_1+(2\mathbf{n})^{-1} R^2]$, 
$z_0\in\mathbb{R}^{\mathbf{n}+\mathbf{k}}$
and let $(\mu_t)_{t\in [t_1,t_2]}$ be a Brakke flow in $\mathbf{B}(z_0,2R)$.

Then for all $t\in [t_1,t_2]$ we have
\begin{align*}
\mu_t\left(\mathbf{B}(z_0,R)\right)
\leq 8\mu_{t_1}\left(\mathbf{B}(z_0,2R)\right).
\end{align*}
\end{lem}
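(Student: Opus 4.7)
\textbf{Proof plan for Lemma \ref{barrierlem}.}

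The strategy is to test the Brakke inequality \eqref{brakkeflowa} against a carefully chosen space--time cutoff that shrinks as $t$ increases, so that monotonicity of the corresponding $\mu_t$-integral can be extracted. The natural candidate is a radial function of the heat-type combination
\[
u(x,t) \;=\; \frac{|x-z_0|^2+2\mathbf{n}(t-t_1)}{(2R)^2},
\]
which satisfies the fundamental identity $\partial_t u + \operatorname{tr}_T D^2 u = 4\mathbf{n}/(2R)^2$ for every $\mathbf{n}$-plane $T$, reflecting the parabolic scaling of \mcf.

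Concretely, I would take $\phi(x,t) = \psi(u(x,t))$ where $\psi:\mathbb{R}\to[0,\infty)$ is smooth, non-increasing, supported in $(-\infty,1)$, and equal to~$1$ on $[-\infty,1/2]$. A convenient explicit choice is $\psi(s) = (1-s)_+^k$ for a suitable integer $k\ge 2$, perturbed so that $\mathrm{spt}\phi(t_1,\cdot)\subset\subset\mathbf{B}(z_0,2R)$. By the choice of $u$ and the hypothesis $t_2-t_1\le R^2/(2\mathbf{n})$, one checks at once the geometric properties
\begin{itemize}
\item[(a)] $\phi(t_1,\cdot)\le 1$ on $\mathbf{B}(z_0,2R)$, with $\mathrm{spt}\phi(t_1,\cdot)\subset\subset\mathbf{B}(z_0,2R)$;
\item[(b)] $\phi(t,\cdot)\ge c_0$ on $\mathbf{B}(z_0,R)$ for every $t\in[t_1,t_2]$, since there $u(x,t)\le (R^2+R^2)/(2R)^2=1/2$; with $k$ chosen so that $c_0\ge 1/8$.
\end{itemize}

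The heart of the matter is the pointwise supersolution property
\begin{equation*}
\partial_t \phi(x,t) + \operatorname{tr}_T D^2\phi(x,t) \;\le\; 0
\quad\text{on }\{\phi>0\},\text{ for every }T\in\mathbf{G}(\mathbf{n}+\mathbf{k},\mathbf{n}).
\end{equation*}
A direct calculation reduces this to an algebraic inequality between $(k-1)|T_\natural(x-z_0)|^2$ and $\mathbf{n}(1-u)(2R)^2$, and it is precisely the coefficient $2\mathbf{n}$ appearing in $u$ (matched to the time restriction $t_2-t_1\le R^2/(2\mathbf{n})$) that makes this verification possible; if the naive bound $|T_\natural(x-z_0)|^2\le |x-z_0|^2$ is insufficient at the boundary of $\{\phi>0\}$, one refines $\psi$ by a suitable smooth rounding so that the $\psi''$-term is compensated by the $\psi'$-term. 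This is the step I expect to be the main technical obstacle.

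Granted the supersolution property, Brakke's perpendicularity theorem (cited in Definition~\ref{brakkevar}) gives $\mathbf{H}_{\mu_t}\perp\mathbf{T}(\mu_t,\cdot)$ for a.e.~$t$, so that
\[
\mathscr{B}(\mu_t,\phi(t,\cdot)) \;=\; \int \operatorname{tr}_{\mathbf{T}(\mu_t,x)} D^2\phi(t,x)\,\mathrm{d}\mu_t(x) \;-\; \int \phi(t,\cdot)|\mathbf{H}_{\mu_t}|^2\,\mathrm{d}\mu_t
\]
(an integration-by-parts valid on supp$\phi(t,\cdot)\subset\subset U$), whence, plugging into \eqref{brakkeflowa},
\[
\mu_{s_2}(\phi(s_2,\cdot)) - \mu_{s_1}(\phi(s_1,\cdot)) \;\le\; \int_{s_1}^{s_2}\!\int \bigl[\partial_t\phi+\operatorname{tr}_T D^2\phi\bigr]\,\mathrm{d}\mu_t\,\mathrm{d}t \;\le\; 0.
\]
Thus $t\mapsto\mu_t(\phi(t,\cdot))$ is non-increasing, and combining with (a) and (b) yields
\[
c_0\,\mu_t(\mathbf{B}(z_0,R)) \;\le\; \mu_t(\phi(t,\cdot)) \;\le\; \mu_{t_1}(\phi(t_1,\cdot)) \;\le\; \mu_{t_1}(\mathbf{B}(z_0,2R)),
\]
and fixing $k$ so that $c_0\ge 1/8$ gives the claimed constant.
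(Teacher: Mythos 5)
Your overall route is the right one, and it is in fact the cited proof: the paper does not prove Lemma \ref{barrierlem} itself but refers to Ecker's Prop.\ 4.9, and your cutoff with $k=3$ is exactly $\varphi_{(t_1,z_0),2R}$ from Definition \ref{sphtestfct}; your points (a), (b) (with the small perturbation, or radii $2R'<2R$ and $R'\nearrow R$, to get compact support) reproduce the constant $8$ precisely. The flaw is a sign error at the crux. With $\mathbf{H}_{\mu_t}$ the classical mean curvature vector (which is what the Brakke inequality and all quoted results use; the paper's line $\delta\mu(\phi)=\int\mathbf{H}_\mu\cdot\phi\,\mathrm{d}\mu$ has the sign of $\mathbf{H}$ reversed relative to the usual $\delta\mu(X)=-\int\mathbf{H}\cdot X\,\mathrm{d}\mu$, and taken literally it would not make smooth mean curvature flow a Brakke flow), the first variation identity applied to $X=D\phi(t,\cdot)$ gives $\int D\phi\cdot\mathbf{H}_{\mu_t}\,\mathrm{d}\mu_t=-\int\operatorname{tr}_{\mathbf{T}(\mu_t,x)}D^2\phi\,\mathrm{d}\mu_t$, equivalently $(\tfrac{d}{dt}-\Delta_{M_t})\phi=\partial_t\phi-\operatorname{tr}_{T}D^2\phi$. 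So the pointwise property you need is $\partial_t\phi-\operatorname{tr}_TD^2\phi\le0$, and the relevant feature of $u$ is that it is caloric, $\partial_t u-\operatorname{tr}_TD^2u=0$ for every plane $T$, not the identity with the plus sign you call fundamental. With the correct sign, convexity of $\psi(s)=(1-s)_+^3$ gives $\partial_t\phi-\operatorname{tr}_TD^2\phi=-\psi''(u)\,|T_\natural Du|^2\le0$ identically on $\{\phi>0\}$: there is no boundary obstacle and no rounding is needed, and the rest of your chain yields the lemma exactly as in the cited proof.

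As written, however, the step you yourself flag as the main technical obstacle genuinely fails and cannot be repaired by smoothing $\psi$. Since arbitrary static planes are Brakke flows in $\mathbf{B}(z_0,2R)$, your method really needs $\partial_t\phi+\operatorname{tr}_TD^2\phi\le0$ for all $(x,T)$ with $\phi>0$; taking $T$ containing $x-z_0$ and $t=t_1$, this reads $\mathbf{n}\,\psi'(u)+u\,\psi''(u)\le0$ wherever $\psi''(u)>0$. Setting $w:=-\psi'\ge0$, this forces $w'\ge-\mathbf{n}w/u\ge-2\mathbf{n}w$ on $[1/2,1]$ (trivially also where $\psi''\le0$), so by Gronwall $w$ stays positive once it is positive; but any admissible non-increasing cutoff must drop from $\ge 1/8$ at $u=1/2$ to $0$ before $u=1$ while remaining smooth and nonnegative, which forces $w>0$ somewhere and $w\to0$ at the first zero of $\psi$ --- a contradiction. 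So no choice of rounding satisfies your inequality; the obstacle is the symptom of the sign slip (your formula $\mathscr{B}(\mu_t,\phi)=\int\operatorname{tr}_TD^2\phi\,\mathrm{d}\mu_t-\int\phi|\mathbf{H}_{\mu_t}|^2\mathrm{d}\mu_t$ should carry a minus sign on the first term), and once that is corrected your outline becomes a complete proof.
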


%
%
%Spherical Testfunction
%
%
\begin{defn}
\label{sphtestfct}
Let $(t_0,x_0)\in\mathbb{R}\times\mathbb{R}^{\mathbf{n}+\mathbf{k}}$ and $\rho\in (0,\infty)$ be fixed.
Set
\begin{align*}
%\label{sphheatkernela}
\Phi_{(t_0,x_0)}(t,x)&:=\left(4\pi(t_0-t)\right)^{-\frac{\mathbf{n}}{2}}\exp\left(\frac{|x-x_0|^2}{4(t-t_0)}\right).
\\
%\label{sphshrinkera}
\varphi_{(t_0,x_0),\rho}(t,x)&:=\left(1-\rho^{-2}\left(|x-x_0|^2 + 2\mathbf{n}(t-t_0)\right)\right)_+^3
\end{align*}
for $(t,x)\in \mathbb{R}\times\mathbb{R}^{\mathbf{n}+\mathbf{k}}$,
where $\Phi_{(t_0,x_0)}(t,x)$ is only defined for $t\leq t_0$.
\end{defn}
%
%
%monotonicity formula
%
%
\begin{thm}[Monotonicity formula {\cite[Thm.\ 3.1]{huisken2},\cite[Thm.\ 7]{ilmanen2}\cite[Thm.\ 4.13]{eckerb}}]
\label{localmonthm}
Consider an open set $U\subset\mathbb{R}^{\mathbf{n}+\mathbf{k}}$,
$\rho,D\in (0,\infty)$, $(t_0,x_0)\in \mathbb{R}\times U$, 
$s_1\in(-\infty,t_0)$ and $s_2\in (s_1,t_0)$
and let $\left(\mu_t\right)_{t\in [s_1,s_2]}$ be a Brakke flow in $U$.
Assume one (or both) of the following holds
\begin{enumerate}
\item 
$\mathrm{spt}\varphi_{(t_0,x_0),\rho}(s_1,\cdot\,)\subset\subset U$.
\item
$U=\mathbb{R}^{\mathbf{n}+\mathbf{k}}$ and 
$\sup_{t\in[s_1,s_2]}\sup_{R\in (0,\infty)}\mu_{t}(\mathbf{B}(x_0,R))
\leq D R^{\mathbf{n}}$.
\end{enumerate}

Then
\begin{align*}
\begin{split}
&\int_{U}\Phi(s_2,x)\,\mathrm{d}\mu_{s_2}(x)
- \int_{U}\Phi(s_1,x)\,\mathrm{d}\mu_{s_1}(x)
\\&\leq 
\int_{s_1}^{s_2}\int_{\mathbb{R}^{\mathbf{n}+\mathbf{k}}}\left(\Phi(t,x)
\left|\mathbf{H}_{\mu_t}(x)+\frac{(\mathbf{T}(\mu_t,x)^{\bot})_{\natural}(x-x_0)}{2(t_0-t)}\right|^2\right)
\,\mathrm{d}\mu_t(x)\,\mathrm{d}t
\end{split}
\end{align*}
for $\Phi=\Phi_{(t_0,x_0)}\varphi_{(t_0,x_0),\rho}$ if assumption $1$ holds
and $\Phi=\Phi_{(t_0,x_0)}$ if assumption $2$ holds.
Here the term under the time integral is interpreted as $-\infty$ 
at times where one of the technical conditions fails, 
as in Definition \ref{brakkevar}.
\end{thm}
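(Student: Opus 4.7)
The plan is to combine the Brakke inequality \eqref{brakkeflowa} with the backward Huisken pointwise identity for the heat kernel and the first variation formula, using $\varphi_{(t_0,x_0),\rho}$ as a localiser in case~1 and a polynomial-mass truncation in case~2. Writing $\Phi:=\Phi_{(t_0,x_0)}$, $T:=\mathbf{T}(\mu_t,x)$ and $V:=(T^{\bot})_{\natural}(x-x_0)/[2(t_0-t)]$, direct computation of $D\Phi$, $D^2\Phi$ and $\partial_t\Phi$, together with the decomposition $|x-x_0|^2=|T_{\natural}(x-x_0)|^2+|(T^{\bot})_{\natural}(x-x_0)|^2$, yields
\begin{align*}
\partial_t\Phi+\mathrm{tr}_T(D^2\Phi)=-\Phi|V|^2\qquad\text{and}\qquad D\Phi=-\Phi(x-x_0)/[2(t_0-t)].
\end{align*}

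First I would test \eqref{brakkeflowa} with $\phi:=\Phi\varphi_{(t_0,x_0),\rho}$ in case~1 (or $\Phi\chi_R$ for a radial cutoff $\chi_R$ in case~2), use $\mathbf{H}_{\mu_t}\perp T$ from Definition~\ref{brakkevar} to replace $(T^{\bot})_{\natural}D\phi\cdot\mathbf{H}_{\mu_t}$ by $D\phi\cdot\mathbf{H}_{\mu_t}$, and complete the square via
\begin{align*}
-\phi|\mathbf{H}_{\mu_t}+V|^2=-\phi|\mathbf{H}_{\mu_t}|^2-2\phi\,\mathbf{H}_{\mu_t}\cdot V-\phi|V|^2.
\end{align*}
Substituting $\varphi\partial_t\Phi=-\varphi\,\mathrm{tr}_T D^2\Phi-\varphi\Phi|V|^2$ from the pointwise identity and then applying the first variation $\int\mathrm{div}_{\mu_t}X\,\mathrm{d}\mu_t=-\int X\cdot\mathbf{H}_{\mu_t}\,\mathrm{d}\mu_t$ to both $X=\varphi D\Phi$ and $X=\Phi D\varphi$, the mixed gradient terms cancel via the symmetry $T_{\natural}(D\Phi)\cdot D\varphi=T_{\natural}(D\varphi)\cdot D\Phi$, and the excess reduces to $\int\Phi(\partial_t\varphi-\mathrm{tr}_T D^2\varphi)\,\mathrm{d}\mu_t$.

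Second, the cubic power and drift $2\mathbf{n}(t-t_0)$ in the definition of $\varphi_{(t_0,x_0),\rho}$ are Ecker's choice \cite{eckerb} that renders this residual non-positive: with $f:=1-\rho^{-2}(|x-x_0|^2+2\mathbf{n}(t-t_0))$ one checks $\partial_t f=\mathrm{tr}_T D^2 f=-2\mathbf{n}\rho^{-2}$, and the chain rule for $\varphi=f_+^3$ gives $\partial_t\varphi-\mathrm{tr}_T D^2\varphi=-6f_+|T_{\natural}Df|^2\leq 0$, completing case~1. For case~2 I would take $\chi_R$ equal to $1$ on $\mathbf{B}(x_0,R)$ and supported in $\mathbf{B}(x_0,2R)$, apply the same argument with $\chi_R$ in place of $\varphi$, and pass to the limit $R\to\infty$: the bound $\mu_t(\mathbf{B}(x_0,R))\leq DR^{\mathbf{n}}$ combined with the Gaussian decay of $\Phi$ makes all error contributions vanish.

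The main obstacle is the rigorous execution at non-smooth times: all manipulations above must be justified for a.e.\ $t$ where $\mu_t\mres U$ is only integer $\mathbf{n}$-rectifiable with locally $L^2$ mean curvature, and at times where the technical conditions in Definition~\ref{brakkevar} fail one must rely on the stipulated $-\infty$ interpretation of the integrand. This careful bookkeeping is what is carried out in \cite{ilmanen2} and \cite{eckerb}.
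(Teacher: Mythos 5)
Your proposal is correct and reproduces the standard argument from the sources cited in the theorem header (Huisken, Ilmanen, Ecker); the paper itself does not supply a proof of this result but refers to the literature. Your reconstruction matches exactly the route taken there: the pointwise backward-heat identity $\partial_t\Phi+\mathrm{tr}_T D^2\Phi=-\Phi|V|^2$ together with $D\Phi=-\Phi(x-x_0)/[2(t_0-t)]$, testing the Brakke inequality \eqref{brakkeflowa} with $\phi=\Phi\varphi$, replacing $(\mathbf{T}^{\perp})_{\natural}D\phi\cdot\mathbf{H}$ by $D\phi\cdot\mathbf{H}$ via perpendicularity, completing the square, and invoking the first variation for the two vector fields $\varphi D\Phi$ and $\Phi D\varphi$ so that the common tangential cross term $\int T_{\natural}(D\Phi)\cdot T_{\natural}(D\varphi)\,\mathrm{d}\mu$ cancels, leaving the localisation error $\int\Phi\bigl(\partial_t\varphi-\mathrm{tr}_T D^2\varphi\bigr)\,\mathrm{d}\mu$. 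Your observation that Ecker's cubic cutoff with drift $2\mathbf{n}(t-t_0)$ gives $\partial_t f=\mathrm{tr}_T D^2 f$ and hence $\partial_t\varphi-\mathrm{tr}_T D^2\varphi=-6f_+|T_{\natural}Df|^2\leq 0$ is exactly the point of Definition~\ref{sphtestfct}, and the $R\to\infty$ exhaustion in case~2 using polynomial mass growth against Gaussian decay is likewise standard. One caveat worth making explicit: as displayed, the theorem's inequality is missing the minus sign in front of the right-hand side; the integrand $\Phi|\mathbf{H}+V|^2$ is non-negative, so the displayed claim is far weaker than Huisken's formula and would not support the later applications in the paper (for instance, the argument in Lemma~\ref{globalgaussianlem} uses that the Gaussian integral is non-increasing and that stalling forces $\mathbf{H}+V=0$). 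Your proof sketch, correctly executed, yields the sharp inequality $\int\Phi\,\mathrm{d}\mu_{s_2}-\int\Phi\,\mathrm{d}\mu_{s_1}\leq -\int_{s_1}^{s_2}\int\Phi|\mathbf{H}_{\mu_t}+V|^2\,\mathrm{d}\mu_t\,\mathrm{d}t$, which is what the cited sources prove and what the paper actually uses. Your closing remark is also right: the genuine technical content is justifying these manipulations at a.e.\ $t$ for an integral Brakke flow with only $L^2$ mean curvature and handling the $-\infty$ convention; this is carried out rigorously by Ilmanen and Ecker, and deferring to them is appropriate here.
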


%
%
%clearing out lemma
%
%
\begin{lem}[Clearing out {\cite[Lem.\ 6.3]{brakke}}]
\label{clearoutlem}
There exist $C\in (1,\infty)$ and $\alpha_1:=(\mathbf{n}+6)^{-1}$
such that the following holds:

Let $\eta\in [0,\infty)$, $R\in (0,\infty)$, 
$t_1\in\mathbb{R}$, $t_2\in (t_1+C\eta^{2\alpha_1}R^2,t_1+(4\mathbf{n})^{-1}R^2)$,
$x_0\in\mathbb{R}^{\mathbf{n}+\mathbf{k}}$.
Let $U\subset\mathbb{R}^{\mathbf{n}+\mathbf{k}}$ be open with $U\supset\supset\mathbf{B}(x_0,R)$
and let $(\mu_t)_{t\in [t_1,t_2]}$ be a Brakke flow in $\mathbf{B}(x_0,R)$.
Suppose we have
\begin{align*}
R^{-\mathbf{n}}\int_{U}(1-R^{-2}|x-x_0|^2)_{+}^3\;\mathrm{d}\mu_{t_1}\leq\eta.
\end{align*}
Set $R(t):=\sqrt{R^2-4\mathbf{n}(t-t_0)}$.

Then for all $t\in [t_1+C\eta^{2\alpha_1}R^2,t_2]$
we have $\mu_t(\mathbf{B}(x_0,R(t)))=0.$
\end{lem}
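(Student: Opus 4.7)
The plan is to derive a Bernoulli-type differential inequality for $f(t):=\int \varphi(t,\cdot)^k\,d\mu_t$, where $\varphi:=\varphi_{(t_1,x_0),R}$ and $k$ is a suitably large integer, and then solve it. By the scaling invariance of Brakke flow I may reduce to $R=1$, $x_0=0$, $t_1=0$; since $\varphi\leq 1$ the hypothesis gives $f(0)\leq\eta$, and since $\mathbf{B}(0,R(t))\subset\{\varphi(t,\cdot)>0\}$ for $t\leq(4\mathbf{n})^{-1}$ it suffices to prove $f(t)=0$ for $t\geq C\eta^{2\alpha_1}$.

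Testing Brakke's inequality with $\phi(t,x):=\varphi(t,x)^k$ (admissible because $\mathrm{spt}\,\phi(t,\cdot)\subset\mathbf{B}(x_0,R)\subset\subset U$) and using that $\mathbf{H}_{\mu_t}\perp\mathbf{T}(\mu_t,\cdot)$ for integral Brakke flows yields
\[
f'(t)\leq\int\bigl(k\varphi^{k-1}D\varphi\cdot\mathbf{H}_{\mu_t}-\varphi^k|\mathbf{H}_{\mu_t}|^2\bigr)\,d\mu_t+\int\partial_t\varphi^k\,d\mu_t.
\]
Writing $\varphi=g^3$ with $g:=(1-|x|^2-2\mathbf{n}t)_+$, and using $\partial_t\varphi^k=-6\mathbf{n}k\varphi^{k-1/3}$ together with $|D\varphi|\leq 6\varphi^{2/3}$, a Young-type splitting of the cross-term gives, for any $\epsilon\in(0,1)$,
\[
f'(t)\leq -(1-\epsilon)\!\int\!\varphi^k|\mathbf{H}_{\mu_t}|^2\,d\mu_t+\frac{9k^2}{\epsilon}\!\int\!\varphi^{k-2/3}\,d\mu_t-6\mathbf{n}k\!\int\!\varphi^{k-1/3}\,d\mu_t.
\]

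To convert this into a useful Bernoulli inequality I would apply the Michael--Simon Sobolev inequality to $h:=\varphi^{k(\mathbf{n}-1)/\mathbf{n}}$, which, using $|D\varphi|\leq 6\varphi^{2/3}$ once more, yields
\[
f(t)^{(\mathbf{n}-1)/\mathbf{n}}=\Bigl(\!\int\!h^{\mathbf{n}/(\mathbf{n}-1)}\,d\mu_t\Bigr)^{\!(\mathbf{n}-1)/\mathbf{n}}\!\leq C\!\int\!\bigl(\varphi^{k(\mathbf{n}-1)/\mathbf{n}-1/3}+\varphi^{k(\mathbf{n}-1)/\mathbf{n}}|\mathbf{H}_{\mu_t}|\bigr)\,d\mu_t.
\]
The curvature piece is absorbed into $\int\varphi^k|\mathbf{H}_{\mu_t}|^2\,d\mu_t$ via Cauchy--Schwarz, with the complementary factor $(\int\varphi^{k(\mathbf{n}-2)/\mathbf{n}}\,d\mu_t)^{1/2}$ controlled through $\eta$ and the measure bound of Lemma \ref{barrierlem}; the remaining positive term $\int\varphi^{k-2/3}$ in the inequality for $f'$ is dominated by a combination of the retained curvature integral and a power of $f$ via H\"older interpolation among the exponents $k-1/3$, $k-2/3$ and $k$. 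Bookkeeping of the exponents produces $f'(t)\leq -c\,f(t)^{1-2\alpha_1}$ with $\alpha_1=(\mathbf{n}+6)^{-1}$: the denominator $\mathbf{n}+6$ records the Sobolev factor $\mathbf{n}$ together with corrections from the fractional losses $1/3$ and $2/3$ that arise when differentiating $\varphi=g^3$ once and twice. Integrating the resulting ODE yields $f(t)^{2\alpha_1}\leq\eta^{2\alpha_1}-2c\alpha_1 t$, hence $f(t)=0$ for $t\geq C\eta^{2\alpha_1}$ as required.

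The main obstacle will be the exponent bookkeeping in the Sobolev--H\"older--Cauchy--Schwarz step: the power $k$ and the interpolation parameters must be chosen so that all intermediate powers of $\varphi$ stay nonnegative, while the final Bernoulli exponent is precisely $1-2/(\mathbf{n}+6)$. A minor technical point is that Brakke's inequality only holds at almost every $t$ at which the integrability conditions of Definition \ref{brakkevar} are met, so the derived estimate is only an a.e.\ inequality for $f'$; however, $f$ is monotone non-increasing (the classical barrier principle, recovered by dropping the Sobolev step and absorbing the full cross-term into $|\mathbf{H}_{\mu_t}|^2$), so integration of the a.e.\ inequality is unproblematic.
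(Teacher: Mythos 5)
The paper does not prove this lemma at all: it is quoted from Brakke \cite[Lem.\ 6.3]{brakke}, so your argument has to stand on its own, and as sketched it does not. The decisive problem is your treatment of the cross term and the resulting target inequality. Splitting $k\varphi^{k-1}D\varphi\cdot\mathbf{H}$ by Young's inequality destroys the exact cancellation that makes the cutoff $\varphi_{(t_1,x_0),R}$ useful: the correct computation (as in this paper's proof of Proposition \ref{improvedheightbndprop}) integrates the cross term by parts via the first variation identity, and the tangential divergence produces $+6\mathbf{n}k\varphi^{k-1/3}$, which cancels $\partial_t\varphi^k$ exactly and leaves only nonpositive terms, namely $-\int\varphi^k|\mathbf{H}|^2$ and a term $-ck^2\int\varphi^{k-2/3}|(\mathbf{T}\mu_t)_{\natural}(x-x_0)|^2R^{-2}$. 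Your version instead retains the positive term $\tfrac{9k^2}{\epsilon}\int\varphi^{k-2/3}$, which (since $\varphi\le1$) pointwise dominates the kept negative term $6\mathbf{n}k\int\varphi^{k-1/3}$, and it cannot be "dominated by the retained curvature integral and a power of $f$": test on the static unit-density plane at distance $R-\delta$ from $x_0$ (an integral Brakke flow with $\mathbf{H}\equiv0$ satisfying the hypothesis with $\eta\sim\delta^{(\mathbf{n}+6)/2}$). There your post-Young right-hand side is positive for small $\delta$, and your claimed fallback that $f$ is monotone "by absorbing the full cross-term into $|\mathbf{H}|^2$" fails for the same reason. Worse, the same example shows the goal itself is unattainable for your choice of $f$: for $f(t)=\int\varphi^k d\mu_t$ on that plane one computes $f\sim a^{3k+\mathbf{n}/2}$ with $a(t)=2\delta-\delta^2-2\mathbf{n}(t-t_1)$, and $f'\le -c f^{1-2\alpha}$ can hold only if $\alpha\le(6k+\mathbf{n})^{-1}$; so for any $k\ge2$ the inequality $f'\le-cf^{1-2/(\mathbf{n}+6)}$ is simply false, and no exponent bookkeeping can produce it. The exponent $(\mathbf{n}+6)^{-1}$ is tied to the cubed cutoff itself ($k=1$), and in the extremal example all decay comes from the tangential $|x-x_0|$ dissipation term that your Young splitting discards.

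Two further gaps: first, you invoke Lemma \ref{barrierlem} to control the complementary factor $\bigl(\int\varphi^{k(\mathbf{n}-2)/\mathbf{n}}d\mu_t\bigr)^{1/2}$, but no bound on $\mu_{t_1}(\mathbf{B}(x_0,R))$ is assumed (only the weighted integral is small), and the flow is only given in $\mathbf{B}(x_0,R)$, not on any doubled ball, so no mass bound on $\{\varphi(t,\cdot)>0\}$ is available; interpolating instead against the monotone quantity $\int\varphi\,d\mu_t\le\eta$ yields Bernoulli exponents $\ge1$, which give no finite extinction time. The actual mechanism in Brakke's proof converts smallness of the weighted measure into vanishing by exploiting integrality quantitatively, via a Michael--Simon/isoperimetric dichotomy (either a density lower bound $\mu_t(\mathbf{B}(y,r))\ge c r^{\mathbf{n}}$ at scales $r$ adapted to the level sets of the cutoff, or a large curvature integral there); merely citing Michael--Simon on global weighted integrals does not capture this. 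Second, a small admissibility point: at $t=t_1$ the support of $\varphi^k(t_1,\cdot)$ is $\overline{\mathbf{B}(x_0,R)}$, not compactly contained in $\mathbf{B}(x_0,R)$ where the flow is assumed (your appeal to $U$ does not help, since the flow is not a Brakke flow in $U$); this is fixable by shifting the cutoff slightly in time and passing to the limit, but it should be addressed.
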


%
%
%local regularity theorem
%
%
\begin{thm}[Local regularity {\cite[Thm.\ 6.11]{brakke}, \cite[Thm.\ 8.7]{kasait}, \cite[Thm.\ 8.4]{lahiri}}]
\label{Blocalregthm}
For every $\lambda\in (0,1]$ 
there exist $\Lambda\in (1,\infty)$ and $h_0\in (0,1)$
such that the following holds:

Let $h\in (0,h_0]$, $R\in (0,\infty)$, 
$t_1\in\mathbb{R}$, $t_2\in (t_1+2\Lambda R^2,\infty)$,
$x_0\in\mathbb{R}^{\mathbf{n}+\mathbf{k}}$,
and let $(\mu_t)_{t\in [t_1,t_2]}$ be a Brakke flow in $\mathbf{B}(x_0,4R)$.
Suppose we have
\begin{align}
\label{Blocalregthma}
\mathrm{spt}\mu_t\cap\mathbf{B}(x_0,4R)
&\subset\mathbf{C}(x_0,4R,hR)
\\
\label{Blocalregthmb}
R^{-\mathbf{n}}\mu_t\left(\mathbf{B}(x_0,4R)\right)
&\leq\lambda^{-1}
\end{align}
for all $t\in [t_1,t_2]$ and
\begin{align}
\label{Blocalregthmc}
R^{-\mathbf{n}}\mu_{t_1}\left(\mathbf{B}(x_0,R\right)
&\leq (2-\lambda)\omega_{\mathbf{n}}
\\
\label{Blocalregthmd}
R^{-\mathbf{n}}\mu_{t_2}\left(\mathbf{B}(x_0,R/2)\right)
&\geq \lambda\omega_{\mathbf{n}}.
\end{align}
Set $I:=(t_1+\Lambda R^2,t_2-\Lambda R^2)$.

Then there exists an
$u\in\mathcal{C}^{\infty}(I\times\mathbf{B}^{\mathbf{n}}(\hat{x}_0,h_0 R),\mathbb{R}^{\mathbf{k}})$
such that
\begin{align*}
\mu_t\mres\mathbf{C}(x_0,h_0 R,R)
=\mathscr{H}^{\mathbf{n}}\mres\mathrm{graph}(u(t,\cdot\,))
\quad \text{for all } t\in I.
\end{align*}
Moreover $\sup|Du|\leq\Lambda h$
and $F_t(\hat{x})=(\hat{x},u(t,\hat{x}))$ satisfies \eqref{smoothmcf}.
\end{thm}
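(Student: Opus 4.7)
The plan is to follow Brakke's original approach, as made rigorous in \cite{kasait} and \cite{lahiri}, whose core analytic ingredient is a parabolic excess decay for $\mathrm{spt}\mu_t$ around a best-fitting plane. First I would set up uniform bounds: the slab hypothesis \eqref{Blocalregthma} and upper mass bound \eqref{Blocalregthmb}, combined with Lemma \ref{barrierlem}, confine $\mathrm{spt}\mu_t$ to a thin tube of controlled mass throughout $[t_1,t_2]$. The lower measure bound \eqref{Blocalregthmd}, via the contrapositive of the clearing-out lemma \ref{clearoutlem}, yields a positive density ratio near $x_0$ at times just before $t_2$; propagating this backward via Huisken's monotonicity formula (Theorem \ref{localmonthm}) gives a pointwise lower bound $\int\Phi_{(s,y)}\varphi_{(s,y),cR}\,\mathrm{d}\mu_{t_1}\geq 1-\varepsilon(h)$ for $(s,y)$ in a parabolic neighbourhood inside $I\times\mathbf{B}(x_0,h_0R)$. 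The two-sheet exclusion \eqref{Blocalregthmc} combined with \eqref{Blocalregthma} gives a matching upper bound $1+\varepsilon(h)$, so on the whole relevant parabolic cylinder the Gaussian density lies within $\varepsilon(h)$ of $1$.

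With this I would execute the excess decay. Define the parabolic height excess
\begin{equation*}
\mathbf{E}(t,y,r)^2:=r^{-\mathbf{n}-2}\int_{\mathbf{B}(y,r)}\mathrm{dist}(x-y,P)^2\,\mathrm{d}\mu_t
\end{equation*}
for a best-fitting plane $P$. The central claim is that there exist universal $\theta,\alpha\in(0,1)$ such that, whenever the Gaussian density at $(t,y)$ is within $\varepsilon$ of $1$ and $\mathbf{E}(t,y,r)\leq\varepsilon$, one can find a new plane $P'$ with $\mathbf{E}'(t',y,\theta r)\leq\theta^{\alpha}\mathbf{E}(t,y,r)$ at a slightly later $t'$. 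I would prove this by contradiction and parabolic blow-up: rescale a putative counterexample by factor $r$, and use Ilmanen's compactness theorem for integral Brakke flows to extract a limit Brakke flow on $\mathbb{R}\times\mathbb{R}^{\mathbf{n}+\mathbf{k}}$ whose Gaussian density equals $1$ everywhere. By the equality case of the monotonicity formula the limit is a static plane, whose trivial excess contradicts the assumed failure of decay. A crucial auxiliary input is a Brakke-flow Caccioppoli inequality, obtained by testing \eqref{brakkeflowa} against a squared-height cutoff, which bounds the tilt excess $\int\|\mathbf{T}(\mu_t,x)-P\|^2\,\mathrm{d}\mu_t$ by the height excess and so ensures that the blow-up converges as varifolds with controlled tangent planes.

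Campanato-type iteration of the excess decay at parabolic dyadic scales then yields H\"older continuity of the approximate tangent planes of $\mu_t$ uniformly for $t\in I$, and hence a $C^{1,\alpha}$ parametrisation of $\mathrm{spt}\mu_t\cap\mathbf{C}(x_0,h_0R,R)$ as the graph of some $u(t,\cdot)$ with small slope $\leq Ch$. Assumption \eqref{Blocalregthmc} together with the decay of $\mathbf{E}$ forces unit density on this graph, yielding the equality $\mu_t\mres\mathbf{C}(x_0,h_0R,R)=\mathscr{H}^{\mathbf{n}}\mres\mathrm{graph}(u(t,\cdot))$. Because $|Du|$ is small, the Brakke flow condition reduces for $u$ to a uniformly parabolic quasilinear graphical mean curvature flow PDE; the interior curvature estimates of Appendix \ref{curvature_estimates} and parabolic Schauder theory then bootstrap $u$ to $\mathcal{C}^\infty$, confirm that $F_t(\hat{x})=(\hat{x},u(t,\hat{x}))$ satisfies \eqref{smoothmcf}, and deliver the gradient bound $\sup|Du|\leq\Lambda h$.

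The main obstacle is the blow-up step for the excess decay. The Brakke inequality is one-sided, so one cannot freely integrate by parts in time, and Brakke flows may abruptly lose mass, so uniform positive mass must be enforced on any rescaling sequence. These difficulties are handled by the systematic use of the monotone localised test functions $\Phi\varphi$ from Definition \ref{sphtestfct} --- compatible with the one-sided inequality --- together with Lemmas \ref{barrierlem} and \ref{clearoutlem} to guarantee that the blow-up limit is a non-trivial integral Brakke flow rather than the zero measure. The careful measure-theoretic quantification of the Campanato iteration under the permitted measure drops is the substantive content of \cite{kasait} and \cite{lahiri}.
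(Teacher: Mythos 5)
First, note that the paper itself does not prove Theorem \ref{Blocalregthm}: it is quoted from Brakke \cite[Thm.\ 6.11]{brakke}, Kasai--Tonegawa \cite[Thm.\ 8.7]{kasait}, and the author's thesis \cite[Thm.\ 8.4]{lahiri}, with a one-line remark that the density-ratio hypotheses \eqref{Blocalregthmc}--\eqref{Blocalregthmd}, posed only at the two endpoint times $t_1,t_2$, must be propagated to intermediate times (in weakened form) via \cite[Lem.\ 6.6]{brakke} or \cite[Thm.\ 5.7]{kasait} before those results apply, and that unit density in the cited statements can be replaced by integer density. You are therefore reconstructing the proof of the cited theorem rather than matching anything in the present paper; you do not, in particular, address the propagation point the paper flags.

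As a reconstruction, your high-level architecture (slab and mass confinement, positive density at $t_2$ via clearing out, Gaussian density near $1$, excess decay, Campanato iteration, graphical bootstrap) is broadly the right shape, but the central excess-decay step is set up in a way that would not close. Rescaling a putative counterexample only by the parabolic scale $r$ and extracting an Ilmanen-compactness limit with Gaussian density $\equiv 1$ does produce a static plane, but this is no contradiction: the prelimit excesses $\mathbf{E}(t,y,r)\le\varepsilon\to 0$ also vanish in the limit, whereas the hypothesis you are trying to contradict is only about the \emph{ratio} $\mathbf{E}'(t',y,\theta r)/\mathbf{E}(t,y,r)$ at two scales. What actually closes the argument is the excess-normalised (parabolic harmonic) blow-up: divide the height by $\mathbf{E}(t,y,r)$, use the Caccioppoli/tilt-excess bound (Lemma \ref{tiltboundlem}) together with a Lipschitz approximation to obtain compactness of the normalised graphs, show that the limit is a weak solution of the heat equation on the plane, and then deduce the ratio decay from interior parabolic estimates for that linear equation. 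This normalised limiting step is the substantive content of the excess-improvement lemma in Brakke, Kasai--Tonegawa, and Lahiri, and is the idea missing from your sketch. Secondarily, your sketch leans on Huisken's monotonicity and the equality-case-of-monotonicity classification considerably more than Brakke or Kasai--Tonegawa do; a Gaussian-density route can be made to work, but within this paper's logical structure it would be circular, since the paper proves its White-type regularity Theorem \ref{whitelocregthm} \emph{using} Theorem \ref{Blocalregthm}.
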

\noindent
To deduce this result from \cite[Thm.\ 6.11]{brakke}, \cite[Thm.\ 8.7]{kasait}
you also need to use \cite[Lem.\ 6.6]{brakke}, \cite[Thm.\ 5.7]{kasait}
to see that the density ratio bounds~\eqref{Blocalregthmc} and \eqref{Blocalregthmd}
actually hold at all times in slightly weaker form.
Note that Brakke as well as Kasai and Tonegawa
state this theorem for unit density Brakke flows,
though their proofs only use integer density.
For the smoothness of $u$ we refer to \cite[Thm.\ 3.6]{tonegawa}.
%
%
%compactness theorem
%
%
\begin{thm}[Compactness {\cite[Thm.\ 7.1]{ilmanen1}}]
\label{compactnessthm}
Let $t_1\in\mathbb{R}$ and $t_2\in (t_1,\infty)$.
For all $i\in\mathbb{N}$ consider an open set
$U_i\subset\mathbb{R}^{\mathbf{n}+\mathbf{k}}$
and a Brakke flow $(\mu_t^i)_{t\in [t_1,t_2]}$ in~$U_i$.
Assume $U_i\subset U_{i+1}$ for all $i\in\mathbb{N}$
and set $U:=\bigcup_{i=1}^{\infty}U_i$.
Suppose for every $K\subset\subset U$ there exists an $C_K\in (1,\infty)$ such that
\begin{align*}
%\label{compactnessthma}
\sup_{i\in\mathbb{N}}\sup_{t\in [t_1,t_2]}\mu_t^i(K\cap U_i)\leq C_K.
\end{align*}

Then there exists a subsequence $\sigma:\mathbb{N}\to\mathbb{N}$
and a Brakke flow $(\mu_t)_{t\in [t_1,t_2]}$ in~$U$ such that
\begin{align*}
%\label{compactnessthmb}
\mu_{t}(\phi)=\lim_{j\to\infty}\mu^{\sigma(j)}_{t}(\phi)
\quad\text{for all}\;\phi\in\mathcal{C}^0_{\mathrm{c}}(U_{j_0})
\end{align*}
for all $t\in [t_1,t_2]$ and all $j_0\in\mathbb{N}$.
\end{thm}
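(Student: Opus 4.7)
}
The strategy is the standard one for Ilmanen's compactness: first extract subsequential limits of the time slices as Radon measures, then verify that the Brakke inequality \eqref{brakkeflowa} passes to the limit. I would proceed in four stages.

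\emph{Stage 1: compactness at individual times.} Fix an exhaustion of $U$ by open sets $V_m \subset\subset V_{m+1}\subset\subset U$ with $V_m \subset U_i$ for all $i\ge i_0(m)$. The uniform mass bound gives $\sup_i \mu_t^i(V_m)\le C_{\bar V_m}$ for every $t\in[t_1,t_2]$. By standard weak-$\ast$ compactness of Radon measures and a diagonal argument over $m$, for each fixed $t$ a subsequence of $(\mu_t^i)$ converges weakly in $U$. The difficulty is to perform this extraction simultaneously for \emph{all} $t$.

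\emph{Stage 2: equicontinuity in time.} For any $\phi\in\mathcal{C}^2_{\mathrm c}(U)$, pick $m$ with $\mathrm{spt}\phi\subset V_m$. Applying \eqref{brakkeflowa} to the time-independent test function $\phi$ on $[s_1,s_2]\subset[t_1,t_2]$ and using Remark \ref{brakkevarbound} gives
\begin{align*}
\mu_{s_2}^i(\phi)-\mu_{s_1}^i(\phi)\le \int_{s_1}^{s_2}\!\Bigl(\sup|D^2\phi|\;\mu_t^i(\{\phi>0\})-\tfrac12\!\int|\mathbf H_{\mu_t^i}|^2\phi\,\mathrm d\mu_t^i\Bigr)\,\mathrm dt,
\end{align*}
so $t\mapsto \mu_t^i(\phi)-C_{\phi,m}\,t$ is nonincreasing, with $C_{\phi,m}$ independent of $i$. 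Hence each $t\mapsto \mu_t^i(\phi)$ has uniformly bounded variation on $[t_1,t_2]$. Fixing a countable dense collection $\{\phi_k\}$ in $\mathcal{C}^0_{\mathrm c}(U)$ (chosen within $\mathcal{C}^2_{\mathrm c}$) and applying Helly's selection theorem together with a diagonal argument over $k$, I obtain a subsequence $\sigma$ such that $\mu_t^{\sigma(j)}(\phi_k)$ converges for every $t\in[t_1,t_2]$ and every $k$. Density and the uniform mass bound upgrade this to weak convergence $\mu_t^{\sigma(j)}\to\mu_t$ in $U$ for every $t$, defining the candidate limit flow.

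\emph{Stage 3: passing the Brakke inequality to the limit.} This is the heart of the argument and the main obstacle. For $\phi\in\mathcal{C}^1((s_1,s_2)\times U)\cap\mathcal{C}^0([s_1,s_2]\times U)$ with compactly supported spatial support, the left-hand side of \eqref{brakkeflowa} passes to the limit by weak convergence of $\mu_t^{\sigma(j)}$. For the right-hand side one must show
\begin{align*}
\int_{s_1}^{s_2}\!\mathscr B(\mu_t,\phi(t,\cdot))\,\mathrm dt \;\ge\; \limsup_{j\to\infty}\int_{s_1}^{s_2}\!\mathscr B(\mu_t^{\sigma(j)},\phi(t,\cdot))\,\mathrm dt.
\end{align*}
This relies on lower semicontinuity of $\mu\mapsto -\mathscr B(\mu,\phi)$ under weak convergence with bounded mass: Fatou, Allard's rectifiability theorem applied slice-wise (using that $L^2$ mean curvature bounds on a subsequence pass to $\mu_t$), and the fact that $(\mathbf T(\mu_t^i,\cdot)^\perp)_\natural \mathbf H_{\mu_t^i}$ converges weakly as a vector-valued measure to the corresponding object for $\mu_t$ by a Riesz-representation argument (see Ilmanen \cite[\S 7]{ilmanen1}). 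Together with Brakke's integer-rectifiability theorem, this also secures that $\mu_t\mres U$ is integer $\mathbf n$-rectifiable for a.e.\ $t$, so $\mathscr B(\mu_t,\phi(t,\cdot))$ is defined (possibly $-\infty$) at a.e.\ $t$ and the passage above is justified.

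\emph{Stage 4: conclusion.} Combining stages 2 and 3, the limit family $(\mu_t)_{t\in[t_1,t_2]}$ satisfies \eqref{brakkeflowa} for every admissible $\phi$ with compact spatial support in $U$, since any such $\phi$ has support in some $V_m\subset U_i$ for $i\ge i_0(m)$, so the approximating inequalities make sense for large $j$. Therefore $(\mu_t)$ is a Brakke flow in $U$. Finally, the stated convergence $\mu_t(\phi)=\lim_j \mu_t^{\sigma(j)}(\phi)$ for $\phi\in\mathcal{C}^0_{\mathrm c}(U_{j_0})$ and all $t$ is exactly what Stage 2 delivers, once one notes that for any fixed $j_0$ the support of $\phi$ lies in $U_i$ for all $i\ge j_0$.
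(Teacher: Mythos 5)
Your proposal is correct in outline, but it takes a different route from the paper: you essentially re-run the proof of Ilmanen's compactness theorem itself, whereas the paper uses \cite[Thm.\ 7.1]{ilmanen1} as a black box and only supplies the extra step needed for the varying domains. Concretely, the paper (see Remark \ref{compactnessproofrem}) applies Ilmanen's fixed-domain theorem on each $U_l$ to get limit flows $\nu^l_t$, observes that the restrictions are consistent, $\nu^{l_2}_t\mres U_{l_0}=\nu^{l_1}_t\mres U_{l_0}$ for $l_0\leq l_1\leq l_2$, defines $\mu_t(\phi):=\lim_{l\to\infty}\nu^l_t(\phi|_{U_l})$, and takes the diagonal subsequence $\sigma(j)=\lambda_j(j)$; no selection or limit-passage argument is ever written out. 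You instead absorb the exhaustion into a single extraction: the almost-monotonicity of $t\mapsto\mu^i_t(\phi)-C_{\phi,m}t$ from \eqref{brakkeflowa} and Remark \ref{brakkevarbound} (note this uses $\phi\geq 0$), Helly plus a diagonal over a countable dense family of test functions, and then upper semicontinuity of the Brakke variation together with reverse Fatou (the integrable majorant again coming from Remark \ref{brakkevarbound}) to pass \eqref{brakkeflowa} to the limit; integrality of $\mu_t\mres U$ for a.e.\ $t$ comes from Allard's compactness/integrality theorem rather than Brakke's rectifiability theorem, a minor attribution slip. What each approach buys: the paper's argument is short and only needs the consistency-of-restrictions observation, at the price of invoking the cited theorem repeatedly; yours avoids that repeated invocation and makes the selection mechanism explicit, but it is not genuinely more self-contained, since the crucial analytic input in your Stage 3 (semicontinuity of $\mathscr{B}$, i.e.\ the weak convergence of the mean curvature terms) is still deferred to \cite[\S 7]{ilmanen1} — exactly the part the paper outsources wholesale.
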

%
% proof remark
%
\noindent
Actually in \cite{ilmanen1} Ilmanen assumes $U_i\equiv M$, 
for a complete manifold $M$.
To derive the above result from \cite[Thm.\ 7.1]{ilmanen1} 
use a diagonal subsequence argument, see Remark \ref{compactnessproofrem}
for some more details.

%
%
%tilt-excess estimate
%
%
\begin{lem}[Tilt-bound {\cite[Lem.\ 5.5]{brakke}}]
\label{tiltboundlem}
There exists a $C\in(0,\infty)$ such that 
the following holds:

Let $U\subset\mathbb{R}^{\mathbf{n}+\mathbf{k}}$ be open
and let $\mu$ be an integer $\mathbf{n}$-rectifiable Radon measure on $U$
with locally $\mathcal{L}^2$-integrable mean curvature vector $\mathbf{H}_{\mu}$.
Consider $g\in\mathcal{C}^1_{\mathrm{c}}\left(U,\mathbb{R}\right)$, 
$f,h\in\mathcal{C}^0_{\mathrm{c}}\left(U,\mathbb{R}\right)$ with $g^2\leq fh$
and set
\begin{align*}
\alpha_f^2&:=\int_{U}|\mathbf{H}_{\mu}(x)|^2 f(x)^2\,\mathrm{d}\mu(x),
\\
\beta_g^2&:=\int_{U}\left\|(\mathbb{R}^{\mathbf{n}}\times\{0\}^{\mathbf{k}})_{\natural}-\mathbf{T}(\mu,x)_{\natural}\right\|^2g(x)^2
\,\mathrm{d}\mu(x),
\\
\gamma_h^2&:=\int_{U}|\tilde{x}|^2h(x)^2\,\mathrm{d}\mu(x),
\\
\xi_g^2&:=\int_{U}|\tilde{x}|^2 |\nabla^{\mu}g(x)|^2\,\mathrm{d}\mu(x).
\end{align*}

Then we have $\beta_g^2\leq C\left(\alpha_f\gamma_h + \xi_g^2\right).$
\end{lem}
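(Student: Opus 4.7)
The plan is to apply the first variation identity to the compactly supported $\mathcal{C}^1$ vector field
\[
X(x):=g(x)^2\,P^{\perp}x=g(x)^2\,(0,\tilde{x}),
\]
where $P:=(\mathbb{R}^{\mathbf{n}}\times\{0\}^{\mathbf{k}})_{\natural}$. Since $\mu$ has locally $\mathcal{L}^2$-integrable mean curvature and $X\in\mathcal{C}^1_{\mathrm{c}}(U,\mathbb{R}^{\mathbf{n}+\mathbf{k}})$, the identity $\int\mathrm{div}_{\mu}X\,\mathrm{d}\mu=-\int X\cdot\mathbf{H}_{\mu}\,\mathrm{d}\mu$ is available. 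A direct computation yields $DX=g^2 P^{\perp}+2g\,(P^{\perp}x)\otimes Dg$, so that with $T:=\mathbf{T}(\mu,x)_{\natural}$ one obtains
\[
\mathrm{div}_{\mu}X=g^2\,\mathrm{tr}(TP^{\perp})+2g\,\tilde{x}\cdot\nabla^{\mu}g.
\]
The elementary identity $\|P-T\|^2=2\,\mathrm{tr}(P^{\perp}T)$ for two rank-$\mathbf{n}$ orthogonal projections (in the Hilbert--Schmidt norm) rewrites this as $\mathrm{div}_{\mu}X=\tfrac{1}{2}g^2\|P-T\|^2+2g\,\tilde{x}\cdot\nabla^{\mu}g$.

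Inserting this into the first variation identity gives
\[
\tfrac{1}{2}\beta_g^2=-\int g^2\,\tilde{x}\cdot\mathbf{H}_{\mu}\,\mathrm{d}\mu-2\int g\,\tilde{x}\cdot\nabla^{\mu}g\,\mathrm{d}\mu.
\]
The first term on the right is bounded by $\alpha_f\gamma_h$: the hypothesis $g^2\leq fh$ forces $fh\geq0$, hence $g^2\leq|f|\,|h|$ everywhere, and a Cauchy--Schwarz applied to the factors $|f|\,|\mathbf{H}_{\mu}|$ and $|h|\,|\tilde{x}|$ gives the claim.

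The key geometric step is for the cross term. Because $\nabla^{\mu}g$ is tangential, $\nabla^{\mu}g\cdot\tilde{x}=\nabla^{\mu}g\cdot T\tilde{x}$, and because $P\tilde{x}=0$, the vector $T\tilde{x}=(T-P)\tilde{x}$ satisfies $|T\tilde{x}|\leq\|T-P\|\,|\tilde{x}|$. This supplies the missing tilt factor, and Cauchy--Schwarz yields
\[
\Bigl|2\int g\,\tilde{x}\cdot\nabla^{\mu}g\,\mathrm{d}\mu\Bigr|\leq 2\int|g|\,\|T-P\|\,|\nabla^{\mu}g|\,|\tilde{x}|\,\mathrm{d}\mu\leq 2\beta_g\xi_g.
\]
Combining both estimates produces $\beta_g^2/2\leq\alpha_f\gamma_h+2\beta_g\xi_g$, and Young's inequality $2\beta_g\xi_g\leq\beta_g^2/4+4\xi_g^2$ lets one absorb a quarter of $\beta_g^2$ into the left-hand side, leaving $\beta_g^2\leq 4\alpha_f\gamma_h+16\xi_g^2$, which is the desired inequality with $C=16$.

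The main obstacle is precisely the extraction of the extra $\|T-P\|$ in the cross term: a naive Cauchy--Schwarz on $2\int g\,\tilde{x}\cdot\nabla^{\mu}g\,\mathrm{d}\mu$ would only give a bound involving $\|g\|_{L^2(\mu)}\,\xi_g$, which is not controlled by the data of the lemma. It is only by combining the tangentiality of $\nabla^{\mu}g$ with the annihilation $P\tilde{x}=0$ that the cross term becomes absorbable into $\beta_g^2$ itself.
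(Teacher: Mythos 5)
Your proof is correct: testing the first variation with $X=g^{2}\,(0,\tilde{x})$, using $\operatorname{tr}\big(T_{\natural}(\mathbb{R}^{\mathbf{n}}\times\{0\}^{\mathbf{k}})^{\bot}_{\natural}\big)=\tfrac12\|P-T_{\natural}\|^{2}$, gaining the extra tilt factor in the cross term from the tangentiality of $\nabla^{\mu}g$ together with $P(0,\tilde{x})=0$, and absorbing via Young (legitimate since $\beta_g<\infty$ because $g$ has compact support and $\|P-T_\natural\|$ is bounded) is exactly the classical argument behind the cited result; the paper itself gives no proof and simply quotes Brakke's Lemma 5.5, whose proof runs along these same lines. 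The only cosmetic discrepancies are the sign convention for $\mathbf{H}_{\mu}$ (the paper writes $\delta\mu(\phi)=\int\mathbf{H}_{\mu}\cdot\phi\,\mathrm{d}\mu$, you use the opposite sign, which is irrelevant since only $|\mathbf{H}_{\mu}|$ enters) and the choice of matrix norm, which only changes the constant $C$.
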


%
%
%cylindrical growth lemma
%
%
\begin{thm}[Cylindrical growth {\cite[Thm.\ 6.4]{brakke}}]
\label{cylgrowththm}
Let $U\subset\mathbb{R}^{\mathbf{n}+\mathbf{k}}$ be open,
$R_1\in (0,\infty)$, $R_2\in (R_1,\infty)$, $\alpha,\beta\in [0,\infty)$.
Let $\mu$ be an integer $\mathbf{n}$-rectifiable Radon measure on $U$ 
with $\mathcal{L}^2$-integrable mean curvature vector $\mathbf{H}_{\mu}$
and $\mathrm{spt}\mu\cap\mathbf{C}(x_0,R_2)\subset\subset U$.
Consider $\psi\in\mathcal{C}^{3}_{\mathrm{c}}\left([-1,1],\mathbb{R}^+\right)$.
Suppose for all $r\in [R_1,R_2]$ we can estimate
\begin{align}
\label{cylgrowththma}
r^{-\mathbf{n}}\int_{U}
|\mathbf{H}_{\mu}(x)|^2\psi(r^{-1}|\hat{x}|)\;\mathrm{d}\mu(x)
\leq \alpha^2,
\\
\label{cylgrowththmb}
r^{-\mathbf{n}}\int_{U}
\left\|(\mathbb{R}^{\mathbf{n}}\times\{0\}^{\mathbf{k}})_{\natural}-\mathbf{T}(\mu,x)_{\natural}\right\|^2\psi(r^{-1}|\hat{x}|)\;\mathrm{d}\mu(x)
\leq \beta^2.
\end{align}

Then we have
\begin{align*}
\begin{split}
\left|R_2^{-\mathbf{n}}\int_{U}\psi(R_2^{-1}|\hat{x}|)\,\mathrm{d}\mu(x)
-R_1^{-\mathbf{n}}\int_{U}\psi(R_1^{-1}|\hat{x}|)\,\mathrm{d}\mu(x)\right|
\\\leq
\mathbf{n}\log(R_2/R_1)\beta^2+(R_2-R_1)\alpha\beta+\beta^2.
\end{split}
\end{align*}
\end{thm}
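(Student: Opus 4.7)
The plan is to extract from the first variation formula an almost-monotonicity identity for $I(r):=r^{-\mathbf{n}}\int_U\psi(r^{-1}|\hat x|)\,\mathrm{d}\mu$, relating $I'(r)$ to the mean curvature and the tilt, and then integrate this identity over $[R_1,R_2]$. The reason to expect this to work is that if $\mu$ were exactly the plane $\mathbb{R}^{\mathbf{n}}\times\{0\}^{\mathbf{k}}$ with zero mean curvature, then $I(r)$ would be constant in $r$; the error away from this must therefore be measured precisely by $\alpha$ and $\beta$.

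To implement this, I would test the first variation against the cylindrical radial field $X(x):=(\hat x,0)\,\psi(r^{-1}|\hat x|)$, which is admissible by the support hypothesis $\mathrm{spt}\mu\cap\mathbf{C}(x_0,R_2)\subset\subset U$. A direct computation of $DX$ gives $\mathrm{tr}(P_0\circ DX)=\mathbf{n}\psi(r^{-1}|\hat x|)+r^{-1}|\hat x|\,\psi'(r^{-1}|\hat x|)$, where $P_0$ denotes projection onto $\mathbb{R}^{\mathbf{n}}\times\{0\}^{\mathbf{k}}$. The chain rule identifies $\int[\mathbf{n}\psi+r^{-1}|\hat x|\psi']\,\mathrm{d}\mu$ with $-r^{\mathbf{n}+1}I'(r)+\mathbf{n}r^{\mathbf{n}}I(r)-\mathbf{n}r^{\mathbf{n}}I(r)$, after which writing $\mathrm{div}_{\mu}X=\mathrm{tr}(P_0\circ DX)+\mathrm{tr}((P_{\mathbf{T}(\mu,x)}-P_0)\circ DX)$ and using $\delta\mu(X)=-\int\mathbf{H}_{\mu}\cdot X\,\mathrm{d}\mu$ yields an identity of the form
\[
I'(r) = r^{-\mathbf{n}-1}\int_U\mathbf{H}_{\mu}\cdot X\,\mathrm{d}\mu + r^{-\mathbf{n}-1}\int_U\mathrm{tr}\bigl((P_{\mathbf{T}(\mu,x)}-P_0)\circ DX\bigr)\,\mathrm{d}\mu.
\]

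Next I would estimate each term. Since $|X|\leq r\psi$ on its support, Cauchy--Schwarz with the weight $\psi$ gives
\[
\Bigl|r^{-\mathbf{n}-1}\!\int\mathbf{H}_{\mu}\!\cdot\! X\,\mathrm{d}\mu\Bigr|\leq \alpha\sqrt{I(r)}.
\]
The explicit form of $DX$ gives $|DX|\leq C(\psi+|\psi'|)$, and using $\psi\in\mathcal{C}^3$ (so that $|\psi'|^2\lesssim\psi^{2/3}$ near the boundary) the tilt term can be split, after Cauchy--Schwarz and Young's inequality, into a piece bounded by $\mathbf{n}\beta^2/r$ and a cross piece bounded by $\alpha\beta$ (with the $r^{-\mathbf{n}-1}$ factor exactly compensating the $r^{\mathbf{n}}$ from the tilt hypothesis). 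Integrating $I'(r)$ over $[R_1,R_2]$ then produces $\mathbf{n}\log(R_2/R_1)\beta^2$ from $\int_{R_1}^{R_2}\beta^2/r\,\mathrm{d}r$ and $(R_2-R_1)\alpha\beta$ from the cross term, with the residual $\beta^2$ arising from boundary contributions at $r=R_1$ when one redistributes the $\psi'$ via integration by parts.

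The main obstacle is carrying out the Cauchy--Schwarz on the tilt error in the correct way: a naive split produces $\beta\sqrt{I(r)}$, which would only give a bound involving an unknown mass-like quantity $I(r)$ and prevent closing the estimate as a purely $\alpha,\beta$ bound. One must absorb the $\sqrt{I(r)}$ into the $\psi'$ piece using Young's inequality together with the structural relation $(\psi')^2\lesssim\psi\cdot(\text{bounded})$ afforded by the $\mathcal{C}^3$ assumption, so that the quadratic tilt $\beta^2$ (with its logarithmic integral) emerges rather than just a linear factor. Once this delicate decomposition is in place, the remainder is a straightforward integration of the resulting pointwise differential inequality for $I$.
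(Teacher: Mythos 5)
The paper itself offers no proof of this theorem (it is imported from Brakke's Thm.\ 6.4), so your proposal must stand on its own against the standard argument. Your skeleton is the right one: differentiate $I(r)=r^{-\mathbf{n}}\int_U\psi(r^{-1}|\hat x|)\,\mathrm{d}\mu$, test the first variation with $X=\psi(r^{-1}|\hat x|)(\hat x,0)$, split $\mathrm{div}_\mu X$ into the planar part $\mathbf{n}\psi+r^{-1}|\hat x|\psi'$ plus a tilt error, and integrate in $r$. The genuine gap lies in how you estimate the two error terms. First, bounding the curvature term by $\alpha\sqrt{I(r)}$ is fatal: the hypotheses contain no mass bound, so $I(r)$ is an uncontrolled quantity, and no manipulation of $\psi'$ (your proposed fix concerns the tilt term, not this one) can remove it; moreover this route can never produce the cross term $(R_2-R_1)\alpha\beta$ of the conclusion. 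The correct estimate uses the perpendicularity of the generalised mean curvature of an integer rectifiable measure, $\mathbf{H}_{\mu}(x)\perp\mathbf{T}(\mu,x)$ for $\mu$-a.e.\ $x$ (recalled in Definition \ref{brakkevar}): then $\mathbf{H}_\mu\cdot(\hat x,0)=\mathbf{H}_\mu\cdot\bigl(\mathrm{Id}-\mathbf{T}(\mu,x)_\natural\bigr)(\hat x,0)$ and $\bigl|\bigl(\mathrm{Id}-\mathbf{T}(\mu,x)_\natural\bigr)(\hat x,0)\bigr|\le\bigl\|(\mathbb{R}^{\mathbf{n}}\times\{0\}^{\mathbf{k}})_\natural-\mathbf{T}(\mu,x)_\natural\bigr\|\,|\hat x|\le r\,\bigl\|(\mathbb{R}^{\mathbf{n}}\times\{0\}^{\mathbf{k}})_\natural-\mathbf{T}(\mu,x)_\natural\bigr\|$ on $\mathrm{spt}\,\psi(r^{-1}|\hat\cdot|)$, so Cauchy--Schwarz pairs \eqref{cylgrowththma} with \eqref{cylgrowththmb} and gives $\alpha\beta$ per unit length in $r$, hence $(R_2-R_1)\alpha\beta$ after integration.

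Second, your treatment of the tilt error mistakes its structure. The error is quadratic, not linear, in the tilt: $\mathbf{n}-\mathrm{tr}\bigl(\mathbf{T}(\mu,x)_\natural P_0\bigr)$ and $|\hat x|^{-1}\bigl|(\mathrm{Id}-\mathbf{T}(\mu,x)_\natural)(\hat x,0)\bigr|^2$ are both of order $\|P_0-\mathbf{T}(\mu,x)_\natural\|^2$, which is why the $\psi$-weighted part is bounded directly by $\beta^2 r^{\mathbf{n}}$ with no mass factor and no $\beta\sqrt{I(r)}$ ever appears. What remains problematic is the piece weighted by $|\psi'|$ rather than $\psi$, and here your device is not a fix: for nonnegative functions one has $|\psi'|^2\le C\psi\sup|\psi''|$, so $|\psi'|\lesssim\sqrt{\psi}$, but Cauchy--Schwarz then brings in the \emph{unweighted} tilt integral, which is not assumed finite. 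Handling this term requires genuinely using that \eqref{cylgrowththmb} holds for every $r\in[R_1,R_2]$, e.g.\ integrating the identity first, applying Fubini in $r$, and integrating by parts in the radial variable, whose bulk contribution is again $\psi$-weighted tilt at varying radii (feeding the $\mathbf{n}\log(R_2/R_1)\beta^2$ term) and whose boundary contribution is the residual $\beta^2$. You correctly guess that the stray $\beta^2$ is a boundary term of such an integration by parts, but this step is asserted rather than carried out, and as written the pointwise differential inequality you propose to integrate is not established.
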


\section{Graphs without holes}
\label{graphs_without_holes}
In this section we prove Theorem \ref{nobdrythm}.
Consider a unit density Radon measure $\mu$ such that the first variation $\delta\mu$
is absolutely continuous with respect to $\mu$.
In some sense this should imply that $\mu$ has no `boundary points'.
Here we show that,
if such a $\mu$ is contained in the graph of some Lipschitz function $f$,
then $\mu$ actually coincides with the measure generated by the graph of $f$.
For $f\in\mathcal{C}^2$ and stationary $\mu$ this is a direct consequence 
of Allard's constancy theorem \cite[Thm.\ 4.6.(3)]{allard} 
(see also Simon's notes \cite[Thm.\ 8.4.1]{simon}).
Here we use the generalized Gauss-Green theorem by Federer \cite[\S 4.5.6]{federer}
to show that the projection of $\mu$ onto 
$\mathbb{R}^{\mathbf{n}}\times\{0\}^{\mathbf{k}}$
is stationary,
which reduces our problem to the $\mathcal{C}^2$-setting,
thus implying the result.

%
%gen varifold
%
\begin{defn}
Let $\mu$ be an $n$-rectifiable Radon measure on $\mathbb{R}^{n+k}$.
We denote the associated general varifold by $\mathbf{V}(\mu)$, i.e.\
$\mathbf{V}(\mu)$ is the Radon measure on 
$\mathbb{R}^{n+k}\times\mathbf{G}(n+k,n)$
given by
\begin{align*}
\mathbf{V}(\mu)(A):=\mu(\{x\in\mathbb{R}^{n+k}:(x,\mathbf{T}(\mu,x))\in A\})
\quad\text{for}\; A\subset\mathbb{R}^{n+k}.
\end{align*}
For $y\in\mathbb{R}^{n+k}$ and $\lambda\in (0,\infty)$ 
we define the $\lambda$-blow-up around $y$ by
\begin{align*}
\mu_{y,\lambda}(A)
:=\lambda^{-n}\mu(\lambda A+y)
\quad\text{for}\; A\subset\mathbb{R}^{n+k}.
\end{align*}
\end{defn}
%
%proof
%
\begin{proof}[Proof of Theorem {\ref{nobdrythm}}]
This proof is based on ideas by Ulrich Menne.
Define $F:D\to U$ by $F(\hat{x}):=(\hat{x},f(\hat{x}))$.
Set 
\begin{align*}
U_1&:=\{x\in U:\; %\Theta^{*\mathbf{n}}(V,x)<\infty\;\;\text{and}\;\;
\Theta^{\mathbf{n}-1}(\|\delta\mu\|,x)=0\},
\\
Q_1&:=\{x\in U:\; \Theta_*^{\mathbf{n}}(\mu,x)\geq 1\},
\;\;\;
Q_2:=Q_1\cap U_1,
\\
R_1&:=\{x\in U:\; \Theta^{\mathbf{n}}(\mu,x)=0\},
\;\;\;
R_2:=R_1\cap U_1.
\end{align*}
We claim
\begin{align}
\label{nobdrylem21}
\mathscr{H}^{\mathbf{n}-1}(U\setminus (Q_2\cup R_2))=0.
\end{align}
As we have absolutely continuous first variation
we can use a result by Menne \cite[Rem.\ 2.11]{menne1} 
to see $\mathscr{H}^{\mathbf{n}-1}(U\setminus (Q_1\cup R_1))=0$.
Hence, to establish the claim it remains to show
\begin{align}
\label{nobdrylem22}
\mathscr{H}^{\mathbf{n}-1}(U\setminus U_1)=0.
\end{align}
We proceed as Federer and Ziemer \cite[\S 8]{federerz}.
For $i\in\mathbb{N}$ set
\begin{align*}
B_i=\{x\in U\cap\mathbf{B}(0,i): \Theta^{*\mathbf{n}-1}(\|\delta\mu\|,x)>i^{-1}\}.
\end{align*}
Then by Federer \cite[\S 2.10.19(3)]{federer} 
we have $i\|\partial\mu\|(B_i)\geq\mathscr{H}^{\mathbf{n}-1}(B_i)$
for all $i\in\mathbb{N}$.
This and \eqref{nobdrylemb} yield the following chain of implications:
$B_i$ bounded,
$\|\partial\mu\|(B_i)<\infty$,
$\mathscr{H}^{\mathbf{n}-1}(B_i)<\infty$,
$\mathscr{H}^{\mathbf{n}}(B_i)=0$,
$\mu (B_i)=0$,
$\|\partial\mu\|(B_i)=0$,
$\mathscr{H}^{\mathbf{n}-1}(B_i)=0$.
This shows \eqref{nobdrylem22} which completes the proof of \eqref{nobdrylem21}.

Now set 
\begin{align*}
p&:=(\mathbb{R}^{\mathbf{n}}\times\{0\}^{\mathbf{k}})_{\natural},
\quad
A_0:=p[\mathrm{spt}\mu]\cap D,
\\
Q_0&:=\{\hat{x}\in \mathbb{R}^{\mathbf{n}}:\; \Theta^{\mathbf{n}}(\mathscr{L}^{\mathbf{n}}\mres(\mathbb{R}^{\mathbf{n}}\setminus A_0),\hat{x})=0\},
\\
R_0&:=\{\hat{x}\in \mathbb{R}^{\mathbf{n}}:\; \Theta^{\mathbf{n}}(\mathscr{L}^{\mathbf{n}}\mres A_0,\hat{x})=0\}.
\end{align*}
We want to use
\begin{align}
\label{nobdrylem33}
p[Q_2]\subset Q_0
\;\text{and}\;
p[R_2\cap\mathrm{graph}f]\subset R_0.
\end{align}
We will prove this statement later.
Suppose \eqref{nobdrylem33} holds,
then
\begin{align*}
&\mathscr{H}^{\mathbf{n}-1}(D\setminus (Q_0\cup R_0))
%=\mathscr{H}^{\mathbf{n}-1}(p[F[D\setminus (Q_0\cup R_0)]])
\leq\mathscr{H}^{\mathbf{n}-1}(F[D\setminus (Q_0\cup R_0)])
\\
&\leq\mathscr{H}^{\mathbf{n}-1}(F[D\setminus (p[Q_2]\cup p[R_2\cap\mathrm{graph}f])])
=\mathscr{H}^{\mathbf{n}-1}(\mathrm{graph}f\setminus (Q_2\cup R_2)).
\end{align*}
Hence by \eqref{nobdrylem21} we have
\begin{align}
\label{nobdrylem34}
\mathscr{H}^{\mathbf{n}-1}(D\setminus (Q_0\cup R_0))=0.
\end{align}

We say $\hat{v}\in\partial\mathbf{B}^{\mathbf{n}}(0,1)$ is an external normal of $A_0$ at $\hat{y}\in\mathbb{R}^{\mathbf{n}}$, if
\begin{align*}
\Theta^{\mathbf{n}}(\mathscr{L}^{\mathbf{n}}\mres \{\hat{x}\in\mathbb{R}^{\mathbf{n}}:(\hat{x}-\hat{y})\cdot\hat{v}>0\}\cap A_0,\hat{y})&=0
\\ \text{and}\;
\Theta^{\mathbf{n}}(\mathscr{L}^{\mathbf{n}}\mres \{\hat{x}\in\mathbb{R}^{\mathbf{n}}:(\hat{x}-\hat{y})\cdot\hat{v}<0\}\setminus A_0,\hat{y})&=0,
\end{align*}
Let $B_0$ be the set consisting of all $\hat{y}\in\mathbb{R}^{\mathbf{n}}$
for which there exists an external normal of $A_0$ at $\hat{y}$.
Then we have
\begin{align}
\label{nobdrylem35}
B_0\cap(Q_0\cup R_0)=\emptyset.
\end{align}
To see this consider $\hat{y}\in Q_0$ and $\hat{v}\in\partial\mathbf{B}^{\mathbf{n}}(0,1)$.
We can estimate
\begin{align*}
&\mathscr{L}^{\mathbf{n}}(\{\hat{x}\in\mathbb{R}^{\mathbf{n}}:(\hat{x}-\hat{y})\cdot\hat{v}>0\}\cap A_0\cap\mathbf{B}^{\mathbf{n}}(\hat{y},r))
\\&\geq 
\mathscr{L}^{\mathbf{n}}(\{\hat{x}\in\mathbb{R}^{\mathbf{n}}:(\hat{x}-\hat{y})\cdot\hat{v}>0\}\cap\mathbf{B}^{\mathbf{n}}(\hat{y},r))
-\mathscr{L}^{\mathbf{n}}((\mathbb{R}^{\mathbf{n}}\setminus A_0)\cap\mathbf{B}^{\mathbf{n}}(\hat{y},r))
\\&\geq (2^{-1}\omega_{\mathbf{n}}-\epsilon) r^{\mathbf{n}}
\end{align*}
for $r$ small enough depending on $\epsilon$.
This yields $B_0\cap Q_0=\emptyset$.
Similarly we can show $B_0\cap R_0=\emptyset$,
which proves \eqref{nobdrylem35}.

Let $K\subset\mathbb{R}^{\mathbf{n}}$ be compact.
Using \eqref{nobdrylem34}, $R_0\supset (\mathbb{R}^{\mathbf{n}}\setminus\bar{D})$ and the rectifiability of $\partial D=\bar{D}\setminus D$ we obtain
\begin{align*}
\mathscr{H}^{\mathbf{n}-1}(K\setminus (Q_0\cup R_0))
&\leq
%\mathscr{H}^{\mathbf{n}-1}((K\setminus D)\setminus (Q_0\cup R_0))
%&\leq
\mathscr{H}^{\mathbf{n}-1}((K\setminus D)\setminus R_0)
&\leq
\mathscr{H}^{\mathbf{n}-1}(\partial D\cap K)
<\infty.
\end{align*}
In view of Federer \cite[\S 4.5.11]{federer} and \cite[\S 2.10.6]{federer}
we can now use the general Gauss-Green theorem (see Federer \cite[\S 4.5.6]{federer}).
Combined with \eqref{nobdrylem34} and \eqref{nobdrylem35} this establishes
\begin{align*}
\int_{A_0}\mathrm{div}_{\mathbb{R}^{\mathbf{n}}}\phi
\,\mathrm{d}\mathscr{L}^{\mathbf{n}}
\leq\mathscr{H}^{\mathbf{n}-1}(D \cap B_0)
\leq\mathscr{H}^{\mathbf{n}-1}(D \setminus (Q_0\cup R_0))=0
\end{align*}
for all $\phi\in\mathcal{C}^{1}_{\mathrm{c}}(D,\mathbf{B}^{\mathbf{n}}(0,1))$.
Thus $A_0$ is stationary in $D$.
% and of course $A_0\cap D\subset\mathbb{R}^{\mathbf{n}}$.
Then the constancy theorem (see Allard \cite[Thm.\ 4.6.(3)]{allard} or Simon \cite[Thm.\ 8.4.1]{simon}) yields $A_0=D$
which establishes the result.
Hence it remains to prove \eqref{nobdrylem33}.

We want to show $p[R_2\cap\mathrm{graph}f]\subset R_0$.
Consider $y\in R_2\cap\mathrm{graph}f$.
By \eqref{nobdrylema} and as $\mu$ is integer $\mathbf{n}$-rectifible we can estimate for $r\in (0,\infty)$
\begin{align*}
r^{-\mathbf{n}}\mathscr{L}^{\mathbf{n}}(A_0\cap\mathbf{B}^{\mathbf{n}}(\hat{y},r))
&=
r^{-\mathbf{n}}\int_{\mathrm{graph}f\cap (A_0\times\mathbb{R}^{\mathbf{k}})\cap\mathbf{C}(y,r)}|JF\circ p|^{-1}\,\mathrm{d}\mathscr{H}^{\mathbf{n}}
\\&\leq
%r^{-\mathbf{n}}\int_{\mathbf{B}(y_0,(1+\mathrm{lip}(f))r)}\,\mathrm{d}\mu
%\leq
r^{-\mathbf{n}}\mu(\mathbf{B}(y,(1+\mathrm{lip}(f))r))
\end{align*}
and as $y\in R_2$ this goes to $0$ for $r\searrow 0$.
Thus $\hat{y}\in R_0$.

It remains to show 
$p[Q_2]\subset Q_0$.
Suppose this is false,
then there exists a $y_0\in Q_2$, an $\epsilon\in (0,1)$
and a sequence $(r_m)_{m\in\mathbb{N}}$ with $r_m\searrow 0$ such that
\begin{align}
\label{nobdrylem51}
r_m^{-\mathbf{n}}\mathscr{L}^{\mathbf{n}}(\mathbf{B}^{\mathbf{n}}(\hat{y}_0,r_m)\setminus A_0)
>2\epsilon
\end{align}
for all $m\in\mathbb{N}$.
Consider the sequence $(\mu_m)_{m\in\mathbb{N}}$ given by $\mu_m=\mu_{y_0,r_m}$.
By \eqref{nobdrylema}, unit density and as $y_0\in U_1$ we have
\begin{align*}
\limsup_{m\to\infty}\mu_m(\mathbf{B}(0,R))
=\limsup_{m\to\infty}r_m^{-\mathbf{n}}\mu(\mathbf{B}(y_0,Rr_m))
\leq (1+C\;\mathrm{lip}f)^{\mathbf{n}}R^{\mathbf{n}},
\\
%\|\delta\nu\|(\mathbf{B}(0,R))\leq
\limsup_{m\to\infty}\|\delta \mu_m\|(\mathbf{B}(0,R))
=\lim_{m\to\infty}r_m^{-\mathbf{n}-1}\|\delta \mu\|(\mathbf{B}(y_0,Rr_m))=0
\end{align*}
for every $R\in (0,\infty)$. 
%for $m$ large enough such that $\mathbf{B}(y_0,Rr_m)\subset A$.
By varifold compactness (see Allard \cite[Thm.\ 6.4]{allard} or Simon \cite[Thm.\ 8.5.5]{simon})
there exists a stationary integer $\mathbf{n}$-rectifiable Radon measure $\nu$ with $0\in\mathrm{spt}\nu$
and such that for a subsequence we have 
\begin{align}
\label{nobdrylem52}
\mathbf{V}(\mu_m)\rightharpoonup \mathbf{V}(\nu)
\;\;\text{as radon measures on}\;\mathbb{R}^{\mathbf{n}+\mathbf{k}}\times\mathbf{G}(\mathbf{n}+\mathbf{k},\mathbf{n})
\end{align}
Define $f_m\in\mathcal{C}^{0,1}(\mathbf{B}^{\mathbf{n}}(0,3),\mathbb{R}^{\mathbf{k}})$
by $f_m(\hat{x}):=r_m^{-1}\left(f(r_m\hat{x}+\hat{y}_0)-f(\hat{y}_0)\right)$.
%Note that $\mathrm{lip}(f_m)\leq\mathrm{lip}(f)$.
By the Arzela-Ascoli theorem there exists a $g\in\mathcal{C}^{0,1}(\mathbf{B}^{\mathbf{n}}(0,3),\mathbb{R}^{\mathbf{k}})$
such that for a subsequence $\|f_m-g\|_{C^0}\to 0$.
We claim
\begin{align}
\label{nobdrylem55}
\mathrm{spt}\nu\cap\mathbf{C}(0,3)&\subset\mathrm{graph}(g)
\\
\label{nobdrylem56}
\mathrm{spt}\nu\cap\mathbf{C}(0,1)&=\mathrm{graph}(g)\cap\mathbf{C}(0,1).
\end{align}

Suppose there exists a $z\in\mathrm{spt}\nu\cap\mathbf{C}(0,3)\setminus\mathrm{graph}(g)$.
Then we find $\rho\in (0,1)$ 
with $\mathbf{B}(z,4\rho)\cap\mathrm{graph}(g)=\emptyset$ 
and $\nu(\mathbf{B}(z,\rho))>0$.
Thus for some large enough $m\in\mathbb{N}$ we have
$\mathbf{B}(z,3\rho)\cap\mathrm{graph}(f_m)=\emptyset$ 
and $\mu_m(\mathbf{B}(z,2\rho))>0$.
But by definition of $f_m$ and $\mu_m$ combined with \eqref{nobdrylema}
we also have $\mathrm{spt}\mu_m\subset\mathrm{graph}(f_m)$,
which yields a contradiction.
Thus \eqref{nobdrylem55} holds.

Now suppose there exists a $z\in\mathrm{graph}(g)\cap\mathbf{C}(0,1)\setminus\mathrm{spt}\nu$.
Define
\begin{align*}
\rho_0:=
\inf\left\{\rho\in (0,\infty):\;\mathbf{C}(z,\rho)\cap\mathrm{spt}\nu\neq\emptyset\right\}>0
\end{align*}
Note that $\rho_0< 1$ as $0\in\mathrm{spt}\nu$.
Then there exists a $z_0\in\partial\mathbf{C}(z,\rho_0)\cap\mathrm{spt}\nu$
such that $\mathbf{C}(z,\rho_0)\cap\mathrm{spt}\nu=\emptyset.$
Consider the sequence $(\nu_l)_{l\in\mathbb{N}}$ given by $\nu_l=\nu_{z_0,r_l}$.
As above there exists a stationary integer $\mathbf{n}$-rectifiable Radon measure 
$\eta$ with $0\in \mathrm{spt}\eta$
and such that for a subsequence we have 
\begin{align}
\label{nobdrylem62}
\mathbf{V}(\nu_l)\rightharpoonup\mathbf{V}(\eta)
\quad\text{as radon measures on}\;\mathbb{R}^{\mathbf{n}+\mathbf{k}}\times\mathbf{G}(\mathbf{n}+\mathbf{k},\mathbf{n}).
\end{align}
Similar as above we also see
\begin{align}
\label{nobdrylem63}
\mathrm{spt}\eta\cap\mathbf{C}(0,3)\subset\mathrm{graph}(h)
\quad &\text{for some}\;
h\in\mathcal{C}^{0,1}(\mathbf{B}^{\mathbf{n}}(0,3),\mathbb{R}^{\mathbf{k}})
\end{align}
As $\nu$ is stationary and integer $\mathbf{n}$-rectifiable we know
$\Theta^{\mathbf{n}}(\nu,z_0)\in [1,\infty)$ (see \cite[Cor.\ 4.3.3]{allard}).
This yields that $\eta$ satisfies (see \cite[Cor.\ 8.5.4]{allard})
\begin{align*}
\eta_{0,\lambda}=\eta
\quad&\text{for all}\;\lambda>0,
\end{align*}
in particular $x\in\mathbf{T}(\eta,x)$ for $\eta$-a.\ e.\ $x\in\mathbb{R}^{\mathbf{n}+\mathbf{k}}$.
By choice of $z_0$ and \eqref{nobdrylem62} we also have
$\mathrm{spt}\eta\subset\{x\in\mathbb{R}^{\mathbf{n}+\mathbf{k}}:
\hat{x}\cdot(\hat{z}-\hat{z}_0)\leq 0\}$,
thus Lemma \ref{halfspacelem} yields $\mathrm{spt}\eta\subset\{x\in\mathbb{R}^{\mathbf{n}+\mathbf{k}}:
\hat{x}\cdot(\hat{z}-\hat{z}_0)= 0\}$.
In view of \eqref{nobdrylem63} this implies $\mathscr{H}^{\mathbf{n}}(\mathrm{spt}\eta)=0$,
hence $\mathrm{spt}\eta=\emptyset$,
which contradicts $0\in \mathrm{spt}\eta$.
This proves \eqref{nobdrylem56}.

We continue to lead \eqref{nobdrylem51} to a contradiction.
Using \eqref{nobdrylema} and the unit density of $\mu$ we can estimate
\begin{align*}
&\mathscr{L}^{\mathbf{n}}(A_0\cap\mathbf{B}^{\mathbf{n}}(\hat{y}_0,r_m))
=
\int_{\mathrm{graph}(f)\cap (A_0\times\mathbb{R}^{\mathbf{k}}\cap\mathbf{C}(y_0,r_m))}|JF\circ p|^{-1}\,\mathrm{d}\mathscr{H}^{\mathbf{n}}
\\ \geq &
\int_{\mathbf{C}(y_0,r_m)}|\Lambda_{\mathbf{n}}\mathbf{T}(\mu,x)_{\natural}|^{-1}\,\mathrm{d}\mu(x)
=
%\int_{\mathbf{C}(y_0,r_m)}|\Lambda_{\mathbf{n}}S_{\natural}|^{-1}\,\mathrm{d}\mathbf{V}(\mu)(x,S)
%\\= &
r_m^{\mathbf{n}}\int_{\mathbf{C}(y_0,1)}|\Lambda_{\mathbf{n}}S_{\natural}|^{-1}\,\mathrm{d}\mathbf{V}(\mu_m)(x,S).
\end{align*}
Recall $\epsilon$ from \eqref{nobdrylem51}.
In view of \eqref{nobdrylem52} and \eqref{nobdrylem56} we obtain
\begin{align*}
r_m^{-\mathbf{n}}\mathscr{L}^{\mathbf{n}}(A_0\cap\mathbf{B}^{\mathbf{n}}(\hat{y}_0,r_m))+\epsilon
&\geq 
\int_{\mathbf{C}(0,1)}|\Lambda_{\mathbf{n}}S_{\natural}|^{-1}\,\mathrm{d}\mathbf{V}(\nu)(x,S)\\
&\geq 
\int_{\mathrm{graph}(g)\cap\mathbf{C}(0,1)}|JG\circ p|^{-1}\,\mathrm{d}\mathscr{H}^{\mathbf{n}}
%\\ = &
%\mathscr{L}^{\mathbf{n}}(\mathbf{B}^{\mathbf{n}}(0,1))
=\omega_{\mathbf{n}}
\end{align*}
for $m$ large enough, where $G(\hat{z}):=(\hat{z},g(\hat{z}))$.
Thus we see
\begin{align*}
r_m^{-\mathbf{n}}\mathscr{L}^{\mathbf{n}}(\mathbf{B}^{\mathbf{n}}(\hat{y}_0,r_m)\setminus A_0)
=\omega_{\mathbf{n}}
-r_m^{-\mathbf{n}}\mathscr{L}^{\mathbf{n}}(A_0\cap\mathbf{B}^{\mathbf{n}}(\hat{y}_0,r_m))
\leq \epsilon,
\end{align*}
which contradicts \eqref{nobdrylem51}.
This completes the proof of \eqref{nobdrylem33}, which establishes the result.
\end{proof}

%
%lemma
%
\begin{lem}
\label{halfspacelem}
Consider a stationary $\mathbf{n}$-rectifiable Radon measure $\eta$ in 
$\mathbb{R}^{\mathbf{n}+\mathbf{k}}$
with $x\in\mathbf{T}(\eta,x)$ for $\eta$-a.e.\ $x\in\mathbb{R}^{\mathbf{n}+\mathbf{k}}$.
Suppose
$\mathrm{spt}\eta\subset\{x\in\mathbb{R}^{\mathbf{n}+\mathbf{k}}:x\cdot w\leq 0\}$
for some $w\in\mathbb{R}^{\mathbf{n}+\mathbf{k}}$.
Then
$\mathrm{spt}\eta
\subset\{x\in\mathbb{R}^{\mathbf{n}+\mathbf{k}}:x\cdot w= 0\}$.
\end{lem}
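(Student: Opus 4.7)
The plan is to test the first variation of $\eta$ against a compactly supported vector field of the form $X(x) := \psi(x)\,w$ with $\psi(x) := f(x \cdot w)\,g(|x|)$ for carefully chosen scalar cutoffs $f \in \mathcal{C}^1_{\mathrm{c}}(\mathbb{R})$ and $g \in \mathcal{C}^1_{\mathrm{c}}([0,\infty))$, and then read off from the vanishing of $\int \mathrm{div}_\eta X\,\mathrm{d}\eta$ that $w \in \mathbf{T}(\eta,x)^{\bot}$ at $\eta$-a.e.\ point where $x\cdot w<0$.

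First I would compute $\mathrm{div}_\eta X = D\psi \cdot w^T$ (using that $w$ is constant), where $w^T := \mathbf{T}(\eta,x)_{\natural} w$. Substituting $D\psi(x) = f'(x\cdot w)\,g(|x|)\,w + f(x\cdot w)\,g'(|x|)\,x/|x|$ and exploiting the hypothesis $x \in \mathbf{T}(\eta,x)$, which gives $x \cdot w^T = \mathbf{T}(\eta,x)_{\natural}x\cdot w = x\cdot w$, I obtain
\begin{align*}
\mathrm{div}_\eta X = f'(x\cdot w)\,g(|x|)\,|w^T|^2 + f(x\cdot w)\,g'(|x|)\,(x\cdot w)/|x|
\end{align*}
for $\eta$-a.e.\ $x$. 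I would then select $f\geq 0$ supported in $[-M,M]$ with $f'>0$ on $(-M,0)$, and $g$ a smooth non-increasing radial cutoff with $g(0)>0$, $g'(0)=0$, $\mathrm{spt}\,g \subset [0,R]$. On $\mathrm{spt}\,\eta$ the crucial point is that $x\cdot w \leq 0$; combined with $f\geq 0$, $g \geq 0$, $g'\leq 0$, this makes \emph{both} summands in $\mathrm{div}_\eta X$ pointwise nonnegative.

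Stationarity then forces $\int \mathrm{div}_\eta X\,\mathrm{d}\eta = 0$, hence each of the two nonnegative integrands vanishes $\eta$-a.e. From the first, $f'(x\cdot w)\,g(|x|)\,|w^T|^2 = 0$ $\eta$-a.e. On the open set $E_{M,R} := \{x : x\cdot w \in (-M,0),\; |x|<R\}$ both $f'(x\cdot w)$ and $g(|x|)$ are strictly positive, so necessarily $|w^T|=0$, i.e.\ $w \in \mathbf{T}(\eta,x)^{\bot}$. Combined with $x \in \mathbf{T}(\eta,x)$, this forces $x\cdot w = 0$, contradicting $x\cdot w \in (-M,0)$; thus $\eta(E_{M,R}) = 0$. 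Letting $M,R\to\infty$ gives $\eta(\{x : x\cdot w < 0\}) = 0$, which combined with the hypothesis $\mathrm{spt}\,\eta \subset \{x : x\cdot w\leq 0\}$ yields $\mathrm{spt}\,\eta \subset \{x : x\cdot w = 0\}$.

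The main obstacle is the careful sign bookkeeping: the argument succeeds precisely because the monotonicity choices for $f$ and $g$ make both terms in $\mathrm{div}_\eta X$ nonnegative \emph{on the same set}, so that stationarity kills each term separately rather than leaving an error contribution from the spatial cutoff to estimate as $R\to\infty$ (which, for a general cone, would only be bounded by $CR^{\mathbf{n}-1}$ and would not close). A minor but nontrivial technical point is ensuring that $X \in \mathcal{C}^1_{\mathrm{c}}(\mathbb{R}^{\mathbf{n}+\mathbf{k}}, \mathbb{R}^{\mathbf{n}+\mathbf{k}})$; this is achieved by requiring $g'(0)=0$ so that $x\mapsto g(|x|)$ extends smoothly across the origin.
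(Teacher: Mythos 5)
Your proof is correct and is essentially the paper's argument: both test stationarity against a compactly supported cutoff times the constant vector $w$ and use $x\in\mathbf{T}(\eta,x)$ together with $\mathrm{spt}\eta\subset\{x\cdot w\le 0\}$ to make the $\eta$-divergence of the test field nonnegative, so it must vanish $\eta$-a.e. The paper simply takes a radial cutoff $\zeta(x)=\psi(|x|)$, for which the single term $\psi'(|x|)\,(x\cdot w)/|x|\ge 0$ already forces $x\cdot w=0$ $\eta$-a.e.\ on annuli; your extra factor $f(x\cdot w)$ and the detour through $|\mathbf{T}(\eta,x)_{\natural}w|^2=0$ are sound but not needed.
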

%
%proof
%
\begin{proof}
Let $0<R_1<R_2<\infty$ and 
consider $\psi\in\mathcal{C}^{\infty}(\mathbb{R},[0,1])$ non-increasing
with $\psi(s)=1$ on $(-\infty,R_1/2]$, $\psi(s)=0$ on $[2R_2,\infty)$ and $\psi'(s)\leq (R_1-R_2)/2$ on $[R_1,R_2]$.
Define $\zeta\in\mathcal{C}^{\infty}_{\mathrm{c}}(\mathbb{R}^{\mathbf{n}+\mathbf{k}},[0,1])$
by $\zeta(x)=\psi(|x|)$.
As $\eta$ is stationary and $x\in\mathbf{T}(\eta,x)$ for $\eta$-a.e.\ $x\in\mathbb{R}^{\mathbf{n}+\mathbf{k}}$
we can calculate
\begin{align*}
0=\int_{\mathbb{R}^{\mathbf{n}+\mathbf{k}}}\mathrm{div}_{\eta}(\zeta w)\,\mathrm{d}\eta
&=\int_{\mathbb{R}^{\mathbf{n}+\mathbf{k}}}\mathbf{T}(\eta,x)_{\natural}(D\zeta)\cdot w\,\mathrm{d}\eta
=\int_{\mathbb{R}^{\mathbf{n}+\mathbf{k}}}\psi'(|x|)\frac{x\cdot w}{|x|}\,\mathrm{d}\eta
\end{align*}
By $\psi'\leq 0$ and our assumptions on $\mathrm{spt}\eta$ the integrant is always positive,
moreover $\psi'(s)< 0$ on $[R_1,R_2]$, thus 
$x\cdot w=0$ for $\eta$-a.e.\ $x\in\mathbf{B}(0,R_2)\setminus\mathbf{B}(0,R_1)$.
This establishes the result.
\end{proof}

\section{Improved height bound}
\label{height_bound}
For a Brakke flow initial local height bounds in a certain direction
yield weaker height bounds at later times in a decreased region.
Such results can be obtained from sphere comparisson,
see Brakke \cite[Thm.\ 3.7]{brakke} or Colding and Minicozzi \cite[Lem.\ 3]{coldingm2}.
Here we prove stronger height bounds for short times
using directly the Brakke flow equation.
%
%improved height bound
%
\begin{prop}
\label{improvedheightbndprop}
For every $p\in \mathbb{N}$ there exists a $C_p\in (1,\infty)$ 
such that the following holds:

Let $\rho\in (0,\infty)$, $t_1\in\mathbb{R}$, $t_2\in (t_1,\infty)$,
$a\in\mathbb{R}^{\mathbf{n}+\mathbf{k}}$, $v\in\mathbb{R}^{\mathbf{n}+\mathbf{k}}$
and let $(\mu_t)_{t\in [t_1,t_2]}$ be a Brakke flow in $\mathbf{B}(a,2\rho)$.
Suppose
\begin{align*}
\mathrm{spt}\mu_{t_1}\cap\mathbf{B}(a,2\rho)
\subset\{x\in\mathbb{R}^{\mathbf{n}+\mathbf{k}}:(x-a)\cdot v\leq 0\mathbb\}.
\end{align*}

Then for all $t\in [t_1,t_2]$  we have
\begin{align*}
\mathrm{spt}\mu_{t}\cap\mathbf{B}(a,\rho)
\subset\{x\in\mathbb{R}^{\mathbf{n}+\mathbf{k}}:(x-a)\cdot v\leq C_p|v|(t-t_1)^p\rho^{1-2p}\mathbb\}.
\end{align*}
\end{prop}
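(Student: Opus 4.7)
The plan is to proceed by induction on $p$, using test functions of the form
\[
\phi(t,x)=\zeta(x)^{2N}\bigl((x-a)\cdot v/|v|-g(t)\bigr)_{+}^{q}
\]
in the Brakke inequality \eqref{brakkeflowa}, where $g(t)$ is a polynomial in $(t-t_1)$ of the target order, $\zeta$ is a smooth cutoff with $\zeta\equiv 1$ on $\mathbf{B}(a,\rho)$ and $\mathrm{spt}\,\zeta\subset\mathbf{B}(a,3\rho/2)$, chosen so that $|D\zeta|^2/\zeta$ is pointwise $O(N^2/\rho^2)$ on its support (e.g.\ $\zeta=\eta^{N_0}$ with $\eta$ a standard cutoff), and the exponents $N,q$ are chosen depending on $p$. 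After normalising $|v|=1$ and $a=0$, the half-space hypothesis forces $\phi(t_1,\cdot)\equiv 0$ on $\mathrm{spt}\,\mu_{t_1}$, so $\mu_{t_1}(\phi(t_1,\cdot))=0$, and the strategy is to show the integrated inequality forces $\mu_t(\phi(t,\cdot))\equiv 0$, hence $\mathrm{spt}\,\mu_t\cap\mathbf{B}(a,\rho)\subset\{u\leq g(t)\}$.

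Base case $p=1$: take $q=3$, $N$ fixed, and $g(t)=C_1(t-t_1)/\rho$. Using the Cauchy--Schwarz form of the Brakke variation (which underlies Remark~\ref{brakkevarbound}), one gets
\[
\mathscr{B}(\mu_t,\phi)\leq \int\frac{|D\phi|^2}{2\phi}\,d\mu_t-\frac{1}{2}\int\phi|\mathbf{H}_{\mu_t}|^2\,d\mu_t,
\]
with $|D\phi|^2/\phi\leq CN^2\rho^{-2}w^3+Cq^2\zeta^{2N}w$ (where $w=(u-g(t))_+$), while $\partial_t\phi\leq 0$ produces a good term $-3g'(t)\zeta^{2N}w^2\,d\mu_t$. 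Choosing $C_1$ large and using $w\leq 2\rho$ to absorb the cutoff-gradient piece into the time-derivative term gives $\mu_t(\phi(t,\cdot))\equiv 0$.

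Inductive step: given $P(p)$ with constant $C_p$, pick $g_{p+1}(t)=C_{p+1}(t-t_1)^{p+1}/\rho^{2p+1}$ and a new test function $\phi_{p+1}$ as above with $q=q(p+1)$ large. The inductive hypothesis, applied at scale $\rho/2$ (or inside sub-balls covering $\mathrm{spt}\,\zeta$), yields the pointwise bound $u\leq g_p(t)=C_p(t-t_1)^p\rho^{1-2p}$ on $\mathrm{spt}\,\mu_t\cap\{\zeta>0\}$ for $t-t_1$ small. Hence $w\leq g_p(t)$ there, so $w^q\leq g_p(t)^q$ and the cutoff-gradient contribution in $|D\phi_{p+1}|^2/\phi_{p+1}$ is bounded by $CN^2\rho^{-2}g_p(t)^q$, which is polynomial in $(t-t_1)$ of high order. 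Iterating this absorption together with the negative time-derivative term $-qg_{p+1}'(t)\int\zeta^{2N}w^{q-1}\,d\mu_t$, and using $w^{q-1}\geq w^q/g_p(t)$ to convert the latter into a term comparable to the growth terms, one obtains a differential inequality of the form
\[
\tfrac{d}{dt}\mu_t(\phi_{p+1}(t,\cdot))\leq\bigl(C\rho^{-2}-c\,q\,g_{p+1}'(t)/g_p(t)\bigr)\mu_t(\phi_{p+1}(t,\cdot))+\text{(source of order }g_p(t)^q\text{)},
\]
which, with $C_{p+1}$ taken large enough, forces $\mu_t(\phi_{p+1}(t,\cdot))\equiv 0$. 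Combining with the cancellation from the lower-order $(u-g)_{+}^{q-2}$ piece handled by a secondary iteration in the exponent $q$ completes the step.

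The main obstacle is choosing $N,q,C_{p+1}$ so that every positive term in the Brakke variation (both the cutoff-gradient term $\sim\rho^{-2}w^q$ and the lower-order $\sim q^2\zeta^{2N}w^{q-2}$ term) is absorbed by the time-derivative term $-qg_{p+1}'(t)\zeta^{2N}w^{q-1}$; this in turn requires the inductive $L^\infty$-bound $w\leq g_p(t)$ from $P(p)$ to convert higher and lower powers of $w$ into one another. The bookkeeping blows up in $p$, which is consistent with allowing the constant $C_p$ to depend on $p$.
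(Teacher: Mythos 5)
Your overall strategy — induct on $p$, test with a power of $(u-g(t))_+$ against a cutoff, and use the Brakke inequality to propagate $\mu_t(\phi)\equiv 0$ from $\mu_{t_1}(\phi)=0$ — is the right framework and is what the paper does. But your test function lacks the one ingredient that makes the paper's proof close, and as written the argument has a genuine gap at the point where you claim the differential inequality forces $\mu_t(\phi_{p+1})\equiv 0$.

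The gap: with $\phi=\zeta^{2N}w^q$, $w=(u-g(t))_+$, the cutoff-gradient contribution to the Brakke variation is of order $N^2\rho^{-2}\zeta^{2N-1}w^q$, while the time-derivative term you want to absorb it into is $-qg'(t)\zeta^{2N}w^{q-1}$. The absorption would require $N^2w/\rho^2\lesssim g'(t)\,\zeta$ pointwise; since $\zeta\to 0$ on its own support this fails for any choice of $C_{p+1}$. (A similar pointwise failure occurs for the $q^2\zeta^{2N}w^{q-2}$ term as $w\to 0^+$, though that one you could eliminate by using the integration-by-parts form of the Brakke variation rather than Cauchy--Schwarz: $\mathrm{div}_{\mu_t}D(w^q)=q(q-1)w^{q-2}|\mathbf{T}_\natural v|^2\geq 0$ enters with the \emph{good} sign in $(\partial_t-\mathrm{div}_{\mu_t}D)\phi$.) Even after your observation that the inductive bound $w\leq g_p(t)$ holds on $\mathrm{spt}\,\zeta$, you only bound the cutoff-gradient term by something of order $g_p(t)^q$, i.e.\ a \emph{nonzero} source. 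The resulting ODE $m'\leq -Am+S(t)$ with $m(t_1)=0$, $S\not\equiv 0$ gives $m(t)>0$ for $t>t_1$, so $\mu_t(\phi_{p+1})\equiv 0$ does \emph{not} follow; and $\mu_t(\phi)$ small does not control the support of $\mu_t$.

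The paper's missing ingredient is a spatial correction \emph{inside} the test function: after fixing an arbitrary center $\bar{x}_0$, it uses
\[
\eta(t,\bar{x},h):=\bigl(h-8C_p|\bar{x}-\bar{x}_0|^2t^p-C_{p+1}t^{p+1}\bigr)_+
\]
and $\phi=\eta^3\chi$ with $\chi$ a cutoff supported in $\mathbf{B}((\bar{x}_0,0),1)$, $\chi\equiv 1$ on $\mathbf{B}((\bar{x}_0,0),1/2)$. The paraboloidal term $8C_p|\bar{x}-\bar{x}_0|^2t^p$ rises \emph{faster} than the inductive height bound $C_pt^p$ once $|\bar{x}-\bar{x}_0|^2>1/8$, so the inductive hypothesis (and the a priori barrier bound $h\leq 1/4$) force $\mathrm{spt}\,\mu_t\cap\mathrm{spt}\,\eta(t,\cdot)\cap\mathrm{spt}\,D\chi=\emptyset$. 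Thus the cutoff contributes literally nothing, and the inequality reduces to showing $(\partial_t-\mathrm{div}_{\mu_t}D)\eta^3\leq 0$, which follows from $\mathrm{div}_{\mu_t}\bigl((\mathbb{R}^N\times\{0\})_\natural\bigr)\leq\mathbf{n}$ and choosing $C_{p+1}\geq 16\mathbf{n}C_p/(p+1)$. This is exactly the device you would need to add: instead of hoping to absorb the cutoff-gradient term, build a spatial term into $w$ (or into $g$) that makes $\{\phi>0\}\cap\mathrm{spt}\,\mu_t$ a compact subset of the cutoff's plateau. Without it, the inductive step does not close.

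Two smaller remarks: your base case should really be $p=0$, which is trivial since $\mathbf{B}(a,\rho)\subset\{(x-a)\cdot v\leq\rho|v|\}$; the case $p=1$ already requires the full argument. And the a priori height bound (the paper's step showing $\mathrm{spt}\,\mu_t\cap\mathbf{B}(0,2)\subset\{h\leq 1/4\}$ via the measure-bound/clearing-out type argument, Lemma~\ref{barrierlem}) is needed to ensure $|x-(\bar{x}_0,0)|<1/2$ on $\{\eta>0\}$ and is not something you can skip.
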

%
%proof
%
\begin{proof}
We will actually prove the result for all $p\in\mathbb{N}\cup\{0\}$.
For $p=0$ the statement directly follows from
$\mathbf{B}(a,\rho)\subset\{(x-a)\cdot v\leq\rho|v|\}$ with $C_0=1$.
Suppose the statement is true for some $p\in \mathbb{N}\cup\{0\}$.
We want to show the statement holds for $p+1$.
We may assume $\rho=2$, $t_1=0$, $a=0$ 
and $v=\mathbf{e}_{\mathbf{n}+\mathbf{k}}$.
Set $N:=\mathbf{n}+\mathbf{k}-1$, $t_0:=\min\{2^{-7}\mathbf{n}^{-1},t_2\}$.
In case $t_0<t_2$ for $t\in (t_0,t_2]$ the conclusion directly follows from
$\mathbf{B}(0,2)\subset\{x\cdot\mathbf{e}_{\mathbf{n}+\mathbf{k}}\leq 2\}$ for $C_{p+1}$ large enough.
Hence we are only interested in $t\in [0,t_0]$.
We claim
\begin{align}
\label{improvedheightbndprop12}
\mathrm{spt}\mu_{t}\cap\mathbf{B}(0,2)
\subset\left(\mathbf{B}^N(\bar{x}_0,0)\times (-2,1/4]\right)
\end{align}
for all $t\in[0,t_0]$.
In order to see this consider $z_0=(\bar{x},h)\in\mathbf{B}(0,2)$ with $h> 1/4$.
By assumption we have $\mathrm{spt}\mu_0\cap\mathbf{B}(z_0,1/4)=\emptyset$.
Then Lemma \ref{barrierlem} with $R=1/8$ implies $z_0\notin\mathrm{spt}\mu_t$
for all $t\in [0,t_0]$.

Fix an arbitrary $\bar{x}_0\in\mathbf{B}^{N}(0,2)$.
By the induction assumption we see
\begin{align}
\label{improvedheightbndprop11}
\mathrm{spt}\mu_{t}\cap\mathbf{B}((\bar{x}_0,0),1)
\subset\left(\mathbf{B}^N(\bar{x}_0,0)\times (-1,C_{p}t^p)\right)
\end{align}
for all $t\in[0,t_0]$.
Consider the function
$\eta\in\mathcal{C}^{0,1}(\mathbb{R}\times\mathbb{R}^N\times\mathbb{R},\mathbb{R}^+)
\cup\mathcal{C}^{\infty}(\{\eta>0\})$
given by
\begin{align*}
\eta(t,\bar{x},h):=\left(h-8C_p|\bar{x}-\bar{x}_0|^2t^p-C_{p+1}t^{p+1}\right)_+.
\end{align*}
Treat $\eta$ as a function on $\mathbb{R}\times\mathbb{R}^{\mathbf{n}+\mathbf{k}}$.
Using $\mathrm{div}_{\mu_t}((\mathbb{R}^N\times\{0\})_{\natural})\leq\mathbf{n}$
and choosing~$C_{p+1}$ large enough we can estimate
\begin{align}
\label{improvedheightbndprop21}
(\partial_t-\mathrm{div}_{\mu_t}D)\eta^3(t,x)
\leq 3\eta^2(t,x)(16C_p\mathbf{n}t^p-(p+1)C_{p+1}t^p)
\leq 0
\end{align}
for all $(t,x)\in ([0,t_2]\times\mathrm{spt}\mu_t)\cap \{\eta>0\}$
at which the approximate tangent space exists.

Let $\chi\in\mathcal{C}^{\infty}(\mathbb{R}^{\mathbf{n}+\mathbf{k}},[0,1])$
be a cut-off function such that 
\begin{align*}
\mathbf{B}((\bar{x}_0,0),1/2)
\subset\{\chi=1\}
\subset\mathrm{spt}\chi
\subset\mathbf{B}((\bar{x}_0,0),1).
\end{align*}
In view of \eqref{improvedheightbndprop12}, \eqref{improvedheightbndprop11}
and by definition of $\eta$ we see
\begin{align}
\label{improvedheightbndprop22}
\mathrm{spt}\mu_t\cap\mathrm{spt}\eta(t,\cdot\,)\cap\mathrm{spt}D\chi=\emptyset
\end{align}
for all $t\in [0,t_0]$.
Consider the test function 
$\phi\in\mathcal{C}^{2}(\mathbb{R}\times\mathbb{R}^{\mathbf{n}+\mathbf{k}},\mathbb{R}^+)$
given by $\phi:=\eta^3\chi$.
Using \eqref{improvedheightbndprop21}, \eqref{improvedheightbndprop22}
and the Brakke flow equation \eqref{brakkeflowa} we obtain
\begin{align*}
\mu_s(\phi(s,\cdot\,))-\mu_0(\phi(0,\cdot\,))
\leq\int_0^s\int_{\mathbb{R}^{\mathbf{n}+\mathbf{k}}}\chi(\partial_t-\mathrm{div}_{\mu_t}D)\eta^3\;\mathrm{d}\mu_t\;\mathrm{d}t
\leq 0
\end{align*}
for all $s\in (0,t_0]$.
By assumption we have $\mu_0(\phi(0,\cdot))=0$,
hence $\phi(t,x)=0$ for all $t\in (0,t_0]$ and all $x\in\mathrm{spt}\mu_t$.
By definition of $\eta$ and $\chi$ this yields that $h\leq C_{p+1}t^p$
for all $t\in (0,t_0]$ and all 
$(\bar{x},h)\in\mathrm{spt}\mu_t\cap(\{\bar{x}_0\}\times[0,1/2))$.
As $\bar{x}_0\in\mathbf{B}^{N}(0,2)$ was arbitrary we just proved
\begin{align*}
\mathrm{spt}\mu_{t}\cap\left(\mathbf{B}^N(0,2)\times (-2,1/2)\right)
\subset\left(\mathbf{B}^N(0,2)\times (-2,C_{p+1}t^{p+1})\right)
\end{align*}
for all $t\in[0,t_0]$.
In view of \eqref{improvedheightbndprop12}
this completes the statement for $p+1$
and the result follows by induction.
\end{proof}

%
%
%height bound corollary
%
%
\begin{cor}
\label{heightboundcor}
For every $p\in \mathbb{N}$ there exists a $c_p\in (0,1)$ 
such that the following holds:

Let $R_1,r_0\in (0,\infty)$, $h_1\in (0,r_0/8]$, $R_2\in [r_0,\infty)$,
$x_0\in\mathbb{R}^{\mathbf{n}+\mathbf{k}}$,
$t_1\in\mathbb{R}$, $t_2\in (t_1,t_1+c_p r_0^2)$
and let $(\mu_t)_{t\in [t_1,t_2]}$ be a Brakke flow in $\mathbf{C}(x_0,R_1+r_0,R_2+r_0)$.
Suppose
\begin{align*}
\mathrm{spt}\mu_{t_1}\cap\mathbf{C}(x_0,R_1+r_0,R_2+r_0)
\subset\mathbf{C}(x_0,R_1+r_0,h_1).
\end{align*}

Then for all $t\in [t_1,t_2]$ and $h(t):=h_1+(t-t_1)^pr_0^{1-2p}$ we have
\begin{align*}
\mathrm{spt}\mu_{t}\cap\mathbf{C}(x_0,R_1,R_2)
\subset\mathbf{C}(x_0,R_1,h(t)).
\end{align*}
\end{cor}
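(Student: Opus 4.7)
The plan is to deduce the cylindrical height bound pointwise from Proposition \ref{improvedheightbndprop}. For each candidate point $x \in \mathrm{spt}\mu_t \cap \mathbf{C}(x_0, R_1, R_2)$ with $|\tilde{x} - \tilde{x}_0| > h_1$, I construct a ball inside the flow domain on which the initial slab hypothesis on $\mu_{t_1}$ translates into the one-sided half-space condition required by the proposition, with $v$ pointing from $\tilde{x}_0$ towards $\tilde{x}$. Points lying too far from the initial slab to fit into such a ball will be ruled out directly by the clearing-out consequence of Lemma \ref{barrierlem}.

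Fix $p \in \mathbb{N}$ and let $c_p > 0$ be chosen small at the end. Pick $x = (\hat{x},\tilde{x}) \in \mathrm{spt}\mu_t \cap \mathbf{C}(x_0,R_1,R_2)$ with $|\tilde{x} - \tilde{x}_0| > h_1$, and set $\tilde{v} := (\tilde{x} - \tilde{x}_0)/|\tilde{x} - \tilde{x}_0|$, $v := (0,\tilde{v})$. If $|\tilde{x} - \tilde{x}_0| \geq h_1 + r_0/4$, then $\mathbf{B}(x, r_0/8) \subset \mathbf{C}(x_0,R_1+r_0,R_2+r_0)$ and every $y$ in it satisfies $|\tilde{y} - \tilde{x}_0| > h_1 + r_0/8$, hence $\mathrm{spt}\mu_{t_1} \cap \mathbf{B}(x, r_0/8) = \emptyset$. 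Lemma \ref{barrierlem} applied with $R = r_0/16$ (valid under $c_p \leq (512\mathbf{n})^{-1}$) then forces $\mu_t(\mathbf{B}(x, r_0/16)) = 0$, contradicting $x \in \mathrm{spt}\mu_t$.

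In the remaining case $h_1 < |\tilde{x} - \tilde{x}_0| < h_1 + r_0/4$, set $\rho := r_0/2$ and $a := (\hat{x}, \tilde{x}_0 + h_1 \tilde{v})$. Using $R_2 \geq r_0$ and $h_1 \leq r_0/8$ one checks $\mathbf{B}(a,2\rho) \subset \mathbf{C}(x_0, R_1+r_0, R_2+r_0)$; any $y \in \mathrm{spt}\mu_{t_1} \cap \mathbf{B}(a,2\rho)$ satisfies $(y - a) \cdot v = (\tilde{y} - \tilde{x}_0) \cdot \tilde{v} - h_1 \leq |\tilde{y} - \tilde{x}_0| - h_1 \leq 0$; and $|x - a| = |\tilde{x} - \tilde{x}_0| - h_1 < r_0/4 < \rho$, so $x \in \mathbf{B}(a,\rho)$. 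Applying Proposition \ref{improvedheightbndprop} with exponent $p+1$ yields
\[
|\tilde{x} - \tilde{x}_0| - h_1 = (x - a) \cdot v \leq C_{p+1}(t - t_1)^{p+1}(r_0/2)^{-1-2p} = C_{p+1} 2^{2p+1}\,\frac{t - t_1}{r_0^2}\,(t - t_1)^p r_0^{1-2p}.
\]
Since $t - t_1 < c_p r_0^2$, choosing $c_p \leq (C_{p+1}2^{2p+1})^{-1}$ gives $|\tilde{x} - \tilde{x}_0| \leq h(t)$ as desired.

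The only mildly delicate step is constant tracking: Proposition \ref{improvedheightbndprop} produces a bound carrying a dimensional constant $C_p$, while the stated conclusion $h(t) - h_1 = (t - t_1)^p r_0^{1-2p}$ has none. Applying the proposition at the higher power $p+1$, so that the factor $(t - t_1)/r_0^2 \leq c_p$ from the time restriction absorbs the constant, is what resolves this. The rest is routine verification that the chosen ball lies in the flow domain and that the slab hypothesis supplies the half-space condition.
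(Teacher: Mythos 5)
Your proof is correct and takes essentially the same approach as the paper: both arguments apply Proposition \ref{improvedheightbndprop} at exponent $p+1$ centered at the projection $a$ of $x$ onto the boundary of the initial slab, use the factor $(t-t_1)/r_0^2 \leq c_p$ to absorb the dimensional constant $C_{p+1}$, and rule out points beyond roughly $r_0/4$ from the slab via Lemma \ref{barrierlem}.
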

%
%proof
%
\begin{proof}
We may assume $t_1=0$ and $x_0=0$.
Let $p\in\mathbb{N}$ be given and $C_{p+1}$
be the value according to Proposition \ref{improvedheightbndprop}.
First we want to show
\begin{align}
\label{heightbndcor11}
\mathrm{spt}\mu_{t}\cap\mathbf{C}(0,R_1,r_0/4)
\subset\mathbf{C}(x_0,R_1,h(t))
\end{align}
for all $t\in (0,t_2]$.
To see this consider arbitrary $t\in (0,t_2]$ and 
$x\in\mathrm{spt}\mu_t\cap\mathbf{C}(0,R_1,r_0/4)$.
We want to show $|\tilde{x}|\leq h(t)$.
Suppose $\tilde{x}\neq 0$.
Set $\tilde{v}:=|\tilde{x}|^{-1}\tilde{x}$, $v:=(0,\tilde{v})\in\mathbb{R}^{\mathbf{n}+\mathbf{k}}$, 
and $a:=(\hat{x},h_1\tilde{v})$. %\in\mathbf{C}(0,R_1,r_0/2)$.
Note that 
$|x-a|=||\tilde{x}|-h_1|
\leq r_0/4$.

For arbitrary $y\in\mathbb{R}^{\mathbf{n}+\mathbf{k}}$
we have $(y-a)\cdot v\leq |\tilde{y}|-h_1$,
hence by our initial height bound we see
\begin{align*}
\mathrm{spt}\mu_{0}\cap\mathbf{B}(a,r_0/2)
\subset\{y\in\mathbb{R}^{\mathbf{n}+\mathbf{k}}:(y-a)\cdot v\leq 0\}.
\end{align*} 
Using Proposition \ref{improvedheightbndprop} with $\rho=r_0/4$ yields
\begin{align*}
|\tilde{x}|-h_1
=(x-a)\cdot v
\leq\left(r_0/4\right)^{-2p-1}C_{p+1}t^{p+1}
%\leq C C_{p+1}c_p t^pr_0^{-2p+1}
\leq t^pr_0^{1-2p},
\end{align*}
as $t\leq t_2\leq c_pr_0^2$ and for $c_p$ small enough.
As $t$ and $x$ were arbitrary,
this establishes \eqref{heightbndcor11}.

Now consider $z_0\in\mathbf{C}(0,R_1,R_2)$ with $\tilde{z}_0\geq r_0/4$.
By our initial height bound and $h_1\leq r_0/8$ we have
$\mathrm{spt}\mu_0\cap\mathbf{B}(z_0,r_0/8)=\emptyset$.
Thus Lemma \ref{barrierlem} with $R=r_0/16$ implies $z_0\notin\mathrm{spt}\mu_t$
for all $t\in [0,t_2]$, where we used $t_2\leq c_{p}r_0^2\leq (2\mathbf{n})^{-1} (r_0/16)^2$.
In view of \eqref{heightbndcor11}, this completes the proof.
\end{proof}

\section{Maintain graphical representability}
\label{stay_graph}
In this section we prove Theorem \ref{staygraphthm}.
%which says that graphical representation is maintained for a non-vanishing Brakke flow.
The main idea of the proof
is to iterate Brakkes local regularity theorem (see Theorem \ref{Blocalregthm})
by choosing a time at which graphical representation is obtained
as the new starting time.
To do so we first show a version of Theorem \ref{Blocalregthm}
which only has assumptions at the initial and final time, 
see Proposition \ref{graphorholeprop}.

%
%
%graph or hole
%
%
By Corollary \ref{heightboundcor} initial height bounds 
yield weaker height bounds later on.
Also by Huisken's monotonicity formula (see Theorem \ref{localmonthm})
initial bounds on area ratio imply bounds on area ratio in the future
(see Lemma \ref{laterbndlem}).
Moreover by the clearing out lemma (see Lemma \ref{clearoutlem}) non-vanishing at some time
yields a lower bound on measure a bit earlier.
Thus with Brakke's local regularity theorem (see Theorem \ref{Blocalregthm}) we obtain the Proposition below,
which is an improved version of a result found in the author's thesis \cite[Thm.\ 11.7]{lahiri}.
%
%
%proposition
%
%
\begin{prop}
\label{graphorholeprop}
For all $\kappa,\alpha\in (0,1)$
there exists a $\sigma_1\in (0,1)$
such that the following holds:

Let $\beta\in (0,1]$, $\iota\in(0,\sigma_1\beta]$, $\varrho\in (0,\infty)$, 
$s_1\in\mathbb{R}$, $s_2\in (s_1+2\iota^2\varrho^2,s_1+4(\sigma_1\beta\iota)^{\alpha}\varrho^2]$,
$z_0\in\mathbb{R}^{\mathbf{n}+\mathbf{k}}$
and let $(\mu_t)_{t\in [s_1,s_2]}$ be a Brakke flow in $\mathbf{C}(z_0,2\varrho,2\varrho)$.
Suppose
\begin{align}
\label{graphorholelemb}
&\mathrm{spt}\mu_{s_2}\cap\mathbf{C}(z_0,\sigma_1\iota\varrho,\varrho)\neq\emptyset,
\\
\label{graphorholelema}
&\mathrm{spt}\mu_{s_1}\cap\mathbf{C}(z_0,2\varrho,2\varrho)
\subset\mathbf{C}(z_0,2\varrho,\sigma_1\beta\iota\varrho),
\\
\label{graphorholelemc}
&r^{-\mathbf{n}}\mu_{s_1}(\mathbf{B}(z_0,r))
\leq (2-\kappa)\omega_{\mathbf{n}}
\quad\text{for all}\quad
r\in (\sigma_1\iota\varrho,8\mathbf{n}\sqrt{s_2-s_1}].
\end{align}
Set $I:=(s_1+\iota^2\varrho^2,s_2-\iota^2\varrho^2)$.

Then there exists an 
$u\in\mathcal{C}^{\infty}\left(I\times\mathbf{B}^{\mathbf{n}}(\hat{z}_0,\sigma_1\iota\varrho),\mathbb{R}^{\mathbf{k}}\right)$
such that
\begin{align*}
\mu_{t}\mres\mathbf{C}(z_0,\sigma_1\iota\varrho,\varrho)=\mathscr{H}^{\mathbf{n}}\mres\mathrm{graph}(u(t,\cdot\,))
\;\;\;\text{for all}\;t\in I.
\end{align*}
Moreover $\sup|Du|\leq\beta$
and $F_t(\hat{x})=(\hat{x},u(t,\hat{x}))$ satisfies \eqref{smoothmcf}.
\end{prop}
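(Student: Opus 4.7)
The idea is to reduce Proposition \ref{graphorholeprop} to a single application of Brakke's local regularity theorem (Theorem \ref{Blocalregthm}) at center $z_0$, with scale $R:=\iota\varrho/\sqrt{c+\Lambda}$, initial time $s_1$, and final time $t_2^B:=s_2-cR^2$; here $\Lambda$ is the constant of Theorem \ref{Blocalregthm} (applied with some $\lambda\in(0,\kappa)$ to be fixed below) and $c\in(0,1)$ is a small dimensional constant. With these choices $t_2^B-\Lambda R^2=s_2-(c+\Lambda)R^2=s_2-\iota^2\varrho^2$ and $s_1+\Lambda R^2\leq s_1+\iota^2\varrho^2$, so the graphical interval returned by Theorem \ref{Blocalregthm} contains $I$; the admissibility $t_2^B-s_1>2\Lambda R^2$ follows from the assumption $s_2-s_1>2\iota^2\varrho^2$. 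Taking $\sigma_1\leq h_0/\sqrt{c+\Lambda}$ ensures that the graphical cross-section $\mathbf{B}^{\mathbf{n}}(\hat{z}_0,h_0R)$ from the theorem contains $\mathbf{B}^{\mathbf{n}}(\hat{z}_0,\sigma_1\iota\varrho)$, and since the height propagation below will force $\mathrm{spt}\mu_t\cap\mathbf{C}(z_0,\sigma_1\iota\varrho,\varrho)$ to lie in a thin slab, the restriction of the graph yields the claimed $u$. The gradient bound $\sup|Du|\leq\Lambda h\leq\beta$ then follows by further shrinking $\sigma_1$.

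The height condition \eqref{Blocalregthma} is obtained via Corollary \ref{heightboundcor} with a large integer $p=p(\alpha)$ chosen so that $p\alpha>1$: starting from the initial slab height $h_1=\sigma_1\beta\iota\varrho$ from \eqref{graphorholelema} and $r_0\sim\varrho$, the propagated height at $t\in[s_1,s_2]$ is bounded by $\sigma_1\beta\iota\varrho+C(s_2-s_1)^p\varrho^{1-2p}$; inserting $s_2-s_1\leq 4(\sigma_1\beta\iota)^\alpha\varrho^2$ and $\iota\leq 1$ yields a height of order $(\sigma_1\beta)^{\alpha p}\iota\varrho$, which is $\leq h_0R$ for sufficiently small $\sigma_1$. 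The initial density estimate \eqref{Blocalregthmc} is immediate from \eqref{graphorholelemc} evaluated at $r=R$, which is admissible because $R>\sigma_1\iota\varrho$ for $\sigma_1<1/\sqrt{c+\Lambda}$ and $R\leq 8\mathbf{n}\sqrt{s_2-s_1}$ by the lower bound on $s_2-s_1$. For the area upper bound \eqref{Blocalregthmb} at all intermediate times, I apply Huisken's monotonicity formula (Theorem \ref{localmonthm}, hypothesis 1) with the test function $\varphi_{(t_0,z_0),\varrho}$ for some $t_0>s_2$: the initial hypothesis \eqref{graphorholelemc} controls the Gaussian weighted mass at time $s_1$, monotonicity transfers the bound forward to every $t\in[s_1,s_2]$, and a standard comparison between $\Phi_{(t_0,z_0)}\varphi_{(t_0,z_0),\varrho}$ and a fixed multiple of $\mathbf{1}_{\mathbf{B}(z_0,4R)}$ delivers \eqref{Blocalregthmb}.

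The main obstacle is verifying the final density \eqref{Blocalregthmd} at time $t_2^B$, since the hypothesis \eqref{graphorholelemb} only supplies non-vanishing at the extreme time $s_2$. I would argue as follows. The point $z\in\mathrm{spt}\mu_{s_2}\cap\mathbf{C}(z_0,\sigma_1\iota\varrho,\varrho)$ guaranteed by \eqref{graphorholelemb} satisfies $|\hat z-\hat z_0|<\sigma_1\iota\varrho$, and by the height bound already established in the previous paragraph $|\tilde z-\tilde z_0|$ is of order $(\sigma_1\beta)^{\alpha p}\iota\varrho$; for $\sigma_1$ small this places $z\in\mathbf{B}(z_0,R/8)$. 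Apply the clearing out lemma (Lemma \ref{clearoutlem}) in its contrapositive form, centered at $z$ at scale $R_c:=4\sqrt{\mathbf{n}c}\,R$, on the backward interval $[t_2^B,s_2]$: since $s_2-t_2^B=cR^2<R_c^2/(4\mathbf{n})$, the reduced radius $R_c(s_2)=\sqrt{R_c^2-4\mathbf{n}cR^2}$ is positive and hence $\mu_{s_2}(\mathbf{B}(z,R_c(s_2)))>0$, forcing the $\varphi$-weighted integral at $t_2^B$ to be bounded below by a dimensional constant $\eta_0$; in particular $\mu_{t_2^B}(\mathbf{B}(z,R_c))\geq\eta_0R_c^{\mathbf{n}}$. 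Fixing $c$ small enough that $R_c+R/8\leq R/2$ yields $\mathbf{B}(z,R_c)\subset\mathbf{B}(z_0,R/2)$, and choosing $\lambda\leq\eta_0(4\sqrt{\mathbf{n}c})^{\mathbf{n}}/\omega_{\mathbf{n}}$ completes the verification of \eqref{Blocalregthmd}. Theorem \ref{Blocalregthm} now delivers the required graphical $u$, finishing the proof.
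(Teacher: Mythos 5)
Your overall strategy is the same as the paper's: propagate the slab bound forward with Corollary \ref{heightboundcor} (your condition $p\alpha>1$ matches the paper's choice $p=2/\alpha$), get the final-time lower mass bound \eqref{Blocalregthmd} from non-vanishing at $s_2$ via the clearing-out lemma (the paper argues by contradiction, you argue by contrapositive -- same content, and your bookkeeping with $R_c=4\sqrt{\mathbf{n}c}\,R$, $s_2-t_2^B=cR^2$ is consistent), and then apply Theorem \ref{Blocalregthm} at the scale $R\sim\iota\varrho/\sqrt{\Lambda}$, exactly as the paper does with $\rho_1=4^{-1}\Lambda^{-1/2}\iota\varrho$. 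The constant dependencies ($c$ and $\eta_0$ dimensional, then $\lambda\le\min\{\kappa,\eta_0(4\sqrt{\mathbf{n}c})^{\mathbf{n}}/\omega_{\mathbf{n}}\}$, then $\Lambda,h_0$, then $\sigma_1$) are not circular, and the restriction of the graph to $\mathbf{C}(z_0,\sigma_1\iota\varrho,\varrho)$ using the thin-slab bound is handled correctly.

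The genuine gap is your verification of the intermediate-time area bound \eqref{Blocalregthmb}. You propose to apply Theorem \ref{localmonthm} with one fixed center $(t_0,z_0)$, $t_0>s_2$, and localization radius $\varrho$, and to claim that \eqref{graphorholelemc} ``controls the Gaussian weighted mass at time $s_1$''. It does not: the cutoff $\varphi_{(t_0,z_0),\varrho}(s_1,\cdot)$ is supported in a ball of radius comparable to $\varrho$, while \eqref{graphorholelemc} only bounds $\mu_{s_1}$ on balls of radius at most $8\mathbf{n}\sqrt{s_2-s_1}\le C(\sigma_1\beta\iota)^{\alpha/2}\varrho\ll\varrho$, and there is no hypothesis bounding the mass of $\mu_{s_1}$ elsewhere in $\mathbf{C}(z_0,2\varrho,2\varrho)$ (arbitrarily many sheets may sit in the slab at horizontal distance $\sim\varrho$ from $z_0$); a tiny Gaussian weight times an unbounded mass is unbounded. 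Moreover, even granting such a bound, the comparison of $\Phi_{(t_0,z_0)}\varphi$ with $\mathbf{1}_{\mathbf{B}(z_0,4R)}$ at time $t$ only gives $\Phi\gtrsim(t_0-t)^{-\mathbf{n}/2}$, hence $\mu_t(\mathbf{B}(z_0,4R))\lesssim(t_0-t)^{\mathbf{n}/2}$; since $t_0-t$ can be of order $s_2-s_1\sim(\sigma_1\beta\iota)^{\alpha}\varrho^2\gg\iota^2\varrho^2\sim R^2$ (recall $\alpha<1$ and $\iota\le\sigma_1\beta$), this is far weaker than the required $\lambda^{-1}R^{\mathbf{n}}$ for times $t$ not close to $s_2$. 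The repair is to let the Gaussian center time and the localization radius depend on $t$: take $t_0=t+O(R^2)$ and localization radius comparable to $\max\{R,\sqrt{t-s_1}\}$, so that the time-$s_1$ support of the cutoff stays inside the range of radii covered by \eqref{graphorholelemc}. This is precisely Lemma \ref{laterbndlem} of the paper, which is what the paper invokes at this step to obtain $\rho_1^{-\mathbf{n}}\mu_t(\mathbf{B}(z_0,4\rho_1))\le 2C_1\omega_{\mathbf{n}}\le\lambda^{-1}$; with that substitution your argument goes through.
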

%
%proof
%
\begin{proof}
We may assume $s_1=0$, $z_0=0$ and $\varrho=1$.
By Corollary \ref{heightboundcor} with $p=2/\alpha$
and assumption \eqref{graphorholelema}
we have
\begin{align}
\label{graphorholelem11}
\mathrm{spt}\mu_t\cap\mathbf{C}(0,1,1)\subset\mathbf{C}(0,1,2\sigma_1\beta\iota)
\end{align}
for all $t\in [0,s_2]$, where we used $s_2^{2/\alpha}\leq 4^{2/\alpha}(\sigma_1\beta\iota)^2\leq\sigma_1\beta\iota$
for $\sigma_1$ small enough.

Choose $\lambda_1\in (0,\kappa]$ such that
$\lambda_1\leq (2C_1)^{-1}$ and
$C_2(2^{\mathbf{n}}\lambda_1\omega_{\mathbf{n}})^{\frac{2}{\mathbf{n}+6}}\leq (16\mathbf{n})^{-1}$,
where $C_1$ and $C_2$ are the constants from Lemma \ref{laterbndlem}.
and Lemma \ref{clearoutlem} respectively.
Let $\Lambda\in (1,\infty)$ be from Theorem \ref{Blocalregthm}
chosen with respect to $\lambda=\lambda_1$.
%We may assume $\Lambda\geq 8\mathbf{n}$.
Consider the radius
\begin{align*}
\rho_1:=4^{-1}\Lambda^{-\frac{1}{2}}\iota\in(2\sigma_1\iota,\sqrt{s_2}),
\end{align*}
where we chose $\sigma_1$ small enough.
Set $t_2:=s_2-(8\mathbf{n})^{-1}(\rho_1/2)^2$.
We want to show
\begin{align}
\label{graphorholelem21}
\rho_1^{-\mathbf{n}}\mu_{t_2}(\mathbf{B}(0,\rho_1/2))\geq\lambda_1\omega_{\mathbf{n}}.
\end{align}
Suppose this would be false,
then we can use Lemma \ref{clearoutlem} 
with $\eta=2^{\mathbf{n}}\omega_{\mathbf{n}}\lambda_1$ and $R=\rho_1/2$
to obtain $\mu_{s_2}(\mathbf{B}(0,\rho_1/4))=0$.
In view of \eqref{graphorholelem11}
this contradicts \eqref{graphorholelemb}.
Thus \eqref{graphorholelem21} has to be true.

By assumption \eqref{graphorholelemc}, Lemma \ref{laterbndlem} 
and $\lambda_1\leq\min\{\kappa,(2C_1\omega_{\mathbf{n}})^{-1}\}$ we obtain
\begin{align}
\label{graphorholelem31}
\rho_1^{-\mathbf{n}}\mu_{t}(\mathbf{B}(0,4\rho_1))
&\leq 2C_1\omega_{\mathbf{n}}
\leq \lambda_1^{-1},
\\
\label{graphorholelem41}
\rho_1^{-\mathbf{n}}\mu_{0}(\mathbf{B}(0,\rho_1))
&\leq (2-\kappa)\omega_{\mathbf{n}}
\leq (2-\lambda_1)\omega_{\mathbf{n}}.
\end{align}

Now choose $h_0$ according to Theorem \ref{Blocalregthm}
with respect to $\lambda=\lambda_1$ as above.
Set $h:=\min\{h_0,\Lambda^{-1}\beta\}$. 
Note that $2\sigma_1\beta\iota\leq h\rho_1$
for $\sigma_1$ small enough.
Thus \eqref{graphorholelem11}, \eqref{graphorholelem21},
\eqref{graphorholelem31} and \eqref{graphorholelem41}
let us apply Theorem \ref{Blocalregthm}
with $\rho=\rho_1$
which establishes the result.
\end{proof}

%
%
%improved graph or hole
%
%
Looking at Proposition \ref{graphorholeprop} we see that 
at time $t_2-2\iota\varrho^2$ we satisfy a non-vanishing condition
in an increased cylinder.
This allows to iterate above Proposition
to obtain graphical representability inside a larger cylinder.
%
%
%Lemma
%
%
\begin{lem}
\label{improvedgohlem}
For all $\kappa\in (0,1)$ there exists a $\sigma_2\in (0,1)$
such that the following holds:

Let $\beta\in (0,1]$, $\lambda\in (0,\sigma_2\beta]$, 
$s_4\in (2\lambda\varrho_0^2,2\sqrt[32]{\lambda}\varrho_0^2]$,
$y_0\in\mathbb{R}^{\mathbf{n}+\mathbf{k}}$
and let $(\mu_t)_{t\in [0,s_4]}$ be a Brakke flow in $\mathbf{C}(y_0,5\varrho_0,2\varrho_0)$.
Suppose we have
\begin{align}
\label{improvedgohlemb}
&\mathrm{spt}\mu_{s_4}\cap\mathbf{C}(y_0,\sigma_2\lambda\varrho_0,\varrho_0)\neq\emptyset,
\\
\label{improvedgohlema}
&\mathrm{spt}\mu_{0}\cap\mathbf{C}(y_0,5\varrho_0,2\varrho_0)
\subset\mathbf{C}(y_0,5\varrho_0,\sigma_2\beta\lambda\varrho_0),
\\
\label{improvedgohlemc}
&r^{-\mathbf{n}}\mu_{0}(\mathbf{B}(y,r))
\leq (2-\kappa)\omega_{\mathbf{n}}
\end{align}
for all $y\in\mathbf{B}^{\mathbf{n}}(\hat{y}_0,3\varrho_0)\times\{\tilde{y}_0\}$ 
and all $r\in (\sigma_2\lambda\varrho_0,8\mathbf{n}\sqrt{s_4}]$.
Set $I:=(\lambda\varrho_0^2,s_4-\lambda\varrho_0^2)$.

Then there exists a
$v\in\mathcal{C}^{\infty}\left(I\times\mathbf{B}^{\mathbf{n}}(\hat{y}_0,2\varrho_0),\mathbb{R}^{\mathbf{k}}\right)$
such that
\begin{align*}
\mu_{t}\mres\mathbf{C}(y_0,2\varrho_0,\varrho_0)
=\mathscr{H}^{\mathbf{n}}\mres\mathrm{graph}(v(t,\cdot\,))
\;\;\;\text{for all}\;t\in I.
\end{align*}
Moreover $\sup|Dv|\leq\beta$
and $F_t(\hat{x})=(\hat{x},v(t,\hat{x}))$ satisfies \eqref{smoothmcf}.
\end{lem}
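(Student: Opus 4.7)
The aim is to apply Proposition \ref{graphorholeprop} at a family of centers $y \in \mathbf{B}^{\mathbf{n}}(\hat{y}_0, 2\varrho_0) \times \{\tilde{y}_0\}$ and piece together the resulting graphical representations over thin cylinders $\mathbf{C}(y, \sigma_1\iota\varrho_0, \varrho_0)$ to cover the target region $\mathbf{C}(y_0, 2\varrho_0, \varrho_0)$. First I would fix $\varrho = \varrho_0$, $s_1 = 0$, $s_2 = s_4$, and choose $\iota$ of order $\sqrt{\lambda}$, together with the Proposition's parameter $\alpha$ small enough (say $\alpha = 1/16$) that the upper time bound $4(\sigma_1\beta\iota)^{\alpha}\varrho_0^2$ comfortably exceeds the Lemma's assumed $s_4 \leq 2\sqrt[32]{\lambda}\varrho_0^2$. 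The constraint $\iota \leq \sigma_1\beta$ is enforced by taking $\sigma_2$ small enough depending on $\sigma_1$, using $\lambda \leq \sigma_2\beta$.

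For each candidate center $y \in \mathbf{B}^{\mathbf{n}}(\hat{y}_0, 3\varrho_0) \times \{\tilde{y}_0\}$ the verification of the hypotheses of Proposition \ref{graphorholeprop} runs as follows: the cylinder inclusion $\mathbf{C}(y, 2\varrho_0, 2\varrho_0) \subset \mathbf{C}(y_0, 5\varrho_0, 2\varrho_0)$ is immediate from $|\hat{y}-\hat{y}_0|\leq 3\varrho_0$ and $\tilde{y}=\tilde{y}_0$; the height bound \eqref{improvedgohlema} yields \eqref{graphorholelema} provided $\sigma_2\lambda \leq \sigma_1\iota$; and the density bound \eqref{improvedgohlemc} at $y$ directly supplies \eqref{graphorholelemc} on the interval $r \in (\sigma_1\iota\varrho_0, 8\mathbf{n}\sqrt{s_4}]$. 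The only hypothesis of the Proposition that is not immediately available is the non-vanishing of $\mu_{s_4}$ in the thin cylinder around $y$; this is assumed only at $y=y_0$.

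To propagate the non-vanishing I would iterate outward from $y_0$. Applying Proposition \ref{graphorholeprop} at $y_0$ first produces graphical representability in $\mathbf{C}(y_0, \sigma_1\iota\varrho_0, \varrho_0)$ on a time interval containing $I$; in particular it provides non-vanishing throughout this thin cylinder at every time in $I$. For a point $y_1$ displaced from $y_0$ into the previously-graphical region, apply Proposition again at $y_1$ with final time $s_2$ chosen slightly below $s_4$, where graphical representability at $y_0$ is already established; this supplies the non-vanishing hypothesis near $y_1$ and extends the graphical region in the base direction. Iterating, after a controlled number of steps the graphical region covers all of $\mathbf{B}^{\mathbf{n}}(\hat{y}_0, 2\varrho_0)$.

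The main obstacle will be bookkeeping the time losses: each iteration removes a strip of width roughly $\iota^2\varrho_0^2$ from the available time interval, and about $1/(\sigma_1\iota)$ iterations are needed to span radius $2\varrho_0$. The total loss must not exceed the cushion $\lambda\varrho_0^2$ built into $I$; this is precisely what the bound $s_4 \leq 2\sqrt[32]{\lambda}\varrho_0^2$ together with the choice of $\alpha$ and the smallness $\lambda\leq\sigma_2\beta$ are engineered to accommodate. Once this bookkeeping is managed, the smooth graphical representations on overlapping thin cylinders agree on intersections by uniqueness of smooth mean curvature flow, so they patch into a single $v \in \mathcal{C}^{\infty}(I \times \mathbf{B}^{\mathbf{n}}(\hat{y}_0, 2\varrho_0), \mathbb{R}^{\mathbf{k}})$ with $\sup|Dv| \leq \beta$ and satisfying \eqref{smoothmcf}, as required.
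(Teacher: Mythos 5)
Your architecture is the right one and is essentially the paper's: apply Proposition \ref{graphorholeprop} first at $y_0$, then repeatedly at centers shifted into the already-graphical region, using the graph established at a slightly earlier final time to supply the non-vanishing hypothesis \eqref{graphorholelemb}, and assemble the thin graphical cylinders (the patching is in fact automatic, since each application represents the same measure $\mu_t$ as a graph on the overlap; no uniqueness theorem for mean curvature flow is needed). The genuine gap is quantitative and sits exactly at the point you flag as "the main obstacle": with your choice $\iota\sim\sqrt{\lambda}$ the bookkeeping cannot close. Each step advances the graphical radius by only $\sigma_1\iota\varrho_0$ and costs a time strip of order $\iota^2\varrho_0^2$, so spanning radius $2\varrho_0$ needs about $2/(\sigma_1\iota)$ steps and a cumulative loss of order $\sigma_1^{-1}\iota\varrho_0^2$. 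To stay inside the cushion $\lambda\varrho_0^2$ of $I$ you must have $\iota\lesssim\sigma_1\lambda$; with $\iota\sim\sqrt{\lambda}$ the loss is of order $\sigma_1^{-1}\sqrt{\lambda}\varrho_0^2\gg\lambda\varrho_0^2$, and since $s_4$ may be as small as $2\lambda\varrho_0^2$, already the single-step loss $\iota^2\varrho_0^2\sim\lambda\varrho_0^2$ eats half the flow's lifetime, so the iteration runs out of time long before reaching radius $2\varrho_0$. Your claim that the hypothesis $s_4\le 2\sqrt[32]{\lambda}\varrho_0^2$ is "engineered to accommodate" this is a misreading: that is an upper bound on $s_4$, whose only role is the window condition $s_2\le s_1+4(\sigma_1\beta\iota)^{\alpha}\varrho^2$ of Proposition \ref{graphorholeprop}; it provides no room to absorb a cumulative loss exceeding $\lambda\varrho_0^2$.

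The repair is the paper's parameter choice: take $\iota=\sqrt{\sigma_2}\,\lambda$, so the total loss is of order $\sigma_1^{-1}\sqrt{\sigma_2}\,\lambda\varrho_0^2<\lambda\varrho_0^2$ once $\sigma_2$ is small relative to $\sigma_1$ (and the height and density hypotheses \eqref{improvedgohlema}, \eqref{improvedgohlemc} still transfer, since $\sigma_2\lambda\le\sigma_1\iota$ and $\sigma_2\beta\lambda\le\sigma_1\beta\iota$ for $\sqrt{\sigma_2}\le\sigma_1$). Because $\iota$ is now of order $\lambda$ and only $\lambda\le\sigma_2\beta$ is available, the Proposition's exponent must be taken as small as $\alpha=1/64$ so that $4(\sigma_1\beta\iota)^{\alpha}\ge 2\lambda^{1/32}$; note that your $\alpha=1/16$ does not even suffice for your own $\iota\sim\sqrt{\lambda}$ unless $\sigma_1\beta$ is bounded below, since the needed inequality reduces to $(\sigma_1\beta)^{1/16}\ge 1/2$, which fails for small $\beta$. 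With $\iota\sim\sqrt{\sigma_2}\lambda$ and $\alpha=1/64$ your iteration scheme goes through and coincides with the paper's induction over the statements $stat(m)$.
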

%
%proof
%
\begin{proof}
We may assume $y_0=0$, $\varrho_0=1$ and set $\alpha:=1/64$.
Let $\sigma_1$ be from Proposition \ref{graphorholeprop} with respect to $\kappa$.
For $m\in\mathbb{N}$ set 
\begin{align*}
R_m:=m\sigma_1\sqrt{\sigma_2}\lambda,
\quad
T_m:=s_4-2m\sigma_2\lambda^2,
\quad
J_m:=(\lambda^2,T_m).
\end{align*}
Note that $T_m= s_4-2\sigma_1^{-1}\sqrt{\sigma_2}R_m\lambda$,
in particular for $R_m\leq 3$ and $\sigma_2$ small enough
we have $T_m > s_4-\lambda > 4\sigma_2\lambda^2$.

Consider the following statement:
\\
$stat(m):\Leftrightarrow$
There exists a 
$v_m\in\mathcal{C}^{\infty}\left(J_m\times \mathbf{B}^{\mathbf{n}}(0,R_m),\mathbb{R}^{\mathbf{k}}\right)$
such that
\begin{align*}
\mu_{t}\mres\mathbf{C}(0,R_m,1)=\mathscr{H}^{\mathbf{n}}\mres\mathrm{graph}(v_m(t,\cdot\,))
\quad\text{for all}\;t\in J_m,
\\
\sup|Dv_m|\leq\Lambda_2\lambda
\;\text{and}\;
F_t(\hat{x})=(\hat{x},v_m(t,\hat{x}))\;\text{satisfies}\;\eqref{smoothmcf}.
\end{align*}
By Proposition \ref{graphorholeprop} 
with $s_2=s_4$, $\varrho=1$, $\iota=\sqrt{\sigma_2}\lambda$
we see that $stat(1)$ is true.
Now suppose $stat(m_0)$ holds for some $m_0\in\mathbb{N}$ with $R_{m_0}\leq 3$.
Using Proposition \ref{graphorholeprop} with $s_2=T_{m_0}-\sigma_2\lambda^2$, $\varrho=1$, $\iota=\sqrt{\sigma_2}\lambda$
and arbitrary 
$z_0\in\mathbf{B}^{\mathbf{n}}(0,R_{m_0}+\sigma_1\sqrt{\sigma_2}\lambda)\times\{0\}^{\mathbf{k}}$
yields that also $stat(m_0+1)$ is true.
Thus $stat(m_1)$ holds for some $m_1\in\mathbb{N}$ with $2\leq R_{m_1}\leq 3$,
which establishes the result.
\end{proof}

%
%
%
%stay graphical or hole
%
%
%
Now consider a Brakke flow which is initially graphical with small Lipschitz constant.
Then the conditions of Lemma \ref{improvedgohlem} are satisfied for arbitrarily small scaling.
Thus we can extend the interval of graphical representation up to the initial time.
%
%
%lemma
%
%
\begin{lem}
\label{staygraphohlem}
There exists an $\sigma_3\in (0,1)$
such that the following holds:

Let $\beta\in (0,1]$, $\iota\in (0,\sigma_3\beta]$, $\rho_0\in (0,\infty)$,
$t_1\in\mathbb{R}$, $t_2\in (t_1+\iota\rho_0^2,t_1+\sqrt[32]{\iota}\rho_0^2]$,
$z_0\in\mathbb{R}^{\mathbf{n}+\mathbf{k}}$
and let $(\mu_t)_{t\in [t_1,t_2]}$ be a Brakke flow in $\mathbf{C}(z_0,5\rho_0,2\rho_0)$.
Assume $z_0\in\mathrm{spt}\mu_{t_1}$ and
\begin{align}
\label{staygraphohlema}
\mathrm{spt}\mu_{t_2}\cap\mathbf{C}(z_0,\sigma_3\iota\rho_0,\rho_0)\neq\emptyset.
\end{align}
Suppose there exists an $u_0\in\mathcal{C}^{0,1}\left(\mathbf{B}^{\mathbf{n}}(\hat{z}_0,5\rho_0),\mathbb{R}^{\mathbf{k}}\right)$
with $\mathrm{lip}(u_0)\leq\sigma_3\beta\iota$ and
\begin{align}
\label{staygraphohlemb}
\mu_{t_1}\mres\mathbf{C}(z_0,5\rho_0,2\rho_0)=\mathscr{H}^{\mathbf{n}}\mres\mathrm{graph}(u_0).
\end{align}
Set $I:=(t_1,t_2-\iota\rho_0^2)$.

Then there exists an 
$u\in\mathcal{C}^{\infty}\left(I\times\mathbf{B}^{\mathbf{n}}(\hat{z}_0,2\rho_0),\mathbb{R}^{\mathbf{k}}\right)$
such that
\begin{align*}
%\label{staygraphohlemc}
\mu_{t}\mres\mathbf{C}(z_0,2\rho_0,\rho_0)=\mathscr{H}^{\mathbf{n}}\mres\mathrm{graph}(u(t,\cdot\,))
\;\;\;\text{for all}\;t\in I.
\end{align*}
Moreover $\sup|Du|\leq\beta$ and 
$F_t(\hat{x})=(\hat{x},u(t,\hat{x}))$ satisfies \eqref{smoothmcf}.
\end{lem}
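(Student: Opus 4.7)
The plan is to apply Lemma~\ref{improvedgohlem} iteratively at progressively smaller scales. The initial Lipschitz graph supplies the slab hypothesis at every scale around any centre on the graph, while the graphical representation produced by the previous iteration supplies the non-vanishing hypothesis at the next iteration's end-time.

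The first application is at the full scale $\varrho_0=\rho_0$: take $y_0=z_0$, $s_4=t_2-t_1$, and $\lambda=\iota/2$. The time-window condition $s_4\in(2\lambda\rho_0^2,2\sqrt[32]{\lambda}\rho_0^2]$ follows from $t_2-t_1\in(\iota\rho_0^2,\sqrt[32]{\iota}\rho_0^2]$. The slab condition~\eqref{improvedgohlema} follows from~\eqref{staygraphohlemb} together with $\mathrm{lip}(u_0)\leq\sigma_3\beta\iota$ once $\sigma_3\leq\sigma_2/10$; the non-vanishing~\eqref{improvedgohlemb} reduces to~\eqref{staygraphohlema} for $\sigma_3\leq\sigma_2/2$; and the density ratio~\eqref{improvedgohlemc} with $\kappa=1/2$ holds since the Lipschitz graph satisfies $R^{-\mathbf{n}}\mu_{t_1}(\mathbf{B}(y,R))\leq(1+C\,\mathrm{lip}(u_0)^2)\omega_{\mathbf{n}}$. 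The output is a smooth $v$ on $(t_1+\iota\rho_0^2/2,\,t_2-\iota\rho_0^2/2)\times\mathbf{B}^{\mathbf{n}}(\hat{z}_0,2\rho_0)$ with $\sup|Dv|\leq\beta$, already covering $I=(t_1,t_2-\iota\rho_0^2)$ except in the neighbourhood $(t_1,t_1+\iota\rho_0^2/2]$ of $t_1$.

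For any $t^*\in(t_1,t_1+\iota\rho_0^2/2]$ and any $\hat{y}_0\in\mathbf{B}^{\mathbf{n}}(\hat{z}_0,2\rho_0)$, I iterate at a smaller scale centred at $y_0:=(\hat{y}_0,u_0(\hat{y}_0))$ with $\lambda_0:=5\sigma_3\iota/\sigma_2$ (the minimal $\lambda$ still permitting the slab condition, since the graph lies in a slab of height $5\,\mathrm{lip}(u_0)\,r$ at scale $r$). Given the current lower endpoint $L_n$ of the graphical window, I choose $r_{n+1}^2$ of order $L_n/\sqrt[32]{\lambda_0}$, which lets me pick $s_4'$ just above $L_n$ inside the time window of Lemma~\ref{improvedgohlem}; the new lower endpoint is $L_{n+1}=\lambda_0 r_{n+1}^2$, of order $L_n\lambda_0^{31/32}$, giving geometric decay since $\lambda_0<1$. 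The non-vanishing at $t_1+s_4'$ comes from the previous iteration through $(\hat{y}_0,v(t_1+s_4',\hat{y}_0))\in\mathrm{spt}\mu_{t_1+s_4'}$; to place this point inside $\mathbf{C}(y_0,\sigma_2\lambda_0 r_{n+1},r_{n+1})$ I must show $|v(t_1+s_4',\hat{y}_0)-u_0(\hat{y}_0)|<r_{n+1}$, which I obtain from Corollary~\ref{heightboundcor} applied at scale $r_{n+1}$ around $y_0$ (with $p=1$ already sufficient): the initial slab height $5\,\mathrm{lip}(u_0)\,r_{n+1}$ plus the growth contribution $C s_4'/r_{n+1}\leq C\sqrt[32]{\lambda_0}\,r_{n+1}$ stays strictly below $r_{n+1}$ once $\sigma_3$ is chosen small enough.

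The local graphs at different centres $\hat{y}_0$ agree on overlaps because $\mathrm{spt}\mu_t$ is determined by $\mu_t$, so they assemble into a single smooth $u$ on $I\times\mathbf{B}^{\mathbf{n}}(\hat{z}_0,2\rho_0)$ with $\sup|Du|\leq\beta$, and equation~\eqref{smoothmcf} is inherited from each piece. I expect the main obstacle to be the height-displacement estimate above: the constraint $s_4'>L_n$ (needed to use the previous graph) forces the time gap $s_4'$ to be smaller than the parabolic scale $r_{n+1}^2$ only by a factor of order $\sqrt[32]{\lambda_0}$, so the factor $\lambda_0^{1/32}$ must be absorbed by the smallness of $\sigma_3$, and it is precisely the polynomial growth supplied by Corollary~\ref{heightboundcor} that makes this absorption possible and closes the iteration.
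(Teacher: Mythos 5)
Your proposal is correct and follows essentially the same approach as the paper's proof: iterate Lemma~\ref{improvedgohlem} at progressively smaller scales centered at points on the initial graph, using the previous step's graphical window to supply the non-vanishing hypothesis and Corollary~\ref{heightboundcor} for the slab bound, so that the lower end of the graphical window is pushed down to $t_1$. The paper phrases the iteration as a halving induction $s_0\mapsto s_0/2$ with scale $\varrho_0=\sqrt{s_0/\iota}$ and fixed $\lambda=\iota/2$, while you fix $\lambda_0$ near the minimal value permitted by the initial slab height and shrink the scale to hit the previous lower endpoint, obtaining decay factor $\lambda_0^{31/32}$ instead of $1/2$; the difference is cosmetic.
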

%
%proof
%
\begin{proof}
We may assume $t_1=0$, $z_0=0$ and $\rho_0=1$.

For $s\in (0,\iota/4]$
we consider the following statement:
\\
$stat(s):\Leftrightarrow$ There exists an 
$v_s\in\mathcal{C}^{\infty}\left(
(s,t_2-\iota/4)\times\mathbf{B}^{\mathbf{n}}(0,1),\mathbb{R}^{\mathbf{k}}\right)$
such that
\begin{align}
\label{staygraphohlem31}
\mu_{t}\mres\mathbf{C}(0,2,1)
=\mathscr{H}^{\mathbf{n}}\mres\mathrm{graph}(v_s(t,\cdot\,))
\;\;\text{for all}\;t\in (s,t_2-\iota/4),
\\
\label{staygraphohlem32}
\sup|Dv_s|\leq\beta
\;\text{and}\;
F_t(\hat{x})=(\hat{x},v_s(t,\hat{x}))\;\text{satisfies}\;\eqref{smoothmcf}
\end{align}

Suppose $stat(s_0)$ holds for some $s_0\in (0,\iota/4]$.
We want to show that in this case also $stat(\frac{s_0}{2})$ holds.
Let $\hat{y}\in\mathbf{B}^{\mathbf{n}}(0,2)$
be arbitrary, 
set $y:=(\hat{y},u_0(\hat{y}))$
and $\varrho_0:=\sqrt{\iota^{-1}s_0}\leq 1/2$.
Using assumption \eqref{staygraphohlemb} and $\mathrm{lip}(u_0)\leq\sigma_3\beta\iota$ yields
\begin{align}
\label{staygraphohlem41}
\mathrm{spt}\mu_{0}\cap\mathbf{C}(y,5\varrho_0,3/2)
\subset\mathbf{C}(y,5\varrho_0,5\sigma_3\beta\iota\varrho_0).
\end{align}
Then by Corollary \ref{heightboundcor}
with $R_1=r_0=2\varrho_0$ and $R_2=5/4$
we obtain
\begin{align}
\label{staygraphohlem42}
\mathrm{spt}\mu_{t}\cap\mathbf{C}((\hat{y},0),2\varrho_0,1)
\subset\mathbf{C}(y,2\varrho_0,\varrho_0)
\end{align}
for all $t\in [0,2s_0]$.
Here we estimated 
$5\sigma_3\beta\iota\varrho_0+\varrho_0^{-1}s_0\leq C\iota\varrho_0\leq\varrho_0$
and $|u_0(\hat{y})|\leq 1/4$.

Set $J_2:=(s_0/2,3s_0/2)$.
We want to use Lemma \ref{improvedgohlem} with
$\kappa=\frac{1}{2}$, $\lambda=\iota/2$, $s_4=2s_0$ and $y_0=y$.
Choosing $\sigma_3$ small enough we obtain the following:
$\iota\varrho_0^2/2\leq s_0/2$;
Statement \eqref{staygraphohlem41}
implies \eqref{improvedgohlema};
Using assumption \eqref{staygraphohlemb}
and $\mathrm{lip}(u_0)\leq\sigma_3\beta\iota$, 
we see that \eqref{improvedgohlemc} holds.
Moreover by \eqref{staygraphohlem42} and as $s_0<2s_0<t_2-\iota/4$ 
we can use assumption \eqref{staygraphohlem31} to show \eqref{improvedgohlemb}.
Then by Lemma \ref{improvedgohlem}
we obtain an
$v_{s,\hat{y}}\in\mathcal{C}^{\infty}\left(
J_2\times\mathbf{B}^{\mathbf{n}}(\hat{y},2\varrho_0),\mathbb{R}^{\mathbf{k}}\right)$
with
\begin{align*}
\mu_{t}\mres\mathbf{C}((\hat{y},0),2\varrho_0,1)
=\mu_{t}\mres\mathbf{C}(y,2\varrho_0,\varrho_0)
=\mathscr{H}^{\mathbf{n}}\mres\mathrm{graph}(v_{s,\hat{y}}(t,\cdot\,))
\end{align*}
for all $t\in J_2$.
Here we used \eqref{staygraphohlem42} to obtain the first equality.
Also $F(t,\hat{x})=(\hat{x},v_{s,\hat{y}}(t,\hat{x}))$ satisfies \eqref{smoothmcf}
and $\sup|Dv_{s,\hat{y}}|\leq\beta$.
As $\hat{y}\in\mathbf{B}^{\mathbf{n}}(0,2)$ was arbitrary
this shows $stat(\frac{s_0}{2})$ is true.

Similarly we can use Lemma \ref{improvedgohlem} 
with $y_0=0$, $s_4=t_2$, $\varrho_0=1$ and $\lambda=\iota/4$
to obtain that $stat(\iota/4)$ is true
for $\sigma_3$ small enough. 
Hence we can start an iteration
which yields that $stat(0)$ holds.
This establishes the result.
\end{proof}

%
%
%
%stay graphical in small nbhd
%
%
%
Consider the situation of Lemma \ref{staygraphohlem}.
If $\hat{z}_0\times\mathbb{R}^{\mathbf{k}}$ intersects $\mu_{t_2}$ and $\iota$ is small enough
we have that $(\mu_t)_{t\in [t_2-\iota\rho_0^2,t_2]}$ satisfies the conditions of
Lemma \ref{staygraphohlem} on the smaller scale $\rho_0/2$
with $\sigma_3\beta\iota$ replaced by $\beta$.
Thus we can use $t_2-\iota$ as the new starting time.
This yields an iteration and by curvature bounds for graphical mean curvature flow,
we can assure that the gradient does not blow up.
This leads to the following:
%
%
%Lemma
%
%
\begin{lem}
\label{staygraphlem}
There exists a $\sigma_4\in (0,1)$ such that the following holds:

Let $\eta\in (0,\sigma_4]$, $\varrho_0\in (0,\infty)$, 
$s_1\in\mathbb{R}$, $s_2\in (s_1,s_1+\eta\varrho_0^2]$,
$y_0\in\mathbb{R}^{\mathbf{n}+\mathbf{k}}$
and let $(\mu_t)_{t\in [s_1,s_2]}$ be a Brakke flow in $\mathbf{C}(y_0,2\varrho_0,2\varrho_0)$.
Assume $y_0\in\mathrm{spt}\mu_{s_1}$ and
\begin{align}
\label{staygraphlema}
\mathrm{spt}\mu_{s_2}\cap\left(\{\hat{y}_0\}^{\mathbf{n}}\times\mathbf{B}^{\mathbf{k}}(\tilde{y}_0,\varrho_0)\right)\neq\emptyset.
\end{align}
Suppose there exists a $v_0\in\mathcal{C}^{0,1}\left(\mathbf{B}^{\mathbf{n}}(\hat{y}_0,2\varrho_0),\mathbb{R}^{\mathbf{k}}\right)$
with $\mathrm{lip}(v_0)\leq\eta^4$ and
\begin{align}
\label{staygraphlemb}
\mu_{s_1}\mres\mathbf{C}(y_0,2\varrho_0,2\varrho_0)=\mathscr{H}^{\mathbf{n}}\mres\mathrm{graph}(v_0).
\end{align}
Let $s\in(s_1,s_2)$ and $\varrho(s):=\eta^{-1/16}\sqrt{s_2-s}$.

Then there exists a 
$v_s\in\mathcal{C}^{\infty}\left((s_1,s)\times\mathbf{B}^{\mathbf{n}}(\hat{y}_0,\varrho(s)),\mathbb{R}^{\mathbf{k}}\right)$
such that
\begin{align*}
%\label{staygraphlemc}
\mu_{t}\mres\mathbf{C}(y_0,\varrho(s),\varrho_0)=\mathscr{H}^{\mathbf{n}}\mres\mathrm{graph}(v_s(t,\cdot))
\;\;\;\text{for all}\;t\in (s_1,s).
\end{align*}
Moreover $\sup |Dv_s|\leq\eta$
and $F_t(\hat{x})=(\hat{x},v_s(t,\hat{x}))$ satisfies \eqref{smoothmcf}.
\end{lem}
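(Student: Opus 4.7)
After normalising so that $s_1 = 0$, $y_0 = 0$, and $\varrho_0 = 1$, I fix $s \in (0,s_2)$ and set $r := \sqrt{s_2 - s}$, so that $\varrho(s) = \eta^{-1/16} r$. My plan is to iterate Lemma \ref{staygraphohlem} on a dyadically shrinking sequence of spatial scales $\rho_n \sim 2^{-n}$, from $\rho_0 \sim 1$ down to $\rho_N \sim \varrho(s)$. At each application I will take $\beta = \eta$ and $\iota = \eta^3/\sigma_3$, so that the required initial Lipschitz bound $\sigma_3 \beta \iota = \eta^4$ matches our hypothesis at step $0$; the task is to \emph{recover} this same initial bound at the start of each subsequent step, now on the smaller scale $\rho_{n+1}$. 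A single application at scale $\varrho(s)/2$ starting at $t_1 = 0$ is insufficient for $s$ close to $s_2$, because the admissible upper interval length $\sqrt[32]{\iota}\rho_0^2 \sim \eta^{3/32}\varrho(s)^2$ then drops below $s_2 - s_1$. Hence the iteration must produce a later restart time $t_1(s)$ together with graphical data of Lipschitz constant $\leq \eta^4$ on $\mathbf{B}^{\mathbf{n}}(0, 5\rho_{n+1}/3)$ at that time.

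The base case is Lemma \ref{staygraphohlem} applied with $z_0 = 0$, $\rho_0 = 1/3$, $t_1 = 0$, $\beta = \eta$, $\iota = \eta^3/\sigma_3$ and $t_2$ close to $s_2$; this yields the smooth graphical representation $u_0$ on $\mathbf{C}(0, 2/3, 1/3)$ over some interval $(0,\tau_1)$ with $\sup|Du_0| \leq \eta$. For the inductive step, given a smooth graphical flow $u_n$ on $\mathbf{C}(0,\rho_n,1)$ for $t\in(0,\tau_n)$ (where the height can be taken as $1$ via Corollary \ref{heightboundcor}, since the flow lies in a very thin slab), I invoke the smooth MCF curvature estimates from Appendix \ref{curvature_estimates} (in particular Lemma \ref{wangcurveestlem}) to find a time $\tau'_n$ slightly before $\tau_n$ at which the Lipschitz constant of $u_n$ on $\mathbf{B}^{\mathbf{n}}(0, 5\rho_{n+1}/3)$ has decayed to $\leq \eta^4$; the gain is due to parabolic smoothing on the smaller scale. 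I then re-apply Lemma \ref{staygraphohlem} at $t_1 = \tau'_n$ with $\rho_0 = \rho_{n+1}/3$, extending the graphical representation to $\mathbf{C}(0, 2\rho_{n+1}/3, \rho_{n+1}/3)$ for an interval $(\tau'_n, \tau_{n+1})$ with $\tau_{n+1} > \tau_n$. After $N = O(\log(1/\varrho(s)))$ such steps I reach the target scale $\rho_N \sim \varrho(s)$.

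The non-vanishing hypothesis at each intermediate $t_2$ in the applications of Lemma \ref{staygraphohlem} will be verified by propagating the given non-vanishing at $s_2$ backward in time. Because $v_0$ is Lipschitz with constant $\leq \eta^4$, a high-exponent application of Corollary \ref{heightboundcor} forces $\mathrm{spt}\mu_{s_2} \cap \mathbf{C}(0,1,1)$ to lie within $\mathbf{C}(0,1,h)$ for some tiny $h$; hence the assumed non-vanishing in $\{0\}\times\mathbf{B}^{\mathbf{k}}(0,\varrho_0)$ actually produces a point of $\mathrm{spt}\mu_{s_2}$ in $\mathbf{B}(0,h)$. The contrapositive of Lemma \ref{clearoutlem} then yields non-vanishing in slightly larger cylinders centred at $0$ at all times $t_2$ not too far below $s_2$, which more than suffices for each iteration step.

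The main obstacle is the Lipschitz blow-up across iterations: Lemma \ref{staygraphohlem} promotes initial Lipschitz $\eta^4$ into final gradient bound $\eta$, which is far too large to be fed back as input for the next step. The resolution is the parabolic-smoothing reduction above, which is where the smooth graphical curvature estimates of Appendix \ref{curvature_estimates} are essential: after waiting time comparable to $\rho_{n+1}^2$, the interior gradient on the new scale $\rho_{n+1}$ has decayed back down to $\leq \eta^4$, exactly restoring the input hypothesis. The per-step time advance $\sim \sqrt[32]{\iota}\rho_n^2 \sim \eta^{3/32}\rho_n^2$, summed over $n$, easily exceeds the ambient budget $s_2-s_1 \leq \eta \varrho_0^2$; balancing this advance against the requirement $\rho_n^2 \gtrsim s_2 - \tau_n$ needed to keep Lemma \ref{staygraphohlem} applicable fixes the exponent $-1/16$ in $\varrho(s) = \eta^{-1/16}\sqrt{s_2-s}$.
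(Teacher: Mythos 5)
There is a genuine gap at the heart of the iteration, namely the claim that interior parabolic smoothing by itself returns the Lipschitz constant from $\eta$ back to $\eta^4$ on the smaller scale. That is not true: a flat plane tilted with gradient exactly $\eta/2$ is a stationary mean curvature flow, has vanishing curvature, and its gradient stays $\eta/2$ forever. The curvature estimate of Lemma \ref{wangcurveestlem} controls $|\mathbf{A}|$, i.e.\ the \emph{oscillation} of $Du$, not the size of $Du$ itself. To turn a curvature bound into a small-gradient bound you first need a point at which the gradient vanishes, and that is precisely what the paper engineers and what your proposal omits. In the paper's proof, at each step one locates a point $a_m\in\mathrm{spt}\mu_{\tau_m}$ near the axis, rotates coordinates by an $S\in\mathbf{SO}(\mathbf{n}+\mathbf{k})$ so that $S(\mathbb{R}^{\mathbf{n}}\times\{0\}^{\mathbf{k}})=\mathbf{T}(\mu_{\tau_m},a_m)$, and works with $\nu_t:=(S^{-1})_{\sharp}(\mu_t-a_m)$. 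In these rotated coordinates the graphical function $f$ satisfies $f(\tau_m,0)=0$ and $Df(\tau_m,0)=0$ by construction, and only \emph{then} does the curvature bound \eqref{staygraphlem24} yield the small Lipschitz estimate \eqref{staygraphlem44} on the smaller scale that feeds back into Lemma \ref{staygraphohlem}. Without this change of reference plane the recursion does not close.

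A second, related omission is the bookkeeping of the accumulated tilt. Once you start rotating the reference plane at each step, you must ensure that the total tilt relative to the original $\mathbb{R}^{\mathbf{n}}\times\{0\}^{\mathbf{k}}$ stays bounded by $\eta$; otherwise the final graphical representation over the \emph{original} coordinates may fail or the gradient bound may be violated. The paper carries the quantity $\sum_{i=0}^m 2^{-i-4}\eta$ through the induction (condition \eqref{staygraphlem23}), which stays below $\eta/8$ because the per-step tilt gain $C\eta_0^{m+33/2}$ is summable. Your proposal has no analogous ledger, and indeed cannot, because without the rotation step there is no per-step tilt gain to sum. You also need the curvature propagation of Lemma \ref{curveboundcor} to pass the curvature bound from one scale to the next through the restart time; Lemma \ref{wangcurveestlem} alone gives a bound of the form $C/(t-t_1)$ which degrades on the shrinking time scales rather than improving.
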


%
%proof
%
\begin{proof}
We may assume $s_1=0$, $y_0=0$ and $\varrho_0=1$.
For a smooth $\mathbf{n}$-dimensional submanifold $M$ and
$\mu=\mathscr{H}^{\mathbf{n}}\mres M$ set
\begin{align*}
\|\mathrm{tilt}(\mu)\|_{U}&:=\sup_{x\in M\cap U}
\|(\mathbb{R}^{\mathbf{n}}\times\{0\}^{\mathbf{k}})_{\natural}-\mathbf{T}(M,x)\|
\\
\|\mathbf{A}(\mu)\|_{U}&:=\sup_{x\in M\cap U}|\mathbf{A}(M,x)|
\end{align*}
for $U\subset\mathbb{R}^{\mathbf{n}+\mathbf{k}}$ open.
Here $\mathbf{A}$ is the second fundamental form of $M$.

For $m\in \mathbb{N}$ set
\begin{align*}
\eta_0:=\sqrt[16]{\eta},
\;\;\delta_0:=\eta_0^{-1}\sqrt{s_2},
\;\;\delta_{m}:=\eta_0^{2m+16}\delta_0,
\\
\tau_0^{+}:=s_2-4\eta_0^{44}s_2,
\quad\tau_m^{+}:=s_2-\eta_0^{m+15}\delta_{m}^2,
\\
\tau_0:=s_2-8\eta_0^{44}s_2,
\quad\tau_m:=s_2-2\eta_0^{m+15}\delta_{m}^2.
\end{align*}
Note that for all $m\in\mathbb{N}\cup\{0\}$
\begin{align}
\label{staygraphlem12}
0<\tau_{m+1}-\tau_{m}^+
<\tau_{m+1}^+-\tau_{m}
\leq s_2-\tau_{m}
\leq C\eta_0^{m+10}\delta_{m+1}^2.
\end{align}

For $m\in \mathbb{N}\cup\{0\}$ we consider the following statement:
\\
$stat(m):\Leftrightarrow$
There exists a 
$v_m\in\mathcal{C}^{\infty}\left(
(0,\tau_m^{+})\times\mathbf{B}^{\mathbf{n}}(0,2\delta_{m}),\mathbb{R}^{\mathbf{k}}\right)$
such that
\begin{align}
\label{staygraphlem21}
&\mu_{t}\mres U_m
=\mathscr{H}^{\mathbf{n}}\mres\mathrm{graph}(v_m(t,\cdot\,))
\;\;\text{for all}\;t\in (0,s)
\\
\label{staygraphlem22}
&F_t(\hat{x})=(\hat{x},v_m(t,\hat{x}))\;\text{satisfies}\;\eqref{smoothmcf},
\\
\label{staygraphlem23}
&\sup_{t\in (0,\tau_m^+)}
\|\mathrm{tilt}(\mu_t)\|_{U_{m}}
\leq\sum_{i=0}^{m}2^{-i-4}\eta,
\end{align}
\begin{align}
\label{staygraphlem24}
\delta_{m+1}^{2}\sup_{t\in [\tau_0/2,\tau_m]}\|\mathbf{A}(\mu_{t})\|_{U_{m}}^2
&\leq\eta_0^{2m+33},
\end{align}
where $U_m:=\mathbf{C}(0,2\delta_{m},1)$.

First we observe the following:
Consider $t\in [0,s_2)$
then there exists an $i\in\mathbb{N}\cup\{0\}$
such that $\tau_i\geq t$ and
\begin{align}
\label{staygraphlem27}
\eta_0^{-1}\sqrt{s_2-t}\leq\delta_{i}.
\end{align}
If $t\in [0,\tau_0]$
this directly follows from the definition of $\delta_0$.
For $t\in (\tau_0,s_2)$
choose $i\in\mathbb{N}$ such that (with \eqref{staygraphlem12})
\begin{align*}
s_2-\tau_i
\leq s_2-t
\leq s_2-\tau_{i-1}
\leq\eta_0^{i+8}\delta_{i}^2.
\end{align*}
Also note that
by \eqref{staygraphlemb}, $\mathrm{lip}(v_0)\leq\eta^4$, $\eta_0^{16}=\eta\leq\sigma_4$, $s_2\leq\eta$
and Corollary \ref{heightboundcor} with $R_2=1$, $R_1=r_0=\eta_0^{-6}\delta_0=\eta_0^{-7}\sqrt{s_2}\leq\eta_0$ and $p=8$ we have
\begin{align}
\label{staygraphlem11}
\mathrm{spt}\mu_{t}\cap\mathbf{C}(0,\eta_0^{-6}\delta_0,1)
\subset\mathbf{C}(0,\eta_0^{-6}\delta_0,\eta^3\delta_0)
\end{align}
for all $t\in [0,s_2]$ where we chose $\sigma_4$ small enough.

Next notice that $stat(0)$ is true.
To see this use Lemma \ref{staygraphohlem}
with $t_1=0$, $t_2=s_2$, $\beta=\eta_0^{16}/16$, $\iota=\eta_0^{46}$ and $\rho_0=2\delta_0$.
%Here $\sigma_3$ is the constant from Lemma \ref{staygraphohlem}.
Use $\eta_0^{16}=\eta\leq\sigma_4$
and choose $\sigma_4$ small enough
to obtain $\eta_0^{64}\leq\sigma_3\beta\iota$,
$s_2-\iota\rho_0^2\geq\tau_0^{+}$
and $\sqrt[32]{\iota}\rho_0^2\geq s_2$.
In particular use \eqref{staygraphlem11} to see that
$\mathrm{spt}\mu_{t}\cap\mathbf{C}(0,4\delta_0,2\delta_0)=\mathrm{spt}\mu_{t}\cap\mathbf{C}(0,4\delta_0,1)$.
This shows \eqref{staygraphlem21}-\eqref{staygraphlem23} for $m=0$ on the larger radius $4\delta_0$.
By Lemma \ref{wangcurveestlem} with $t_1=0$, $t_2=\tau_0^+$ and $\rho=2\delta_0$
we have
\begin{align*}
\sup_{t\in [\tau_0/2,\tau_0]}\|\mathbf{A}(\mu_{t})\|_{\mathbf{C}(0,2\delta_{m},1)}
\leq C\tau_0^{-1/2}
= Cs_2^{-1/2}
= C/(\eta_0\delta_0)
= C\eta_0^{17}/\delta_1,
\end{align*}
which completes $stat(0)$.

Now consider $m\in\mathbb{N}\cup\{0\}$
such that $stat(m)$ holds,
we want to show that then also $stat(m+1)$ holds.
By \eqref{staygraphlema} and \eqref{staygraphlem11} there exists a point
$a_{\omega}$ in $\mathrm{spt}\mu_{s_2}\cap(\{0\}^{\mathbf{n}}\times\mathbf{B}^{\mathbf{k}}(0,1/4))$.
Using \eqref{staygraphlem12} and Lemma \ref{barrierlem} with $z_0=a_{\omega}$ and $R=4\mathbf{n}\sqrt{s_2-\tau_m}$
we see
$\mathbf{B}(a_{\omega},C\eta_0^{5}\delta_{m+1})\cap\mathrm{spt}\mu_{\tau_m}\neq\emptyset$.
Now choose $a_m\in\mathrm{spt}\mu_{\tau_m}$ as a nearest point to $a_{\omega}$.
In particular $a_m\in\mathbf{B}(a_{\omega},C\eta_0^{5}\delta_{m+1})\subset\mathbf{C}(0,C\eta_0^{5}\delta_{m+1},1/2)$ and 
$a_{\omega}-a_m\in\mathbf{T}(\mu_{\tau_{m}},a_{m})^{\bot}$.
We claim
\begin{align}
\label{staygraphlem31}
\mathrm{spt}\mu_{t}\cap\mathbf{C}(0,\delta_{m},1)
\subset\mathbf{C}(a_{m},2\delta_{m},\delta_{m+1})
\end{align}
for all $t\in [\tau_{m},s_2]$.
For $m=0$ this directly follows from \eqref{staygraphlem11}, $|\hat{a}_0|\leq\delta_{0}$
and $\eta^3\delta_0=\eta_0^{30}\delta_1$. 
For $m\in\mathbb{N}$ note that by \eqref{staygraphlem21} and \eqref{staygraphlem23}
we can use Corollary \ref{heightboundcor}
with $t_1=\tau_{m}$, $t_2=s_2$, $R_1=r_0=\delta_{m}$, $R_2=1/4$ and $p=2$ to see
\begin{align*}
\mathrm{spt}\mu_{t}\cap\mathbf{C}(a_{m},2\delta_{m},1/4)
\subset\mathbf{C}(a_{m},2\delta_{m},C\eta\delta_{m}).
\end{align*}
for all $t\in [\tau_{m},s_2]$.
Then \eqref{staygraphlem31} follows from \eqref{staygraphlem11}, $|\hat{a}_m|\leq\delta_{m}$
and $C\eta\delta_{m}=C\eta_0^{14}\delta_{m+1}\leq\delta_{m+1}$ for $\sigma_4$ small enough. 

Fix an $S\in\mathbf{SO}(\mathbf{n}+\mathbf{k})$
with $S(\mathbb{R}^{\mathbf{n}}\times\{0\}^{\mathbf{k}})=\mathbf{T}(\mu_{\tau_{m}},a_{m})$
and set
\begin{align*}
\nu_t:=(S^{-1})_{\sharp}(\mu_t-a_{m})
\quad\text{for }t\in [0,s_2],
\end{align*}
in particular $(\nu_t)_{t\in [0,s_2]}$ is a Brakke flow in $\mathbf{C}(0,1,1)$.
Using \eqref{staygraphlem21} and \eqref{staygraphlem23}
for $\sigma_4$ small enough
there exists an
$f\in\mathcal{C}^{\infty}\left((0,\tau_{m}^+)\times\mathbf{B}^{\mathbf{n}}(0,\delta_{m}),\mathbb{R}^{\mathbf{k}}\right)$ 
with $\sup|Df|\leq\eta$ and
\begin{align}
\label{staygraphlem42}
\nu_{t}\mres\mathbf{C}(0,\delta_m,\delta_m)
=\mathscr{H}^{\mathbf{n}}\mres\mathrm{graph}(f(t,\cdot\,))
\;\;\text{for all}\;t\in (0,\tau_{m}^+).
\end{align}
By choice of $a_{\omega}$ and $a_m$ we also have
\begin{align}
\label{staygraphlem43}
S^{-1}(a_{\omega}-a_m)\in\big(\{0\}^{\mathbf{n}}\times\mathbf{B}^{\mathbf{k}}(0,C\eta_0^{5}\delta_{m+1})\big)\cap\mathrm{spt}\nu_{s_2}.
\end{align}
By defintion of $\nu$ its evident that $f(\tau_m,0)=0$ and $Df(\tau_m,0)=0$.
In view of \eqref{staygraphlem23} and \eqref{staygraphlem24} we can estimate
\begin{align}
\label{staygraphlem44}
\sup_{\mathbf{B}^{\mathbf{n}}(0,2^9\delta_{m+1})}|Df(\tau_m,\cdot\,)|\leq C\eta_0^{m+33/2}.
\end{align}
%where $l_1$ is the constant from Lemma \ref{wangcurveestlem}.
Here we used Corollary \cite[Cor.\ A2]{lahiri2}, $\delta_{m+1}\leq\eta_0^2\delta_m$, $\eta_0^{16}=\eta\leq\sigma_4$ and chose $\sigma_4$ small enough.
In view of $0\in\mathrm{spt}\nu$, \eqref{staygraphlem12}, \eqref{staygraphlem42}, \eqref{staygraphlem44}
we can use Corollary \ref{heightboundcor} to see
\begin{align}
\label{staygraphlem45}
\mathrm{spt}\nu_t\cap\mathbf{C}(0,2^{9}\delta_{m+1},2^{9}\delta_{m+1})
\subset\mathbf{C}(0,2^{9}\delta_{m+1},C\eta_0^{m+33/2}\delta_{m+1})
\end{align}
for all $t\in [\tau_{m},s_2]$.

We want to use Lemma \ref{staygraphohlem} with 
$t_1=\tau_m$, $t_2=s_2$, $\rho_0=2^6\delta_{m+1}$, 
$\beta=l_1$, $\iota=4^{-6}\eta_0^{m+16}$ and $z_0=0$,
where $l_1$ is the from Lemma \ref{wangcurveestlem}.
By $\eta_0^{16}=\eta\leq\sigma_4$ and for $\sigma_4$ small enough we see:
\eqref{staygraphlem43} implies  \eqref{staygraphohlema},
\eqref{staygraphlem42} implies \eqref{staygraphohlemb}
and \eqref{staygraphlem44} yields the desired Lipschitz bound.
Moreover with \eqref{staygraphlem12} we can estimate
$$\eta_0^{m+16}\delta_{m+1}^2
\leq s_2-\tau_{m+1}^+
\leq s_2-\tau_m
\leq\sqrt[32]{\eta_0^{m+16}}\delta_{m+1}^2.$$
Then we obtain 
a $g\in\mathcal{C}^{\infty}\left((\tau_{m},\tau_{m+1}^+)\times\mathbf{B}^{\mathbf{n}}(0,2^7\delta_{m+1}),\mathbb{R}^{\mathbf{k}}\right)$ 
with
\begin{align}
\label{staygraphlem51}
\nu_{t}\mres\mathbf{C}(0,2^7\delta_{m+1},2^7\delta_{m+1})
=\mathscr{H}^{\mathbf{n}}\mres\mathrm{graph}(g(t,\cdot\,))
%\;\;\text{for all}\;t\in (\tau_{m},\tau_{m+1}^+).
\end{align}
for all $t\in (\tau_{m},\tau_{m+1}^+)$ and $\sup|Dg|\leq l_1$.
By \eqref{staygraphlem42}, \eqref{staygraphlem51} and \eqref{staygraphlem12}
we can use Lemma \ref{wangcurveestlem} 
with $t_1=\tau_{m+1}^+-\delta_{m+1}^2\geq\tau_0-\delta_1>0$, $t_2=\tau_{m+1}^+$ and $\rho=2^6\delta_{m+1}$
to obtain
\begin{align}
\label{staygraphlem52}
\sup_{t\in [\tau_{m},\tau_{m+1}^+)}\|\mathbf{A}(\nu_t)\|^2_{\mathbf{C}(0,2^6\delta_{m+1},2^7\delta_{m+1})}
&\leq C\delta_{m+1}^{-2}.
\end{align}
Here we used \eqref{staygraphlem12} to estimate
\begin{align*}
\tau_{m}-(\tau_{m+1}^+-\delta_{m+1}^2)
\geq-C\eta_0^{m+10}\delta_{m+1}^2+\delta_{m+1}^2
\geq\delta_{m+1}^2/2.
\end{align*}
By definition of $\nu$ we have
$\|\mathbf{A}(\nu_{t})\|^2_{\mathbf{B}(0,2^6\delta_{m+1})}
\leq\|\mathbf{A}(\mu_{t})\|^2_{\mathbf{C}(0,\delta_{m},1)}$
for all $t\in (0,\tau_m]$.
Then in view of \eqref{staygraphlem12}, \eqref{staygraphlem24}, \eqref{staygraphlem42}, \eqref{staygraphlem51} and \eqref{staygraphlem52}
we can use Lemma \ref{curveboundcor}
with $t_1=\tau_{m}$, $t_2=\tau_{m+1}^+$, $\varrho=2^5\delta_{m+1}$, $L=C$ and $p=4$ to estimate
\begin{align}
\label{staygraphlem53}
\delta_{m+1}^2\|\mathbf{A}(\nu_t)\|^2_{\mathbf{B}(0,2^5\delta_{m+1})}
&\leq C\big(\eta_0^{2m+33}+\eta_0^{4(m+10)}\big)\leq C\eta_0^{2m+33}
\end{align}
for all $t\in [\tau_{m},\tau_{m+1}^+)$.

In view of \eqref{staygraphlem45}, \eqref{staygraphlem51} and \eqref{staygraphlem53}
we can use Lemma \cite[Lem.\ A4]{lahiri2} to obtain the gradient bound
\begin{align}
\label{staygraphlem56}
\sup_{[\tau_{m},\tau_{m+1}^+)\times\mathbf{B}^{\mathbf{n}}(0,2^5\delta_{m+1})}|Dg|\leq C\eta_0^{m+33/2}.
\end{align}

By definition of $\nu$, \eqref{staygraphlem23}, \eqref{staygraphlem51} and \eqref{staygraphlem56}
there exists a smooth function
$h\in\mathcal{C}^{\infty}\left((\tau_{m},\tau_{m+1}^+)\times\mathbf{B}^{\mathbf{n}}(0,8\delta_{m+1}),\mathbb{R}^{\mathbf{k}}\right)$ 
with
\begin{align*}
\mu_{t}\mres\mathbf{C}(a_m,8\delta_{m+1},\delta_{m+1})
=\mathscr{H}^{\mathbf{n}}\mres\mathrm{graph}(h(t,\cdot\,))
\\
\|\mathrm{tilt}(\mu_t)\|_{\mathbf{C}(a_m,8\delta_{m+1},\delta_{m+1})}
\leq\sum_{i=0}^{m}2^{-i-4}\eta+C\eta_0^{m+33/2}
\leq\sum_{i=0}^{m+1}2^{-i-4}\eta,
\end{align*}
for all $t\in (\tau_{m},\tau_{m+1}^+)$,
where we used $\eta_0^{16}=\eta\leq\sigma_4$
and chose $\sigma_4$ small enough.
By \eqref{staygraphlem31} and $|a_m|\leq\delta_{m+1}$
this proves \eqref{staygraphlem21}-\eqref{staygraphlem23} for $m+1$.
Also
\begin{align*}
\|\mathbf{A}(\mu_{t})\|^2_{\mathbf{C}(0,4\delta_{m+1},1)}
\leq\|\mathbf{A}(\mu_{t})\|^2_{\mathbf{B}(a_m,2^5\delta_{m+1})}
=\|\mathbf{A}(\nu_{t})\|^2_{\mathbf{B}(0,2^5\delta_{m+1})}
\end{align*}
for all $t\in [\tau_m,\tau_{m+1}]$.
Hence \eqref{staygraphlem53} and $\delta_{m+1}=\eta_0^{-2}\delta_{m+2}$
imply \eqref{staygraphlem24} for $m+1$.
This proves $stat(m+1)$
and in view of \eqref{staygraphlem27} an induction establishes the result.
\end{proof}

%
%
%proof of stay graph theorem
%
%
Consider the setting of Theorem \ref{staygraphthm}.
We will use Lemma \ref{staygraphlem} to show that for all $t\in (t_1,t_2)$ we have that
$\mathrm{spt}\mu_t\cap\mathbf{C}(a,\rho,\rho)$
is contained in a Lipschitz graph and has density $<2$ almost everywehere.
Then by Theorem~\ref{nobdrythm} and the properties of a Brakke flow
we find a sequence $\tau_m\nearrow t_2$
such that $\mathrm{spt}\mu_{\tau_m}\mres\mathbf{C}(a,\rho,\rho)$ is graphical.
This allows us to apply Lemma \ref{staygraphlem} with arbitrary centre point in $\mathbf{B}^{\mathbf{n}}(\hat{a},\rho)\times\{0\}^{\mathbf{k}}$
and final time $\tau_m$ to conclude $\mathrm{spt}\mu_{t}\mres\mathbf{C}(a,\rho,\rho)$ is graphical for all $t\in [t_1,\tau_m)$.
By the convergence of $(\tau_m)$ this establishes Theorem \ref{staygraphthm}.

%
%
%proof
%
%
\begin{proof}[Proof of Theorem {\ref{staygraphthm}}]
We may assume $a=0$, $t_1=0$ and $\rho=2$.
Using \eqref{staygraphthmb} and Corollary \ref{heightboundcor}
we obtain
\begin{align}
\label{staygraphthm13}
\mathrm{spt}\mu_{t}\cap\mathbf{C}(0,3,3)\subset\mathbf{C}(0,3,7l_0)
\end{align}
for all $t\in [0,t_2]$.

Set $U:=\mathbf{B}^{\mathbf{n}}(0,2)\times\mathbb{R}^{\mathbf{k}}$.
By definition of a Brakke flow we find a sequence $(\tau_m)_{m\in\mathbb{N}}$ with $\tau_m\nearrow t_2$, $\tau_m\in (0,t_2]$
such that for all $m\in\mathbb{N}$ 
we have $\mu_m:=\mu_{\tau_m}\mres\mathbf{C}(0,2,2)$ is integer $\mathbf{n}$-rectifiable
and the generalised mean curvature vector $\mathbf{H}_{\mu_m}$ inside $U$ exists.
In particular $\|\partial \mu_m\|$ is absolutely continuous 
with respect to $\mu_m$.
Fix an arbitrary $m\in\mathbb{N}$.
We want to show 
\begin{align}
\label{staygraphthm21}
\mathrm{spt}\mu_m\cap U\subset\mathrm{graph}(f_m)
\end{align}
for some Lipschitz function $f_m:\mathbf{B}^{\mathbf{n}}(0,4)\to\mathbb{R}^{\mathbf{k}}$.

Let $x,y\in \mathrm{spt}\mu_m\cap U$ with $x\neq y$.
Set $y_0:=(\hat{y},u_0(\hat{y}))$.
We want to show $|\tilde{x}-\tilde{y}|\leq L|\hat{x}-\hat{y}|$
for some constant $L\in (1,\infty)$ which will depend on $l_0$.
By \eqref{staygraphthm13} we have $|\tilde{x}-\tilde{y}|\leq 7 l_0$.
Hence we may assume $|\hat{x}-\hat{y}|\leq l_0$.

First consider the case $\tau_m\leq 4|\hat{x}-\hat{y}|^2\leq 4l_0^2$
and let $z\in\mathrm{spt}\mu_m\cap U$.
Then $\mu_m(\mathbf{B}(z,2\sqrt{\mathbf{n}\tau_m}))>0$,
so by Lemma \ref{barrierlem} we have $\mu_0(\mathbf{B}(z,4\sqrt{\mathbf{n}\tau_m}))>0$.
Thus by \eqref{staygraphthmb} and $\mathrm{lip}(u_0)\leq l_0$ we have 
$|\tilde{z}-\tilde{y}_0|\leq l_0|\hat{z}-\hat{y}_0|+8\sqrt{\mathbf{n}\tau_m}$.
For $z=x,y$ this yields the wanted estimate.

Now consider the case $0<4|\hat{x}-\hat{y}|^2<\tau_m$.
Set $\epsilon:=|\hat{x}-\hat{y}|$.
By \eqref{staygraphthm13} we have 
$y\in\mathrm{spt}\mu_m\cap(\{\hat{y}_0\}\times\mathbf{B}^{\mathbf{k}}(0,1))$.
Set $s_m:=\tau_m-2\epsilon^2$.
Using Lemma \ref{staygraphlem} 
with $s_1=0$, $s_2=\tau_m$, $\varrho_0=1$, $\eta=\sqrt[4]{l_0}$
we obtain a $v_m\in\mathcal{C}^{\infty}(\mathbf{B}^{\mathbf{n}}(\hat{y},8\mathbf{n}\epsilon))$ 
with $\sup|Dv_m|\leq\sqrt[4]{l_0}\leq 1$ and
\begin{align}
\label{staygraphthm32}
\mathrm{spt}\mu_{s_m}\cap\mathbf{C}(y_0,8\mathbf{n}\epsilon,1)=\mathscr{H}^{\mathbf{n}}\mres\mathrm{graph}(v_m).
\end{align}
Consider $z\in\mathrm{spt}\mu_m\cap U$ with $|\hat{z}-\hat{y}|\leq\epsilon$.
Then $\mu_m(\mathbf{B}(z,2\sqrt{\mathbf{n}}\epsilon))>0$,
so by Lemma \ref{barrierlem} we have $\mu_{s_m}(\mathbf{B}(z,4\sqrt{\mathbf{n}}\epsilon))>0$.
In view of \eqref{staygraphthm13} 
we can use \eqref{staygraphthm32} to estimate
$|\tilde{z}-v_m(\hat{y})|\leq (1+8\mathbf{n})\epsilon$.
For $z=x,y$ this proves \eqref{staygraphthm21}.

Next we want to show that $\mu_m$ has unit density.
Let $y\in \mathrm{spt}\mu_m\cap U$ and $r\in (0,\sqrt{\tau_m})$ be given.
Set $s_r:=\tau_m-16\sqrt[32]{l_0}r^2$ and $y_0:=(\hat{y},u_0(\hat{y}))$.
Note that by \eqref{staygraphthm13} we have
$y\in\mathrm{spt}\mu_m\cap(\{\hat{y}_0\}\times\mathbf{B}^{\mathbf{k}}(0,1))$.
Using Lemma \ref{staygraphlem} with $s_1=0$, $s_2=\tau_m$, $\varrho_0=1$, $\eta=\sqrt[4]{l_0}$
we obtain a $v_r\in\mathcal{C}^{\infty}(\mathbf{B}^{\mathbf{n}}(\hat{y},4r))$ 
with $\sup|Dv_r|\leq\sqrt[4]{l_0}$ and
\begin{align}
\label{staygraphthm41}
\mathrm{spt}\mu_{s_r}\cap\mathbf{C}(y_0,4r,1)=\mathscr{H}^{\mathbf{n}}\mres\mathrm{graph}(v_r),
\end{align}
Consider a radial cut-off function 
$\zeta_r\in\mathcal{C}^{\infty}_{\mathrm{c}}\left(\mathbb{R}^{\mathbf{n}+\mathbf{k}},[0,1]\right)$
with $\sup|D^{2}\zeta_{r}|\leq Cr^{-2}$ and
\begin{align*}
\zeta_{r}(x)=
\begin{cases}
1 &\text{for}\;\; 0\leq\left|x-y\right|\leq r\\
0 &\text{for}\;\; (1+2^{-\mathbf{n}-2})r\leq\left|x-y\right|.
\end{cases}
\end{align*}
Using equation \eqref{brakkeflowa}, Remark \ref{brakkevarbound}
and Lemma \ref{barrierlem} we estimate
\begin{align*}
&\mu_{m}\left(\mathbf{B}(y,r)\right)-\mu_{s_r}\left(\mathbf{B}(y,(1+2^{-\mathbf{n}-2})r)\right)
\\&\leq
\mu_{m}(\zeta_r)-\mu_{s_r}(\zeta_r)
\leq
C\int_{s_r}^{\tau_m}\left(\sup|D^2\zeta_r|\,\mu_{t}(\{\zeta>0\})\right)\,\mathrm{d}t
\\&\leq
C\sqrt[32]{l_0}\sup\{\mu_{t}(\mathbf{B}(y,2r)),t\in [s_r,\tau_m]\}
\leq
C\sqrt[32]{l_0}\mu_{s_r}\left(\mathbf{B}(y,4r)\right).
\end{align*}
In view of \eqref{staygraphthm13} we have 
$|\tilde{y}-\tilde{y}_0|\leq 1/4<1-4r$,
hence we can use \eqref{staygraphthm41} and the above estimate to obtain
\begin{align*}
\mu_{m}\left(\mathbf{B}(y,r)\right)
\leq
(1+C\sqrt[4]{l_0})\left(C\sqrt[32]{l_0}(4r)^{\mathbf{n}}+((1+2^{-\mathbf{n}-2})r)^{\mathbf{n}}\right)
\leq \frac{3}{2}r^{\mathbf{n}},
\end{align*}
where we chose $l_0$ small enough.
As we already know $\mu_{m}$ is integer rectifiable, 
this shows that $\mu_{m}$ even has unit density in $U$.
Also, by \eqref{staygraphthma} and Lemma \ref{barrierlem} we have
$\mathrm{spt}\mu_m\cap U\neq\emptyset$.
Then Theorem \ref{nobdrythm}
yields that in \eqref{staygraphthm21} actually holds equality.
Hence
\begin{align*}
\mu_{\tau_m}\mres\mathbf{C}(0,2,2)=\mathscr{H}^{\mathbf{n}}\mres\mathrm{graph}f_m
\end{align*}
for all $m\in\mathbb{N}$,
for some Lipschitz function $f_m:\mathbf{B}^{\mathbf{n}}(0,2)\to\mathbb{R}^{\mathbf{k}}$.
In view of this and \eqref{staygraphthm13}
we can use Lemma \ref{staygraphlem} with $s_2=\tau_m$, $\varrho_0=1$
and arbitrary $y_0\in\mathbf{B}^{\mathbf{n}}(0,2)\times\{0\}^{\mathbf{k}}$
to obtain graphical representability inside $\mathbf{C}(0,2,2)$ for times in $(0,\tau_m)$.
As $\tau_m\nearrow t_2$ this actually holds on $(0,t_2)$.
Finally for the Lipschitz bound consider $(t,\hat{y})\in (0,t_2)\times\mathbf{B}^{\mathbf{n}}(0,2)$.
Lemma \ref{staygraphlem} with $y_0:=(\hat{y},u_0(\hat{y}))$, $\rho_0=1$, $s_2= t+\epsilon$ and $\eta=\sqrt[4]{l}+t+\epsilon$
yields $|Du(t,\hat{y})|\leq\sqrt[4]{l}+t+\epsilon$ for all $\epsilon\in (0,t_2-t)$.
Letting $\epsilon\searrow 0$ completes the result.
\end{proof}

\section{Brakke-type local regularity}
\label{local_regularity}
Here we proof Theorem \ref{locregthm}.
First note that under slightly stronger assumptions on the starting density ratios
the result directly follows from 
Lemma \ref{improvedgohlem} and Theorem \ref{staygraphthm},
see below:

%
%
%become graph lemma
%
%
\begin{lem}
\label{becomegraphlem}
There exists a constant $\sigma_5\in (0,1)$
and for every $\kappa\in (0,1)$ exists an $h_2\in (0,\sigma_5^2)$
such that the following holds:

Let $h\in (0,h_2]$, $\varrho\in (0,\infty)$, 
$s_1\in\mathbb{R}$, $s_2\in (s_1+\sqrt{h}\varrho^2,s_1+\sigma_5\varrho^2]$,
$x_0\in\mathbb{R}^{\mathbf{n}+\mathbf{k}}$
and let $(\mu_t)_{t\in [s_1,s_2]}$ be a Brakke flow in $\mathbf{C}(x_0,2^5\varrho,2^5\varrho)$.
Suppose $x_0\in\mathrm{spt}\mu_{s_2}$,
\begin{align}
\label{becomegraphthma}
&\mathrm{spt}\mu_{s_1}\cap\mathbf{C}(x_0,2^5\varrho,2^5\varrho)
\subset\mathbf{C}(x_0,2^5\varrho,h\varrho),
\\
\label{becomegraphthmc}
&r^{-\mathbf{n}}\mu_{s_1}(\mathbf{B}(y,r))
\leq (2-\kappa)\omega_{\mathbf{n}}
\end{align}
for all $y\in\mathbf{B}^{\mathbf{n}}(\hat{x}_0,2^{4}\varrho)\times\{\tilde{x}_0\}$ 
and all $r\in (h\varrho,\varrho)$.
Set $I:=(s_1+\sqrt{h}\varrho^2,s_2)$.
%Set $I:=(s_1+4h\varrho^2,s_2)$.

Then there exists a 
$v\in\mathcal{C}^{\infty}\left(I\times\mathbf{B}^{\mathbf{n}}(\hat{x}_0,\varrho),\mathbb{R}^{\mathbf{k}}\right)$
such that 
\begin{align*}
\mu_{t}\mres\mathbf{C}(x_0,\varrho,\varrho)=\mathscr{H}^{\mathbf{n}}\mres\mathrm{graph}(v(t,\cdot\,))
\quad\text{for all}\;t\in I.
\end{align*}
Moreover $\sup |Dv(t,\cdot\,)|\leq\sqrt[16]{h}+\varrho^{-2}(t-s_1)$ for all $t\in I$
and $F_t(\hat{x})=(\hat{x},v(t,\hat{x}))$ satisfies \eqref{smoothmcf}.
\end{lem}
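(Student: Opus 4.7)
The plan is to combine Lemma \ref{improvedgohlem}, which produces an initial graphical representation on an inner time subinterval, with Theorem \ref{staygraphthm}, which extends that representation up to the final time $s_2$. As a preliminary, apply Corollary \ref{heightboundcor} with a sufficiently large exponent $p$ and $\sigma_5$ chosen small compared to $c_p$; this propagates the initial slab \eqref{becomegraphthma} forward in time, giving
\[
\mathrm{spt}\mu_t\cap\mathbf{C}(x_0,2^4\varrho,2^4\varrho)\subset\mathbf{C}(x_0,2^4\varrho,2h\varrho)
\]
for all $t\in[s_1,s_2]$; this thin-slab inclusion is used throughout.

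Next, apply Lemma \ref{improvedgohlem} with $y_0=x_0$, spatial scale $\varrho_0=2\varrho$, gradient parameter $\beta$ of order $h^{1/4}$ (so that $\sqrt[4]{\beta}$ matches the target $\sqrt[16]{h}$), and slab parameter $\lambda$ of order $h^{3/4}$ (so that $\sigma_2\beta\lambda\varrho_0\gtrsim h\varrho$, bringing our slab into the form required by that lemma). Choose $s_4$ in the admissible range $(2\lambda\varrho_0^2,2\sqrt[32]{\lambda}\varrho_0^2]$. The required non-vanishing at $s_4$ is inferred from $x_0\in\mathrm{spt}\mu_{s_2}$: directly if $s_4=s_2-s_1$ is admissible, and otherwise via the contrapositive of the clearing-out Lemma \ref{clearoutlem}, combined with the thin-slab inclusion, to rule out complete local emptiness near $x_0$ at time $s_1+s_4$. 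The conclusion is a smooth $v$ on $(s_1+\lambda\varrho_0^2,s_1+s_4-\lambda\varrho_0^2)$ with $\mu_t\mres\mathbf{C}(x_0,4\varrho,2\varrho)=\mathscr{H}^{\mathbf{n}}\mres\mathrm{graph}(v(t,\cdot\,))$ and $\sup|Dv|\leq\beta$. Since $\lambda\varrho_0^2\sim h^{3/4}\varrho^2\ll\sqrt{h}\varrho^2$, this interval already covers the beginning of the target interval $I$.

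Then apply Theorem \ref{staygraphthm} with initial time $\tau$ taken just above $s_1+\lambda\varrho_0^2$ (so within the graphical interval from the previous step), centre $a=(\hat{x}_0,v(\tau,\hat{x}_0))\in\mathrm{spt}\mu_\tau$ (which lies within $2h\varrho$ of $x_0$), spatial scale $\rho=\varrho$, final time $t_2=s_2$, and initial graph $u_0=v(\tau,\cdot\,)$ of Lipschitz constant $\leq\beta\leq h^{1/4}\leq l_0$. The time-gap $s_2-\tau\leq\sigma_5\varrho^2\leq l_0\varrho^2$ is admissible provided $\sigma_5\leq l_0$, and the non-vanishing at $s_2$ is $x_0\in\mathrm{spt}\mu_{s_2}$. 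Theorem \ref{staygraphthm} produces a smooth $u$ on $(\tau,s_2)$ with the graph representation and $\sup|Du|\leq\sqrt[4]{\beta}+\varrho^{-2}(t-\tau)\leq\sqrt[16]{h}+\varrho^{-2}(t-s_1)$. Concatenating $u$ with $v$ on their respective intervals (using $\beta\leq\sqrt[16]{h}$ on the earlier interval, and the thin-slab inclusion to transfer graphicality from centre $a$ to centre $x_0$) yields the desired representation on $I$ with the claimed bound.

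The main obstacle is the mismatch between Lemma \ref{improvedgohlem}'s narrow admissible time window $(2\lambda\varrho_0^2,2\sqrt[32]{\lambda}\varrho_0^2]$ and the wider range $(\sqrt{h}\varrho^2,\sigma_5\varrho^2]$ permitted for $s_2-s_1$; resolving this requires a case analysis on the size of $s_2-s_1$, with $h_2$ chosen small depending on $\sigma_2(\kappa)$, $\sigma_5$ and $l_0$, and the clearing-out argument supplying the non-vanishing at intermediate times when $s_4<s_2-s_1$.
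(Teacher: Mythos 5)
Your proof plan follows the same two--step architecture as the paper's proof (Lemma~\ref{improvedgohlem} to get initial graphicality on an inner interval, then Theorem~\ref{staygraphthm} to push it forward to $s_2$), and most of the scaling and parameter choices check out, but there is a genuine gap in how you verify the non-vanishing hypothesis of Lemma~\ref{improvedgohlem}.

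You propose to apply Lemma~\ref{improvedgohlem} with centre $y_0=x_0$. Its hypothesis~\eqref{improvedgohlemb} requires $\mathrm{spt}\mu_{s_1+s_4}\cap\mathbf{C}(x_0,\sigma_2\lambda\varrho_0,\varrho_0)\neq\emptyset$, where the horizontal radius $\sigma_2\lambda\varrho_0\sim h^{3/4}\varrho$ is tiny. When $s_4<s_2-s_1$ (which, as you note, is forced whenever $s_2-s_1$ exceeds $2\sqrt[32]{\lambda}\varrho_0^2\sim h^{3/128}\varrho^2$, hence for most of the allowed range $(\sqrt{h}\varrho^2,\sigma_5\varrho^2]$), you try to extract this from $x_0\in\mathrm{spt}\mu_{s_2}$ via the clearing-out Lemma~\ref{clearoutlem}. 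But clearing-out (or, better, Lemma~\ref{barrierlem}) run backward only produces a point of $\mathrm{spt}\mu_{s_1+s_4}$ within a ball of radius comparable to $\sqrt{s_2-s_1-s_4}$, which is a fixed scale depending on $\sigma_5$, not $o(h^{3/4})$. So "ruling out complete local emptiness near $x_0$" does not put a support point inside the thin cylinder $\mathbf{C}(x_0,\sigma_2\lambda\varrho_0,\varrho_0)$ that the lemma requires.

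The paper's fix is to re-centre: it first uses Lemma~\ref{barrierlem} to find $z_0\in\mathrm{spt}\mu_{s_1+s_4}\cap\mathbf{B}(x_0,\varrho)$, then applies Lemma~\ref{improvedgohlem} with $y_0=(\hat{z}_0,\tilde{x}_0)$ (not $x_0$). Then $z_0\in\mathbf{C}(y_0,\sigma_2\lambda\varrho_0,\varrho_0)$ is automatic, the density hypothesis \eqref{improvedgohlemc} still follows from \eqref{becomegraphthmc} because $|\hat{z}_0-\hat{x}_0|\leq\varrho$ keeps the required centres inside $\mathbf{B}^{\mathbf{n}}(\hat{x}_0,2^4\varrho)$, and since $|\hat{z}_0-\hat{x}_0|\leq\varrho$ the resulting graphical cylinder still covers a fixed neighbourhood of $x_0$. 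With $y_0$ re-centred at a point you actually know to be in the support, the rest of your argument (your choice of $\beta\sim h^{1/4}$ so that $\sqrt[4]{\beta}=\sqrt[16]{h}$ exactly, and the final application of Theorem~\ref{staygraphthm}) goes through; the paper instead takes $\beta\sim\sqrt{h}$, $\lambda\sim\sqrt{h}$, $s_4=\sqrt{h}\varrho^2$, which is slightly less sharp on the gradient bound but avoids the case split you mention entirely, since $s_4$ then never coincides with $s_2-s_1$.
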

\begin{proof}
We may assume $x_0=0$, $s_1=0$ and $\varrho=1$.
First note that by assumption \eqref{becomegraphthma}, 
$s_2\leq\sigma_5$, $h\leq h_2<\sigma_5$ 
and Corollary \ref{heightboundcor} with $R_1=R_2=r_0=16$ we have
\begin{align}
\label{becomegraphthm11}
&\mathrm{spt}\mu_{t}\cap\mathbf{C}(0,16,16)
\subset\mathbf{C}(0,16,2\sigma_5),
\end{align}
for all $t\in [0,s_2]$.

Let $\sigma_2$ be from Lemma \ref{improvedgohlem} with respect to $\kappa$
and set $s_4:=\sqrt{h}<s_2$, $J:=(\sqrt{h}/4,s_4-\sqrt{h}/4)$.
Lemma \ref{barrierlem} and $0\in\mathrm{spt}\mu_{s_2}$ yield
the existence of a $z_0\in\mathrm{spt}\mu_{s_4}\cap\mathbf{B}(0,1)$.
By Lemma \ref{improvedgohlem} 
with $\varrho_0=4$, $y_0=(\hat{z}_0,0)$, $\beta=2^4\sigma_2^{-1}\sqrt{h}$ and $\lambda=2^{-6}\sqrt{h}$
there exists a $v_1\in\mathcal{C}^{\infty}(J\times\mathbf{B}^{\mathbf{n}}(\hat{z}_0,8),\mathbb{R}^{\mathbf{k}})$
such that
\begin{align*}
\mu_t\mres\mathbf{C}((\hat{z}_0,0),8,4)=\mathscr{H}^{\mathbf{n}}\mres\mathrm{graph}(v_1(t,\cdot\,))
\;\;\;\text{for all}\;t\in J.
\end{align*}
Moreover 
$\sup |Dv_1|\leq 2^4\sigma_2^{-1}\sqrt{h}$.
Here we chose $h_2\leq\sigma_2^{-2}$. 
Then \eqref{becomegraphthm11}, $|\hat{z}_0|\leq 1$
and Theorem \ref{staygraphthm}
with $t_1=\sqrt{h}/2$, $a=(0,v_1(t_1,0))$, $l= 2^4\sigma_2^{-1}\sqrt{h}$ and $\rho=2$ yield the result.
Here we chose $h_2$ small depending on $\sigma_2$ and $l_0$.
\end{proof}

Now under the assumptions of Theorem \ref{locregthm}
we can find a time $s_1$ shortly after $t_1$ such that
$\mu_{s_1}\mres\mathbf{C}(a,\rho,\rho)$ 
has bounded mean-curvature-excess and still has small height.
By Lemma \ref{tiltboundlem} then also the tilt-excess has to be small.
Thus Brakke's cylindrical growth theorem (see Theorem \ref{cylgrowththm})
can be used to show that the density assumptions of Lemma \ref{becomegraphlem} hold, 
which then yields the conclusion of Theorem \ref{locregthm}.

%
%
%proof new Brakke local regularity
%
%
\begin{proof}[proof of Theorem {\ref{locregthm}}]
We may assume $a=0$, $t_1=0$ and $\rho=1$.
First consider the case $\gamma> 0$.
Set $U:=\mathbf{B}(0,1)$
and $C(x,r):=\mathbf{C}(x,r)\cap U$ for $r\in (0,\infty)$, $x\in U$.
In view of assumption \eqref{locregthma}
and as $\mathbf{C}(0,\sqrt{2},\sqrt{2})\subset\mathbf{B}(0,2)$ 
we can use Corollary \ref{heightboundcor} with $r_0=\sqrt{2}-1$ and $p=4$
to obtain
\begin{align}
\label{locregthm11}
\mathrm{spt}\mu_{t}\cap\mathbf{C}(0,1,1)
\subset\mathbf{C}(0,1,2\gamma)
\end{align}
for all $t\in [0,\sqrt[4]{\gamma}]$
for $\gamma_0$ small enough.
Fix a $\sigma\in (0,2^{-5})$ 
such that $(1-8\sigma)^{-\mathbf{n}}\leq 1+\lambda/8$
and $(1+4\sigma)^{\mathbf{n}}\leq 1+\lambda/32$.
In particular in the following $\gamma_0$ may depend on $\sigma$.
By Lemma \ref{barrierlem} and assumption \eqref{locregthmb} we can estimate
\begin{align}
\label{locregthm21}
\mu_{t}\left(\mathbf{B}(0,1-\sigma)\right)
\leq C\sigma^{-\mathbf{n}-\mathbf{k}}\mu_{0}\left(\mathbf{B}(0,1)\right)
\leq C\sigma^{-\mathbf{n}-\mathbf{k}}
\end{align}
for all $t\in [0,\sqrt[4]{\gamma}]$.

Fix a cut-off function $\psi\in\mathcal{C}^{\infty}\left(\mathbb{R},[0,1]\right)$
with $|\psi''|\leq C\sigma^{-2}$ and
\begin{align*}
\psi(t)=
\begin{cases}
1 &\text{for}\;\; 0\leq|t| \leq 1-2\sigma \\
0 &\text{for}\;\; 1-\sigma\leq|t| .
\end{cases}
\end{align*}
Consider $\zeta\in\mathcal{C}^{\infty}_{\mathrm{c}}\left(\mathbf{B}(0,1),[0,1]\right)$
given by $\zeta(x)=\psi(|x|)$.
For $s\in (0,\sqrt[4]{\gamma}]$ equation \eqref{brakkeflowa} and Remark \ref{brakkevarbound} yield
\begin{align*}
D&:=\mu_{s}\left(\zeta\right)
+\frac{1}{2}\int_0^{s}\int_{\mathbb{R}^{\mathbf{n}
+\mathbf{k}}}|\mathbf{H}_{\mu_t}|^2\zeta\,\mathrm{d}\mu_t\,\mathrm{d}t
\\&\leq 
\mu_{0}\left(\zeta\right)
+\sup|D^2\zeta|\int_0^{s}\mu_{t}\left(\{\zeta>0\}\right)\,\mathrm{d}t.
\end{align*}
Hence by \eqref{locregthmb} and \eqref{locregthm21} we have
\begin{align*}
D\leq (2-\lambda)\omega_{\mathbf{n}}+Cs\sigma^{-\mathbf{n}-\mathbf{k}-2}
\leq (2-\lambda/2)\omega_{\mathbf{n}},
\end{align*}
where we used $s\leq\sqrt[4]{\gamma_0}$
and we chose $\gamma_0$ small enough.
By \eqref{locregthm11} we have
$\{\zeta=1\}\supset\mathbf{B}(0,1-2\sigma)\supset\mathrm{spt}\mu_s\cap C(0,1-4\sigma)$,
for $\gamma_0\leq\sigma$.
Thus
\begin{align}
\label{locregthm31}
\mu_{s}\left(C(0,1-4\sigma)\right)
+\frac{1}{2}\int_0^{s}\int_{C(0,1-4\sigma)}|\mathbf{H}_{\mu_t}|^2\;\mathrm{d}\mu_t\;\mathrm{d}t
\leq D\leq (2-\lambda/2)\omega_{\mathbf{n}}
%\leq (2-\lambda/4)\omega_{\mathbf{n}}(1-2\sigma)^{\mathbf{n}}
\end{align}
for all $s\in (0,\sqrt[4]{\gamma}]$.
In particular we find an $s_1\in (0,\sqrt[4]{\gamma}]$ such that
$\mu_{s_1}\mres U$ is integer $\mathbf{n}$-rectifiable, 
has $\mathcal{L}^2$-integrable mean curvature vector
and
\begin{align}
\label{locregthm32}
\int_{C(0,1-4\sigma)}|\mathbf{H}_{\mu_{s_1}}|^2\;\mathrm{d}\mu_{s_1}
\leq 2(2-\lambda/2)\omega_{\mathbf{n}}\gamma^{-1/4}
\leq C\gamma^{-1/4}.
\end{align}
Consider $y\in\mathbf{B}(0,\sigma)$.
By \eqref{locregthm31} and choice of $\sigma$ we can estimate
\begin{align}
\label{locregthm41}
\mu_{s_1}(C(y,1-8\sigma))
%\leq\mu_{s_1}(\mathbf{B}(y,1-2\sigma))
\leq (2-\lambda/2)\omega_{\mathbf{n}}
\leq (2-\lambda/4)\omega_{\mathbf{n}} (1-8\sigma)^{\mathbf{n}}.
\end{align}

Let $f\in\mathcal{C}^{\infty}_{\mathrm{c}}\left(C(0,1-4\sigma),[0,1]\right)$
be such that $f(x)=\psi((1-4\sigma)^{-1}|\hat{x}|)$ for $x\in\mathrm{spt}\mu_{s_1}\cap U$.
In view of \eqref{locregthm11},\eqref{locregthm31} and \eqref{locregthm32} 
we can use Lemma \ref{tiltboundlem}
with $f=g=h$ to obtain
\begin{align}
\label{locregthm51}
\int_{C(0,1-6\sigma)}\left\|(\mathbb{R}^{\mathbf{n}}\times\{0\}^{\mathbf{k}})_{\natural}
-\mathbf{T}(\mu_{s_1},x)_{\natural}\right\|^2\;\mathrm{d}\mu_{s_1}(x)
%\leq C(\gamma^{7/8}+\sigma^{-2}\gamma^2)
\leq C \gamma^{7/8}.
\end{align}
Here we estimated $\sup|Df|^2\leq C\sigma^{-2}\leq C\gamma_0^{-1}\leq C\gamma^{-1}$.

Consider $y\in\mathbf{B}(0,\sigma)$
and $r_0\in (2^{-5}\gamma^{16\alpha_0}\sigma,\sigma)$.
Let $R_2=1-8\sigma$
and $R_1=(1+4\sigma)r_0$.
By \eqref{locregthm32} and \eqref{locregthm51}
the assumptions of Theorem \ref{cylgrowththm} are satisfied 
for $\alpha^2=C\gamma^{-1/4} (\gamma^{16\alpha_0}\sigma)^{-\mathbf{n}}$ 
and $\beta^2=C\gamma^{7/8} (\gamma^{16\alpha_0}\sigma)^{-\mathbf{n}}$.
Hence we can estimate
\begin{align*}
&\left|R_2^{-\mathbf{n}}\int_{U}\psi(R_2^{-1}|\hat{x}-\hat{y}|)\,\mathrm{d}\mu_{s_1}(x)
-R_1^{-\mathbf{n}}\int_{U}\psi(R_1^{-1}|\hat{x}-\hat{y}|)\,\mathrm{d}\mu_{s_1}(x)\right|
\\
&\leq
C\gamma^{-1/8} (\gamma^{16\alpha_0}\sigma)^{-\mathbf{n}}\gamma^{7/16}
\leq C\sigma^{-\mathbf{n}}\gamma_0^{3/16}
\leq \lambda\omega_{\mathbf{n}}/8,
\end{align*}
where we chose $\alpha_0$ and $\gamma_0$ small enough.
By estimate \eqref{locregthm41}, definition of $\psi$ and $\sigma$ this yields
\begin{align*}
((1+4\sigma)r_0)^{-\mathbf{n}}\mu_{s_1}\left(\mathbf{B}(y,r_0)\right)
\leq
(2-\lambda/8)\omega_{\mathbf{n}}
\leq (2-\lambda/16)\omega_{\mathbf{n}}(1+4\sigma)^{-\mathbf{n}}.
\end{align*}
Now we can use Lemma \ref{becomegraphlem}
with $s_2=t_2$, $\kappa=\lambda/16$, $\varrho=2^{-5}\sigma$ and $h=\gamma^{16\alpha_0}$
to obtain the result. 
For the case $\gamma=0$ use the above result with arbitrary small~$\gamma$.
\end{proof}

\section{White-type local regularity}
\label{white_regularity}
Here we want to prove Theorem \ref{whitelocregthm}.
First we observe that a Brakke flow for which all Gaussian density ratios are one,
has to be a plane.
This mainly follows from Huisken's monotonicity formula (see Theorem \ref{localmonthm}).

%
%
%global gaussian density 1 yields plain
%
%
\begin{lem}
\label{globalgaussianlem}
Let $M\in (1,\infty)$, $t_1\in\mathbb{R}$, $t_2\in (t_1,\infty)$
and $(\mu_t)_{t\in [t_1,t_2]}$ be a Brakke flow in $\mathbb{R}^{\mathbf{n}+\mathbf{k}}$.
Suppose $\mathrm{spt}\mu_{t_2}\neq\emptyset$ 
and for all $(s,y)\in(t_1,t_2]\times\mathbb{R}^{\mathbf{n}+\mathbf{k}}$
\begin{align}
\label{globalgaussianlemb}
\sup_{R\in (0,\infty)} R^{-\mathbf{n}}\mu_{s}(\mathbf{B}(y,R))
&\leq M,
\\
\label{globalgaussianlema}
\sup_{t\in [t_1,s)}\int_{\mathbb{R}^{\mathbf{n}+\mathbf{k}}}\Phi_{(s,y)}
\,\mathrm{d}\mu_{t}
&\leq 1.
\end{align}

Then there exists a $T\in\mathbf{G}(\mathbf{n}+\mathbf{k},\mathbf{n})$
and an $a\in\mathbb{R}^{\mathbf{n}+\mathbf{k}}$ such that
$\mu_t=\mathscr{H}^{\mathbf{n}}\mres(T+a)$ for all $t\in (t_1,t_2)$.
\end{lem}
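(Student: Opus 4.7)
The plan is to apply Huisken's monotonicity formula (Theorem~\ref{localmonthm}, case~2, available thanks to the uniform density bound~\eqref{globalgaussianlemb}) backward from a time close to $t_2$ and exploit the equality case. Since $\mathrm{spt}\mu_{t_2}\neq\emptyset$, fix $y_\infty\in\mathrm{spt}\mu_{t_2}$; a standard Brakke inequality argument with a bump around $y_\infty$ shows $\mu_s\neq 0$ near $y_\infty$ for all $s\in(t_1,t_2)$ sufficiently close to $t_2$, so one may choose such an $s$ at which $\mu_s$ is integer $\mathbf{n}$-rectifiable and pick a density-one point $y_0\in\mathrm{spt}\mu_s$. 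By the monotonicity formula, $t\mapsto\int\Phi_{(s,y_0)}(t,\cdot)\,\mathrm{d}\mu_t$ is non-increasing on $(t_1,s)$, bounded above by $1$ via \eqref{globalgaussianlema}, and tends to $\Theta^{\mathbf{n}}(\mu_s,y_0)=1$ as $t\nearrow s$. Hence it equals $1$ identically, and the equality case of the monotonicity formula forces the self-shrinker identity
\begin{align*}
\mathbf{H}_{\mu_t}(x)+\frac{(\mathbf{T}(\mu_t,x)^{\bot})_{\natural}(x-y_0)}{2(s-t)}=0
\end{align*}
for $\mu_t$-a.e.\ $x$ and a.e.\ $t\in(t_1,s)$.

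Next I would upgrade this to a tangent-plane rigidity statement by varying the reference point. Running the same argument at any other density-one $y\in\mathrm{spt}\mu_s$ and subtracting the two shrinker identities gives $(\mathbf{T}(\mu_t,x)^{\bot})_{\natural}(y-y_0)=0$, i.e.\ $y-y_0\in\mathbf{T}(\mu_t,x)$ for $\mu_t$-a.e.\ $x$. Using the blow-up characterisation of the approximate tangent plane $T_0$ of $\mu_s$ at $y_0$ --- which produces, for each vector $v$ in a fixed basis $v_1,\dots,v_{\mathbf{n}}$ of $T_0$, a sequence of density-one points $y_k\in\mathrm{spt}\mu_s$ with $(y_k-y_0)/|y_k-y_0|\to v/|v|$ --- and exploiting the closedness of $\mathbf{T}(\mu_t,x)$ under scaling and limits, one concludes $T_0\subseteq\mathbf{T}(\mu_t,x)$, so by dimension $\mathbf{T}(\mu_t,x)=T_0$ for $\mu_t$-a.e.\ $x$ and a.e.\ $t\in(t_1,s)$.

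Substituting $\mathbf{T}(\mu_t,x)=T_0$ back into the shrinker identity and observing that $\mu_t$ is then supported on a countable union of affine translates of $T_0$ on each of which the mean curvature vanishes, the identity collapses to $(T_0^{\bot})_{\natural}(x-y_0)=0$, so $\mathrm{spt}\mu_t\subseteq\Pi:=T_0+y_0$ for a.e.\ $t\in(t_1,s)$. Allard's constancy theorem combined with integer rectifiability and unit density then yields $\mu_t=\mathscr{H}^{\mathbf{n}}\mres\Pi$ for those $t$. Varying $s$ across a dense subset of $(t_1,t_2)$ (the resulting plane must again be $\Pi$, as the shrinker identity centred at $y_0$ continues to hold) and applying the Brakke inequality with nonnegative test functions from both sides of any time to extract two-sided weak continuity of $t\mapsto\mu_t$, the identification $\mu_t=\mathscr{H}^{\mathbf{n}}\mres\Pi$ extends to every $t\in(t_1,t_2)$, completing the proof.

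The main obstacle is the rigidity step: constructing a countable family of density-one points of $\mathrm{spt}\mu_s$ whose normalised differences from $y_0$ exhaust the approximate tangent plane $T_0$. The approximate tangent plane controls $\mu_s$-mass distribution on small scales, but not the precise location of individual support points, so the construction requires combining the blow-up definition of $T_0$ with the $\mu_s$-genericity of density-one points in order to avoid any $\mathscr{H}^{\mathbf{n}}$-null exceptional set. The remaining technicalities --- locating $s$ close to $t_2$ at which $\mu_s\neq 0$ near $y_\infty$, uniqueness of $\Pi$ as $s$ varies, and promoting the a.e.\ identity in $t$ to every $t\in(t_1,t_2)$ --- reduce to standard Brakke flow continuity arguments.
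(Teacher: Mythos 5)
Your overall strategy is the same as the paper's: use the area-ratio bound to run Huisken's monotonicity formula, force equality of the Gaussian density at suitable centres $(s,y)$ with $\Theta^{\mathbf{n}}(\mu_s,y)\geq 1$, deduce the shrinker identity, subtract the identities for several centres to make $\mathbf{T}(\mu_t,x)$ a.e.\ equal to a fixed plane $T$, conclude $\mathbf{H}_{\mu_t}\equiv 0$ and $\mathrm{spt}\mu_t\subset T+y_0$, apply a constancy-type theorem, and finally upgrade from a.e.\ $t$ to all $t$ by Brakke-flow continuity. The only structural variation is how the spanning directions are produced: the paper picks $\mathbf{n}+1$ affinely independent density points $y_0,\dots,y_{\mathbf{n}}\in D(t_0)$ at a single good time, while you blow up at one point $y_0$ and approximate a basis of the approximate tangent plane by directions $(y_k-y_0)/|y_k-y_0|$ with $y_k$ density points; this works (positive $\mu_s$-measure of small cones around each direction plus genericity of density points gives the $y_k$, and countably many centres keep the exceptional sets under control) and is an acceptable substitute.

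The one step I would not accept as written is your justification of $\mathbf{H}_{\mu_t}\equiv 0$: you claim that once $\mathbf{T}(\mu_t,x)=T_0$ a.e., the measure ``is supported on a countable union of affine translates of $T_0$ on each of which the mean curvature vanishes.'' Neither half of this is automatic. An a.e.\ constant approximate tangent does \emph{not} by itself force the support into countably many parallel planes (Cantor-function--type graphs give rectifiable measures with a.e.\ vertical tangent whose support needs uncountably many vertical lines; ruling this out uses the first variation, i.e.\ it is essentially the statement you are trying to prove), and ``the mean curvature vanishes on each translate'' implicitly invokes locality of the mean curvature, which is itself a nontrivial theorem. The paper closes exactly this step by citation (Brakke's general regularity theorem \cite[Thm.\ 6.12]{brakke} or Menne's characterization of the mean curvature vector \cite[Thm.\ 15.6]{menne5}). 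Alternatively there is an elementary fix: for $w\in T_0^{\bot}$ and any test function $\phi$ one has $\mathrm{div}_{\mu_t}(\phi w)=(T_0)_{\natural}D\phi\cdot w=0$ a.e., so $\int\phi\,\mathbf{H}_{\mu_t}\cdot w\,\mathrm{d}\mu_t=0$, whence $\mathbf{H}_{\mu_t}\in T_0$ a.e.; since the shrinker identity (or Brakke's perpendicularity theorem) gives $\mathbf{H}_{\mu_t}\in T_0^{\bot}$ a.e., it follows that $\mathbf{H}_{\mu_t}=0$ and then $(T_0^{\bot})_{\natural}(x-y_0)=0$ a.e., i.e.\ the planar confinement you want. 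With that repair (and noting that the final identification $\mu_t=\mathscr{H}^{\mathbf{n}}\mres(T_0+y_0)$ needs unit density, which your equality-case argument supplies, so that Theorem \ref{nobdrythm} or Allard's constancy applies), your proof goes through along the paper's lines.
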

%
%proof
%
\begin{proof}
We may assume $t_1=-1$ and $t_2=0$.
For $t\in (-1,0)$ 
let $D(t)$ be the set of all $y\in\mathrm{spt}\mu_{t}$
such that $\Theta^{\mathbf{n}}(\mu_{t},y)\geq 1$
and $\mathbf{T}(\mu_{t},y)$ exists.
Fix $s\in (-1,0)$ and $y\in D(s)$.
For $\epsilon\in (0,1)$ there exist a radial symmetric cut-off function 
$\zeta\in\mathcal{C}^0_{\mathrm{c}}\left(\mathbb{R}^{\mathbf{n}+\mathbf{k}},[0,1]\right)$
such that
\begin{align}
\label{globalgaussianlem21}
\int_{\mathbb{R}^{\mathbf{n}}\times\{0\}^{\mathbf{k}}}\Phi_{(0,0)}(-1,x)\zeta(x)
\;\mathrm{d}\mathscr{H}^{\mathbf{n}}(x)
\geq
1-\epsilon,
\end{align}
By \eqref{globalgaussianlem21} and definition of the approximate tangent space 
we can estimate
\begin{align*}
(1-\epsilon)\Theta^{\mathbf{n}}(\mu_{s},y)
&\leq 
%(1-\epsilon)\Theta^{\mathbf{n}}(\mu_{s},y)
%\leq
\Theta^{\mathbf{n}}(\mu_{s},y)\int_{\mathbf{T}(\mu_{s},y)}\Phi_{(0,0)}(-1,x)\zeta(x)\;\mathrm{d}\mathscr{H}^{\mathbf{n}}(x)
\\
&\leq
\lim_{\lambda\searrow 0}\lambda^{-\mathbf{n}}\int_{\mathbb{R}^{\mathbf{n}+\mathbf{k}}}\Phi_{(0,0)}(-1,\lambda^{-1}(x-y))\;\mathrm{d}\mu_s(x)
\\
&=
\lim_{\lambda\searrow 0}\int_{\mathbb{R}^{\mathbf{n}+\mathbf{k}}}\Phi_{(s+\lambda^2,y)}(s,x)\;\mathrm{d}\mu_s(x).
\end{align*}
Then with Huisken's monotonicity formula (see Theorem \ref{localmonthm}), 
and by continuity of the integral we obtain for $h_0\in (0,s+1)$ small enough
\begin{align*}
(1-\epsilon)\Theta^{\mathbf{n}}(\mu_{s},y)
\leq 
\lim_{\lambda\searrow 0}\int_{\mathbb{R}^{\mathbf{n}+\mathbf{k}}}\Phi_{(s+\lambda^2,y)}\;\mathrm{d}\mu_{s-h_0}
%=
%\int_{\mathbb{R}^{\mathbf{n}+\mathbf{k}}}\Phi_{(s,y)}\;\mathrm{d}\mu_{s-h_0}
\leq
\lim_{h\searrow 0}\int_{\mathbb{R}^{\mathbf{n}+\mathbf{k}}}\Phi_{(s,y)}\;\mathrm{d}\mu_{s-h}
+\epsilon,
\end{align*}
Thus by \eqref{globalgaussianlema} and as $\epsilon$ was arbitrary we have
\begin{align}
\label{globalgaussianlem23}
1\leq\Theta^{\mathbf{n}}(\mu_{s},y)
\leq\lim_{h\searrow 0}\int_{\mathbb{R}^{\mathbf{n}+\mathbf{k}}}\Phi_{(s,y)}\;\mathrm{d}\mu_{s-h}
\leq 1
\end{align}
for all $y\in D(s)$ for all $s\in (-1,0)$.
Hence $\mu_{t}$ has unit density for a.e.\ $t\in (-1,0)$.

Fix an arbitrary $t_0\in (-1,0)$
such that $\mu_{t_0}$ has unit density.
Assumption $\mathrm{spt}\mu_{0}\neq\emptyset$ 
and Lemma \ref{barrierlem} imply $\mathrm{spt}\mu_{t_0}\neq\emptyset$,
so we can find $\mathbf{n}+1$ points $y_0,\ldots,y_{\mathbf{n}}$ in $D(t_0)$
such that $v_i:=y_i-y_0$,  $i=1,\ldots,\mathbf{n}$ are linearly independent.
Set $T:=\mathrm{span}(v_i)_{1\leq i\leq\mathbf{n}}$.
By estimates \eqref{globalgaussianlema}, \eqref{globalgaussianlem23}
and Theorem \ref{localmonthm} we obtain
\begin{align*}
\int_{\mathbb{R}^{\mathbf{n}+\mathbf{k}}}\Phi_{(t_0,y_i)}\;\mathrm{d}\mu_{t}=1
\end{align*}
for all $t\in [-1,t_0)$ for all $i\in\{0,\ldots,\mathbf{n}\}$.

Then Theorem \ref{localmonthm} yields the existence of a $J\subset (-1,t_0)$
such that for all $t\in J$ we have
$\mu_t$ has unit density,
the generalised mean curvature vector $\mathbf{H}_{\mu_t}$ exists
with $\int|\mathbf{H}_{\mu_t}|^2\mu_t< \infty$ and
\begin{align}
\label{globalgaussianlem11}
\mathbf{H}_{\mu_t}(x)+(2(t_0-t))^{-1}(\mathbf{T}(\mu_t,x)^{\bot})_{\natural}(x-y_i)=0
\end{align}
for $\mu_t$-a.e.\ $x\in\mathbb{R}^{\mathbf{n}+\mathbf{k}}$
and all $i=0,1,\ldots,\mathbf{n}$.
Moreover $\mathscr{L}^1((-1,t_0)\setminus J)=0$.

Let $t\in J$ and let $E_t$ be the set of points $x\in\mathrm{spt}\mu_t$
such that $\Theta^{\mathbf{n}}(\mu_{t},x)\geq 1$,
$\mathbf{T}(\mu_{t},x)$ exists 
and \eqref{globalgaussianlem11} holds for all $i\in\{0,\ldots,\mathbf{n}\}$.
We see $\mu_t(\mathbb{R}^{\mathbf{n}+\mathbf{k}}\setminus E_t)=0$.
Consider $x\in E_t$ then by \eqref{globalgaussianlem11}
we have
\begin{align*}
(\mathbf{T}(\mu_t,x)^{\bot})_{\natural}(y_0-y_i)
=(\mathbf{T}(\mu_t,x)^{\bot})_{\natural}(x-y_i)-(\mathbf{T}(\mu_t,x)^{\bot})_{\natural}(x-y_0)=0
\end{align*}
for all $i\in\{1,\ldots,\mathbf{n}\}$.
So $v_i=y_i-y_0\in\mathbf{T}(\mu_t,x)$ for all $i\in\{1,\ldots,\mathbf{n}\}$,
hence $\mathbf{T}(\mu_t,x)=\mathrm{span}(v_i)_{1\leq i\leq\mathbf{n}}=T$.
As this holds for all $x\in E_t$ for all $t\in J$,
we have $\mathbf{H}_{\mu_t}\equiv 0$ for a.e.\ $t\in (-1,t_0)$.
This follows from Brakke's general regularity theorem \cite[Thm.\ 6.12]{brakke}
(see also \cite[Thm.\ 3.2]{kasait}).
Or deduce this from Menne's 
characterization of the mean curvature vector \cite[Thm.\ 15.6]{menne5}.

Now for a.e.\ $t\in (-1,t_0)$ we have $t\in J$ and $\mathbf{H}_{\mu_t}\equiv 0$,
so equality \eqref{globalgaussianlem11} with $i=0$ yields $E_t\subset T+y_0$.
Thus $\mathrm{spt}\mu_t\subset T+y_0$.
Then by Theorem \ref{nobdrythm} we have $\mu_t=\mathscr{H}^{\mathbf{n}}\mres (T+y_0)$.
As this holds for a.e.\ $t$ in $(-1,t_0)$
and by the continuity properties of the Brakke flow due to Brakke \cite[Thm.\ 3.10]{brakke}
we obtain $\mu_t=\mathscr{H}^{\mathbf{n}}\mres (T+y_0)$ for all $t\in (-1,t_0)$.
Finally choose $t_0$ arbitrary close to $0$ to establish the result.
\end{proof}

Now suppose the Gaussian density ratios are locally bounded by $1+\delta$.
In view of the previous Lemma \ref{globalgaussianlem} an indirect blow-up argument
combined with Ilmanen's compactness theorem (see Theorem \ref{compactnessthm}),
yields a small neighbourhood in which we have small height
and density ratios close to one, see below.
In view of Theorem \ref{locregthm} this implies
Theorem \ref{whitelocregthm}.

%
%
%local gaussian density to height bounds
%
%
\begin{lem}
\label{localgaussianlem}
For all $\epsilon,\sigma\in (0,1)$ there exists a $\delta\in (0,1)$
such that the following holds:

Let $\rho,\rho_0\in (0,\infty)$ and let $(\mu_t)_{t\in [-\rho^2,0]}$ be a Brakke flow in $\mathbf{B}(0,4\sqrt{\mathbf{n}}\rho_0)$.
Suppose $\rho\leq\rho_0$, $0\in\mathrm{spt}\mu_{0}$ 
and for all $(s,y)\in(-\rho^2,0]\times\mathbf{B}(0,\rho)$ we have
\begin{align}
\label{localgaussianlemc}
\int_{\mathbb{R}^{\mathbf{n}+\mathbf{k}}}\Phi_{(s,y)}\varphi_{(s,y),\rho_0}\, d\mu_{-\rho^2}
\leq 1+\delta.
\end{align}

Then there exists a $T\in\mathbf{G}(\mathbf{n}+\mathbf{k},\mathbf{n})$ 
such that we have
\begin{align*}
&\sup\left\{|(T^{\bot})_{\natural}(x)|, x\in\mathrm{spt}\mu_{-\sigma\delta^2\rho^2}\cap\mathbf{B}(0,2\delta\rho)\right\}
\leq\epsilon\delta\rho
\\
&\text{and}\quad
\mu_{-\sigma\delta^2\rho^2}(\mathbf{B}(0,\delta\rho))\leq\omega_{\mathbf{n}}(1+\epsilon)(\delta\rho)^{\mathbf{n}}.
\end{align*}
\end{lem}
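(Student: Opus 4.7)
The plan is a parabolic blow-up argument with Lemma \ref{globalgaussianlem} as the rigidity input. Fix $\epsilon_0,\sigma_0\in(0,1)$ and suppose for contradiction that the conclusion fails for every $\delta>0$: one finds $\delta_i\searrow 0$, $\rho_i\leq\rho_{0,i}$, and Brakke flows $(\mu^i_t)_{t\in[-\rho_i^2,0]}$ in $\mathbf{B}(0,4\sqrt{\mathbf{n}}\rho_{0,i})$ with $0\in\mathrm{spt}\mu^i_0$ satisfying \eqref{localgaussianlemc} at $\delta=\delta_i$, yet for no $T\in\mathbf{G}(\mathbf{n}+\mathbf{k},\mathbf{n})$ do both conclusions hold at scale $\delta_i\rho_i$. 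I rescale parabolically by $\lambda_i:=\delta_i\rho_i$, setting $\nu^i_t(A):=\lambda_i^{-\mathbf{n}}\mu^i_{\lambda_i^2 t}(\lambda_i A)$; this is a Brakke flow on $\mathbf{B}(0,4\sqrt{\mathbf{n}}\rho_{0,i}/\lambda_i)$ for $t\in[-\delta_i^{-2},0]$, with the space and time domains exhausting $\mathbb{R}^{\mathbf{n}+\mathbf{k}}$ and $(-\infty,0]$ respectively. By the homogeneity of $\Phi$ and $\varphi$, the assumption becomes
\[
\int\Phi_{(s,y)}\varphi_{(s,y),\rho_{0,i}/\lambda_i}\,d\nu^i_{-\delta_i^{-2}}\leq 1+\delta_i
\quad\text{for }(s,y)\in(-\delta_i^{-2},0]\times\mathbf{B}(0,\delta_i^{-1}),
\]
while $0\in\mathrm{spt}\nu^i_0$.

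Next I extract a limit flow. Monotonicity (Theorem \ref{localmonthm}) carries the Gaussian bound to every intermediate time along $\nu^i$, and comparing against concentrated parabolic Gaussians yields uniform area-ratio bounds $R^{-\mathbf{n}}\nu^i_t(\mathbf{B}(y,R))\leq C$ on every compact subset of $(-\infty,0]\times\mathbb{R}^{\mathbf{n}+\mathbf{k}}$. Ilmanen's compactness (Theorem \ref{compactnessthm}), applied diagonally, then produces a subsequential limit Brakke flow $(\nu_t)_{t\in(-\infty,0]}$ on $\mathbb{R}^{\mathbf{n}+\mathbf{k}}$. Taking $i\to\infty$ in the Gaussian bound against the compactly supported $\Phi_{(s,y)}\varphi_{(s,y),R_0}$, and then $R_0\to\infty$ by dominated convergence (using the area-ratio bound to control the Gaussian tail), yields $\int\Phi_{(s,y)}\,d\nu_t\leq 1$ for all $(s,y)\in\mathbb{R}\times\mathbb{R}^{\mathbf{n}+\mathbf{k}}$ and $t<s$. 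Conversely, integrality of $\nu^i_0$ and $0\in\mathrm{spt}\nu^i_0$ give $\Theta^{\mathbf{n}}(\nu^i_0,0)\geq 1$, hence monotonicity from $(0,0)$ yields $\int\Phi_{(0,0)}(t,\cdot)\,d\nu^i_t\geq 1$ for every $t<0$, a lower bound that survives the limit. In particular $\mathrm{spt}\nu_t\neq\emptyset$ for $t<0$, so applying Lemma \ref{globalgaussianlem} on a finite sub-interval of $(-\infty,0)$ forces $\nu_t=\mathscr{H}^{\mathbf{n}}\mres(T+a)$; the equality $\int\Phi_{(0,0)}\,d\nu_t=1$ then pins $\mathrm{dist}(0,T+a)=0$, so absorbing the translation we obtain $T\in\mathbf{G}(\mathbf{n}+\mathbf{k},\mathbf{n})$ with $\nu_t\equiv\mathscr{H}^{\mathbf{n}}\mres T$.

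With this $T$ the measure bound falls out from upper semicontinuity of weak convergence on the compact $\overline{\mathbf{B}(0,1)}$: $\limsup_i\nu^i_{-\sigma_0}(\mathbf{B}(0,1))\leq\nu_{-\sigma_0}(\overline{\mathbf{B}(0,1)})=\omega_{\mathbf{n}}$, so $\nu^i_{-\sigma_0}(\mathbf{B}(0,1))\leq\omega_{\mathbf{n}}(1+\epsilon_0)$ for all large $i$. The principal obstacle is the height bound, because weak convergence of Radon measures does not in general give Hausdorff convergence of supports; here I would invoke the clearing-out lemma (Lemma \ref{clearoutlem}). Suppose on a subsequence there were $x_i\in\mathrm{spt}\nu^i_{-\sigma_0}\cap\mathbf{B}(0,2)$ with $|(T^{\bot})_{\natural}(x_i)|>\epsilon_0$, and extract $x_i\to x_\infty$ with $\mathrm{dist}(x_\infty,T)\geq\epsilon_0$. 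Fix $R:=\epsilon_0/2$ and choose $\tau>0$ so small that $\sqrt{R^2-4\mathbf{n}\tau}>\epsilon_0/4$ and $C\eta^{2\alpha_1}R^2<\tau$ for the sufficiently small $\eta$ needed below. Because $\mathbf{B}(x_\infty,R)\cap T=\emptyset$, weak convergence at time $-\sigma_0-\tau$ yields
\[
R^{-\mathbf{n}}\int\bigl(1-R^{-2}|x-x_\infty|^2\bigr)_+^3\,d\nu^i_{-\sigma_0-\tau}\longrightarrow 0,
\]
so Lemma \ref{clearoutlem} applied at $(x_\infty,R)$ starting at time $-\sigma_0-\tau$ forces $\nu^i_{-\sigma_0}(\mathbf{B}(x_\infty,\epsilon_0/4))=0$ for all large $i$, contradicting $x_i\to x_\infty$ with $x_i\in\mathrm{spt}\nu^i_{-\sigma_0}$. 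Undoing the rescaling contradicts the initial failure assumption and proves the lemma.
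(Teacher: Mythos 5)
Your overall strategy is the same as the paper's: rescale by $\delta\rho$, argue by contradiction, use Lemma \ref{barrierlem} together with the Gaussian hypothesis (propagated to intermediate times by Theorem \ref{localmonthm}) to get uniform area-ratio bounds, extract a limit flow by Theorem \ref{compactnessthm}, pass the Gaussian bound to the limit with tail control, invoke the rigidity Lemma \ref{globalgaussianlem} to identify the limit as a static plane, and then obtain the measure bound by testing against a cut-off slightly larger than $\overline{\mathbf{B}(0,1)}$ and the height bound by a clearing-out (Lemma \ref{clearoutlem}) argument. All of these steps match the paper's proof in substance.

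There is, however, one step whose justification as written would fail: you claim that ``integrality of $\nu^i_0$ and $0\in\mathrm{spt}\nu^i_0$ give $\Theta^{\mathbf{n}}(\nu^i_0,0)\geq 1$'' and then that monotonicity yields $\int\Phi_{(0,0)}(t,\cdot)\,\mathrm{d}\nu^i_t\geq 1$ for $t<0$. Definition \ref{brakkeflow} only guarantees integer rectifiability of $\mu_t$ for \emph{almost every} $t$, so nothing is assumed about the final-time slice $\nu^i_0$; and even for an integer rectifiable measure, a point of the support need not carry density $\geq 1$ (mass in small balls can be positive but of lower order). Likewise, converting a final-time density bound into the lower Gaussian bound at earlier times requires the existence of an approximate tangent plane at that point (this is exactly how the paper argues inside the proof of Lemma \ref{globalgaussianlem}, at a.e.\ time), which you do not have at $t=0$. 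The conclusion you need — that the limit plane is non-empty at negative times and passes through the origin, and that $\mathrm{spt}$ of the limit is non-empty at the final time of the interval on which Lemma \ref{globalgaussianlem} is applied — is still reachable, but by a different mechanism: the contrapositive of Lemma \ref{clearoutlem} turns $0\in\mathrm{spt}\nu^i_0$ into a mass lower bound $\nu^i_t\big(\mathbf{B}(0,C\sqrt{-t})\big)\geq c\,(-t)^{\mathbf{n}/2}$ at slightly earlier times, uniformly in $i$; such lower bounds on closed balls do pass to the weak limit, forcing the limit plane to meet $\overline{\mathbf{B}(0,C\sqrt{-t})}$ for all small $-t$ and hence to contain $0$. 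With that repair your argument closes, and it then coincides with the paper's proof.
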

\begin{proof}
We may assume $\rho=\delta^{-1}$.
Suppose the statement would be false.
Then there exist $\epsilon,\sigma\in (0,1)$ and for every $j\in\mathbb{N}$ 
we find an $\rho_j\in [j,\infty)$
and a Brakke flow $(\nu_t^j)_{t\in [-j^2,0]}$ in 
$B_j:=\mathbf{B}(0,4\sqrt{\mathbf{n}})\rho_j)$
such that $0\in\mathrm{spt}\nu^j_0$,
\begin{align}
\label{localgaussianlem24}
\sup_{t\in [-j^2,s)}
\int_{\mathbb{R}^{\mathbf{n}+\mathbf{k}}}
\Phi_{(s,y)}\varphi_{(s,y),\rho_j}\;\mathrm{d}\nu^j_{t}
=
\int_{\mathbb{R}^{\mathbf{n}+\mathbf{k}}}
\Phi_{(s,y)}\varphi_{(s,y),\rho_j}\;\mathrm{d}\nu^j_{-j^2}
\leq 1+\frac{1}{j}
\end{align}
for all $(s,y)\in (-j^2,0]\times\mathbf{B}(0,j)$
and one of the following holds
\begin{align}
\label{localgaussianlem25}
\inf_{T\in\mathbf{G}(\mathbf{n}+\mathbf{k},\mathbf{n})}
\sup\left\{|(T^{\bot})_{\natural}(x)|, x\in\mathrm{spt}\nu^j_{-\sigma}\cap \mathbf{B}(0,2)\right\}
>\epsilon,
\\
\label{localgaussianlem26}
\nu^j_{-\sigma}(\mathbf{B}(0,1))
> \omega_{\mathbf{n}}(1+\epsilon).
\end{align}
Note that the equality in \eqref{localgaussianlem24}
follows from Huisken's monotonicity formula (see Theorem \ref{localmonthm}).

To obtain a converging subsequence of the $(\nu^j_t)$ we need uniform bounds on the measure of compact sets.
We claim that for every $R\in (0,\infty)$ we can find a $D(R)\in (0,\infty)$ such that
\begin{align}
\label{localgaussianlem31}
\sup_{j\geq 2}\sup_{t\in [-2,0]}\nu^j_{t}(\mathbf{B}(0,R)\cap\mathbf{B}(0,j/2))
&\leq D(R).
\end{align}
Before we prove this, we first show
\begin{align}
\label{localgaussianlem32}
\sup_{j\geq 2}\sup_{t\in [-2,0]}\sup_{y\in\mathbf{B}(0,j)}\sup_{R\in (0,j/4]}\nu^j_{t}(\mathbf{B}(y,R))
&\leq C R^{\mathbf{n}}
\end{align}
To see \eqref{localgaussianlem32} consider $R\leq j/4$, $x\in \mathbf{B}(y,2R)$, $t\in [-2,0]$
and $c_2:=(2\mathbf{n})^{-1}$.
Then we can estimate $\Phi_{(t,y)}(t-c_2R^2,x)\geq (4\pi c_2 R^2)^{-\frac{\mathbf{n}}{2}}\exp(-1/c_2)$
as well as $\varphi_{(t,y),\rho_j}(t-c_2R^2,x)\geq(1-1/4)^3$.
Thus Lemma \ref{barrierlem} and assumption \eqref{localgaussianlem24} yield
\begin{align*}
\nu^j_{t}(\mathbf{B}(y,R))
&\leq
C\nu^j_{t-c_2R^2}(\mathbf{B}(y,2R))
\\&\leq
CR^{\mathbf{n}}\int_{\mathbb{R}^{\mathbf{n}+\mathbf{k}}}
\Phi_{(t,y)}\varphi_{(t,y),\rho_j}\;\mathrm{d}\nu^j_{t-c_2R^2}
\leq 
CR^{\mathbf{n}}.
\end{align*}
To prove \eqref{localgaussianlem31} let $R\in (0,\infty)$.
For $j\leq 4R$ we can use Lemma \ref{barrierlem} to estimate
\begin{align*}
\nu^j_{t}(\mathbf{B}(0,j/2))
\leq \nu^j_{t}(\mathbf{B}(0,4\sqrt{\mathbf{n}}j))
\leq \nu^j_{-2}(\mathbf{B}(0,8\sqrt{\mathbf{n}}j))=:D_j(R)
\end{align*}
for all $t\in [-2,0]$.
Combined with \eqref{localgaussianlem32} this proves \eqref{localgaussianlem31}.

Now we can use the 
compactness theorem by Ilmanen, %\cite[7.1]{ilmanen1} 
(see Theorem \ref{compactnessthm})
with $U_j=\mathbf{B}(0,j/2)$,
to see that a subsequence of the $(\nu_t^j)$
converges to a Brakke flow $(\nu_t)_{t\in [-2,0]}$ in $\mathbb{R}^{\mathbf{n}+\mathbf{k}}$.
Note that without loss of generality we  will assume that the whole sequence converges.
In particular
\begin{align}
\label{localgaussianlem41}
\nu_{t}(\phi)=\lim_{j\to\infty}\nu^j_{t}(\phi)
\quad\text{for all}\;\phi\in\mathcal{C}_{\mathrm{c}}^0(\mathbf{B}(0,j_0/2))
\end{align}
for all $t\in[-2,0]$ and all $j_0\in\mathbb{N}$.
Combining this with \eqref{localgaussianlem32} yields
\begin{align}
\label{localgaussianlem42}
\sup_{t\in[-2,0]}\sup_{y\in\mathbb{R}^{\mathbf{n}+\mathbf{k}}}\sup_{R\in (0,\infty)}\nu_{t}(\mathbf{B}(y,R))
\leq C R^{\mathbf{n}}.
\end{align}

Next we want to show
\begin{align}
\label{localgaussianlem51}
\int_{\mathbb{R}^{\mathbf{n}+\mathbf{k}}}\Phi_{(s,y)}\;\mathrm{d}\nu_{t}\leq 1
\end{align}
for all $(s,y)\in (-2,0]\times\mathbb{R}^{\mathbf{n}+\mathbf{k}}$ and all $t\in [-2,s)$.
To see this fix $s,y,$ and $t$ like that.
First we see that by \eqref{localgaussianlem42} we have
\begin{align}
\label{localgaussianlem52}
\int_{\mathbb{R}^{\mathbf{n}+\mathbf{k}}}\Phi_{(s,y)}\;\mathrm{d}\nu_{t}<\infty.
\end{align}
In order to prove \eqref{localgaussianlem52} consider
$f_l:\mathbb{R}^{\mathbf{n}+\mathbf{k}}\to\mathbb{R}^+$ 
given by $f_l(x):=\Phi_{(s,y)}(t,x)$ for $|x-y|< l$ and $f_l\equiv 0$ outside $\mathbf{B}(y,l)$.
Obviously we have $f_{l+1}\geq f_l$.
Use \eqref{localgaussianlem42}
to estimate $\nu_t(\mathbf{B}(y,2l))\leq 2C_1 (2l)^{\mathbf{n}}$ for all $l\in\mathbb{N}$.
Then for $l\geq l_0$ we observe
\begin{align*}
&\int_{\mathbb{R}^{\mathbf{n}+\mathbf{k}}}f_{l+1}\;\mathrm{d}\nu_{t}
-\int_{\mathbb{R}^{\mathbf{n}+\mathbf{k}}}f_l\;\mathrm{d}\nu_{t}
\leq
\int_{\mathbf{B}(y,l+1)\setminus\mathbf{B}(y,l)}\Phi_{(s,y)}\;\mathrm{d}\nu_{t}
\\
&\leq C(s-t)^{-\mathbf{n}/2}\exp(-l^2/(4(s-t)))\nu_t(\mathbf{B}(y,2l))
\leq l^{-3-\mathbf{n}}\nu_t(\mathbf{B}(y,2l))
\leq l^{-2},
\end{align*}
where we chose $l_0$ large enough depending on $s-t$.
Thus $\lim_{l\to \infty}\int f_ld\nu_t<\infty$
and the monotone convergence theorem implies \eqref{localgaussianlem52}.

We continue to prove \eqref{localgaussianlem51}.
Let $\gamma\in (0,1)$ be arbitrary.
Note that $y,s,t$ are still fixed.
Using \eqref{localgaussianlem52} and \eqref{localgaussianlem41} 
we find $j_1,j_2,j_3\in\mathbb{N}$, $j_3> j_2> j_1$ 
such that $\rho_{j_2}\geq \rho_{j_1}+1$,
\begin{align*}
\int_{\mathbb{R}^{\mathbf{n}+\mathbf{k}}\setminus\mathbf{B}(y,\rho_{j_1})}
\Phi_{(s,y)}\;\mathrm{d}\nu_{t}\leq\gamma,
%\\
%\sup_{x\in\mathbb{R}^{\mathbf{n}+\mathbf{k}}\setminus\mathbf{B}(y,\rho_{j})}\Phi_{(s,y)}(t,x)\leq \gamma\rho_{j}^{-\mathbf{n}}
\\
\int_{\mathbf{B}(y,\rho_{j_1})}\Phi_{(s,y)}\;\mathrm{d}\nu_{t}
-\int_{\mathbf{B}(y,\rho_{j_2})}\Phi_{(s,y)}\;\mathrm{d}\nu^j_{t}\leq\gamma,
\\
1\leq\inf_{x\in\mathbf{B}(y,\rho_{j_2})}\varphi_{(s,y),\rho_j}(t,x) + \gamma\rho_{j_2}^{-\mathbf{n}}
\end{align*}
for all $j\geq j_3$.
Combining these estimates with \eqref{localgaussianlem32} 
we obtain
\begin{align*}
\int_{\mathbb{R}^{\mathbf{n}+\mathbf{k}}}
\Phi_{(s,y)}\;\mathrm{d}\nu_{t}
\leq
\int_{\mathbb{R}^{\mathbf{n}+\mathbf{k}}}
\Phi_{(s,y)}\varphi_{(s,y),\rho_j}\;\mathrm{d}\nu^j_{t}
+ (2+C(s-t)^{-\mathbf{n}/2})\gamma
\end{align*}
for all $j\geq j_3$.
By \eqref{localgaussianlem24}
and as $s,t,y,\gamma$ were arbitrary 
this establishes \eqref{localgaussianlem51}.

In view of \eqref{localgaussianlem42} and \eqref{localgaussianlem51}
we can use Lemma \ref{globalgaussianlem} 
to obtain a subspace $T\in\mathbf{G}(\mathbf{n}+\mathbf{k},\mathbf{n})$
such that
\begin{align}
\label{localgaussianlem61}
\nu_t=\mathscr{H}^{\mathbf{n}}\mres T
\end{align}
for all $t\in (-2,0)$.
Note that by \eqref{localgaussianlem41} 
and as $0\in\mathrm{spt}\nu^j_0$ for all $j\in\mathbb{N}$
we see $a=0$ is a porpper choice in Lemma \ref{globalgaussianlem}.
This should now contradict that \eqref{localgaussianlem25} or \eqref{localgaussianlem26}
hold for infinitely many $j$.

First suppose  that for infinitely many $j$ inequality \eqref{localgaussianlem25} holds,
i.e.\ there exists a $z_j\in\mathrm{spt}\nu^j_{-\sigma}\cap\mathbf{B}(0,1)$
such that $(T^{\bot})_{\natural}(z_j)>\epsilon$.
Consider $C_1$ and $\alpha_1$ from the clearing out lemma (see Lemma \ref{clearoutlem}).
Choose $\tau,\eta_1\in (0,1)$
such that $4\mathbf{n}\tau<(\epsilon/4)^2$
and $C_1\eta_1^{2\alpha_1}(\epsilon/4)^2\leq\tau$.
Then Lemma \ref{clearoutlem} with $R=\epsilon/4$
yields that $\nu^j_{-\sigma-\tau}(\mathbf{B}(z_j,\epsilon/4))>\eta_1$
for infinitely many $j$.
A subsequence of the $z_j$ converges to some $z_0\in \mathbf{B}(0,2)$
with $(T^{\bot})_{\natural}(z_0)\geq\epsilon$.
Consider a cut-off function 
$\zeta_1\in\mathcal{C}^{\infty}_{\mathrm{c}}(\mathbf{B}(z_0,\epsilon/2),[0,1])$
with $\{\zeta_1=1\}\supset\mathbf{B}(z_0,\epsilon/3)$.
Then 
\begin{align*}
\nu^j_{-\sigma-\tau}(\zeta_1)
\geq\nu^j_{-\sigma-\tau}(\mathbf{B}(z_j,\epsilon/4))
>\eta_1>0=\nu_{-\sigma-\tau}(\zeta_1)
\end{align*}
for infinitely many $j$,
where we used \eqref{localgaussianlem61} for the last equality.
In view of \eqref{localgaussianlem41} this yields a contradiction.

Now suppose that for infinitely many $j$ inequality \eqref{localgaussianlem26} holds.
Consider 
$\zeta_2\in\mathcal{C}^{\infty}_{\mathrm{c}}(\mathbf{B}(0,\sqrt[\mathbf{n}]{1+\epsilon/2})),[0,1])$
with $\{\zeta_2=1\}\supset\mathbf{B}(0,1)$.
In view of \eqref{localgaussianlem61} we can estimate
\begin{align*}
\nu_{-\sigma}(\zeta_2)
\leq\omega_{\mathbf{n}}(1+\epsilon/2)
<\omega_{\mathbf{n}}(1+\epsilon)
\leq\nu^{j}_{-\sigma}(\mathbf{B}(0,1))
\leq\nu^{j}_{-\sigma}(\zeta_2)
\end{align*}
for infinitely many $j$.
Again, this contradicts \eqref{localgaussianlem41},
which establishes the result.
\end{proof}
%
%
%proof of white-type local regularity theorem
%
%
\begin{proof}[Proof of Theorem {\ref{whitelocregthm}}]
We may assume $t_0=0$, $a=0$ and $\rho=1$.
Let $\alpha_0$ and $\gamma_0$ be from Theorem \ref{locregthm}
with respect to $\lambda=1/2$.
Choose $\epsilon\in (0,\gamma_0]$ 
such that $2\epsilon^{\alpha_0}\leq\alpha_0\beta$
and set $\sigma:=2\epsilon^{\alpha_0}$.
Let $\delta$ be chosen with respect to $\epsilon$ and $\sigma$ according to Lemma \ref{localgaussianlem}
and choose $\eta\leq\delta$, $t_1:=-2\epsilon^{\alpha_0}\delta^2$.
Then Lemma \ref{localgaussianlem} with yields the existence of a 
$T\in\mathbf{G}(\mathbf{n}+\mathbf{k},\mathbf{n})$ such that
\begin{align*}
\sup\left\{|(T^{\bot})_{\natural}(x)|, x\in\mathrm{spt}\mu_{t_1}\cap\mathbf{B}(0,2\delta)\right\}
\leq\epsilon\delta.
\\
\mu_{t_1}(\mathbf{B}(0,\delta))
\leq\omega_{\mathbf{n}}(1+\epsilon)\delta^{\mathbf{n}}.
\end{align*}
Then Theorem \ref{locregthm} with $\rho=\delta$ and $\gamma=\epsilon$
yields the desired graphical representation for $\eta=\epsilon^{\alpha_0}\delta$.
\end{proof}

\begin{appendix}

\section{Curvature estimates}
\label{curvature_estimates}
%
%
%estimating the second fundamental form
%
%
\begin{sett}
\label{normalmcfsett}
Consider open and connected sets $W\subset\mathbb{R}^{\mathbf{n}}$, $J\subset\mathbb{R}$.
Consider $\Psi\in\mathcal{C}^{\infty}(J\times W,\mathbb{R}^{\mathbf{n}+\mathbf{k}})$
such that $\Psi_t:=\Psi(t,\cdot\,)$ are empbeddings and set $M_t:=\Psi_t[W]$ for all $t\in J$.
Fix $t\in J$ and $\hat{y}\in W$.
We set
\begin{align*}
&b_i(t,\hat{y}):=D_i\Psi(t,\hat{y}),
\;\;\;
g_{ij}(t,\hat{y}):=b_i(t,\hat{y})\cdot b_j(t,\hat{y}),
\\
&\mathbf{T}(M_t,\Psi(t,\hat{y})):=\mathrm{span}(b_i(t,\hat{y}))_{1\leq i\leq\mathbf{n}}
\\
&A_{ij}(t,\hat{y})
:=\big(\mathbf{T}(M_t,\Psi(t,\hat{y}))^{\bot}\big)_{\natural}D_iD_j\Psi(t,\hat{y})
\in\mathbb{R}^{\mathbf{n}+\mathbf{k}}
\end{align*}
for $i,j\in\{1,\ldots,\mathbf{n}\}$.
In the following we sum over repeated indices.
Let $(g^{ij}(t,\hat{y}))$ be the inverse to $(g_{ij}(t,\hat{y}))$.
and let $(\mathrm{d}b^{i}(t,\hat{y}))_{1\leq i\leq\mathbf{n}}$ 
be the dual basis to $(b_i(t,\hat{y}))_{1\leq i\leq\mathbf{n}}$.
We set
\begin{align*}
\mathbf{A}(M_t,\Psi_t(\hat{y}))&:=A(t,\hat{y})
:=A_{ij}(t,\hat{y})\,\mathrm{d}b^{i}(t,\hat{y})\otimes\mathrm{d}b^{j}(t,\hat{y})
\\
\mathbf{H}(M_t,\Psi_t(\hat{y}))&
:=g^{ij}(t,\hat{y})A_{ij}(t,\hat{y}).
\end{align*}
Consider $T=T_{i_1\cdots i_p}\mathrm{d}b^{i_1}\otimes\cdots\otimes\mathrm{d}b^{i_p}$ 
and $S=S_{j_1\cdots j_p}\mathrm{d}b^{j_1}\otimes\cdots\otimes\mathrm{d}b^{j_p}$
with $T_{i_1\cdots i_p},S_{j_1\cdots j_p}\in\mathbb{R}^{\mathbf{n}+\mathbf{k}}$.
We set
\begin{align*}
&\langle T,S\rangle :=g^{i_1j_1}\cdots g^{i_pj_p}T_{i_1\cdots i_p}\cdot S_{j_1\cdots j_p},
\quad
|T|^2:=\langle T,T\rangle.
\end{align*}

Moreover we assume $\Psi$ moves by smooth mean curvature flow in normal direction, i.e.\
\begin{align}
\label{normalmcf}
\partial_t\Psi(t,\hat{y})=\mathbf{H}(M_t,\Psi_t(\hat{y}))
\end{align}
for all $(t,\hat{y})\in J\times W$.
\end{sett}

%
%
%normalizing direction
%
%
\begin{lem}
\label{tangentialdifflem}
Consider open and connected sets $W_0\subset\subset W\subset\subset\Omega\subset\mathbb{R}^{\mathbf{n}}$, $I\subset\mathbb{R}$.
Let $F\in\mathcal{C}^{\infty}(I\times\Omega,\mathbb{R}^{\mathbf{n}+\mathbf{k}})$
be such that $F_t:=F(t,\cdot\,)$ are embeddings and set $M_t:=F_t[\Omega]$ for all $t\in I$.
Suppose $F$ satisfies \eqref{smoothmcf} and fix $s_0\in I$.

Then there exists an open $J\subset I$ with $s_0\in J$
and a $\phi\in\mathcal{C}^{\infty}(J\times W,\mathbb{R}^{\mathbf{n}})$
such that $\phi_t:=\phi(t,\cdot\,)$ is injective and $W_0\subset\phi_t[W]$ for all $t\in J$.
Fuhthermore $\phi$ satisfies $\phi_{s_0}(\hat{y})=\hat{y}$ for all $\hat{y}\in W$ and
\begin{align}
\label{tangentialdifflema}
DF(t,\phi(t,\hat{y}))\partial_t\phi(t,\hat{y})
=-(\mathbf{T}(M_t,F(t,\phi(t,\hat{y}))))_{\natural}\partial_tF(t,\phi(t,\hat{y}))
\end{align}
for all $(t,\hat{y})\in J\times W$.
In particular $\Psi(t,\hat{y}):=F(t,\phi(t,\hat{y}))$ satisfies \eqref{normalmcf}
and we are in Setting \ref{normalmcfsett}.
\end{lem}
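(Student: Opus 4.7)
The plan is to obtain $\phi$ as the flow of a time-dependent tangential vector field on $\Omega$ that, once added to $\partial_t F$, cancels the tangential component of the motion leaving only the mean curvature vector. Concretely, since each $F_t$ is an embedding, the linear map $DF(t,\hat{x}): \mathbb{R}^{\mathbf{n}} \to \mathbf{T}(M_t, F(t,\hat{x}))$ is an isomorphism, so I can define a smooth vector field
\begin{align*}
X(t,\hat{x}) := -DF(t,\hat{x})^{-1}\bigl[(\mathbf{T}(M_t, F(t,\hat{x})))_\natural\, \partial_t F(t,\hat{x})\bigr]
\end{align*}
on $I \times \Omega$; this is the unique choice making \eqref{tangentialdifflema} hold for any $\phi$ satisfying $\partial_t \phi = X(\cdot, \phi)$.

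Next, I would solve the ODE $\partial_t \phi(t,\hat{y}) = X(t, \phi(t,\hat{y}))$ with initial condition $\phi(s_0, \hat{y}) = \hat{y}$ for $\hat{y} \in W$. Since $W \subset\subset \Omega$, the Picard--Lindel\"of theorem together with smooth dependence on initial data yields an open interval $J \ni s_0$ and a smooth $\phi : J \times W \to \Omega$. Uniqueness of ODE solutions (applied in both forward and backward time) gives injectivity of each $\phi_t$; moreover, since $\phi_{s_0} = \mathrm{id}_W$ and $D\phi_t$ varies continuously in $t$, the Jacobian determinant stays close to $1$ on compact subsets of $W$, so $\phi_t$ is a diffeomorphism onto its image.

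For the inclusion $W_0 \subset \phi_t[W]$: at $t=s_0$ one has $\phi_{s_0}[W] = W \supset \overline{W_0}$, and $\overline{W_0}$ is compact and disjoint from $\partial W$. The map $\Phi(t,\hat{y}) := (t, \phi_t(\hat{y}))$ is a local diffeomorphism in a neighbourhood of $\{s_0\} \times \overline{W_0}$ (its differential there is a triangular block with identity on the diagonal), so by a standard continuity/degree argument — or by covering $\overline{W_0}$ with finitely many inverse function theorem neighbourhoods — one gets $\overline{W_0} \subset \phi_t[W]$ after shrinking $J$. Finally, for $\Psi(t,\hat{y}) := F(t, \phi(t,\hat{y}))$, the chain rule gives
\begin{align*}
\partial_t \Psi &= \partial_t F(t,\phi) + DF(t,\phi)\,\partial_t\phi
= \partial_t F(t,\phi) - (\mathbf{T}(M_t, \Psi))_\natural\, \partial_t F(t,\phi)\\
&= (\mathbf{T}(M_t,\Psi)^{\bot})_\natural\, \partial_t F(t,\phi)
= \mathbf{H}(M_t, \Psi),
\end{align*}
using \eqref{smoothmcf} in the last equality. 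Hence $\Psi$ satisfies \eqref{normalmcf} and we are in Setting~\ref{normalmcfsett}. The only mildly delicate step is verifying the surjectivity $W_0 \subset \phi_t[W]$, but this is a routine consequence of continuous dependence of ODE solutions on initial data together with the compactness of $\overline{W_0}$; no serious analytic obstacle is anticipated.
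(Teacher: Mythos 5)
Your proposal is correct and follows essentially the same route as the paper: you define a time-dependent tangential vector field by inverting $DF$ on the tangential component of $-\partial_t F$ (the paper writes this explicitly via the dual frame $q_i = B^{ij}p_j$, which is precisely $DF^{-1}$ restricted to $\mathbf{T}(M_t)$), and then invoke Picard--Lindel\"of for the flow $\phi$. The extra detail you give on injectivity and on $W_0\subset\phi_t[W]$ is sound and merely spells out what the paper leaves implicit in ``the Picard--Lindel\"off theorem yields the result.''
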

\begin{proof}
Actually \eqref{tangentialdifflema} equals
\begin{align}
\label{tangentialdifflem11}
\partial_t\phi(t,\hat{y})
=G(t,\phi(t,\hat{y}))
\end{align}
for some $G\in \mathcal{C}^{\infty}(I\times\Omega,\mathbb{R}^{\mathbf{n}})$
depending on $F$.
The function $G$ can be obtained in the following way:
Let $p_i:=D_iF$, $B_{ij}:=p_i\cdot p_j$ and $(B^{ij})$ be the inverse of $(B_{ij})$.
Set $q_i:=B^{ij}p_j$. Now we obtain \eqref{tangentialdifflem11} from
\begin{align*}
Q:=\left(q_1\cdots q_{\mathbf{n}}\right),
\quad
DF=\left(p_1\cdots p_{\mathbf{n}}\right),
\quad
Q^{T}v=(v\cdot p_i)\hat{\mathbf{e}}_i,
\quad
G:=-Q^{T}\partial_tF,
\end{align*}
for all $v\in \mathbb{R}^{\mathbf{n}+\mathbf{k}}$.
Then the Picard-Lindel\"off-theorem yields the result.
\end{proof}

%
%absolute curvature evolution estimate
%
Next we need a bound on the evolution of the absolute value of the curvature tensor and its covariant derivatives.
In the case of one co-dimension this was done by Huisken \cite[Thm.\ 3.4]{huisken1} (see also \cite[Sect.\ 13]{hamilton}).
Here we state the generalization by Andrews and Baker.
%
%prop
%
\begin{prop}[{\cite[Prop.\ 9]{andrewsb}}]
\label{curvevoprop}
For every $m\in\mathbb{N}\cup\{0\}$ there exists an $C_m\in [1,\infty)$
such that in Setting \ref{normalmcfsett} the following holds:
\begin{align*}
\begin{split}
\partial_t\left|\nabla^{m} A\right|^2
\leq\Delta\left| A\right|^2-2\left|\nabla^{m+1} A\right|^2
&+C_m\sum_{a+b+c=m}|\nabla^m A||\nabla^a A||\nabla^b A||\nabla^{c}A|
\end{split}
\end{align*}
everywhere in $J\times W$.
Here $\nabla$ and $\Delta$ are the connection and Laplace-Beltrami operator 
associated to $(M_t,(g_{ij}))$.
\end{prop}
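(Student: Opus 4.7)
The plan is to argue by induction on $m$, reducing everything to a schematic calculus in which $\ast$ denotes a bilinear contraction with metric coefficients. First I would treat the base case $m=0$. Using \eqref{normalmcf} one gets $\partial_t g_{ij}=-2\langle A_{ij},\mathbf{H}\rangle$, so in particular $\partial_t g^{ij}=2\,g^{ik}g^{jl}\langle A_{kl},\mathbf{H}\rangle$, a term of type $A\ast A$. A direct computation of $\partial_t A_{ij}=\partial_t\big((\mathbf{T}^{\bot})_{\natural}D_iD_j\Psi\big)$, combined with the Gauss, Codazzi and Ricci identities for a submanifold of Euclidean space in arbitrary codimension, yields a Simons-type identity of schematic shape $\partial_t A=\Delta A+A\ast A\ast A$. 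Pairing with $A$ and accounting for the $A\ast A$ contribution from $\partial_t g^{ij}$ gives
\[
\partial_t|A|^2\leq\Delta|A|^2-2|\nabla A|^2+C_0|A|^4,
\]
which is the $m=0$ case.

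For the inductive step I would assume the estimate for all smaller indices and analyse $\partial_t\nabla^m A$ by separately commuting $\partial_t$ and $\Delta$ through covariant derivatives. The Christoffel symbols $\Gamma_{ij}^k$ and the normal connection one-forms evolve under \eqref{normalmcf} by expressions of the form $A\ast\nabla A$; hence the commutator $[\partial_t,\nabla^m]$, applied to $A$, produces a finite sum of terms $\nabla^p A\ast\nabla^q A\ast\nabla^r A$ with $p+q+r=m$. Commuting $\Delta$ with $\nabla^m$ through the Ricci identity produces curvature factors in the tangent and normal bundles, which by the Gauss and Ricci equations for submanifolds are of type $A\ast A$, giving again a cubic expression of total derivative order $m$. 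Substituting the base-case identity $\partial_t A=\Delta A+A\ast A\ast A$ and pairing with $\nabla^m A$ with respect to the evolving metric, the standard Bochner identity $2\langle\nabla^m A,\Delta\nabla^m A\rangle=\Delta|\nabla^m A|^2-2|\nabla^{m+1}A|^2$ produces the first two terms on the right, while everything else is cubic in $A$ and its derivatives and fits into the multi-index sum after one application of Cauchy--Schwarz to extract the factor $|\nabla^m A|$.

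The main obstacle will be bookkeeping in higher codimension: unlike the hypersurface case, $A$ takes values in the normal bundle, so commutators involve both the tangential curvature $R$ and the normal curvature $R^{\perp}$, and one must check carefully that no derivatives of $R$ or $R^{\perp}$ ever appear outside the Laplacian term. This is guaranteed by the fact that, via Gauss and Ricci, both $R$ and $R^{\perp}$ are pointwise of the form $A\ast A$, so their derivatives are of the form $\nabla A\ast A+A\ast\nabla A$ and are already accounted for in the cubic sum. The combinatorial verification that, after these substitutions, every remaining term indeed has $\nabla^m A$ as one factor and the other two factors of total derivative order $m$ is the technical heart of the induction and is precisely the content of \cite[Prop.\ 9]{andrewsb}.
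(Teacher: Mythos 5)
The paper states this proposition without proof, citing Andrews and Baker \cite[Prop.\ 9]{andrewsb}, so there is no in-paper argument to compare against. Your sketch correctly reproduces the standard proof from that reference: the Simons-type identity for the base case $m=0$, induction by commuting $\partial_t$ and $\Delta$ past $\nabla^m$, the evolution of the Christoffel symbols and normal connection to control $[\partial_t,\nabla^m]A$, and the Gauss/Ricci equations to guarantee that all tangential and normal curvature factors are pointwise of type $A\ast A$, so that no stray derivatives of curvature survive. Your sketch also tacitly corrects what appears to be a typo in the stated display: the term $\Delta|A|^2$ on the right-hand side should read $\Delta|\nabla^m A|^2$, as is clear both from the Bochner identity you invoke and from the way the estimate is actually applied in the proof of Proposition \ref{higherregprop}, where $(\partial_t-\Delta)|\nabla^m A|^2$ is bounded.
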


%
%
%estimating higher curvature derivatives
%
%
Now we can prove a higher regularity theorem similar to Ecker's \cite[Prop.\ 3.22]{eckerb} (see also \cite{eckerh1}).
%
%proposition
%
\begin{prop}
\label{higherregprop}
For all $m\in\mathbb{N}$ and $K\in [1,\infty)$
there exists a $C_{m,K} \in (1,\infty)$
such that the following holds:

Consider an open and connected set $\Omega\subset\mathbb{R}^{\mathbf{n}}$.
Let $\rho\in (0,\infty)$, $t_0\in\mathbb{R}$, $a\in\mathbb{R}^{\mathbf{n}+\mathbf{k}}$
and let $F\in\mathcal{C}^{2}((t_0-4\rho^2,t_0)\times\Omega,\mathbb{R}^{\mathbf{n}+\mathbf{k}})$
be such that for all $t\in (t_0-4\rho^2,t_2)$ we have
$F_t:=F(t,\cdot\,)$ are embeddings, $M_t:=F_t[\Omega]$ and $\partial M_t\cap\mathbf{B}(a,2\rho)=\emptyset$.
Suppose $F$ satisfies \eqref{smoothmcf} and
\begin{align*}
\kappa^2:=\rho^2\sup_{t\in (t_0-4\rho^2,t_0)}\sup_{x\in M_t\cap\mathbf{B}(a,2\rho)}|\mathbf{A}(M_t,x)|^2\leq K
\end{align*}

Then we have
\begin{align*}
\rho^{2m+2}\sup_{t\in [t_0-\rho^2,t_0)}\sup_{x\in M_t\cap\mathbf{B}(a,\rho)}|\nabla^m\mathbf{A}(M_t,x)|^2
\leq C_{m,K}\kappa^2
\end{align*}
\end{prop}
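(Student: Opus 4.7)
The plan is to establish the estimate by induction on $m$ using the parabolic evolution inequality of Proposition \ref{curvevoprop} together with a maximum principle argument with spatial and temporal cutoffs, in the style of Ecker--Huisken. First I would reduce to the normal MCF setting of \ref{normalmcfsett}: by Lemma \ref{tangentialdifflem}, locally around any point in $\Omega$ one may reparametrize $F$ by a smooth tangential diffeomorphism to obtain a map $\Psi$ satisfying \eqref{normalmcf}, without changing the geometric quantity $|\nabla^m\mathbf{A}|$. Hence we may freely assume Setting \ref{normalmcfsett} and invoke Proposition \ref{curvevoprop} throughout. The base case $m = 0$ is the hypothesis $\kappa^2 \leq K$.

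For the inductive step I would work with a nested family of balls and time intervals $2\rho > \rho_0 > \ldots > \rho_{m-1} > \rho$ and $t_0 - 4\rho^2 < t_0 - \tau_0^2 < \ldots < t_0 - \tau_{m-1}^2 < t_0 - \rho^2$, assuming inductively that $\rho^{2k+2}|\nabla^k\mathbf{A}|^2 \leq C_{k,K}\kappa^2$ on $M_t \cap \mathbf{B}(a,\rho_k)$ for $t \in [t_0 - \tau_k^2, t_0)$ for every $k < m$. Plugging these bounds into the quartic term $\sum_{a+b+c=m}|\nabla^m\mathbf{A}||\nabla^a\mathbf{A}||\nabla^b\mathbf{A}||\nabla^c\mathbf{A}|$ of Proposition \ref{curvevoprop} and using Young's inequality to absorb the bare $|\nabla^m\mathbf{A}|$ factors coming from the cases with $a,b,c<m$, the only remaining top-order contribution is $|\mathbf{A}|^2|\nabla^m\mathbf{A}|^2 \leq K\rho^{-2}|\nabla^m\mathbf{A}|^2$. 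Hence
\begin{align*}
(\partial_t - \Delta)|\nabla^m\mathbf{A}|^2 \leq -2|\nabla^{m+1}\mathbf{A}|^2 + \frac{C(m,K)}{\rho^2}|\nabla^m\mathbf{A}|^2 + \frac{C(m,K)\kappa^2}{\rho^{2m+4}}
\end{align*}
on the inductively controlled region.

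To localize, I would apply the parabolic maximum principle to the Hamilton-type combination
\begin{align*}
Q := \eta^{2q}\rho^{2m+2}|\nabla^m\mathbf{A}|^2 + \Lambda\eta^{2q+2}\rho^{2m}|\nabla^{m-1}\mathbf{A}|^2,
\end{align*}
where $\eta = \eta(x,t)$ is a standard Ecker cutoff supported in $\mathbf{B}(a,\rho_{m-1}) \times [t_0 - \tau_{m-1}^2, t_0]$ and vanishing on the parabolic boundary, for instance of the form $(c - |x-a|^2 - 2\mathbf{n}(t - t_0 + \tau_{m-1}^2))_+$ multiplied by a power of $t - (t_0 - \tau_{m-1}^2)$, chosen so that $\eta$ is bounded below on $\mathbf{B}(a,\rho) \times [t_0 - \rho^2, t_0)$. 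Using the identity $(\partial_t - \Delta)|F-a|^2 = -2\mathbf{n}$ to control $(\partial_t - \Delta)\eta^{2q}$, absorbing the cross term $4q\eta^{2q-1}\nabla\eta \cdot \nabla|\nabla^m\mathbf{A}|^2$ into $-2\eta^{2q}|\nabla^{m+1}\mathbf{A}|^2$ by Cauchy--Schwarz and Kato (taking $q$ large), and choosing $\Lambda$ large enough that the good $-2\Lambda\eta^{2q+2}|\nabla^m\mathbf{A}|^2\rho^{2m}$ term coming from the evolution of the $|\nabla^{m-1}\mathbf{A}|^2$ piece dominates the $C(m,K)\rho^{-2}|\nabla^m\mathbf{A}|^2$ bad term, one obtains $(\partial_t - \Delta)Q \leq C(m,K)\kappa^2\rho^{-2}$. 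At an interior maximum of $Q - C(m,K)\kappa^2\,(t - t_0 + \tau_{m-1}^2)\rho^{-2}$ this is non-positive, yielding $Q \leq C(m,K)\kappa^2$; on the parabolic boundary $Q$ vanishes by construction, so the bound persists, and restricting to $\mathbf{B}(a,\rho) \times [t_0 - \rho^2, t_0)$ where $\eta$ is bounded below completes the inductive step.

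The main obstacle is the bookkeeping of absorption: one must simultaneously choose the cutoff exponent $q$, the coupling constant $\Lambda$, and the nested radii $\rho_k$ and times $\tau_k$ so that every bad term in $(\partial_t - \Delta)Q$ is dominated by a good term. This is algebraic but intricate, and is the reason the final constant $C_{m,K}$ depends on both $m$ and $K$; any simplification (for example via integral Moser iteration) trades one technicality for another of comparable difficulty.
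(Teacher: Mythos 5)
Your overall frame (induction on $m$, reduction to Setting \ref{normalmcfsett} via Lemma \ref{tangentialdifflem}, the evolution inequality of Proposition \ref{curvevoprop}, and a localized maximum principle) is the same as the paper's, and your derivation of
$(\partial_t-\Delta)|\nabla^m\mathbf{A}|^2\le-2|\nabla^{m+1}\mathbf{A}|^2+C(m,K)\rho^{-2}|\nabla^m\mathbf{A}|^2+C(m,K)\kappa^2\rho^{-2m-4}$
from the inductive bounds is fine. The gap is in the absorption step for your additive, Hamilton-type test function
\[
Q=\eta^{2q}\rho^{2m+2}|\nabla^{m}\mathbf{A}|^{2}+\Lambda\,\eta^{2q+2}\rho^{2m}|\nabla^{m-1}\mathbf{A}|^{2}.
\]
The good term produced by the coupled piece is $-2\Lambda\eta^{2q+2}\rho^{2m}|\nabla^{m}\mathbf{A}|^{2}$, and it carries a strictly higher power of the cutoff than the bad terms it must dominate: the term $C\eta^{2q}\rho^{2m}|\nabla^m\mathbf{A}|^2$ coming from the evolution of the first piece and, worse, the term $Cq^2\eta^{2q-2}|\nabla\eta|^2\rho^{2m+2}|\nabla^m\mathbf{A}|^2$ left over after you absorb the cross term into $-2\eta^{2q}\rho^{2m+2}|\nabla^{m+1}\mathbf{A}|^2$. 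Since $\eta$ vanishes at the edge of its support, no choice of the constant $\Lambda$ gives $\Lambda\eta^{2q+2}\ge C\eta^{2q}$ there, and at an interior maximum of $Q$ you have no a priori lower bound on $\eta$; nor does smallness of $\eta$ control the first summand of $Q$, which is exactly the quantity you are trying to bound. So the maximum-principle inequality does not close. Giving both pieces the same cutoff power does not repair this: a compactly supported cutoff cannot satisfy $|\nabla\eta|\le C\eta$ near its zero set, so the same mismatch reappears through the cross term. The additive coupling is the right tool for global-in-space estimates (no spatial cutoff), but not for this local statement; the sentence ``choosing $\Lambda$ large enough that the good term dominates'' is the missing idea, and it is false pointwise.

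This is precisely why the paper instead uses the multiplicative (Shi-type) quantity $\alpha=|\nabla^{m+1}\mathbf{A}|^{2}\bigl(\Lambda\kappa^{2}+|\nabla^{m}\mathbf{A}|^{2}\bigr)$ together with a cutoff $\eta$: its evolution produces the good term $-2|\nabla^{m+1}\mathbf{A}|^{4}$, which is quadratic in the quantity being estimated, so at the maximum point of $\alpha\eta$ all cutoff error terms (only linear in $\alpha$) are absorbed by Young's inequality irrespective of how small $\eta$ is, yielding $\alpha\eta\le C\kappa^{4}$ and hence the bound on $|\nabla^{m+1}\mathbf{A}|^{2}$ where $\eta$ is bounded below. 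If you want to keep the additive ansatz you would have to replace the pointwise absorption by an integral/Moser-type iteration, which you mention but do not carry out; as written the inductive step fails.
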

%
%proof
%
\begin{proof}
Obviously the statement holds for $m=0$.
Now suppose the statement holds for all $l=0,\ldots, m$
for some $m\in\mathbb{N}\cup\{0\}$
and show this implies it also holds for $m+1$.
We may assume $t_0=0$, $a=0$ and $\rho=3$.
Fix $t_2\in (-9,0)$ and let $\Lambda\in [1,\infty)$ be chosen below,.
Consider the family of functions
$\alpha_t:M_t\to\mathbb{R}^+$, $t\in (-12,0)$ given by
\begin{align*}
\alpha_t(x)&:=|\nabla^{m+1}\mathbf{A}(M_t,x)|^2(\Lambda\kappa^2+|\nabla^m\mathbf{A}(M_t,x)|^2).
\end{align*}
Also consider the cut-off functions $\eta_t\in\mathcal{C}^2_{\mathrm{c}}(\mathbf{B}(0,5),\mathbb{R}^+)$, $t\in (-12,0)$ given by
\begin{align*}
\eta_t(x)&:=\left((t+10)_+(16-|x|^2)_+\right)^3.
\end{align*}
For $t\in (-12,0)$ set
\begin{align*}
m(t):=\sup_{M_t}\alpha_t\eta_t,
\quad
m_0:=\sup_{t\in [-10,t_2]}m(t),
\quad
s_0:=\inf\{t\in[t_1,t_2],\;m(t)=m_0\}.
\end{align*}
There exists an $x_0\in M_{s_0}$ such that $\alpha_{s_0}(x_0)\eta_{s_0}(x_0)=m_0$
and $\hat{y}_0\in\Omega$ with $F_{s_0}(\hat{y}_0)=x_0$.
Consider $s_3=\min\{s_0+1/2,s_0/2\}$ in particular $s_0\in (s_3-1,s_3)$.
By induction assumption we can use the Proposition for $m$
with $a$, $t_0$, $\rho$ replaced by $x_0$, $s_3$, $1$
to obtain
\begin{align}
\label{higherregprop11}
\max_{l=1,\ldots,m}|\nabla^{l}\mathbf{A}(M_{s_0},x_0)|^2
%\leq
%\max_{l=1,\ldots,m}\sup_{t\in [t_2-1,t_2])}\sup_{x\in M_t\cap\mathbf{B}(x_0,1/2)}|\nabla^{l}\mathbf{A}(M_t,x)|^2
\leq
\Gamma_m\kappa^2
\end{align}
for some $\Gamma_m\in (K,\infty)$ depending only on $m$ and $K$.
In the following quantities that only depend on 
$m$, $K$ and $\Gamma_m$ will be denoted by $C_m$.

In view of Lemma \ref{tangentialdifflem}
there exists an $\epsilon\in (0,-s_0)$, $J=(s_0-\epsilon,s_0+\epsilon)$, an open $W\subset\Omega$
and a $\phi\in\mathcal{C}^{\infty}\left(J\times W,\Omega\right)$,
such that $\phi_t=\phi(t,\cdot\,)$ is a diffeomorphism onto its image and
$F^{-1}[M_t\cap\mathbf{B}(x_0,\epsilon)]\subset\phi_t[W]$ for all $t\in J$.
Moreover choose $\phi$ such that $\Psi_t(\hat{y}):=F_t(\phi_t(\hat{y}))$ satisfies \eqref{normalmcf}.
In particular $\hat{y}_0\in W$ and we are in Setting \ref{normalmcfsett}.
Consider $\hat{\alpha}\in\mathcal{C}^{\infty}\left(J\times W,\mathbb{R}\right)$ given by
\begin{align*}
\hat{\alpha}(t,\hat{x}):=\hat{\alpha}_t(\hat{x})
:=\alpha_t(\Psi_t(\hat{x}))
=|\nabla^{m+1}A(t,\hat{x})|^2(\Lambda\kappa^2+|\nabla^m A(t,\hat{x})|^2)
\end{align*}
in the $\Psi$ coordinates.
Using Proposition \ref{curvevoprop} and inequality \eqref{higherregprop11}
we can estimate at $(s_0,\hat{y}_0)$
\begin{align*}
(\partial_t-\Delta)\hat{\alpha}
= &
(\Lambda\kappa^2+|\nabla^m A|^2)(\partial_t-\Delta)|\nabla^{m+1}A|^2
+|\nabla^{m+1}A|^2(\partial_t-\Delta)|\nabla^m A|^2
\\&
-2\nabla|\nabla^{m+1}A|^2\cdot\nabla|\nabla^{m}A|^2
\\
\leq &
\Lambda\kappa^2\left(-2|\nabla^{m+2}A|^2
+C_m|\nabla^{m+1}A|(|\nabla^{m+1}A|\kappa^2+\kappa^3)\right)
\\ &
-2|\nabla^{m+1}A|^4
+C_m|\nabla^{m+1}A|^2\kappa^4
-2\nabla|\nabla^{m+1}A|^2\cdot\nabla|\nabla^{m}A|^2.
\end{align*}
By Young's and Kato's inequality we can estimate
\begin{align*}
C_m\Lambda|\nabla^{m+1}A|\kappa^5
&\leq C_m\Lambda|\nabla^{m+1}A|^2\kappa^4+\kappa^6,
\\
C_m\Lambda|\nabla^{m+1}A|^2\kappa^4
&\leq|\nabla^{m+1}A|^4/2+C_m\Lambda^2\kappa^8,
\\
2\left|\nabla|\nabla^{m+1}A|^2\cdot\nabla|\nabla^{m}A|^2\right|
&\leq 8|\nabla^{m}A||\nabla^{m+1}A|^2|\nabla^{m+2}A|
\\&\leq
\Lambda\kappa^2|\nabla^{m+2}A|^2+2\Lambda^{-1}|\nabla^{m+1}A|^4.
\end{align*}
Fixing $\Lambda=4$ this leads to
\begin{align*}
(\partial_t&-\Delta)\hat{\alpha}
\leq-|\nabla^{m+1}A|^4+C_m\kappa^4
\leq -L^{-1}\kappa^{-4}\hat{\alpha}^2+L\kappa^4
\end{align*}
at $(s_0,\hat{y}_0)$ for some $L\in [1,\infty)$ depending only on $m$, $K$ and $\Gamma_m$.
Consider $\hat{\eta}(t,\hat{x}):=\eta_t(\Psi(t,\hat{x}))$
then at $(s_0,\hat{y}_0)$ we can estimate
\begin{align*}
(\partial_t-\Delta)(\hat{\alpha}\hat{\eta})
&\leq\hat{\eta}(L\kappa^4-L^{-1}\kappa^{-4}\hat{\alpha}^2)+C\hat{\alpha}-2\nabla\hat{\alpha}\cdot\nabla\hat{\eta}
\\
&\leq\hat{\eta}(L\kappa^4-L^{-1}\kappa^{-4}\hat{\alpha}^2)+C\hat{\alpha}-2\hat{\eta}^{-1}\nabla\hat{\eta}\cdot\nabla(\hat{\alpha}\hat{\eta}).
\end{align*}
By choice of $s_0$ and $\hat{y}_0$ we have 
$\Delta(\hat{\alpha}\hat{\eta})\leq 0$, $\nabla(\hat{\alpha}\hat{\eta})=0$ 
and $\partial_t(\hat{\alpha}\hat{\eta})\geq 0$.
Hence
\begin{align*}
(L\kappa^4)^{-1}\hat{\alpha}^2\hat{\eta}
\leq L\kappa^4\hat{\eta}+C\hat{\alpha}
\end{align*}
at $(s_0,\hat{y}_0)$.
Multiplying with $\hat{\eta}$ and using Young's inequality we obtain
\begin{align*}
(L\kappa^4)^{-1}(\hat{\alpha}\hat{\eta})^2
\leq L\kappa^4\hat{\eta}^2+C\hat{\alpha}\hat{\eta}.
\leq C_m\kappa^4+(2L\kappa^4)^{-1}(\hat{\alpha}\hat{\eta})^2
\end{align*}
at $(s_0,\hat{y}_0)$.
Thus $m_0=\hat{\alpha}(s_0,\hat{y}_0)\hat{\eta}(s_0,\hat{y}_0)\leq C_m\kappa^4$ 
and we arrive at
\begin{align*}
|\nabla^{m+1}\mathbf{A}(M_t,x)|^2(\Lambda\kappa^2+|\nabla^m\mathbf{A}(M_t,x)|^2)\eta_t(x)
\leq C_m\kappa^4
\end{align*}
for all $t\in [-10,t_2]$ and all $x\in M_t$.
For $t\in [-9,t_2]$ and $x\in M_t\cap\mathbf{B}(0,3)$ we have $\eta_t(x)\geq 1$,
so we can conclude the desired estimate on $[-9,t_2]$.
As $t_2\in [-9,0)$ was arbitrary this establishes the statement for $m+1$.
\end{proof}

%
%
%estimating higher curvature derivatives
%
%
The above curvature estimates will be used in the form of the next Lemma.
It basiciely says that if you have a smooth mean curvature flow that satifies a strong curvature bound up to some time $t_1$
and the curvature on $[t_1,t_2]$ is a-priori bounded by some constant, then the increase in curvature is controlled by $(t_2-t_1)^p$.
Thus for short times you can actually almost maintain the strong curvature bound.
\begin{lem}
\label{curveboundcor}
For $p\in\mathbb{N}$ and $L\in [1,\infty)$
there exists a $C_{p,L} \in (1,\infty)$
such that the following holds:

Consider an open and connected set $\Omega$.
Let $\varrho\in (0,\infty)$, $z_0\in\mathbb{R}^{\mathbf{n}+\mathbf{k}}$,
$t_2\in\mathbb{R}$, $t_1\in [t_2 -\varrho^2/(4pL),t_2)$
and let $F\in\mathcal{C}^{2}((t_2-\varrho^2,t_2)\times\Omega,\mathbb{R}^{\mathbf{n}+\mathbf{k}})$
be such that for all $t\in (t_2-\varrho^2,t_2)$ we have
$F_t:=F(t,\cdot\,)$ are embeddings, $M_t:=F_t[\Omega]$ and $\partial M_t\cap\mathbf{B}(z_0,2\varrho)=\emptyset$.
Suppose $F$ satisfies \eqref{smoothmcf}.
For $s\in  [t_1,t_2]$, $r\in (0,2\varrho]$ set
\begin{align*}
\kappa(s,r)^2:=&\varrho^2\sup_{t\in (t_2-\varrho^2,s)}\sup_{x\in M_t\cap\mathbf{B}(z_0,r)}|\mathbf{A}(M_t,x)|^2
\end{align*}
and assume $\kappa(t_2,2\varrho)\leq L$.

Then we have
\begin{align*}
\kappa(t_2,\varrho)^2\leq C_{p,L}\big(\kappa(t_1,2\varrho)^2+\varrho^{-2p}(t_2-t_1)^p\big).
\end{align*}
\end{lem}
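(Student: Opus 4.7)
The strategy is a parabolic maximum principle applied to $m(t) := \sup_y u(t,y)\tilde\eta(t,y)^{2p}$, where $u = |\mathbf{A}|^2$ and $\eta \in \mathcal{C}^\infty_{\mathrm{c}}(\mathbf{B}(z_0, 2\varrho))$ is a spatial cut-off with $\eta \equiv 1$ on $\mathbf{B}(z_0, \varrho)$, $|D\eta| \leq C/\varrho$ and $|D^2\eta| \leq C/\varrho^2$. The crucial feature is the \emph{high power} $2p$, which will generate the $(t_2-t_1)^p$-factor via an interpolation at the maximum point. Using Lemma \ref{tangentialdifflem} to place the setup into Setting \ref{normalmcfsett}, the pulled-back cut-off $\tilde\eta(t,y) := \eta(\Psi(t,y))$ inherits $|\partial_t\tilde\eta| = |D\eta \cdot \mathbf{H}| \leq C_L\varrho^{-2}$, $|\nabla^{M_t}\tilde\eta| \leq C\varrho^{-1}$ and $|\Delta^{M_t}\tilde\eta| \leq C_L\varrho^{-2}$, where the $L$-dependence comes from $|\mathbf{A}| \leq L\varrho^{-1}$ on $\mathbf{B}(z_0, 2\varrho)$.

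Proposition \ref{curvevoprop} with $m=0$ furnishes $\partial_t u \leq \Delta^{M_t}u + Cu^2$. At an interior spatial maximum $y_0$ of $u\tilde\eta^{2p}$ attaining $m(s)$, the identities $\nabla(u\tilde\eta^{2p}) = 0$ and $\Delta(u\tilde\eta^{2p}) \leq 0$ combined with expanding $\Delta(u\tilde\eta^{2p}) = \tilde\eta^{2p}\Delta u + 2\nabla u \cdot \nabla\tilde\eta^{2p} + u\Delta\tilde\eta^{2p}$ (using $\nabla u = -2pu\nabla\tilde\eta/\tilde\eta$ at the maximum) together with the $\partial_t\tilde\eta$ contribution produce
\[
m'(s) \leq C_{p,L}\bigl(u\tilde\eta^{2p-2}\varrho^{-2} + u\tilde\eta^{2p-1}\varrho^{-2}\bigr) + C u^2\tilde\eta^{2p}\quad\text{at } (s, y_0).
\]
The key step is the interpolation: at $y_0$ one has $u\tilde\eta^{2p} = m(s)$, so for $j \in \{1,2\}$ the factorisation $u\tilde\eta^{2p-j} = u^{j/(2p)}(u\tilde\eta^{2p})^{1-j/(2p)} \leq (L^2\varrho^{-2})^{j/(2p)}m(s)^{1-j/(2p)}$ together with $u^2\tilde\eta^{2p} \leq L^2\varrho^{-2}\,m(s)$ reduces the above to the scalar differential inequality
\[
m'(s) \leq C_{p,L}\bigl(\varrho^{-2-2/p}\,m(s)^{1-1/p} + \varrho^{-2-1/p}\,m(s)^{1-1/(2p)} + \varrho^{-2}\,m(s)\bigr).
\]

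The change of variables $n := m^{1/p}$ now turns the dominant term into a constant:
\[
n'(s) \leq C_{p,L}\varrho^{-2-2/p} + C_{p,L}\varrho^{-2-1/p}n(s)^{1/2} + CL^2\varrho^{-2}n(s).
\]
The last two terms are absorbed into the first using the a priori bound $n \leq L^{2/p}\varrho^{-2/p}$ (immediate from $u \leq L^2\varrho^{-2}$), giving $n'(s) \leq C'_{p,L}\varrho^{-2-2/p}$ on $[t_1,t_2]$. Integrating and raising to the $p$-th power yields $m(t_2) \leq C_{p,L}(m(t_1) + (t_2-t_1)^p\varrho^{-2p-2})$; since $\tilde\eta \equiv 1$ on $\mathbf{B}(z_0,\varrho)$ we have $m(t_2) \geq \varrho^{-2}\kappa(t_2,\varrho)^2$ and $m(t_1) \leq \varrho^{-2}\kappa(t_1,2\varrho)^2$, and multiplying through by $\varrho^2$ produces the conclusion.

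The main difficulty is the interpolation step and verifying that all $L$-dependent cut-off contributions (particularly those coming from $\mathbf{H}\cdot D\eta$ in $\Delta^{M_t}\tilde\eta$ and from $\partial_t\tilde\eta$) can be absorbed; the time smallness $t_2-t_1 \leq \varrho^2/(4pL)$ is precisely what keeps the constants uniform. A minor technical point is that $m(t)$ need not be classically differentiable, so one works with its upper Dini derivative, which is standard in this setting.
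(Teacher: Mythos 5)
Your argument is correct and takes a genuinely different route from the paper. The paper proves the bound by an induction over spatial radii $\varrho_q = 2 - q/p$, $q = 0, \ldots, p$: given the bound at stage $q$, it controls $\frac{\mathrm{d}}{\mathrm{d}s}|\mathbf{A}|^2$ along flow lines via the evolution inequality (Proposition \ref{curvevoprop}), which requires bounding $|\nabla^i\mathbf{A}|$ for $i\leq 2$ and thus invokes the higher-regularity estimate (Proposition \ref{higherregprop}); the shrinking radius $\varrho_q(t) = \varrho_q - 1/(2p) - tL$ compensates for points moving at speed $\sim L\varrho^{-1}$, and the hypothesis $t_2-t_1 \leq \varrho^2/(4pL)$ is exactly what guarantees $\varrho_q(t_2)\geq\varrho_{q+1}$ so the induction closes. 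Your approach instead runs a single parabolic maximum principle on $m(t) = \sup u\eta^{2p}$, exploits the sign of $\Delta(u\eta^{2p})$ at the maximum so that no second derivative of $\mathbf{A}$ is ever estimated (only Proposition \ref{curvevoprop} with $m=0$ is used), and converts the differential inequality into a linear one via $n = m^{1/p}$ and the a priori bound $n\leq L^{2/p}\varrho^{-2/p}$. This is both shorter and more self-contained: no appeal to higher regularity, no radius bookkeeping.

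Two small remarks. First, you claim the time-smallness hypothesis $t_2-t_1 \leq \varrho^2/(4pL)$ ``is precisely what keeps the constants uniform,'' but your argument does not actually invoke it anywhere — the scalar ODE $n'(s)\leq C'_{p,L}\varrho^{-2-2/p}$ and its integration work for any $t_2-t_1\leq\varrho^2$ once $\kappa(t_2,2\varrho)\leq L$ is given. That hypothesis is only indispensable for the paper's radius-shrinkage argument, so your version is in fact marginally stronger. Second, since $\Psi$ from Lemma \ref{tangentialdifflem} only exists on a short time interval, the Dini-derivative step should be phrased intrinsically ($m(t) := \sup_{x\in M_t}|\mathbf{A}(M_t,x)|^2\eta(x)^{2p}$, material derivative at a max point attained in the interior since $\partial M_t\cap\mathbf{B}(z_0,2\varrho)=\emptyset$); you already flag this as a technical point, and it is indeed standard. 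Incidentally, the individual bounds on $\partial_t\tilde\eta$ and $\Delta^{M_t}\tilde\eta$ carry $L$-dependence via $\mathbf{H}$, but the mean-curvature terms cancel in the combination $(\partial_t-\Delta)\tilde\eta = -\mathrm{tr}_{M_t}D^2\eta$, so that contribution is actually $L$-independent — this does not change the outcome but slightly simplifies the bookkeeping.
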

%
%proof
%
\begin{proof}
We may assume $z_0=0$, $t_1=0$ and $\varrho=1$.
Consider $q=0,1\ldots,p$ and set $\varrho_q:=2-q/p$.
We will actually show
\begin{align}
\label{curveboundcor11}
\kappa(t_2,\varrho_q)^2\leq\Gamma_q(2\kappa(0,2)^2+t_2^q).
\end{align}
for some $\Gamma_q\in [1,\infty)$ depending only on $L$, $p$ and $q$.
For $q=0$ this holds with $\Gamma_0=L$.
Now suppose \eqref{curveboundcor11} holds for some $q\in\{0,1\ldots,p-1\}$.
For $t\in [0,t_2]$ set $\varrho_q(t):=\varrho_q-1/(2p)-tL$.
We want to show
\begin{align}
\label{curveboundcor12}
\kappa(t,\varrho_q(t))^2\leq\Gamma_{q+1}((t+1)\kappa(0,2)^2+t\; t_2^q).
\end{align}
for some $\Gamma_{q+1}\in [1,\infty)$ depending only on $L$, $p$, $q$ and $\Gamma_{q}$.
Let $I_0$ be the interval containing all $s\in [0,t_2]$ such that  
\eqref{curveboundcor12} holds.
Note that by $t_2\leq 1/(4pL)$ we have $\varrho_q(t_2)\geq\varrho_{q+1}$,
thus if $t_2\in I_0$ we can conclude the result by induction.
Obviously $0\in I_0$.
Also, for an increasing sequence $(s_m)$ in $I_0$ with $s_m\nearrow s_0$ 
the smoothness of $(M_t)$ yields that $s_0\in I_0$.
To establish $I_0=[0,t_2]$ (and thus the result) it remains to show that for each $s_0\in I_0$
there exists an $\epsilon\in (0,t_2-s_0)$ such that $(s_0,s_0+\epsilon)\subset I_0$.
Hence, consider $s_0\in I_0$.

In view of Lemma \ref{tangentialdifflem}
there exists an $\epsilon\in (0,t_2-s_0)$, $J=(s_0-\epsilon,s_0+\epsilon)$, an open $W\subset\Omega$
and $\phi\in\mathcal{C}^{\infty}\left(J\times W,\Omega\right)$,
such that $\phi_t=\phi(t,\cdot\,)$ is a diffeomorphism onto its image and
$F^{-1}[M_t\cap\mathbf{B}(0,2)]\subset\phi_t[W]$ for all $t\in J$.
Moreover choose $\phi$ such that $\Psi_t(\hat{y}):=F_t(\phi_t(\hat{y}))$ satisfies \eqref{normalmcf},
in particular we are in Setting \ref{normalmcfsett}.

Consider an $s_2\in (s_0,s_0+\epsilon)$ and $x_0\in M_{s_2}\cap\mathbf{B}(0,\rho_q(s_2))$,
hence there exists an $\hat{y}_0\in W$ such that $x_0=\Psi_{s_2}(\hat{y}_0)$.
By \eqref{normalmcf} and $\kappa(t_2,2)\leq L$ we have
\begin{align*}
|\Psi_{s}(\hat{y}_0)|
\leq|x_0|+\int_{s_0}^{s}|D_t\Psi_{t}(\hat{y}_0)|\,\mathrm{d}t
<\rho_q(s_2)+(s_2-s_0)L
\leq\rho_q(0)
\end{align*}
for all $s\in [s_0,s_2]$.
%By Huisken's evolution equations for higher curvature \cite[Thm.\ 7.1]{huisken1}
By Proposition \ref{curvevoprop} we have
\begin{align}
\label{curveboundcor21}
\left|\frac{\mathrm{d}}{\mathrm{d}s}|\mathbf{A}(M_{s},\Psi_{s}(\hat{y}_0))|^2\right|
\leq C_1\sum_{i=0}^2\sup_{t\in [0,s_2]}\sup_{x\in M_t\cap\mathbf{B}(0,\rho_q(0))}|\nabla^i\mathbf{A}(M_{t},x)|^2
\end{align}
for $s\in (s_0,s_2)$ and some constant $C_1\in [1,\infty)$.
We assume \eqref{curveboundcor11} holds for $q$,
so by $\rho_q(0)\leq\rho_q-1/(2p)$ and Proposition \ref{higherregprop} with $\rho=1/(4p)$, $K=\Gamma_{q}(2L^2+1)$ 
and arbitrary $(t_0,a)\in [0,s_2]\times\mathbf{B}(0,\rho_q(0))$ we have
\begin{align}
\label{curveboundcor22}
\sum_{i=0}^2\sup_{t\in [0,s_2]}\sup_{x\in M_t\cap\mathbf{B}(0,\rho_q(0))}|\nabla^i\mathbf{A}(M_{t},x)|^2
%\leq\Lambda_q\kappa(t_2,\varrho_q)^2
\leq\Lambda_q(\kappa(0,2)^2+t_2^q).
\end{align}
for some $\Lambda_q\in [1,\infty)$ 
depending only on $L$, $p$, $q$ and $\Gamma_q$.

As $s_0\in I_0$ and $|\Psi_{s_0}(\hat{y}_0)|<\rho_q(0)\leq\rho_q(s_0)$ inequality \eqref{curveboundcor12}
combined with \eqref{curveboundcor21} and \eqref{curveboundcor22} yield
\begin{align*}
&|\mathbf{A}(M_{s_2},x_0)|^2
\leq
|\mathbf{A}(M_{s_0},\Psi_{s_0}(\hat{y}_0))|^2+(s_2-s_0)\sup_{t\in [s_0,s]}\left|\frac{\mathrm{d}}{\mathrm{d}t}|\mathbf{A}(M_{t},\Psi_{t}(\hat{y}_0))|^2\right|
\\&\leq
\Gamma_{q+1}((s_0+1)\kappa(0,2)^2+s_0\;t_2^q)+C_1\Lambda_q(s_2-s_0)(\kappa(0,2)^2+t_2^q)
\\&\leq
\Gamma_{q+1}((s_2+1)\kappa(0,2)^2+s_2\;t_2^q),
\end{align*}
where we chose $\Gamma_{q+1}=C_1\Lambda_q$.
As $s_2\in (s_0,s_0+\epsilon)$ and $x_0\in M_{s_2}\cap\mathbf{B}(0,\rho_q(s_2))$
were arbitrary we proved $(s_0,s_0+\epsilon)\subset I_0$.
\end{proof}

%
%
%Wang's curvature estimate
%
%
For a solution of mean curvature flow, that is graphical with small gradient we obtain an a-priori curvature bound.
This follows directly from the work of Wang \cite{wang} and Ecker and Huisken \cite{eckerh1}.
%
%Lemma
%
\begin{lem}
\label{wangcurveestlem}
There exist $C \in (1,\infty)$ and $l_1\in (0,1)$ 
with the following property:

Let $\rho\in (0,\infty)$, $t_1\in\mathbb{R}$, $t_2\in (t_1,t_1+l_1\rho^2)$,
$w\in\mathcal{C}^{\infty}((t_1,t_2)\times\mathbf{B}^{\mathbf{n}}(0,2\rho),\mathbb{R}^{\mathbf{k}})$.
Suppose $F_t(\hat{x}):=(t,w(t,\hat{x}))$ satisfies \eqref{smoothmcf}
and $\sup|Dw|\leq l_1$.

Then we have
\begin{align*}
\sup_{\hat{x}\in\mathbf{B}^{\mathbf{n}}(0,\rho)}|\mathbf{A}(M_t,(\hat{x},w(t,\hat{x}))|^2
&\leq C (t-t_1)^{-1}
\end{align*}
for all $t\in (t_1,t_2)$,
where $M_t=\mathrm{graph}(w(t,\cdot\,))$.
\end{lem}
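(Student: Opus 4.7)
The plan is to deduce the lemma from the interior curvature estimate for graphical mean curvature flow proved by Ecker--Huisken \cite{eckerh1} in codimension one and extended to arbitrary codimension by Wang \cite{wang}. By the parabolic rescaling $\tilde w(t,\hat x) := \rho^{-1}w(\rho^2 t + t_1, \rho \hat x)$, which preserves graphical mean curvature flow, the gradient norm, and the claimed inequality, I may assume $t_1 = 0$ and $\rho = 1$, so that $w\in \mathcal{C}^{\infty}((0,t_2)\times \mathbf{B}^{\mathbf{n}}(0,2),\mathbb{R}^{\mathbf{k}})$ with $\sup|Dw|\leq l_1$ and $t_2 < l_1$, and the task reduces to showing $|\mathbf{A}|^2 \leq C/t$ on $\mathbf{B}^{\mathbf{n}}(0,1)$ for $t\in (0,t_2)$.

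Next I would introduce the tilt factor $v:=\sqrt{\det(I_{\mathbf{n}}+(Dw)^{\top}Dw)}$, which under $|Dw|\leq l_1$ satisfies $1\leq v\leq 1+Cl_1^2$. The two central differential inequalities along the flow are Wang's (and, in codimension one, Ecker--Huisken's) evolution estimate
\begin{align*}
(\partial_t - \Delta)v &\leq -c\,|\mathbf{A}|^2 v,
\end{align*}
valid as long as $v$ is close to $1$, and the Simons-type identity
\begin{align*}
(\partial_t - \Delta)|\mathbf{A}|^2 &\leq C\,|\mathbf{A}|^4.
\end{align*}
I would then apply the parabolic maximum principle to the localized quantity $Q:= \eta\, t\, |\mathbf{A}|^2\,(2-v)^{-1}$, where $\eta\in\mathcal{C}^{\infty}_{\mathrm{c}}(\mathbf{B}^{\mathbf{n}}(0,2))$ is a radial spatial cutoff with $\eta\equiv 1$ on $\mathbf{B}^{\mathbf{n}}(0,1)$. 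At an interior maximum of $Q$ on $(0,s]\times \mathbf{B}^{\mathbf{n}}(0,2)$, $s\in (0,t_2)$, the strictly negative contribution from $(\partial_t-\Delta)(2-v)^{-1}$ is used to absorb the positive quartic term from the evolution of $|\mathbf{A}|^2$, yielding a bound $Q\leq C$. Since $2-v\geq 1/2$ and $\eta=1$ on $\mathbf{B}^{\mathbf{n}}(0,1)$, this translates into the claimed estimate.

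The main obstacle is the absorption computation at the maximum point, where the sign-definite term coming from $v$ must dominate the quartic term from $|\mathbf{A}|^2$. In codimension one this follows directly from Ecker--Huisken; in higher codimension Wang's pinching estimate, which exploits precisely the smallness $\sup|Dw|\leq l_1$ to ensure $v$ stays close to $1$ and that the relevant coefficient in the evolution of $v$ is large enough, provides the required cancellation. Choosing $l_1$ sufficiently small depending only on $\mathbf{n}$ and $\mathbf{k}$ closes the argument and gives the stated constant $C$.
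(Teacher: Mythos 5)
Your overall strategy is the same as the paper's: rescale, combine Wang's evolution inequality for the tilt/Jacobian quantity with a Simons-type inequality for the second fundamental form, weight $|\mathbf{A}|^2$ by a function of the tilt, localize with a spatial cutoff and a factor of $t$, and run a parabolic maximum principle as in Ecker--Huisken. The gap is in the one step you flag as "the main obstacle": with your weight $Q=\eta\, t\,|\mathbf{A}|^2(2-v)^{-1}$ and $v\in[1,1+Cl_1^2]$, the absorption of the quartic term does not work. Writing $\psi(v)=(2-v)^{-1}$, the good quartic contribution at the maximum point is $-c\,|\mathbf{A}|^4\,v\,\psi'(v)= -c\,|\mathbf{A}|^4\, v(2-v)^{-2}\approx -c\,|\mathbf{A}|^4$, where $c$ is the (order-one) constant in $(\partial_t-\Delta)v\leq -c|\mathbf{A}|^2v$, while the bad contribution is $+C_0|\mathbf{A}|^4\psi\approx +C_0|\mathbf{A}|^4$ with $C_0$ the Simons-type constant in arbitrary codimension (the paper works with $(\partial_t-\Delta)|A|\leq 5|A|^3$, so $C_0$ is of size $10$). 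Since $C_0>c$, the net quartic coefficient is positive and the maximum-principle argument cannot close. Even in codimension one, where the exact constants are $2|A|^4$ and coefficient $1$ in the $v$-equation, your weight gives $v\psi'/\psi=v/(2-v)\approx 1<2$; Ecker--Huisken need precisely $\varphi(v)=v^2/(1-kv^2)$, for which $v\varphi'/\varphi=2/(1-kv^2)>2$, to make the quartic term strictly negative.

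Your explanation that Wang's pinching makes "the relevant coefficient in the evolution of $v$ large enough" is therefore not accurate: pinching only guarantees a coefficient of order one. What actually produces a large effective coefficient -- and what the paper does, following Wang -- is to subtract a constant just below the infimum of the Jacobian: with $*\Omega:=\big(\det(\mathbf{Id}+(Dw)^TDw)\big)^{-1}\geq 19/20$ one sets $f:=*\Omega-9/10\leq 1/10$, so that $(\partial_t-\Delta)f\geq |A|^2/2\geq 5f|A|^2$, matching the constant $5$ in the Simons-type inequality; one then applies the Ecker--Huisken weight to $v:=1/f$, i.e.\ $\varphi=v^2/(1-\kappa v^2)$, and localizes with $\eta$ and the factor $t$ exactly as you propose, finishing with $s_1=(t+t_1)/2$, $s_2=t$. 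To repair your argument you must replace $(2-v)^{-1}$ by such a weight (any function of the tilt quantity whose logarithmic derivative on the relevant range dominates the Simons constant will do); as written, the key cancellation you rely on is not available.
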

%
%proof
%
\begin{proof}
Consider $[s_1,s_2]\subset (t_1,t_2)$.
We may assume $s_1=0$ and $\rho=1$.
For $t\in [0,s_2]$ set $M_t=\mathrm{graph}(w(t,\cdot\,))$
and 
$\mathcal{M}:=\{(t,x)\in [0,s_2]\times\mathbb{R}^{\mathbf{n}+\mathbf{k}}:\;x\in M_t\}$.
On $\mathcal{M}$ define
\begin{align*}
*\Omega(t,x):=\left(\mathrm{det}(\mathbf{Id}_{\mathbb{R}^{\mathbf{n}}}+(Dw(t,\hat{x})^TDw(t,\hat{x})\right)^{-1}\in (0,1],
\end{align*}
where $\mathbf{Id}_{\mathbb{R}^{\mathbf{n}}}$ is the identity on $\mathbb{R}^{\mathbf{n}}$.
The gradient bound on $w$ yields
\begin{align}
\label{wangcurveestlem11}
\inf_{\mathcal{M}}*\Omega\geq\left(1+Cl_1\right)^{-1}\geq 19/20,
\end{align}
where we chose $l_1$ small enough.
For $(t,x)\in\mathcal{M}$ consider 
\begin{align*}
f(t,x):=*\Omega(t,x)-9/10,
\quad
v(t,x):=1/f(t,x),
\\
\kappa:=\frac{1}{2}\inf_{t\in [0,s_2]}\inf_{x\in M_t\cap\mathbf{C}(0,\sqrt{2})}(v(t,x))^{-2},
\quad
\varphi(t,x):=(v(t,x))^2/(1-\kappa (v(t,x))^2),
\\
g(t,x):=|\mathbf{A}(M_t,x)|^2\varphi(t,x)
\quad\text{and}\quad
\eta(x):=(2-|\hat{x}|^2)^2.
\end{align*}
For $t\in [0,s_2]$ set $g_t:=g(t,\cdot\,)$ and
\begin{align*}
m(t):=\sup_{M_t}tg_t\eta,
\quad
m_0:=\sup_{t\in [0,s_2]}m(t),
\quad
s_0:=\inf\{t\in [0,s_2],\;m(t)=m_0\}.
\end{align*}
Consider $x_0\in M_{s_0}\cap\mathbf{C}(0,\sqrt{2})$
such that $m_0=s_0g(s_0,x_0)\eta(x_0)$.

In view of Lemma \ref{tangentialdifflem}
there exists an $\epsilon>0$, $J=(s_0-\epsilon,s_0+\epsilon)\subset (t_1,t_2)$, $W:=\mathbf{B}^{\mathbf{n}}(0,7/4)$
and $\phi\in\mathcal{C}^{\infty}\left(J\times W,\Omega\right)$,
such that $\phi_t=\phi(t,\cdot\,)$ is a diffeomorphism onto its image and
$\mathbf{B}^{\mathbf{n}}(0,\sqrt{2})\subset\phi_t[W]$ for all $t\in J$.
Moreover choose $\phi$ such that $\Psi_t(\hat{y}):=F_t(\phi_t(\hat{y}))$ satisfies \eqref{normalmcf},
in particular we are in Setting \ref{normalmcfsett}.
Let $\hat{y}_0\in W$ be such that $\Psi_{s_0}(\hat{y}_0)=x_0$.

By \eqref{wangcurveestlem11} and \cite[Lem.\ 3.1]{wang} we can estimate
\begin{align}
\label{wangcurveestlem21}
\left(\partial_t-\Delta\right)\hat{f}
=\left(\partial_t-\Delta\right)*\Omega
\geq|A|^2/2
\geq 5\hat{f}|A|^2
\end{align}
everywhere in $J\times W$, where $\hat{f}(t,\hat{x}):=f(\Psi_t(\hat{x}))$.
Then by the proof of \cite[Cor.\ 4.1]{wang} we obtain
\begin{align}
\label{wangcurveestlem22}
\left(\partial_t-\Delta\right)|A|
\leq 5|A|^3
\end{align}
everywhere in $J\times W$.
Estimates \eqref{wangcurveestlem21}, \eqref{wangcurveestlem22} and
the proof of \cite[Lem.\ 4.1]{wang} with $h=A$, $c_1=c_2=5$ 
and $\phi$ replaced by $\varphi$ yield
\begin{align*}
%\label{wangcurveestlem31}
\left(\partial_t-\Delta\right)\hat{g}
\leq 
-10\kappa\hat{g}^2-2\kappa(1-\kappa\hat{v}^2)^{-2}|\nabla\hat{v}|^2\hat{g}
-2\hat{\varphi}\hat{v}^{-3}\nabla\hat{v}\cdot\nabla\hat{g}
\end{align*}
everywhere in $J\times W$, 
where $\hat{g}(t,\hat{x}):=g(t,\Psi_t(\hat{x}))$, $\hat{v}(t,\hat{x}):=v(t,\Psi_t(\hat{x}))$ 
and $\hat{\varphi}(t,\hat{x}):=\varphi(t,\Psi_t(\hat{x}))$.
Then following the proof of \cite[Thm.\ 3.1]{eckerh1} 
with $r(t,x)=|x|^2-|w(t,\hat{x})|$
we see
\begin{align*}
\left(\partial_t-\Delta\right)\iota\hat{g}\hat{\eta}
\leq & 
-10\kappa\hat{g}^2\hat{\eta}\iota
-2\left(\hat{\varphi}\hat{v}^{-3}\nabla\hat{v}+\hat{\eta}^{-1}\nabla\hat{\eta}\right)
\cdot\nabla(\iota\hat{g}\hat{\eta})
\\
&+C\left((1+(\kappa\hat{v}^2)^{-1})\right)\iota\hat{g}
+\hat{g}\hat{\eta}
\end{align*}
everywhere in $J\times W$, where $\hat{\eta}(t,\hat{x}):=\eta(\Psi_t(\hat{x}))$ and $\iota(t,\hat{x}):=t$.
In particular at $(s_0,x_0)$ we have
\begin{align*}
10\kappa g^2 \eta s_0
\leq
C\left((1+(\kappa v^2)^{-1})\right)s_0g
+g\eta.
\end{align*}
Recall the definition of $\kappa$ and that by \eqref{wangcurveestlem11}
we have $\kappa\geq c$. Also note that $\eta\leq 2$ and $s_0\leq s_2\leq l_1\leq 1$.
Thus we obtain
\begin{align*}
s_2\sup_{M_{s_2}\cap\mathbf{C}(0,\sqrt{2})}g_{s_2}\eta
\leq s_0g(s_0,x_0)\eta(x_0)
%\leq C\left((1+(\kappa\hat{v}^2)^{-1})s_0+1\right)
\leq C.
\end{align*}
As $\inf_{\mathbf{C}(0,1)}\eta\geq 1$ and $\inf_{\mathcal{M}}\varphi>1$ we arrive at
\begin{align*}
\sup_{\hat{x}\in\mathbf{B}^{\mathbf{n}}(0,1)}|\mathbf{A}(M_{s_2},(\hat{x},w(s_2,\hat{x}))|^2
&\leq C (s_2-s_1)^{-1}
\end{align*}
for arbitrary $[s_1,s_2]\subset (t_1,t_2)$.
Now for any $t\in (t_1,t_2)$ choose $s_2=t$ and $s_1=(t+t_1)/2$
to conclude the result. 
\end{proof}

\section{Further remarks}
\label{appendix}
%
%
%
%later measure bound
%
%
Based on Huisken's monotonicity formula \cite{huisken2}
one can obtain bounds on area ratio at later times
from initial area ratio bounds.
%
%lemma
%
\begin{lem}
\label{laterbndlem}
There exists a $C\in (1,\infty)$
such that the following holds:

Let $r\in (0,\infty)$, $s_1\in\mathbb{R}$, $s_2\in (s_1,\infty)$,
$y_0\in\mathbb{R}^{\mathbf{n}+\mathbf{k}}$
and for $t\in [s_1,s_2]$ set $\varrho(t):=\sqrt{8\mathbf{n}}\max\{r,\sqrt{t-s_1}\}$.
Let $(\mu_t)_{t\in [s_1,s_2]}$ be a Brakke flow in $\mathbf{B}(y_0,\varrho(s_2))$.

Then for all $t\in [s_1,s_2]$ we have
\begin{align*}
r^{-\mathbf{n}}\mu_{t}(\mathbf{B}(y_0,r))
\leq C(\varrho(t))^{-\mathbf{n}}\mu_{s_1}(\mathbf{B}(y_0,\varrho(t))).
\end{align*}
\end{lem}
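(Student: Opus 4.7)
The plan is a direct application of the localised monotonicity formula (Theorem \ref{localmonthm}, assumption 1) with carefully chosen parameters. Fix $t\in (s_1,s_2]$ and set
\[
t_0:=t+r^2,\qquad \rho:=\varrho(t)/2.
\]
Since $\varrho(t)^2=8\mathbf{n}\max\{r^2,t-s_1\}$ and $\max\{r^2,t-s_1\}\in[(r^2+(t-s_1))/2,\,r^2+(t-s_1)]$, one has
\[
\tfrac{1}{8\mathbf{n}}\varrho(t)^2\le t_0-s_1=r^2+(t-s_1)\le\tfrac{1}{4\mathbf{n}}\varrho(t)^2.
\]
Then the support of $\varphi_{(t_0,y_0),\rho}(s_1,\cdot\,)$ is contained in the closed ball of radius $\sqrt{\rho^2+2\mathbf{n}(t_0-s_1)}\le\sqrt{\varrho(t)^2/4+\varrho(t)^2/2}=\tfrac{\sqrt{3}}{2}\varrho(t)$, which is compactly contained in $\mathbf{B}(y_0,\varrho(s_2))$ because $\varrho$ is non-decreasing. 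Hence assumption $1$ of Theorem \ref{localmonthm} is satisfied on the time interval $[s_1,t]$.

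Applying the monotonicity formula for $\Phi=\Phi_{(t_0,y_0)}\varphi_{(t_0,y_0),\rho}$ on $[s_1,t]$ and discarding the nonpositive error term yields
\[
\int_{\mathbb{R}^{\mathbf{n}+\mathbf{k}}}\Phi_{(t_0,y_0)}(t,\cdot\,)\varphi_{(t_0,y_0),\rho}(t,\cdot\,)\,\mathrm{d}\mu_t
\le\int_{\mathbb{R}^{\mathbf{n}+\mathbf{k}}}\Phi_{(t_0,y_0)}(s_1,\cdot\,)\varphi_{(t_0,y_0),\rho}(s_1,\cdot\,)\,\mathrm{d}\mu_{s_1}.
\]
For the left-hand side one notes that on $\mathbf{B}(y_0,r)$ we have $t_0-t=r^2$, so
\[
\Phi_{(t_0,y_0)}(t,x)=(4\pi r^2)^{-\mathbf{n}/2}\exp\!\Bigl(-\tfrac{|x-y_0|^2}{4r^2}\Bigr)\ge c\,r^{-\mathbf{n}},
\]
and $|x-y_0|^2\le r^2\le 2\mathbf{n}r^2=2\mathbf{n}(t_0-t)$ implies $\varphi_{(t_0,y_0),\rho}(t,x)\ge 1$. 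Consequently the left-hand side is at least $c\,r^{-\mathbf{n}}\mu_t(\mathbf{B}(y_0,r))$.

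For the right-hand side one uses $t_0-s_1\ge\varrho(t)^2/(8\mathbf{n})$ to get $\Phi_{(t_0,y_0)}(s_1,\cdot\,)\le C\varrho(t)^{-\mathbf{n}}$, and on the support of $\varphi_{(t_0,y_0),\rho}(s_1,\cdot\,)$ the estimate $\rho^{-2}\cdot 2\mathbf{n}(t_0-s_1)\le 2$ gives $\varphi_{(t_0,y_0),\rho}(s_1,\cdot\,)\le 27$. Since this support lies in $\mathbf{B}(y_0,\varrho(t))$, the right-hand side is bounded by $C\varrho(t)^{-\mathbf{n}}\mu_{s_1}(\mathbf{B}(y_0,\varrho(t)))$. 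Dividing through by $c$ produces the claimed inequality. The case $t=s_1$ is trivial (note $r\le\varrho(s_1)$). The main bookkeeping point is the choice $t_0-t=r^2$, $\rho=\varrho(t)/2$, which simultaneously guarantees compact containment of the cutoff at time $s_1$, a lower bound on $\Phi\varphi$ of order $r^{-\mathbf{n}}$ at time $t$, and an upper bound of order $\varrho(t)^{-\mathbf{n}}$ at time $s_1$ with support inside $\mathbf{B}(y_0,\varrho(t))$; the other calculations are routine.
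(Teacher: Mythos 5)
Your argument is correct and follows essentially the same route as the paper's proof: Huisken's localised monotonicity formula (Theorem \ref{localmonthm}) with the offset $t_0=t+r^2$, a lower bound of order $r^{-\mathbf{n}}$ for $\Phi\varphi$ on $\mathbf{B}(y_0,r)$ at time $t$, and an upper bound of order $\varrho(t)^{-\mathbf{n}}$ with support inside $\mathbf{B}(y_0,\varrho(t))$ at time $s_1$. The only difference is that the paper treats the short-time regime $t-s_1\leq r^2$ separately via Lemma \ref{barrierlem} and takes $\rho=\sqrt{4\mathbf{n}(t-s_1)}$ otherwise, whereas your uniform choice $\rho=\varrho(t)/2$ handles all times in one stroke; both work.
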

%
%proof
%
\begin{proof}
We may assume $y_0=0$, $s_1=0$.
Fix $t\in [0,s_2]$.
First consider the case $t\leq r^2$.
Then by Lemma \ref{barrierlem} with $R=\sqrt{2\mathbf{n}}r$ we have
\begin{align*}
\mu_{t}(\mathbf{B}(y_0,r))
\leq\mu_{t}(\mathbf{B}(y_0,\sqrt{2\mathbf{n}}r))
\leq 8\mu_{0}(\mathbf{B}(y_0,2\sqrt{2\mathbf{n}}r))
\end{align*}
and as $\varrho(t)=\sqrt{8\mathbf{n}}r$ this yields the desired estimate.

Now suppose $r^2<t$,
in particular $\varrho(t)=\sqrt{8\mathbf{n}t}$.
Set $s_0:=t+r^2$.
Consider $\Phi_{}=\Phi_{(s_0,0)}$
and $\varphi_{}=\varphi_{(s_0,0),\sqrt{4\mathbf{n}t}}$
from Definition \ref{sphtestfct}.
By $s_0< 2t$
we obtain
\begin{align*}
\mathrm{spt}\varphi_{}(0,\cdot\,)\subset\subset\mathbf{B}(0,\varrho(t)),
\;\;\;
\sup_{\mathbb{R}^{\mathbf{n}+\mathbf{k}}}\varphi_{}(0,\cdot\,)\leq C,
\;\;\;
\inf_{\mathbf{B}(0,r)}\varphi_{}(t,\cdot\,)\geq 1.
\end{align*}
Thus by Huiskin's monotonicity formula (see Theorem \ref{localmonthm})
\begin{align*}
\int_{\mathbf{B}(0,r)}\Phi_{}\,\mathrm{d}\mu_{t}
\leq 
\int_{\mathbb{R}^{\mathbf{n}+\mathbf{k}}}\Phi_{}\varphi_{}\,\mathrm{d}\mu_{t}
\leq 
\int_{\mathbb{R}^{\mathbf{n}+\mathbf{k}}}\Phi_{}\varphi_{}\,\mathrm{d}\mu_{0}
\leq 
C\int_{\mathbf{B}(0,\varrho(t))}\Phi_{}\,\mathrm{d}\mu_0.
\end{align*}
We have $s_0-t=r^2$,
hence
$\inf_{\mathbf{B}(0,r)}\Phi_{}(t,\cdot\,)\geq cr^{-\mathbf{n}}$.
So by the above inequality
\begin{align}
\label{laterbndlem21}
r^{-\mathbf{n}}\mu_{t}(\mathbf{B}(0,r))
\leq C\int_{\mathbf{B}(0,\varrho(t))}\Phi_{}(0,x)\,\mathrm{d}\mu_0(x).
\end{align}
By definition of $\Phi$ 
and $s_0=t+r^2\geq t=(8\mathbf{n})^{-1}\varrho(t)^2$ we have
$\Phi_{}(0,x)\leq C(\varrho(t))^{-\mathbf{n}}$ for all $x\in\mathbb{R}^{\mathbf{n}+\mathbf{k}}$.
Thus \eqref{laterbndlem21}
establishes the result.
\end{proof}
%
%
%point to neighbourhood Gaussian density bound
%
%
Next we show that under the assumption of certain measure bounds
if \eqref{whitelocregthma} is satisfied at one point
a slightly weaker estimate is satisifed in a small neighbourhood.
%
%lemma
%
\begin{lem}
\label{ptngaussdenslem}
There exists a $C\in (1,\infty)$
such that the following holds:

Let $\varrho,M,\kappa\in (0,\infty)$,
$\Lambda\in [1,\infty)$,
$\delta\in (0,1/(C\Lambda)]$,
$s_0\in\mathbb{R}$,
$y_0\in\mathbb{R}^{\mathbf{n}+\mathbf{k}}$
and let $\mu$ be a Radon measure on $\mathbb{R}^{\mathbf{n}+\mathbf{k}}$.
Suppose we have
\begin{align}
\label{ptngaussdenslema}
\mu\left(\mathbf{B}(y_0,C\Lambda\varrho)\right)
&\leq M\varrho^{\mathbf{n}},
\\
\label{ptngaussdenslemb}
\int_{\mathbb{R}^{\mathbf{n}+\mathbf{k}}}\Phi_{(s_0,y_0)}\varphi_{(s_0,y_0),\Lambda\varrho}(s_0-\varrho^2,x)\,\mathrm{d}\mu(x)
&\leq 1+\kappa.
\end{align}

Then
for all $(s,y)\in(s_0-\delta^2\varrho^2,s_0]\times\mathbf{B}(y_0,\delta\varrho)$
we have
\begin{align*}
\int_{\mathbb{R}^{\mathbf{n}+\mathbf{k}}}\Phi_{(s,y)}\varphi_{(s,y),\Lambda\varrho}
(s_0-\varrho^2,x)\,\mathrm{d}\mu(x)
\leq 1+\kappa+CM\Lambda\delta.
\end{align*}
\end{lem}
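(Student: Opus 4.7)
The plan is to pointwise Lipschitz-control the integrand in the parameters $(s,y)$ at fixed evaluation time $t_1:=s_0-\varrho^2$, and then integrate against $\mu$ using \eqref{ptngaussdenslema}. Write $f(s,y,x):=\Phi_{(s,y)}(t_1,x)\,\varphi_{(s,y),\Lambda\varrho}(t_1,x)$ and $\tau:=s-t_1$. Since $\tau\in((1-\delta^2)\varrho^2,\varrho^2]$, the quantity $\tau$ stays comparable to $\varrho^2$ throughout, so $f$ is smooth in $(s,y)$ on the relevant range.

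First I would localise. For $|y-y_0|\leq\delta\varrho$, the support of $\varphi_{(s,y),\Lambda\varrho}(t_1,\cdot)$ is contained in $\{x:|x-y|^2\leq\Lambda^2\varrho^2+2\mathbf{n}\tau\}\subset\{x:|x-y|\leq\sqrt{\Lambda^2+2\mathbf{n}}\,\varrho\}$, and hence in $\mathbf{B}(y_0,C\Lambda\varrho)$ once $\delta\leq 1/(C\Lambda)$ with $C$ a suitable universal constant. Both integrands $f(s,y,\cdot)$ and $f(s_0,y_0,\cdot)$ are therefore supported in this common ball, on which the assumption \eqref{ptngaussdenslema} controls the $\mu$-mass.

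On this ball I would compute pointwise derivative bounds. The heat kernel factor satisfies $|\partial_s\log\Phi|\leq\mathbf{n}/(2\tau)+|x-y|^2/(4\tau^2)\leq C\Lambda^2/\varrho^2$ and $|\nabla_y\log\Phi|=|x-y|/(2\tau)\leq C\Lambda/\varrho$, together with $\Phi\leq C\varrho^{-\mathbf{n}}$; the cutoff $\varphi$ is bounded by $1$ and has $|\partial_s\varphi|\leq C(\Lambda\varrho)^{-2}$, $|\nabla_y\varphi|\leq C(\Lambda\varrho)^{-1}$, which are dominated after multiplication by $\Phi$. Integrating along the straight-line segment in parameter space from $(s_0,y_0)$ to $(s,y)$, with $|s-s_0|\leq\delta^2\varrho^2$ and $|y-y_0|\leq\delta\varrho$, yields the pointwise bound
\begin{align*}
|f(s_0,y_0,x)-f(s,y,x)|\leq C\bigl(\Lambda^2\delta^2+\Lambda\delta\bigr)\varrho^{-\mathbf{n}}\leq C\Lambda\delta\,\varrho^{-\mathbf{n}},
\end{align*}
where the last step absorbs $\Lambda^2\delta^2$ into $\Lambda\delta$ by means of $\Lambda\delta\leq 1$, which follows from the hypothesis $\delta\leq 1/(C\Lambda)$.

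Integrating this pointwise estimate against $\mu$ on $\mathbf{B}(y_0,C\Lambda\varrho)$ and invoking \eqref{ptngaussdenslema} gives $\bigl|\int f(s,y,\cdot)\,d\mu-\int f(s_0,y_0,\cdot)\,d\mu\bigr|\leq C\Lambda\delta\,\varrho^{-\mathbf{n}}\cdot M\varrho^{\mathbf{n}}=CM\Lambda\delta$, which combined with \eqref{ptngaussdenslemb} is exactly the claimed estimate. The argument is essentially one Lipschitz estimate and no serious obstacle is expected; the only bookkeeping is choosing the universal constant $C$ large enough that the hypothesis $\delta\leq 1/(C\Lambda)$ simultaneously enforces the support inclusion from the localisation step and absorbs the $\Lambda^2\delta^2$ error into the $\Lambda\delta$ term.
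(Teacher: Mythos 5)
Your proposal is correct and takes essentially the same approach as the paper: localise to a ball $\mathbf{B}(y_0,C\Lambda\varrho)$ containing all relevant supports, establish a pointwise bound of order $\Lambda\delta\,\varrho^{-\mathbf{n}}$ on the variation of the integrand in $(s,y)$, and integrate against $\mu$ using \eqref{ptngaussdenslema}. The only cosmetic difference is that you obtain the pointwise estimate by differentiating in $(s,y)$ and integrating along a path, whereas the paper compares the ratio $\Phi_{(s,y)}/\Phi_{(s_0,y_0)}$ multiplicatively via direct exponential estimates; both give the same bound and rely on the same use of $\Lambda\delta\leq 1/C$ to absorb the $\Lambda^2\delta^2$ term.
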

%
%proof
%
\begin{proof}
We may assume $s_0=0$, $y_0=0$ and $\varrho=1$.
Fix $(s,y)\in(-\delta^2,0]\times\mathbf{B}(0,\delta)$.
Note that 
$\mathrm{spt}(\varphi_{(s,y),\Lambda}(-1,\cdot))
\subset\mathbf{B}(0,(2\mathbf{n}+1)\Lambda)$.
Let $x\in\mathbf{B}(0,(2\mathbf{n}+1)\Lambda)$.
Direct calculations yield
\begin{align*}
1\leq (s+1)^{-\mathbf{n}/2}
\leq (s+1)^{-\mathbf{n}}
\leq 1-Cs\leq 1+C\delta
\\
\exp\left(\left|\frac{|x|^2}{4}-\frac{|x-y|^2}{4(s+1)}\right|\right)
\leq\exp\left(C(\Lambda|y|+\Lambda^2|s|)\right)
\leq 1+C\Lambda\delta,
\end{align*}
where we used $\delta\leq (C\Lambda)^{-1}$.
Thus we have
\begin{align*}
|\Phi_{(0,0)}(-1,x)-\Phi_{(s,y)}(-1,x)|
\leq C\Lambda\delta
\\
|\varphi_{(0,0),\Lambda}(-1,x)-\varphi_{(s,y),\Lambda}(-1,x)|
\leq C\delta
\end{align*}
Combined with \eqref{ptngaussdenslema} and \eqref{ptngaussdenslemb}
\begin{comment}
Now we can use \eqref{ptngaussdenslema} to estimate
\begin{align*}
\int_{\mathbb{R}^{\mathbf{n}+\mathbf{k}}}
\Phi_{(s,y)}\varphi_{(s,y),\Lambda}\;\mathrm{d}\mu_{-1}
&\leq
\int_{\mathbb{R}^{\mathbf{n}+\mathbf{k}}}
\Phi_{(0,0)}\varphi_{(0,0),\Lambda}\;\mathrm{d}\mu_{-1}+CM\Lambda\delta.
\end{align*}
Combined with \eqref{ptngaussdenslemb}
\end{comment}
this yields the result.
\end{proof}

%
%
%compactness proof remark
%
%
\begin{rem}
\label{compactnessproofrem}
Here we want to derive Theorem \ref{compactnessthm} from Ilmanen's work \cite{ilmanen1}.
In case $U_i\equiv U$ the result directly follows from the proof of \cite[Thm.\ 7.1]{ilmanen1}.
Now consider the general case.
We can find a subsequence $\lambda_1:\mathbb{N}\to\mathbb{N}$
and a Brakke flow $(\nu^1_t)_{t\in [t_1,t_2]}$ in $U_1$ such that
$\lim_{j\to\infty}\mu^{\lambda_1(j)}_{t}(\phi)=\nu^1_{t}(\phi)$
for all $\phi\in\mathcal{C}_{\mathrm{c}}^0(U_1)$, for all $t\in [t_1,t_2]$.
\begin{comment}
Similarly there exists a subsequence $\lambda_2:\mathbb{N}\to\lambda_1[\mathbb{N}]$
and a Brakke flow $(\nu^2_t)_{t\in [t_1,t_2]}$ in $U_2$ such that
$\lim_{j\to\infty}\mu^{\lambda_2(j)}_{t}(\phi)=\nu^2_{t}(\phi)$
for all $\phi\in\mathcal{C}_{\mathrm{c}}^0(U_1)$, for all $t\in [t_1,t_2]$.
\end{comment}
Inductively for all $l\in\mathbb{N}$, $l\geq 2$ we can find a subsequence
$\lambda_l:\mathbb{N}\to\lambda_{l-1}[\mathbb{N}]$
and a Brakke flow $(\nu^l_t)_{t\in [t_1,t_2]}$ in $U_l$ such that $\lambda_l(1)\geq l$ and
\begin{align*}
\lim_{j\to\infty}\mu^{\lambda_l(j)}_{t}(\phi)=\nu^l_{t}(\phi)
\quad\text{for all}\;\phi\in\mathcal{C}_{\mathrm{c}}^0(U_l)
\end{align*}
for all $t\in [t_1,t_2]$.
In particular we have
$\nu^{l_2}_t\mres U_{l_0}=\nu^{l_1}_t\mres U_{l_0}$
for all $l_0\leq l_1\leq l_2$ and all $t\in [t_1,t_2]$.
Then $\mu_t(\phi):=\lim_{l\to\infty}\nu^l_t(\phi |_{U_l})$
is well defined for all $\phi\in\mathcal{C}_{\mathrm{c}}^0(U)$
and gives the desired Brakke flow on $U$.
With $\sigma(j)=\lambda_j(j)$ this establishes the result.
\end{rem}
\end{appendix}

\vspace*{1cm}
\noindent
Ananda Lahiri\\
Max Planck Institute for Gravitational Physics 
(Albert Einstein Institute)\\
Am M\"uhlenberg 1, 14476 Potsdam, Germany\\
Ananda.Lahiri@aei.mpg.de
\end{document}